\newtheorem{theorem}{Theorem}[section]
\newtheorem{fact}[theorem]{Fact}
\newtheorem{lemma}[theorem]{Lemma}
\newtheorem{corollary}[theorem]{Corollary}
\newtheorem{proposition}[theorem]{Proposition}
\theoremstyle{definition}
\newtheorem{definition}[theorem]{Definition}
\newtheorem{example}[theorem]{Example}
\newtheorem{remark}[theorem]{Remark}
\newtheorem{question}[theorem]{Question}
\newcommand{\abar}{\bar{a}}
\newcommand{\bbar}{\bar{b}}
\newcommand{\cbar}{\bar{c}}
\newcommand{\dbar}{\bar{d}}
\newcommand{\wbar}{\bar{w}}
\newcommand{\xbar}{\bar{x}}
\newcommand{\ybar}{\bar{y}}
\newcommand{\zbar}{\bar{z}}
\newcommand{\smd}{\raisebox{.75pt}{\textrm{\scriptsize{~\!$\triangle$\!~}}}}
\def\seq{\subseteq}
\def\feq{\textit{feq}}
\def\Def{\operatorname{Def}}
\def\tp{\operatorname{tp}}
\def\dfs{\operatorname{dfs}}
\def\fim{\operatorname{fim}}
\def\Av{\operatorname{Av}}
\def\Aut{\operatorname{Aut}}
\def\Th{\operatorname{Th}}
\def\N{\mathbb{N}}
\def\R{\mathbb{R}}
\def\cL{\mathcal{L}}
\def\cU{\mathcal{U}}
\def\cX{\mathcal{X}}
\def\cY{\mathcal{Y}}
\def\cB{\mathcal{B}}
\def\cP{\mathcal{P}}
\def\cF{\mathcal{F}}
\def\cH{\mathcal{H}}
\def\cI{\mathcal{I}}
\def\cS{\mathcal{S}}
\def\cQ{\mathcal{Q}}
\def\cL{\mathcal{L}}
\def\kM{\mathfrak{M}}
\def\seq{\subseteq}
\def\tp{\operatorname{tp}}
\def\Av{\operatorname{Av}}
\def\abar{\bar{a}}
\def\bbar{\bar{b}}
\def\xbar{\bar{x}}
\def\ybar{\bar{y}}
\newcommand{\miff}{\makebox[.4in]{$\Leftrightarrow$}}
\def\Def{\operatorname{Def}}
\def\feq{\textit{feq}}
\def\e{\epsilon}
\def\Lc{\mathcal{L}} 
\def\Uc{\mathcal{U}}
\def\cop{^{\textnormal{\textsf{c}}}}
\def\lp{(}
\def\rp{)}
\def\lb{[}
\def\rb{]}
\def\dfs{\textnormal{\textit{dfs}}}
\def\fam{\textnormal{\textit{fam}}}
\def\fim{\textnormal{\textit{fim}}}
\def\sqseq{\sqsubseteq}
\def\im{\text{im}}
\newcommand{\ABA}{\mathsf{ABA}}
\renewcommand{\phi}{\varphi}
\renewcommand{\emptyset}{\varnothing}
\renewcommand{\epsilon}{\varepsilon}
\newcommand{\Fraisse}{Fra\"{i}ss\'{e}}
\newcommand{\claim}{\hfill$\dashv_{\text{\scriptsize{claim}}}$}
\newcommand{\inv}{^{\text{-}1}}
\newcommand{\Me}{M_{\textnormal{\sfrac{1}{2}}}}
\newcommand{\Mi}{M^\infty_{\textnormal{\sfrac{1}{2}}}}
\newcommand{\Te}{T_{\textnormal{\sfrac{1}{2}}}}
\newcommand{\Ti}{T^\infty_{\textnormal{\sfrac{1}{2}}}}
\newcommand{\Avo}[4]{\Av^{#1}_{\!<{#2}}(#3,#4)}
\newcommand{\Avc}[4]{\Av^{#1}_{\!\leq{#2}}(#3,#4)}
\newcommand{\Avd}[4]{\Av^{#1}_{#2}(#3,#4)}
\def\Ind{\setbox0=\hbox{$x$}\kern\wd0\hbox to 0pt{\hss$\mid$\hss}
\lower.9\ht0\hbox to 0pt{\hss$\smile$\hss}\kern\wd0}
\def\Notind{\setbox0=\hbox{$x$}\kern\wd0\hbox to 0pt{\mathchardef
\nn=12854\hss$\nn$\kern1.4\wd0\hss}\hbox to
0pt{\hss$\mid$\hss}\lower.9\ht0 \hbox to 0pt{\hss$\smile$\hss}\kern\wd0}
\providecommand{\bigsqcap}{%
  \mathop{%
    \mathpalette\@updown\bigsqcup
  }%
}
\newcommand*{\@updown}[2]{%
  \rotatebox[origin=c]{180}{$\m@th#1#2$}%
}
\newcommand{\sqin}{%
  \mathrel{\vphantom{\sqsubset}\text{%
    \mathsurround=0pt
    \ooalign{$\sqsubset$\cr$-$\cr}%
  }}%
}
\DeclareRobustCommand{\cset}{\@ifstar\star@cset\normal@cset}
\newcommand{\star@cset}[1]{\left\llbracket#1\right\rrbracket}
\newcommand{\normal@cset}[2][]{\mathopen{#1\llbracket}#2\mathclose{#1\rrbracket}}
\newcommand{\leqnomode}{\tagsleft@true\let\veqno\@@leqno}
\newcommand{\reqnomode}{\tagsleft@false\let\veqno\@@eqno}
\title{Keisler Measures in the Wild}
\date{March 13, 2023}
\author[G. Conant]{Gabriel Conant}
\address{Department of Pure Mathematics and Mathematical Statistics\\
University of Cambridge\\
Cambridge CB3 0WB\\
 UK}
\email{gconant@maths.cam.ac.uk}
\author[K. Gannon]{Kyle Gannon}
\address{Department of Mathematics\\
University of California\\
Los Angeles, CA, 90095, USA}
\email{gannon@math.ucla.edu}
\author[J. Hanson]{James Hanson}
\address{Department of Mathematics\\
University of Wisconsin \\
Madison, WI, 53706, USA}
\email{jehanson2@wisc.edu}
\thanks{GC was partially supported by NSF grant DMS-1855503. KG was partially supported by the NSF CAREER grant DMS-1651321.}
\begin{document}

\begin{abstract}
We investigate Keisler measures in arbitrary theories. Our initial focus is on Borel definability. We show that when working over countable parameter sets in countable theories, Borel definable measures are closed under Morley products and satisfy associativity. However, we also demonstrate failures of both properties over uncountable parameter sets. In particular, we show that the Morley product of Borel definable types need not be Borel definable (correcting an erroneous result from the literature). We then study various notions of generic stability for Keisler measures and generalize several results from the NIP setting to arbitrary theories. 
We also prove some positive results for the class of frequency interpretation measures in arbitrary theories, namely, that such measures are closed under convex combinations and commute with all Borel definable measures.
Finally, we construct the first example of a complete type which is definable and finitely satisfiable in a small model, but not finitely approximated over any small model. 
\end{abstract}

\maketitle

\setcounter{tocdepth}{1}
\tableofcontents

\section*{Introduction}
Finitely additive probability measures on definable sets were originally introduced by Keisler as a tool to study forking in NIP theories \cite{Keis}. Since then, Keisler measures have found extensive connections to various contexts in both pure and applied model theory. They played a pivotal role in resolving the Pillay conjectures on definably compact groups definable in o-minimal theories \cite{HPS} and, more generally, are a crucial tool in the study of definably amenable groups definable in NIP theories \cite{ChSi, OnPi}. Keisler measures are a vital component in the interplay between model theory and combinatorics (especially in connection to the regularity theorems) \cite{ChStNIP, CPTNIP}. Additionally, Keisler measures also arise naturally in continuous logic as types over models of the randomization \cite{BYK}.

  Despite this plethora of research, there is an obvious gap in the existing results: A clear structural understanding of Keisler measures only exists in NIP theories, and, with a few specialized exceptions, the deepest results concerning Keisler measures exist in that context. This paper lays the foundational groundwork for the study of Keisler measures outside the boundary of NIP. Our results demonstrate that the \textit{general theory of Keisler measures} is fundamentally more complicated than previously thought. Broadly speaking, whereas Keisler measures in NIP theories can be sufficiently approximated by types, and so are tame, measures in arbitrary theories are far more sensitive to analytic and descriptive set theoretic issues, and it is no longer possible to directly generalize proofs from types to measures. Indeed, we will develop several examples demonstrating novel and exotic behavior of arbitrary Keisler measures outside of NIP. However, we also prove positive results concerning the theory of `generically stable' Keisler measures, which further demonstrate that structural understanding is possible.

In arbitrary theories, invariant types can be `freely' amalgamated using the Morley product operation, sometimes also called the nonforking product. In NIP theories, this operation extends automatically to invariant Keisler measures, thanks to the result of Hrushovski and Pillay \cite{HP} that any such measure in an NIP theory is \emph{Borel definable}. While this need not hold outside of NIP, one can still define the Morley product of measures that are Borel definable. 
Thus the first main goal of this paper is to establish basic properties of Borel definable Keisler measures in arbitrary theories. We consider the two questions of whether Borel definability is preserved by Morley products, and whether the Morley product is associative for Borel definable measures. Despite the fundamental nature of these questions,  the previous literature has been somewhat vague regarding the answers.  A positive answer to the first question is stated without proof in \cite[Lemma 1.6]{HPS}. Moreover, while associativity of the Morley product seems to be tacitly assumed in various places, it is only directly addressed in \cite{Sibook} under the assumption of NIP (and, even in this case, a complete proof of associativity was given only recently;
see Remark \ref{rem:NIPnice}). The goal of Sections \ref{sec:BD} and \ref{sec:badBD} is to clarify this situation. In Section \ref{sec:BD}, we show that if $T$ is any countable theory, and $A\subset \cU$ is countable, then the set of measures that are Borel definable over $A$ is closed under Morley products, and associativity holds for such measures. On the other hand, we will see in Section \ref{sec:badBD} that both properties can fail without the extra countability assumptions (even in simple theories). In particular, this refutes the unproven claim in \cite{HPS}. 

The rest of the paper is devoted to developing various notions of `generic stability' for Keisler measures in arbitrary theories. We focus on three classes: measures that are \emph{definable and finitely satisfiable} in a small model (or \dfs), measures that are \emph{finitely approximated} in a small model (or \fam), and measures that are \emph{frequency interpretation measures} with respect to a small model (or \fim). Section \ref{sec:fimfam} provides definitions and a review of basic facts about \fim, \fam, and \dfs\ measures. 

In NIP theories, the three classes of measures described above coincide, and a Keisler measure with these properties is called `generically stable'. Outside of NIP theories, these properties are no longer equivalent, and thus one obtains three competing notions of generic stability for measures. An assessment of this competition was undertaken in \cite{CoGa}, mostly focusing on types. The present article continues this work with a greater emphasis on measures. A recurring question is the extent to which fundamental results on generically stable Keisler measures in NIP theories can be generalized to arbitrary theories, and we will prove several results to this effect. These results help to clarify when NIP is playing a crucial role in a given result about measures, versus when a similar result can be obtained in general, perhaps after some appropriate modification of the working assumptions. In particular, a fundamental fact about NIP theories is that any Keisler measure can be locally approximated by types. This result is often used to replace measures by types in various arguments, and thus avoid the necessity of pure measure theory and integration techniques. On the other hand, our work will show that generalizations of certain results on NIP theories can indeed be obtained using more measure-theoretic proofs which, although possibly more complicated methodologically, are also shorter and in some cases more concise.

In Section \ref{sec:com}, we focus on the question of commutativity for the Morley product of Borel definable Keisler measures. This is motivated by the result of Hrushovski, Pillay, and Simon \cite{HPS} that, in NIP theories, definable measures commute with finitely satisfiable measures and, moreover,  \dfs\  measures commute with arbitrary invariant measures. The goal of Section \ref{sec:com} is to obtain suitable generalizations of these results for arbitrary theories. We first show that in any theory, if $\mu$ is a definable measure, and $\nu$ is Borel definable and finitely satisfiable, then $\mu$ and $\nu$ commute provided that for any small model $M$, $\mu|_M$ has some definable global extension that commutes with $\nu$ (see Theorem \ref{thm:gencom}). This recovers the corresponding fact from \cite{HPS} since, in NIP theories, any measure over a small model has a \emph{smooth} global extension, and it is easy to show that smooth measures (in any theory) are definable and commute with all Borel definable measures. We also show later in the paper that, in Theorem \ref{thm:gencom}, the extra assumption on restrictions of $\mu$ to small models (which is automatic in NIP theories) is necessary. In particular, we construct a theory with a \dfs\ type and a definable measure that do not commute (see Proposition \ref{prop:nocom}). Finally in Section \ref{sec:com}, we show that in any theory, \fim\ measures commute with Borel definable measures (see Theorem \ref{thm:fimcom}). In other words, the corresponding result for NIP theories from \cite{HPS} generalizes to arbitrary theories, provided one replaces \dfs\ with \fim. 

In Section \ref{sec:fim}, we focus on further properties of \fim\ measures. Evidence suggests that \fim\ is the `right' notion of generic stability for measures in arbitrary theories. In particular, the notion of a generically stable \emph{type} is well established in the literature, and the first two authors showed in \cite{CoGa} that this notion coincides with \fim\ when viewing types as $\{0,1\}$-valued measures.
In Theorem \ref{thm:fimCC}, we show that \fim\ measures are closed under convex combinations. The analogous result for \dfs\ and \fam\ measures is quite easy to prove (see Proposition \ref{prop:CCbasic}) and so, in light of \cite{HPS}, Theorem \ref{thm:fimCC} again generalizes known facts from the study of NIP theories. However, we will see that working directly with \fim\ measures in general theories leads to significantly more complicated proofs. We finish Section \ref{sec:fim} with a discussion of the still open question of whether \fim\ measures are preserved by Morley products (an earlier draft of this article contained an erroneous proof of a positive answer).

In Section \ref{sec:dfsfam}, we answer one of the main questions left open in \cite{CoGa}, which is on the existence of a complete global measure that is \dfs\ and not \fam\ (an example involving a local type was given in \cite{CoGa}). We will first give a new local example of this phenomenon, which is built using subsets of the interval $[0,1]$ of Lebesgue measure $\frac{1}{2}$. Then we develop this example into a more complicated theory with a complete \dfs\ type that is not \fam. 

Section \ref{sec:famfim} focuses on examples of measures that are \fam\ and not \fim. We first show that a purported example from \cite{ACPgs} of this phenomenon does not work. Then we revisit a different example from \cite{CoGa} in the theory of the generic $K_s$-free graph (for fixed $s\geq 3$). We develop further properties of this example, and correct an erroneous proof from \cite{CoGa}. Finally, we give a new example of a complete type that is \fam\ and not \fim, which is obtained by taking a certain reduct of the \dfs\ and non-\fam\ type from Section \ref{sec:dfsfam}.

\subsection*{Corrigendum} 

For the sake of clarifying the literature, we summarize the incorrect results and proofs from previous work that are addressed in this article.

\begin{enumerate}[$(1)$]
\item We recall that the product of two Borel definable Keisler measures is Borel definable in the NIP setting. In \cite[Lemma 1.6]{HPS}, it is claimed, but not proved, that the Morley product of two Borel definable Keisler measures is Borel definable. We show here that this is not always true, even for Borel definable \emph{types} (see Proposition \ref{prop:ternary1}).

\item Example 1.7 of \cite{ACPgs} describes a complete theory that is claimed to admit a global generically stable type $p$ such that $p\otimes p$ is not generically stable. This claim is repeated in \cite[Fact 5.4]{CoGa}. It turns out that $p$ is not well-defined, and we show here that this particular theory has no global non-algebraic generically stable types (see Theorem \ref{thm:Tfeq2triv}). 
The question of whether Morley products preserve generic stability remains open. See the end of Section \ref{sec:fim} for further discussion.
\item Remark 4.2 of \cite{CoGa} makes an unjustified claim that \dfs, \fam, and \fim\ measures are closed under localization at arbitrary Borel sets, which seems likely to be false. In Section \ref{sec:famfim}, we supply correct proofs of the results in \cite{CoGa} that used this remark (see Proposition \ref{prop:dfstriv1}, Remark \ref{rem:CGcor}, and Theorem \ref{thm:Hfimtriv}).
\end{enumerate}

\section{Basic definitions and notation}
We start with some general notation that will be used throughout the paper. Let $X$ be a set. Given a point $a\in X$, we let $\delta_a$ denote the Dirac measure on $X$ concentrating at $a$. For $\abar\in X^n$, we let $\Av(\abar)$ denote the `average' measure $\frac{1}{n}\sum_{i=1}^n\delta_{a_i}$. Given a (bounded) real-valued function $f$ on $X$, define $\|f\|_\infty\coloneqq \sup_{x\in X}|f(x)|$. 

Given $r,s\in \R$ and $\epsilon>0$, we write $r\approx_\epsilon s$ to denote $|r-s|<\epsilon$. Given an integer $n\geq 1$, let $[n]=\{1,\ldots,n\}$.

Now let $T$ be a complete $\cL$-theory with monster model $\cU$. We work with formulas in the language $\cL$ with parameters from $\cU$. A formula $\phi(x)$ is \textbf{over $A\seq\cU$} if all parameters in $\phi(x)$ come from $A$. In this case we say $\phi(x)$ is an \textbf{$\cL_A$-formula}.  An \textbf{$\cL$-formula} is a formula without parameters. We will use $x,y,z$, etc., to denote tuples of variables, although at times we may also employ vector notation $\xbar,\ybar,\zbar$, etc., for clarity. As usual, we often partition the free variables in a formula $\phi(x,y)$ into object variables $x$ and parameter variables $y$.  

Given $A\seq \cU$, let $\Def_x(A)$ denote the Boolean algebra of $\cL_A$-formulas with free variables $x$, up to equivalence modulo $T$ expanded by constants for $A$. The corresponding Stone space of types is denoted $S_x(A)$. Given an $\cL_A$-formula $\phi(x)$, we let $[\phi(x)]$ denote the clopen set of types in $S_x(A)$ containing $\phi(x)$.

We let $\kM_x(A)$ denote the space of Keisler measures (i.e., finitely additive probability measures) on $\Def_x(A)$.  
Recall that any $\mu\in\kM_x(A)$ determines a  unique regular Borel probability measure $\widetilde{\mu}$ on $S_x(A)$ such that if $\phi(x)$ is an $\cL_A$-formula then $\mu(\phi(x))=\widetilde{\mu}([\phi(x)])$ (and, furthermore, any regular Borel probability measure on $S_x(A)$ is of this form). See \cite[Section 7.1]{Sibook} for an explicit construction of $\widetilde{\mu}$. By identifying $\mu$ and $\widetilde{\mu}$, we can view $\mu$  as a regular Borel probability measure on $S_x(A)$. For further details on Borel measures and regularity, see Section \ref{app:Borel} of the appendix. We will use the following special case of Fact \ref{fact:BStone}.
 
 \begin{fact}\label{fact:basicBorel}
 Fix $A\seq \cU$ and $\mu\in\kM_x(A)$.
 \begin{enumerate}[$(a)$]
 \item If $U\seq S_x(A)$ is open then 
 \[
 \mu(U)=\sup\{\mu(\phi(x)):\text{$\phi(x)$ is an $\cL_A$-formula and $[\phi(x)]\seq U$}\}.
 \]
 \item If $\nu$ is a regular Borel probability measure on $S_x(A)$, and $\nu(\phi(x))=\mu(\phi(x))$ for any $\cL_A$-formula $\phi(x)$, then $\mu=\nu$.
\end{enumerate}
\end{fact}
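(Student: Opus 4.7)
The plan is to derive both parts from standard regularity properties applied to the Stone space $S_x(A)$, making essential use of the fact that the basic clopens $[\phi(x)]$ form a basis for the topology that is closed under finite unions (since $[\phi(x)\vee\psi(x)] = [\phi(x)]\cup[\psi(x)]$) and complements (since $[\neg\phi(x)] = S_x(A)\setminus[\phi(x)]$).

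For part (a), the inequality $\mu(U)\geq \sup\{\mu(\phi(x)):[\phi(x)]\seq U\}$ is immediate from monotonicity. For the reverse inequality, fix $\epsilon>0$ and use the inner regularity of $\mu$ to find a compact set $K\seq U$ with $\mu(K)>\mu(U)-\epsilon$. Since the basic clopens form a basis for $S_x(A)$, cover $K$ by basic clopens contained in $U$; by compactness finitely many suffice, say $[\phi_1(x)],\ldots,[\phi_n(x)]$. Setting $\phi(x)\coloneqq \phi_1(x)\vee\cdots\vee\phi_n(x)$, one has $K\seq [\phi(x)]\seq U$ and hence $\mu(\phi(x))\geq \mu(K)>\mu(U)-\epsilon$. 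Letting $\epsilon\to 0$ gives the result.

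For part (b), the idea is to bootstrap agreement on basic clopens to agreement on all Borel sets via regularity. Since $\nu$ is also a regular Borel probability measure on $S_x(A)$, part (a) applied to $\nu$ yields $\nu(U)=\sup\{\nu(\phi(x)):[\phi(x)]\seq U\}$ for every open $U$. Combined with the hypothesis that $\mu$ and $\nu$ agree on every $[\phi(x)]$, we conclude $\mu(U)=\nu(U)$ for every open set $U$. Then outer regularity of both measures yields, for any Borel set $B$,
\[
\mu(B)=\inf\{\mu(U):B\seq U,\ U\text{ open}\}=\inf\{\nu(U):B\seq U,\ U\text{ open}\}=\nu(B),
\]
so $\mu=\nu$.

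There is no real obstacle here: the proof is essentially standard measure theory on a compact Hausdorff space once one observes that the basic clopens form an algebra generating the topology. The only mild subtlety is in part (a), where one must pass from a general compact approximation (guaranteed by inner regularity) to a clopen approximation, which is where the basis property plus closure under finite unions gets used.
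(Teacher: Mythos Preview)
Your proof is correct and matches the paper's approach. The paper simply defers to Fact~\ref{fact:BStone} in the appendix (stated for arbitrary totally disconnected compact Hausdorff spaces), whose proof reads in its entirety ``Part $(a)$ is straightforward. Part $(b)$ follows from part $(a)$ and regularity''; your argument is precisely the standard way to unpack those two sentences.
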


Given $A\seq B\seq \cU$ and a tuple $x$ of variables, let $\rho^x_{B,A}\colon S_x(B)\to S_x(A)$ denote the restriction map. Note that $\rho^x_{B,A}$  is a continuous surjective map between compact Hausdorff spaces, and thus is a quotient map. 
Let $\rho^x_A$ denote $\rho^x_{\cU,A}$. Given $\mu\in\kM_x(\cU)$ and $A\seq \cU$, we let $\mu|_A$ denote the restriction of $\mu$ to $\Def_x(A)$.

\begin{remark}\label{rem:push}
If $\mu\in\kM_x(\cU)$ and $A\seq\cU$ then,
as a regular Borel measure on $S_x(A)$, $\mu|_A$ is the pushforward of $\mu$ to $S_x(A)$ along $\rho^x_A$. In other words, if $X\seq S_x(A)$ is Borel then $\mu|_A(X)=\mu((\rho^x_A)\inv (X))$. Indeed, by definition of $\mu|_A$, this holds when $X=[\varphi(x)]$  for some $\cL_A$-formula $\phi(x)$. Thus it holds for all Borel $X$ by Fact \ref{fact:basicBorel}$(b)$, and since pushforwards preserve regularity in this context (see Fact \ref{fact:push}). 
\end{remark}

We write $A\subset\cU$ to denote that $A$ is a subset of $\cU$ which is \textbf{small}, i.e., $\cU$ is $|A|^+$-saturated and strongly $|A|^+$-homogeneous. A measure $\mu\in\kM_x(\cU)$ is \textbf{invariant} if there is some $A\subset \cU$ such that for any $\cL$-formula $\phi(x,y)$, if $b,b'\in\cU^y$ have the same type over $A$, then $\mu(\phi(x,b))=\mu(\phi(x,b'))$. In this case, we also say that $\mu$ is \textbf{invariant over $A$} or \textbf{$A$-invariant}.

Suppose $\mu\in\kM_x(\cU)$ is invariant over $A\subset\cU$. Given an $\cL_A$-formula $\phi(x,y)$, define $F^\phi_{\mu,A}\colon S_y(A)\to [0,1]$ such that $F^\phi_{\mu,A}(q)=\mu(\phi(x,b))$ for some/any $b\models q$.  Note that if $B\supseteq A$ then $\mu$ is invariant over $B$ and, if $\phi(x,y)$ is an $\cL_A$-formula, then $F^\phi_{\mu,B}=F^\phi_{\mu,A}\circ \rho^y_{B,A}$. 

A Keisler measure $\mu\in\kM_x(\cU)$ is \textbf{Borel definable} if there is some $A\subset \cU$ such that $\mu$ is $A$-invariant and $F^{\phi}_{\mu,A}$ is a Borel map for any $\cL$-formula $\phi(x,y)$. In this case, we also say that $\mu$ is \textbf{Borel definable over $A$}. Note that if $\mu$ is Borel definable over $A$, then $F^\phi_{\mu,A}$ is Borel for any $\cL_A$-formula $\phi(x,y)$ and, moreover, $\mu$ is Borel definable over any $B\supseteq A$ (see also \cite[Proposition 2.22]{Ganthesis}).

Finally, we define the Morley product of Keisler measures. Given a Borel definable measure $\mu\in\kM_x(\cU)$ and a measure $\nu\in\kM_y(\cU)$, we define a measure $\mu\otimes\nu$ in $\kM_{xy}(\cU)$ such that, given an $\cL_{\cU}$-formula $\phi(x,y)$,
\[
(\mu\otimes\nu)(\phi(x,y))=\int_{S_y(A)}F^\phi_{\mu,A}\,d\nu|_A,
\]
where $A\subset \cU$ is any small set such that $\phi(x,y)$ is over $A$ and $\mu$ is Borel definable over $A$.  One can show that this does not depend on the choice of $A$.   The measure $\mu\otimes\nu$ is called the \textbf{Morley product} of $\mu$ and $\nu$.

\begin{remark}
To help ease notation, we will write integrals $\int_{S_y(A)}f\, d\nu|_A$ simply as $\int_{S_y(A)}f\, d\nu$. In other words, the fact that we integrate with respect to $\nu|_A$ is implied by the domain of integration $S_y(A)$. When $f$ is (or involves) a function of the form $F^\phi_{\mu,A}$, we write $\int_{S_y(A)}F^\phi_\mu\, d\nu$ instead of $\int_{S_y(A)}F^\phi_{\mu,A}\, d\nu$. 
\end{remark}

Recall that $S_x(\cU)$ can be identified with a closed subset of $\kM_x(\cU)$ by viewing types as $\{0,1\}$-valued measures. 
If $q\in S_y(\cU)$ is a type, then we have a well-defined Morley product $\mu\otimes q$ for any \emph{invariant} $\mu\in\kM_x(\cU)$ since any function is integrable with respect to $q$ as a Dirac measure. More explicitly,  $(\mu\otimes q)(\phi(x,y))=\mu(\phi(x,b))$, where $\mu$ is $A$-invariant, $\phi(x,y)$ is over $A$, and $b\models q|_A$. If $\mu$ is a type $p\in S_x(\cU)$, then $p\otimes q$ is a type in $S_{xy}(\cU)$, and $\phi(x,y)\in p\otimes q$ if and only if $\phi(x,b)\in p$ (where $b$ is as before).  We recall the following easy exercise.

\begin{fact}\label{fact:basicinv}
Suppose $\mu\in\kM_x(\cU)$ and $\nu\in\kM_y(\cU)$ are invariant. If $\mu$ is Borel definable, or if $\nu$ is a type, then $\mu\otimes\nu$ is invariant.
\end{fact}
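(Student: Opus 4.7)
The plan is to handle the two cases in a unified way by exploiting an $A$-automorphism of $\cU$. First I would fix a small set $A \subset \cU$ over which $\mu$ is invariant (and Borel definable in the first case) and over which $\nu$ is invariant (and, in the second case, is the restriction of a global invariant type $q$). Let $\phi(x,y,z)$ be an $\cL$-formula, and fix tuples $c, c' \in \cU^z$ with $c \equiv_A c'$. Choose $\sigma \in \Aut(\cU/A)$ with $\sigma(c) = c'$. The goal is then to show $(\mu\otimes\nu)(\phi(x,y,c)) = (\mu\otimes\nu)(\phi(x,y,c'))$.

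For the case when $\nu$ is a type $q$, the argument is a direct computation. By the explicit formula for $\otimes$ given just before the statement of the fact, $(\mu\otimes q)(\phi(x,y,c)) = \mu(\phi(x,b,c))$ for any $b \models q|_{Ac}$. Since $q$ is $A$-invariant and $\sigma$ fixes $A$, we have $\sigma(b) \models q|_{Ac'}$, so $(\mu\otimes q)(\phi(x,y,c')) = \mu(\phi(x,\sigma(b),c'))$. But $(b,c) \equiv_A (\sigma(b),c')$, and $A$-invariance of $\mu$ finishes the job.

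For the Borel definable case, the strategy is to transport the integral defining $(\mu\otimes\nu)(\phi(x,y,c'))$ back to $S_y(Ac)$ using the homeomorphism $\sigma_* \colon S_y(Ac) \to S_y(Ac')$ induced by $\sigma$. Two compatibility checks are needed. First, because $\nu$ is $A$-invariant and $\sigma$ fixes $A$, one has $\nu(\psi(y)) = \nu(\sigma(\psi)(y))$ for every $\cL_\cU$-formula $\psi(y)$; via Fact \ref{fact:basicBorel}$(b)$ and Remark \ref{rem:push}, this upgrades to the statement that $(\sigma_*)_*(\nu|_{Ac}) = \nu|_{Ac'}$ as regular Borel measures on $S_y(Ac')$. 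Second, for any $q \in S_y(Ac)$ realized by some $b$, one has $F^{\phi(\cdot,\cdot,c')}_{\mu,Ac'}(\sigma_*(q)) = \mu(\phi(x,\sigma(b),c')) = \mu(\phi(x,b,c)) = F^{\phi(\cdot,\cdot,c)}_{\mu,Ac}(q)$, using $A$-invariance of $\mu$ at the middle step. The standard change-of-variables formula for pushforwards then yields
\[
\int_{S_y(Ac')} F^{\phi(\cdot,\cdot,c')}_\mu \, d\nu = \int_{S_y(Ac)} F^{\phi(\cdot,\cdot,c)}_\mu \, d\nu,
\]
which is exactly the equality of the two Morley product values.

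The only mildly delicate point is the pushforward identity $(\sigma_*)_*(\nu|_{Ac}) = \nu|_{Ac'}$; everything else is bookkeeping with automorphisms and invariance. Since this identity is verified on clopens $[\psi(y)]$ for $\cL_{Ac'}$-formulas $\psi(y)$ and extended to all Borel sets via uniqueness of regular Borel extensions (Fact \ref{fact:basicBorel}$(b)$), together with the fact that pushforwards under continuous maps between compact Hausdorff spaces preserve regularity, no real obstacle remains. I would expect the cleanest write-up to state and verify the two compatibilities as short sublemmas and then conclude with one line of change-of-variables.
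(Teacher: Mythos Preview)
Your argument is correct. The paper does not actually prove this fact; it is stated as ``the following easy exercise'' with no proof given, so there is nothing to compare against beyond noting that your automorphism-and-pushforward approach is exactly the standard way to fill in this exercise.
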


\section{Borel definability over countable sets}\label{sec:BD}

As explained in the introduction, one main goal of this paper is to settle the question of whether the Morley product of Keisler measures preserves Borel definability, and also to address associativity. In this section, we show that both properties hold when working over countable parameter sets in countable theories. We will approach this result from a general perspective that will lead to further facts about Borel definable measures, and also explain precisely how the situation turns complicated (and counterintuitive) over uncountable sets. This perspective will also lead to some useful conclusions for definable measures (see Section \ref{sec:definable}).

\subsection{Fiber functions over Borel sets}
Recall that if $\mu\in\kM_x(\cU)$ is Borel definable over $A\subset\cU$, then it is Borel definable over any $B\supseteq A$. We now observe that Borel definability can also be dropped to smaller parameter sets, provided one still has invariance. The proof uses a result from \cite{HolSpur}, which can be viewed as a  Borel variation on the universal property of quotient maps.

\begin{theorem}[Holick\'{y} \& Spurn\'{y} \cite{HolSpur}]\label{thm:HolSpur}
Suppose $\rho\colon X\to Y$ is a surjective continuous map between compact Hausdorff spaces. Then for any $E\seq Y$, if $\rho\inv(E)$ is Borel then $E$ is Borel. Therefore, if $f\colon Y\to Z$ is a map to a topological space $Z$, and $f\circ\rho$ is Borel, then $f$ is Borel. 
\end{theorem}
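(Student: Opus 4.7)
The plan is to first reduce the map statement to the set statement, then prove the set statement by induction on the Borel hierarchy. The reduction is immediate: if $f \circ \rho$ is Borel and $U \seq Z$ is open, then $(f \circ \rho)\inv(U) = \rho\inv(f\inv(U))$ is Borel in $X$, so the set statement gives that $f\inv(U)$ is Borel in $Y$, whence $f$ is Borel.

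For the set statement, the starting observation is that $\rho$ is automatically a closed map, and therefore a quotient map: a closed subset of the compact space $X$ is compact, and its continuous image in the Hausdorff space $Y$ is compact, hence closed. In particular, $E \seq Y$ is open (respectively, closed) if and only if $\rho\inv(E)$ is open (respectively, closed), which handles the base case. The natural inductive tools are the direct image operator $\rho(-)$, which commutes with unions and preserves the class of closed sets, and the \emph{strong image} operator $\rho^+(A) \coloneqq Y \setminus \rho(X \setminus A) = \{y \in Y : \rho\inv(y) \seq A\}$, which commutes with intersections and preserves the class of open sets. Using surjectivity of $\rho$, one verifies the basic identities $E = \rho(\rho\inv(E)) = \bigcup_n \rho(A_n)$ when $\rho\inv(E) = \bigcup_n A_n$, and $E = \bigcap_n \rho^+(A_n)$ when $\rho\inv(E) = \bigcap_n A_n$; these let one convert a Borel expression for $\rho\inv(E)$ into a candidate Borel expression for $E$.

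The main obstacle, and the substantive content of the Holick\'{y}--Spurn\'{y} theorem, is that the building blocks $A_n$ appearing in a Borel representation of $\rho\inv(E)$ need not be saturated (i.e., of the form $\rho\inv(B_n)$), so one cannot simply apply an inductive hypothesis to them to produce a Borel representation of $E$ of the same rank. Holick\'{y} and Spurn\'{y} resolve this via a simultaneous induction that carefully interleaves the direct and strong image operators along the alternating unions and intersections of a Borel expression for $\rho\inv(E)$, together with auxiliary bookkeeping to control the resulting Borel rank in $Y$. For our purposes here this step may be treated as a black box, and the outcome is precisely the claimed preservation of Borel membership under pushing through $\rho$.
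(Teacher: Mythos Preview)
Your proposal is correct and matches the paper's approach: both reduce the function statement to the set statement via the identity $(f\circ\rho)\inv(U)=\rho\inv(f\inv(U))$, and both defer the set statement to the cited reference \cite{HolSpur} rather than reproving it. Your additional discussion of the direct and strong image operators and the obstacle posed by non-saturated building blocks is accurate commentary, but since you ultimately invoke the Holick\'{y}--Spurn\'{y} result as a black box, the substantive content is the same as the paper's two-line proof.
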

\begin{proof}
The first claim is a special case of \cite[Theorem 10]{HolSpur}. The second claim follows from the first. Indeed, if $U\seq Z$ is open and $f\circ\rho$ is Borel, then $\rho\inv(f\inv (U))$ is a Borel set, and thus so is $f\inv(U)$.
\end{proof}

\begin{corollary}\label{cor:BDdrop}
Suppose $\mu\in\kM_x(\cU)$ is Borel definable, and invariant over $A\subset\cU$. Then $\mu$ is Borel definable over $A$. 
\end{corollary}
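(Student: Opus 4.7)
The plan is to reduce the claim to a direct application of Theorem \ref{thm:HolSpur}, using the factorization identity $F^\phi_{\mu,B} = F^\phi_{\mu,A} \circ \rho^y_{B,A}$ that was recorded just before the definition of Borel definability.

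First I would fix a small set $B \subset \cU$ witnessing Borel definability of $\mu$, and then replace $B$ by $B \cup A$. This is legitimate since the text already notes that Borel definability is preserved under passing to larger parameter sets. Thus we may assume $B \supseteq A$, so that the restriction map $\rho^y_{B,A}\colon S_y(B)\to S_y(A)$ is defined, continuous, and surjective between compact Hausdorff spaces. Note here it is crucial that $\mu$ is $A$-invariant to begin with, since otherwise the function $F^\phi_{\mu,A}$ would not even be well-defined.

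Next, fix an $\cL$-formula $\phi(x,y)$. Since $\phi$ has no parameters, the identity from the preamble gives $F^\phi_{\mu,B} = F^\phi_{\mu,A} \circ \rho^y_{B,A}$. The left-hand side is Borel because $\mu$ is Borel definable over $B$. Now apply the second assertion of Theorem \ref{thm:HolSpur} to the continuous surjection $\rho^y_{B,A}$: the composition $F^\phi_{\mu,A}\circ \rho^y_{B,A}$ is Borel, so $F^\phi_{\mu,A}$ itself must be Borel. Since $\phi$ was arbitrary, $\mu$ is Borel definable over $A$.

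There is essentially no obstacle here; the whole corollary is a one-line consequence of the Holick\'y--Spurn\'y theorem once one observes the factorization through $\rho^y_{B,A}$. The only thing worth double-checking is that the enlargement from $B$ to $B\cup A$ is permitted by the saturation and homogeneity conventions on small sets, but this is immediate from the convention that $\cU$ is sufficiently saturated relative to any named small set.
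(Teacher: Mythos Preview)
Your proof is correct and follows essentially the same approach as the paper: both use the factorization $F^\phi_{\mu,B}=F^\phi_{\mu,A}\circ\rho^y_{B,A}$ (after enlarging $B$ to contain $A$) and then appeal to Theorem~\ref{thm:HolSpur}. The paper's version is simply a one-sentence compression of exactly what you wrote.
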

\begin{proof}
This follows from Theorem \ref{thm:HolSpur} since if $\mu$ is Borel definable over $B\supseteq A$ then, for any $\cL$-formula $\phi(x,y)$, $F^\phi_{\mu,B}=F^\phi_{\mu,A}\circ\rho^y_{B,A}$. 
\end{proof}

\begin{remark}\label{rem:BDdrop}
Despite the simplicity of the proof, Corollary \ref{cor:BDdrop} does not appear in previous literature, possibly due to the use of \cite{HolSpur}. On the other hand,  the analogue of this corollary for \emph{definable} measures (which are discussed in Section \ref{sec:definable}) is well known and follows from a similar proof. Indeed, if $\mu$ is definable over $B$ and invariant over $A\seq B$, then $F^\phi_{\mu,B}$ is continuous and thus $F^\phi_{\mu,A}$ is continuous by the universal property of quotient maps. It follows that $\mu$ is definable over $A$. 
\end{remark}

Our next goal is to redefine $F^\phi_{\mu,A}$ with an arbitrary Borel set $W(x,y)\seq S_{xy}(A)$ in place of $\phi(x,y)$.  The underlying idea is quite natural. We will `plug in' a parameter $b$ for the $y$ variables, and apply the measure $\mu$. This perspective of treating Borel sets like formulas crops up in the literature, though often informally. We will see that while some techniques pass from formulas to Borel sets without any issues, there are certain places where things can go wrong. These subtleties will eventually lead to examples where Borel definable measures fail to be closed under Morley products, and where associativity of the Morley product fails. For this reason, we will proceed carefully with the next few definitions and basic observations, so as to ensure a solid foundation for the passage from formulas to Borel sets. 

\begin{definition}
Given $A\subset\cU$, we say that a set $W\seq  S_x(\cU)$ is \textbf{$\rho^x_A$-invariant} if membership in $W$ depends only on $\rho^x_A$, i.e., $W=(\rho^x_A)\inv(\rho^x_A(W))$. 
\end{definition}

\begin{remark}\label{rem:pushBorel}
If $W\seq S_x(\cU)$ is Borel and $\rho^x_A$-invariant for some $A\subset\cU$, then $\rho^x_A(W)$ is a Borel set in $S_x(A)$ by Theorem \ref{thm:HolSpur}.
\end{remark}

\begin{definition}
Suppose $A\subset\cU$ and $W\seq S_{xy}(A)$. Given $b\in \cU^y$, we define
\[
W(x,b)=\{p\in S_x(\cU):\tp(a,b/A)\in W\text{ for some/any }a\models p|_{Ab}\}.
\]
Note that $W(x,b)$ is $\rho^x_{Ab}$-invariant.
\end{definition}

\begin{lemma}\label{lem:Borel-fiber}
Suppose $A\subset\cU$ and $W\seq S_{xy}(A)$ is Borel.
\begin{enumerate}[$(a)$]
\item If  $b\in \cU^y$ then $W(x,b)$ is a Borel subset of $S_x(\cU)$.
\item If $\mu\in\kM_x(\cU)$ is $A$-invariant, and $b,b'\in\cU^y$ with $b\equiv_A b'$, then $\mu(W(x,b))=\mu(W(x,b'))$.
\end{enumerate}
\end{lemma}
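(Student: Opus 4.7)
The plan is to prove both parts by constructing, for each $b\in\cU^y$, a continuous map $f_b\colon S_x(\cU)\to S_{xy}(A)$ whose preimage of a Borel set recovers the fiber $W(x,b)$.

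For part (a), fix $b\in\cU^y$ and define a ``substitute $b$ for $y$'' map $g_b\colon S_x(\cU)\to S_{xy}(\cU)$ by
\[
g_b(p)\coloneqq\{\phi(x,y)\text{ an }\cL_\cU\text{-formula}:\phi(x,b)\in p\}.
\]
A routine check shows $g_b(p)$ is a complete consistent type: for any $\phi(x,y)$ over $\cU$, either $\phi(x,b)\in p$ or $\neg\phi(x,b)\in p$, and any finite conjunction in $g_b(p)$ is satisfied by $(a,b)$ for $a\models p$. Moreover $g_b$ is continuous because $g_b\inv([\phi(x,y)])=[\phi(x,b)]$ is clopen. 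Setting $f_b\coloneqq\rho^{xy}_{\cU,A}\circ g_b$, I then verify that if $a\models p|_{Ab}$ and $\phi(x,y)$ is an $\cL_A$-formula, then $\phi(x,y)\in f_b(p)$ iff $\phi(x,b)\in p$ iff $\models\phi(a,b)$ iff $\phi(x,y)\in\tp(a,b/A)$; so $f_b(p)=\tp(a,b/A)$. Hence $W(x,b)=f_b\inv(W)$, which is Borel as the continuous preimage of a Borel set. Note that this argument uses only the elementary fact that preimages of Borel sets under continuous maps are Borel; the deeper Theorem \ref{thm:HolSpur} is not needed here.

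For part (b), pick $\sigma\in\Aut(\cU/A)$ with $\sigma(b)=b'$, which exists by strong $|A|^+$-homogeneity of $\cU$. This $\sigma$ induces a homeomorphism of $S_x(\cU)$ via $\sigma\cdot p=\{\phi(x,\sigma(c)):\phi(x,c)\in p\}$. I will first check that $\sigma(W(x,b))=W(x,b')$: if $a\models p|_{Ab}$ then $\sigma(a)\models\sigma(p)|_{Ab'}$ since $\sigma$ fixes $A$ and sends $b$ to $b'$, and $\tp(\sigma(a),b'/A)=\tp(a,b/A)$ because $\sigma\!\upharpoonright\!A=\mathrm{id}$. Applying the same argument to $\sigma\inv$ gives the reverse inclusion.

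It then remains to upgrade the formula-level $A$-invariance of $\mu$ to arbitrary Borel sets. For this, the pushforward $\sigma_{\ast}\mu$ is a regular Borel probability measure on $S_x(\cU)$ (pushforwards preserve regularity in this setting), and on each clopen $[\phi(x,c)]$ it agrees with $\mu$ since $\sigma_{\ast}\mu([\phi(x,c)])=\mu([\phi(x,\sigma\inv(c))])=\mu([\phi(x,c)])$ by $A$-invariance. Fact \ref{fact:basicBorel}$(b)$ therefore forces $\sigma_{\ast}\mu=\mu$, so $\mu(W(x,b))=\mu(\sigma(W(x,b)))=\mu(W(x,b'))$. The only place real care is needed is in part (a), where one must correctly define $f_b$ and verify that $f_b(p)=\tp(a,b/A)$; part (b) is then a clean combination of the explicit automorphism-equivariance of $W(x,b)$ with the uniqueness part of Fact \ref{fact:basicBorel}.
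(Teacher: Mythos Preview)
Your proof is correct. For part~$(b)$ your argument is essentially identical to the paper's: pick $\sigma\in\Aut(\cU/A)$ with $\sigma(b)=b'$, observe that $A$-invariance of $\mu$ on clopen sets upgrades to all Borel sets via Fact~\ref{fact:basicBorel}$(b)$, and conclude. You spell out the equality $\sigma(W(x,b))=W(x,b')$ more explicitly than the paper does, but the content is the same.

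For part~$(a)$ your approach is genuinely different and more elementary. The paper's written proof goes in the opposite direction: it works with the surjection $\tau\colon X\to S_x(Ab)$ (where $X\subseteq S_{xy}(A)$ is the closed set of types consistent with $y=b$), observes that $\tau^{-1}(\tau(W\cap X))=W\cap X$, and then invokes the Holick\'{y}--Spurn\'{y} theorem (Theorem~\ref{thm:HolSpur}) to conclude that the \emph{image} $\tau(W\cap X)$ is Borel. Your argument avoids this entirely by constructing a continuous map $f_b\colon S_x(\cU)\to S_{xy}(A)$ in the forward direction and realizing $W(x,b)$ as a \emph{preimage} $f_b^{-1}(W)$, which is Borel for free. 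This is also distinct from the ``induction on $\boldsymbol{\Sigma}$-complexity'' route the paper mentions in passing. The payoff of your approach is that it needs nothing beyond the trivial fact that continuous preimages of Borel sets are Borel; the paper's argument, while short, imports a nontrivial result about images under quotient maps.
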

\begin{proof}
Both parts can be proved directly by induction on the $\boldsymbol{\Sigma}$-complexity of $W$, using only elementary steps. We will sketch alternative `high-level' arguments. For part $(a)$, fix $b\in\cU^y$ and let $X=\{q\in S_{xy}(A):q(x,b)\text{ is consistent}\}$, which is closed in $S_{xy}(A)$. Set $\tau\colon X\to S_x(Ab)$ such that $\tau(q)=q(x,b)$. Then $\tau$ is surjective and continuous, and $\tau\inv(\tau(W\cap X))=W\cap X$. So $\tau(W\cap X)$ is Borel by Theorem \ref{thm:HolSpur}. Thus $W(x,b)=(\rho^x_{Ab})\inv(\tau(W\cap X))$ is Borel.

For part $(b)$, suppose we have $b,b'\in\cU^y$ and $\sigma\in\Aut(\cU/A)$ such that $\sigma(b)=b'$. Then $\sigma$ induces a homeomorphism of $S_x(\cU)$, which yields a regular Borel measure $\nu\coloneqq \mu\sigma$ on $S_x(\cU)$. Since $\mu$ is $A$-invariant, it agrees with $\nu$ on clopen sets, and thus $\mu=\nu$ by Fact \ref{fact:basicBorel}$(b)$. So $\mu(W(x,b'))=\nu(W(x,b))=\mu(W(x,b))$. 
\end{proof}

\begin{definition}
Suppose $\mu\in\kM_x(\cU)$ is invariant over $A\subset\cU$, and $W\seq S_{xy}(A)$ is Borel. Define $F^W_{\mu,A}\colon S_y(A)\to [0,1]$ such that $F^W_{\mu,A}(q)=\mu(W(x,b))$ where $b\models q$.
\end{definition}

Note that $F^W_{\mu,A}$ is well-defined by Lemma \ref{lem:Borel-fiber}. We also note that if $W$ is the \emph{clopen} set determined by some $\cL_A$-formula $\phi(x,y)$, then $F^W_{\mu,A}$ coincides with $F^\phi_{\mu,A}$. 

\subsection{Products and associativity}

In this subsection, we formulate some ad hoc conditions on Borel definable measures that allow one to prove preservation under Morley products and associativity. In the next subsection, we will see that these conditions hold over countable sets. We start with some motivation.

Consider a measure $\mu\in\kM_x(\cU)$ that is Borel definable over some $A\subset \cU$. Then maps of the form $F^W_{\mu,A}$ are Borel for any \emph{clopen} set $W\seq S_{xy}(A)$. But we will eventually see that this is not enough to ensure $F^W_{\mu,A}$ is Borel for general Borel sets $W$. To obtain this, one needs to further assume that $F^U_{\mu,A}$ is Borel for any \emph{open} $U\seq S_{xy}(A)$ (see Lemma \ref{lem:SBDall}). We will show that this assumption suffices to address preservation of Borel definability in Morley products. The issue of associativity, however, requires consideration of further subtleties. In particular, with $\mu$ as above, suppose we have some fixed open set $U\seq S_{xy}(A)$ such that $F^U_{\mu,A}$ is Borel. Then given some $\nu\in\kM_y(\cU)$, we have a well-defined integral $\int_{S_y(A)}F^U_\mu\,d\nu$. On other hand, Fact \ref{fact:basicBorel}$(a)$ gives an explicit expression for $(\mu\otimes\nu)|_A(U)$ which, as we will see in later examples, need not be the same as the previous integral (note that if $U$ is clopen then we do have such an equality by definition of the Morley product). Altogether, this discussion motivates the following definition.

\begin{definition}
Suppose $\mu\in\kM_x(\cU)$ is invariant over $A\subset\cU$.  We say $\mu$ is \textbf{BD$^{\text{+}}$} over $A$ if, for any $y$ and any open $U\seq S_{xy}(A)$,  the map $F^U_{\mu,A}$ is Borel. Moreover, we say $\mu$ is \textbf{BD$^{\text{++}}$} over $A$ if it is BD$^{+}$ over $A$ and, for any $y$, any open $U\seq S_{xy}(A)$, and any $\nu\in\kM_y(\cU)$, we have  $(\mu\otimes\nu)|_A(U)=\int_{S_y(A)}F^U_{\mu}\,d\nu$. 
\end{definition}

Next we show that the defining properties of BD$^{+}$ and BD$^{++}$ extend automatically from open sets to arbitrary Borel sets. For BD$^{+}$, this boils down to the fact that pointwise limits of Borel functions are Borel. For BD$^{++}$ we will apply the Dominated Convergence Theorem \cite[Theorem 2.4.5]{Cohn}.

\begin{lemma}\label{lem:SBDall}
Suppose $\mu\in\kM_x(\cU)$ is BD$^{+}$ over $A\subset\cU$, and $W\seq S_{xy}(A)$ is Borel. Then $F^W_{\mu,A}$ is Borel. Moreover, if $\mu$ is BD$^{++}$ over $A$ then, for any $\nu\in\kM_y(\cU)$, we have $(\mu\otimes\nu)|_A(W)=\int_{S_y(A)}F^W_{\mu}\, d\nu$.
\end{lemma}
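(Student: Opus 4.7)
The plan is to prove both parts by the Dynkin $\pi$-$\lambda$ theorem, using the open sets as the $\pi$-system. Since the Borel $\sigma$-algebra of $S_{xy}(A)$ is generated by the open sets, which are closed under finite intersection, it will suffice to verify that the sets satisfying the conclusion form a $\lambda$-system containing all open sets, the latter being provided directly by the BD$^{+}$ (resp.\ BD$^{++}$) hypothesis.

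Before starting, I would record the elementary fact that the assignment $W \mapsto W(x,b)$ commutes with Boolean operations: directly from the definition, $W^c(x,b) = (W(x,b))^c$ and if $(W_n)$ are pairwise disjoint then $(\bigsqcup_n W_n)(x,b) = \bigsqcup_n W_n(x,b)$. By Lemma \ref{lem:Borel-fiber}$(a)$, each such set is a Borel subset of $S_x(\cU)$, so we may apply $\mu$ to it.

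For the first claim, let $\cL_1$ be the collection of Borel $W \subseteq S_{xy}(A)$ such that $F^W_{\mu,A}$ is Borel. Then $\cL_1$ contains $S_{xy}(A)$ (constant function $1$); it is closed under complements, since $F^{W^c}_{\mu,A} = 1 - F^W_{\mu,A}$; and it is closed under countable disjoint unions, since countable additivity of $\mu$ gives $F^{\bigsqcup_n W_n}_{\mu,A} = \sum_n F^{W_n}_{\mu,A}$, a pointwise series of nonnegative Borel functions, hence Borel. Since $\cL_1$ contains the open sets by BD$^{+}$, Dynkin's theorem yields $\cL_1 \supseteq$ Borel, as desired.

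For the moreover claim, let $\cL_2$ consist of those Borel $W$ for which $(\mu\otimes\nu)|_A(W) = \int_{S_y(A)} F^W_\mu \, d\nu$ (this integral makes sense by the first part). Again $\cL_2$ contains $S_{xy}(A)$; closure under complements follows from $F^{W^c}_{\mu,A} = 1 - F^W_{\mu,A}$ together with $(\mu\otimes\nu)|_A(W^c) = 1 - (\mu\otimes\nu)|_A(W)$; and closure under countable disjoint unions follows by applying $\sigma$-additivity of $(\mu\otimes\nu)|_A$ on one side and the Monotone Convergence Theorem (on the partial sums $\sum_{n \le N} F^{W_n}_\mu$, which are bounded by $1$) on the other. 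BD$^{++}$ supplies the open sets, and Dynkin again concludes. The only mildly delicate point is ensuring that the integrand is measurable at each step, which is exactly what the first claim guarantees; beyond that, the argument is routine measure theory.
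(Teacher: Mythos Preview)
Your proof is correct and takes a genuinely different organizational route from the paper's. The paper proceeds by transfinite induction on the $\boldsymbol{\Sigma}^0_\alpha$-complexity of $W$: at each stage one writes a $\boldsymbol{\Sigma}^0_\alpha$ set as an increasing union of complements of lower-rank sets (using that each $\boldsymbol{\Pi}^0_\beta$ class is closed under finite unions), and then invokes pointwise limits of Borel functions for the first claim and Dominated Convergence for the moreover clause. Your argument replaces this hierarchy induction with the Dynkin $\pi$-$\lambda$ theorem, using open sets as the $\pi$-system and verifying $\lambda$-closure via complements and countable \emph{disjoint} unions (with Monotone Convergence in place of Dominated Convergence). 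Your version is arguably cleaner: it avoids the transfinite bookkeeping and the need to invoke structural facts about the Borel hierarchy, at the cost of importing Dynkin's theorem as a black box. The paper's version is more self-contained and slightly more explicit about how the Borel rank of $F^W_{\mu,A}$ relates to that of $W$, which could be useful if one wanted finer control.
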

\begin{proof}
We proceed by induction on the $\boldsymbol{\Sigma}$-complexity of $W$. The base case when $W$ is open holds by assumption. So fix $1<\alpha<\omega_1$ and assume the result for $\boldsymbol{\Sigma}^0_\beta$ subsets of $S_{xy}(A)$ for all $\beta<\alpha$. Suppose $W$ is a $\boldsymbol{\Sigma}^0_{\alpha}$ set. Then, for $i<\omega$, we have $W_i\in\boldsymbol{\Sigma}^0_{\alpha_i}$ for some $\alpha_i<\alpha$, such that $W=\bigcup_{i<\omega}\neg W_i$. Since each $\boldsymbol{\Pi}^0_\beta$ class is closed under finite unions (see \cite[Theorem 2.1]{MillerDST}), we may assume without loss of generality that $\neg W_i\seq\neg W_{i+1}$ for all $i<\omega$. Given $q\in S_y(A)$ and $b\models q$, we have
\begin{multline*}
F^B_{\mu,A}(q)=\mu(W(x,b))=\lim_{i\to\infty}\mu((\neg W_i)(x,b))\\
=\lim_{i\to\infty}(1-\mu(W_i(x,b)))=\lim_{i\to\infty}(1-F^{W_i}_{\mu,A}(q)).
\end{multline*}
By induction, $F^W_{\mu,A}$ is a pointwise limit  of Borel functions, and thus is Borel. Moreover, if $\mu$ is BD$^{++}$ over $A$, then, by induction and the Dominated Convergence Theorem, we have
\begin{multline*}
(\mu\otimes\nu)|_A(W)=\lim_{i\to\infty}(\mu\otimes\nu)|_A(\neg W_i)=1-\lim_{i\to\infty}\int_{S_y(A)}F_{\mu}^{W_i}\, d\nu\\
=\int_{S_y(A)}\lim_{i\to\infty}(1-F_{\mu}^{W_i})\, d\nu=\int_{S_y(A)}F^W_{\mu}\, d\nu.\qedhere
\end{multline*}
\end{proof}

We can now prove the main result concerning BD$^{+}$ and BD$^{++}$.

\begin{theorem}\label{thm:SBDmain}
Suppose $\mu\in\kM_x(\cU)$ is Borel definable over $A\subset\cU$. 
\begin{enumerate}[$(a)$]
\item  If $\nu\in\kM_y(\cU)$ is BD$^{+}$ over $A$, then $\mu\otimes\nu$ is Borel definable over $A$. 
\item If $\nu\in\kM_y(\cU)$ is BD$^{++}$ over $A$, then $((\mu\otimes\nu)\otimes\lambda)|_A=(\mu\otimes(\nu\otimes\lambda))|_A$ for any $\lambda\in\kM_z(\cU)$.
\end{enumerate}
\end{theorem}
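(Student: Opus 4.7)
The plan is to derive both parts from a single core calculation that expresses $F^\phi_{\mu\otimes\nu,A}(r)$ as an integral of $F^\phi_{\mu,A}$ against a Borel probability measure $\nu_r$ on $S_{yz}(A)$ built from $\nu$ and satisfying $\nu_r(V)=F^V_{\nu,A}(r)$ for every Borel $V\seq S_{yz}(A)$. Fix an $\cL$-formula $\phi(x,y,z)$, $r\in S_z(A)$, and $c\models r$ in $\cU$. The map $\iota_c\colon S_y(Ac)\to S_{yz}(A)$ sending $\tp(b/Ac)$ to $\tp(bc/A)$ is continuous, and the integrand $\tp(b/Ac)\mapsto\mu(\phi(x,b,c))$ used in the definition of $(\mu\otimes\nu)(\phi(x,y,c))$ factors as $F^\phi_{\mu,A}\circ\iota_c$, where we view $\phi$ as having object variable $x$ and parameter variable $yz$ (so $F^\phi_{\mu,A}$ is Borel by Borel definability of $\mu$ over $A$). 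Setting $\nu_r\coloneqq(\iota_c)_*\nu|_{Ac}$ and applying change of variables,
\begin{equation*}
F^\phi_{\mu\otimes\nu,A}(r)=\int_{S_{yz}(A)}F^\phi_{\mu,A}\,d\nu_r.
\end{equation*}
Using $A$-invariance of $\nu$ and Remark \ref{rem:push} (noting that $V(y,c)$ is $Ac$-invariant and unwinding its definition), a short computation shows that $\nu_r$ depends only on $r$ and that $\nu_r(V)=F^V_{\nu,A}(r)$ for every Borel $V\seq S_{yz}(A)$.

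For (a), approximate the $[0,1]$-valued Borel function $F^\phi_{\mu,A}$ by simple functions $h_n=\sum_i \alpha_{n,i}\,1_{V_{n,i}}$ with each $V_{n,i}$ Borel in $S_{yz}(A)$. Then
\[
r\mapsto \int h_n\,d\nu_r=\sum_i\alpha_{n,i}\,F^{V_{n,i}}_{\nu,A}(r)
\]
is Borel by Lemma \ref{lem:SBDall} applied to $\nu$, which is BD$^{+}$ over $A$. Dominated Convergence yields $\int h_n\,d\nu_r\to F^\phi_{\mu\otimes\nu,A}(r)$ pointwise in $r$, so $F^\phi_{\mu\otimes\nu,A}$ is a pointwise limit of Borel functions, hence Borel. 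Invariance of $\mu\otimes\nu$ over $A$ follows from Fact \ref{fact:basicinv}, completing Borel definability over $A$.

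For (b), part (a) gives that $\mu\otimes\nu$ is Borel definable over $A$, and since BD$^{++}$ implies BD$^{+}$, which in turn implies Borel definability, $\nu\otimes\lambda$ is also defined. By Fact \ref{fact:basicBorel}$(b)$, it suffices to check agreement of the two restrictions on $\cL_A$-formulas $\phi(x,y,z)$. For such $\phi$,
\begin{align*}
((\mu\otimes\nu)\otimes\lambda)(\phi) &=\int_{S_z(A)} F^\phi_{\mu\otimes\nu,A}\,d\lambda=\int_{S_z(A)}\!\!\int_{S_{yz}(A)} F^\phi_{\mu,A}\,d\nu_r\,d\lambda(r),\\
(\mu\otimes(\nu\otimes\lambda))(\phi) &=\int_{S_{yz}(A)} F^\phi_{\mu,A}\,d(\nu\otimes\lambda)|_A.
\end{align*}
The BD$^{++}$ hypothesis on $\nu$, via Lemma \ref{lem:SBDall} together with the identity $\nu_r(V)=F^V_{\nu,A}(r)$, yields $(\nu\otimes\lambda)|_A(V)=\int_{S_z(A)}\nu_r(V)\,d\lambda(r)$ for every Borel $V\seq S_{yz}(A)$. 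Approximating the bounded Borel integrand $F^\phi_{\mu,A}$ by simple functions and invoking Dominated Convergence upgrades this Borel-set identity to the desired equality of iterated integrals.

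The main technical content is the identification $\nu_r(V)=F^V_{\nu,A}(r)$, which translates between the "abstract" Borel-definability data of $\nu$ and the "concrete" pushforward measure $\nu_r$; its verification uses $A$-invariance of $\nu$ essentially (to show independence from the choice of $c\models r$ and to apply Remark \ref{rem:push}), together with Lemma \ref{lem:Borel-fiber}. Once this identity is established, both (a) and (b) reduce to routine measure-theoretic approximations, with BD$^{+}$ (resp.\ BD$^{++}$) used exactly to promote the equation from clopen sets to arbitrary Borel sets at the level of measurability (resp.\ integration) via Lemma \ref{lem:SBDall}.
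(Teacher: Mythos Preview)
Your proposal is correct and follows essentially the same approach as the paper's proof: both approximate $F^\phi_{\mu,A}$ by simple Borel functions on $S_{yz}(A)$, invoke Lemma~\ref{lem:SBDall} to handle the resulting Borel sets via $\nu$, and apply Dominated Convergence to pass to the limit. Your introduction of the pushforward measure $\nu_r=(\iota_c)_*\nu|_{Ac}$ and the identity $\nu_r(V)=F^V_{\nu,A}(r)$ is a slightly cleaner repackaging of the paper's more hands-on identification $\nu|_{Ac}(W^c_{n,i})=F^{W_{n,i}}_{\nu,A}(r)$ (where $W^c_{n,i}=\rho^y_{Ac}(W_{n,i}(y,c))$), but the underlying computations are identical.
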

\begin{proof}
Before proving the two statements, we will develop some preliminaries.
Fix an $\cL_A$-formula $\phi(x,y,z)$. Then $F^\phi_{\mu,A}\colon S_{yz}(A)\to[0,1]$ is Borel, and so there is a sequence $(f_n)_{n=0}^\infty$ of simple Borel functions on $S_{yz}(A)$ converging pointwise to $F^\phi_{\mu,A}$. (See Fact \ref{fact:Bapprox}; in fact, this convergence can be made uniform, but we will work with pointwise convergence in preparation for Remark \ref{rem:Baire}.) For $n\geq 0$, write $f_n=\sum_{i=1}^{m_n}\alpha_{n,i}\boldsymbol{1}_{W_{n,i}}$ where $W_{n,i}\seq S_{yz}(A)$ is Borel and $\alpha_{n,i}\in [0,1]$. 

Given $c\in\cU^z$, we set the following notation.  Let $W^c_{n,i}\coloneqq \rho^y_{Ac}(W_{n,i}(y,c))$, which is a Borel subset of $S_y(Ac)$ by Remark \ref{rem:pushBorel}. Define the map $f^c_n=\sum_{i=1}^{m_n}\alpha_{n,i}\boldsymbol{1}_{W^c_{n,i}}$ on $S_y(Ac)$. Finally, let $\phi_c(x,y)$ denote $\phi(x,y,c)$. \medskip

\noindent\textit{Claim 1:} Fix $c\in\cU^z$. Then $(f^c_n)_{n=0}^\infty$ converges pointwise to  $F^{\phi_c}_{\mu,Ac}$. 

\noindent\textit{Proof:} Fix $q\in S_y(Ac)$, and let  $s=\tp(b,c/A)$ where $b\models q$. Then for any $i\le m_n$, we have $s\in W_{n,i}$ if and only if $q\in W^c_{n,i}$. It follows that $f_n(s)=f^c_n(q)$ for any $n\geq 0$. Therefore
\reqnomode
\begin{equation*}
F^{\phi_c}_{\mu,Ac}(q)=\mu(\phi(x,b,c))=F^\phi_{\mu,A}(s)=\lim_{n\to\infty} f_n(s)=\lim_{n\to\infty} f^c_n(q).\tag*{\claim}
\end{equation*}

Now fix some $A$-invariant measure $\nu\in\kM_y(\cU)$. Given $n\geq 0$, define the function $h_n=\sum_{i=1}^{m_n}\alpha_{n,i}F^{W_{n,i}}_{\nu,A}$ on $S_z(A)$. 
\medskip

\noindent\textit{Claim 2:} $(h_n)_{n=0}^\infty$ converges pointwise to $F^{\phi}_{\mu\otimes\nu,A}$.  

\noindent\textit{Proof:}  Fix $r\in S_z(A)$, and let $c\models r$. Then $\nu|_{Ac}(W^c_{n,i})=F^{W_{n,i}}_{\nu,A}(r)$ for any $n\geq 0$ and $i\leq m_n$. So for any $n\geq 0$, we have
\[
h_n(r)=\sum_{i=1}^{m_n}\alpha_{n,i} F^{W_{n,i}}_{\nu,A}(r)=\sum_{i=1}^{m_n}\alpha_{n,i}\nu|_{Ac}(W^c_{n,i})= \int_{S_y(Ac)}f^c_n\,d\nu.
\]
Therefore, by Claim 1 and the Dominated Convergence Theorem, we have
\begin{equation*}
F^{\phi}_{\mu\otimes\nu,A}(r)=\int_{S_y(Ac)}F^{\phi_c}_\mu\,d\nu=\lim_{n\to\infty} \int_{S_y(Ac)}f^c_n\,d\nu=\lim_{n\to\infty}h_n(r).\tag*{\claim}
\end{equation*}

We can now prove the theorem. For part $(a)$,  suppose $\nu\in\kM_y(\cU)$ is BD$^{+}$ over $A$. Then each function $h_n$ above is Borel by Lemma \ref{lem:SBDall}. So $F^{\phi}_{\mu\otimes\nu,A}$ is a pointwise limit of Borel functions by Claim 2, and thus is Borel. Since $\phi(x,y,z)$ is an arbitrary $\cL_A$-formula, we have that $\mu\otimes\nu$ is Borel definable over $A$.

Finally, for part $(b)$, suppose $\nu\in\kM_y(\cU)$ is BD$^{++}$ over $A$. Fix some $\lambda\in\kM_z(\cU)$. Then for any $n\geq 0$ and $i\leq m_n$, we have $(\nu\otimes\lambda)|_A(W_{n,i})=\int_{S_z(A)}F^{W_{n,i}}_{\nu}\,d\lambda$ by Lemma \ref{lem:SBDall}. Therefore
\begin{multline*}
(\mu\otimes(\nu\otimes\lambda))(\phi(x,y,z))=\int_{S_{yz}(A)}F^\phi_{\mu}\,  d(\nu\otimes\lambda) =\lim_{n\to\infty}\int_{S_{yz}(A)}f_n\, d(\nu\otimes\lambda)\\
=\lim_{n\to\infty}\sum_{i=1}^{m_n}\alpha_{n,i}(\nu\otimes\lambda)(W_{n,i})=\lim_{n\to\infty}\sum_{i=1}^{m_n}\alpha_{n,i}\int_{S_z(A)}F^{W_{n,i}}_{\nu}\,d\lambda\\
= \lim_{n\to\infty}\int_{S_z(A)}h_n\,d\lambda = \int_{S_z(A)}F^\phi_{\mu\otimes\nu}\,d\lambda=((\mu\otimes\nu)\otimes\lambda)(\phi(x,y,z)).
\end{multline*}
Note that the second and sixth equalities again use dominated convergence. 
Since $\phi(x,y,z)$ is an arbitrary $\cL_A$-formula, we have  $((\mu\otimes\nu)\otimes\lambda)|_A=(\mu\otimes(\nu\otimes\lambda))|_A$.
\end{proof}

\subsection{Countable sets}
Next we show that in a countable theory, Borel definability coincides with BD$^{++}$ over countable parameter sets. This is another straightforward application of dominated convergence (similar to Lemma \ref{lem:SBDall}).

\begin{lemma}\label{lem:BDcountable}
Assume $T$ is countable, and suppose $\mu\in\kM_x(\cU)$ is Borel definable over a countable set $A\subset\cU$. Then $\mu$ is BD$^{++}$ over $A$.
\end{lemma}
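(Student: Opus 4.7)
The plan is to exploit countability of $T$ and $A$ to reduce everything about open sets to sequential approximations by $\cL_A$-formulas. Since $T$ and $A$ are countable, there are only countably many $\cL_A$-formulas $\phi(x,y)$ up to equivalence, so $S_{xy}(A)$ is second-countable and the clopen sets $[\phi(x,y)]$ form a countable basis. Hence every open set $U\seq S_{xy}(A)$ can be written as $U=\bigcup_{n<\omega}[\phi_n(x,y)]$ for some $\cL_A$-formulas $\phi_n(x,y)$, and after replacing $\phi_n$ with $\phi_0\vee\cdots\vee\phi_n$ we may assume this union is increasing.

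For BD$^{+}$, I first check that $U(x,b)=\bigcup_{n<\omega}[\phi_n(x,b)]$ as an increasing union in $S_x(\cU)$ for any $b\in\cU^y$: a type $p\in S_x(\cU)$ lies in $U(x,b)$ iff for some (equivalently, every) $a\models p|_{Ab}$ we have $\tp(a,b/A)\in U$, iff $\models\phi_n(a,b)$ for some $n$, iff $p\in[\phi_n(x,b)]$ for some $n$. Continuity from below of $\mu$ then gives $F^U_{\mu,A}(q)=\lim_{n\to\infty}F^{\phi_n}_{\mu,A}(q)$ for all $q\in S_y(A)$. Since $\mu$ is Borel definable over $A$, each $F^{\phi_n}_{\mu,A}$ is Borel, and therefore so is its pointwise limit $F^U_{\mu,A}$.

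For the additional BD$^{++}$ clause, fix $\nu\in\kM_y(\cU)$ and write $U=\bigcup_{n<\omega}[\phi_n]$ as above. By countable additivity of $(\mu\otimes\nu)|_A$ applied to the increasing union of clopen sets, together with the definition of the Morley product,
\[
(\mu\otimes\nu)|_A(U)=\lim_{n\to\infty}(\mu\otimes\nu)(\phi_n)=\lim_{n\to\infty}\int_{S_y(A)}F^{\phi_n}_{\mu}\,d\nu.
\]
Since each $F^{\phi_n}_{\mu}$ is bounded by $1$ and converges pointwise to $F^U_{\mu}$, the Dominated Convergence Theorem lets us pull the limit inside the integral, yielding $(\mu\otimes\nu)|_A(U)=\int_{S_y(A)}F^U_{\mu}\,d\nu$.

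There is no real obstacle here beyond recognizing what countability buys us: the only nontrivial tools are pointwise limits of Borel functions being Borel, and Dominated Convergence, both of which are inherently sequential. The conceptual point worth flagging (in preparation for Section \ref{sec:badBD}) is that countability of $\cL_A$ is precisely what makes the passage from formulas to open sets routine; for uncountable $A$, an open set is only a directed union of clopens over a potentially uncountable directed poset, so these sequential techniques no longer apply, and that is where BD$^{+}$ and BD$^{++}$ can fail.
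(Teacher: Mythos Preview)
Your proof is correct and follows essentially the same approach as the paper's own argument: write the open set as an increasing countable union of clopens (using countability of $T$ and $A$), get $F^U_{\mu,A}$ as a pointwise limit of the Borel functions $F^{\phi_n}_{\mu,A}$, and then apply dominated convergence for the BD$^{++}$ clause. Your explicit verification that $U(x,b)=\bigcup_n[\phi_n(x,b)]$ is a detail the paper leaves implicit, and your closing remark about why countability is essential is a useful gloss that the paper makes elsewhere.
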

\begin{proof}
Fix an open set $U\seq S_{xy}(A)$. Since $T$ and $A$ are countable, we can write $U=\bigcup_{n<\omega}[\phi_n(x,y)]$, where each $\phi_n(x,y)$ is an $\cL_A$-formula and $\varphi_{n}(\cU^{xy}) \seq \varphi_{n + 1}(\cU^{xy})$ for all $n<\omega$. Given $q\in S_y(A)$ and $b\models q$, we have 
\[
F^U_{\mu,A}(q)=\mu(U(x,b))=\lim_{n \to \infty}\mu(\phi_n(x,b))=\lim_{n \to \infty }F^{\phi_n}_{\mu,A}(q).
\]
So $F^U_{\mu,A}$ is the pointwise limit of a countable sequence of Borel functions, and hence is Borel. Now fix another measure $\nu\in\kM_y(\cU)$. Then
\begin{multline*}
(\mu\otimes\nu)(U)=\lim_{n \to \infty }(\mu\otimes\nu)(\phi_n(x,y))=\lim_{n \to \infty }\int_{S_y(A)} F^{\phi_n}_{\mu}\, d\nu\\
= \int_{S_y(A)}\lim_{n \to \infty }F^{\phi_n}_{\mu}\, d\nu=\int_{S_y(A)} F^U_{\mu}\, d\nu,
\end{multline*}
where the third equality uses the Dominated Convergence Theorem.
\end{proof}

\begin{theorem}\label{thm:ctble}
Assume $T$ is countable, and suppose $\mu\in\kM_x(\cU)$ and $\nu\in\kM_y(\cU)$ are Borel definable over a countable set $A\subset\cU$. Then $\mu\otimes\nu$ is Borel definable over $A$ and, for any $\lambda\in\kM_z(\cU)$, we have $(\mu\otimes\nu)\otimes\lambda=\mu\otimes(\nu\otimes\lambda)$.
\end{theorem}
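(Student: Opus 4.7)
The plan is to obtain this theorem as a direct consequence of Lemma \ref{lem:BDcountable} and Theorem \ref{thm:SBDmain}. First I would note that Lemma \ref{lem:BDcountable} applied to $\nu$ (with the countable parameter set $A$) gives that $\nu$ is BD$^{++}$ over $A$, and in particular BD$^{+}$ over $A$. Since $\mu$ is Borel definable over $A$, Theorem \ref{thm:SBDmain}$(a)$ then immediately gives that $\mu\otimes\nu$ is Borel definable over $A$, which is the first conclusion.

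For the associativity statement, Theorem \ref{thm:SBDmain}$(b)$ (combined with $\mu$ being Borel definable over $A$ and $\nu$ being BD$^{++}$ over $A$) gives
\[
((\mu\otimes\nu)\otimes\lambda)|_A = (\mu\otimes(\nu\otimes\lambda))|_A,
\]
but this is only equality after restriction to $A$, whereas the theorem asks for equality of the two measures as elements of $\kM_{xyz}(\cU)$. To upgrade, I would exploit the robustness of Borel definability under enlarging the parameter set. Fix an arbitrary $\cL_\cU$-formula $\phi(x,y,z)$, and choose a countable $B\subset\cU$ containing the parameters of $\phi$. Set $A'=A\cup B$, which is still countable. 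Since $\mu$ and $\nu$ are Borel definable over $A$, they are also Borel definable over $A'$ (as noted in the paragraph preceding Fact \ref{fact:basicinv}), so the previous argument applies verbatim with $A'$ in place of $A$: by Lemma \ref{lem:BDcountable}, $\nu$ is BD$^{++}$ over $A'$, and by Theorem \ref{thm:SBDmain}$(b)$ we obtain $((\mu\otimes\nu)\otimes\lambda)|_{A'} = (\mu\otimes(\nu\otimes\lambda))|_{A'}$. Evaluating both sides at $\phi(x,y,z)$ (which is an $\cL_{A'}$-formula) gives the desired equality on $\phi$, and since $\phi$ was arbitrary the two global measures coincide.

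I do not expect any real obstacle here; the proof is essentially bookkeeping, combining the two preceding results. The one subtle point, and the only thing worth flagging carefully, is that Theorem \ref{thm:SBDmain}$(b)$ by itself yields only equality on $\cL_A$-formulas, so the final step requires enlarging $A$ to absorb the parameters of an arbitrary test formula and reapplying the lemmas. This enlargement is harmless precisely because countability of $T$ together with countability of $A$ keeps $A'$ countable, so Lemma \ref{lem:BDcountable} remains applicable at the new parameter set.
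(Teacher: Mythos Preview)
Your proposal is correct and follows essentially the same approach as the paper: both apply Lemma \ref{lem:BDcountable} to get $\nu$ BD$^{++}$, invoke Theorem \ref{thm:SBDmain} for Borel definability of $\mu\otimes\nu$ and for associativity restricted to $A$, and then enlarge $A$ to an arbitrary countable superset to upgrade to global equality. The only cosmetic difference is that the paper phrases the enlargement as ``any countable $B\supseteq A$'' rather than your explicit $A'=A\cup B$; note also that the parameters of a single formula are finite, so the countability of $T$ is needed only for Lemma \ref{lem:BDcountable} itself, not for keeping $A'$ countable.
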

\begin{proof}
By Theorem \ref{thm:SBDmain}$(a)$ and Lemma \ref{lem:BDcountable}, $\mu\otimes\nu$ is Borel definable over $A$. Note that $\mu$ and $\nu$ are Borel definable over any $B\supseteq A$. So $\nu$ is BD$^{++}$ over any countable $B\supseteq A$ by Lemma \ref{lem:BDcountable}. Therefore, for any $\lambda\in\kM_z(\cU)$, we have $((\mu\otimes\nu)\otimes\lambda)|_B=(\mu\otimes(\nu\otimes\lambda))|_B$ for any countable $B\supseteq A$ by Theorem \ref{thm:SBDmain}$(b)$. It follows that $(\mu\otimes\nu)\otimes\lambda=\mu\otimes(\nu\otimes\lambda)$.
\end{proof}

It is well-known that the Dominated Convergence Theorem does not hold for nets, and so the proof of Lemma \ref{lem:BDcountable} cannot be generalized to Borel definable measures over uncountable models. Indeed, in Section \ref{sec:badBD} we will give an example showing that Theorem \ref{thm:ctble} can fail without the countability assumptions.

\begin{remark}\label{rem:NIPnice}  
Theorem \ref{thm:ctble} holds for NIP theories without the countability assumptions. In particular, Hrushovski and Pillay \cite{HP} proved that if  $T$ is NIP then any invariant Keisler measure is Borel definable (see also \cite[Proposition 7.19]{Sibook}). Combined with Fact \ref{fact:basicinv}, it follows that Borel definability is preserved by Morley products in NIP theories. As for associativity of the Morley product in NIP theories, a proof sketch  is given after Exercise 7.20 in \cite{Sibook}. However, as pointed out recently by Krupi\'{n}ski,  the argument tacitly uses assumptions along the lines of BD$^{++}$ without justification. This motivated the first two authors \cite{CoGaNA}  to write a different proof of associativity in NIP theories, which uses the existence of `smooth extensions'. We will note a similar proof in Corollary \ref{cor:NIPassoc}. 
\end{remark}

\subsection{Definable measures}\label{sec:definable}
In this section we use the material developed above to prove some useful facts about \emph{definable} measures. The definition of this notion is based on  the following standard exercise (see also \cite[Proposition 2.17]{Ganthesis}).

\begin{fact}\label{fact:defdefs}
Suppose $\mu\in\kM_x(\cU)$ is invariant over $A\subset\cU$. Given  an $\cL$-formula $\phi(x,y)$, the following are equivalent.
\begin{enumerate}[$(i)$]
\item $F^\phi_{\mu,A}$ is continuous.
\item For any any $\epsilon>0$, there are $\cL_A$-formulas $\psi_1(y),\ldots,\psi_n(y)$ and real numbers $r_1,\ldots,r_n\in[0,1]$ such that $\|F^\phi_{\mu,A}-\sum_{i=1}^n r_i\boldsymbol{1}_{\psi_i}\|_\infty<\epsilon$.
\item For any $\epsilon>0$, the set $\{b\in\cU^y:\mu(\phi(x,b))\leq\epsilon\}$ is type-definable over $A$.
\end{enumerate}
\end{fact}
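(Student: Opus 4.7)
I will treat $F \coloneqq F^\phi_{\mu,A}$ as a $[0,1]$-valued function on the Stone space $S_y(A)$, using the bijection between (equivalence classes of) $\cL_A$-formulas $\psi(y)$ and clopen subsets $[\psi]\seq S_y(A)$. The plan is to establish (i) $\Leftrightarrow$ (ii) and (i) $\Leftrightarrow$ (iii), each as a chain of routine topological steps, with one genuine obstacle in the final direction.

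The equivalence (i) $\Leftrightarrow$ (ii) is a Stone-space exercise. For (ii) $\Rightarrow$ (i), each indicator $\boldsymbol{1}_{[\psi_i]}$ is continuous (since $[\psi_i]$ is clopen), so each simple function $\sum_i r_i\boldsymbol{1}_{[\psi_i]}$ is continuous; condition (ii) then writes $F$ as a uniform limit of such functions, hence $F$ is continuous. For (i) $\Rightarrow$ (ii), given $\epsilon>0$, continuity of $F$ together with compactness and zero-dimensionality of $S_y(A)$ yields a finite clopen cover on each piece of which $F$ oscillates by less than $\epsilon$; refining via Boolean combinations (using that clopen sets form a Boolean algebra) produces a clopen partition $[\psi_1],\ldots,[\psi_m]$ with the same property, and any choice $r_i \in F([\psi_i])$ completes the approximation. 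For (i) $\Rightarrow$ (iii), continuity pulls the closed set $[0,\epsilon]\seq[0,1]$ back to a closed subset of $S_y(A)$, which corresponds by Stone duality to a type-definable subset of $\cU^y$.

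The main obstacle is (iii) $\Rightarrow$ (i). Hypothesis (iii) yields that $\{q\in S_y(A):F(q)\leq\epsilon\}$ is closed for every $\epsilon>0$, which is precisely lower semicontinuity of $F$. Upgrading this to continuity requires closedness of $\{F\geq c\}$ for every $c\in[0,1]$ (upper semicontinuity). The identity $\mu(\neg\phi(x,b))=1-\mu(\phi(x,b))$ translates the needed closedness of $\{q:F(q)\geq 1-\epsilon\}$ into the statement that $\{b:\mu(\neg\phi(x,b))\leq\epsilon\}$ is type-definable over $A$, which is exactly the form of hypothesis (iii) applied to the $\cL$-formula $\neg\phi$. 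Since $\neg\phi$ is itself an $\cL$-formula, my plan is to incorporate the $\neg\phi$-instance of (iii) into the argument: combining lower semicontinuity (from (iii) for $\phi$) with upper semicontinuity (from (iii) for $\neg\phi$) gives continuity of $F$. The main subtlety is that, strictly read per-formula, (iii) for $\phi$ alone does not formally entail (iii) for $\neg\phi$; the crux of the proof will therefore be making this complementary use transparent, since in practice the fact is invoked in contexts where the full symmetric version is available (e.g., when verifying definability of $\mu$, or because one is prepared to apply the hypothesis to $\neg\phi$ as a separate $\cL$-formula).
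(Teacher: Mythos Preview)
The paper does not supply a proof of this fact; it is introduced as a ``standard exercise'' with a reference to \cite{Ganthesis}. So there is no paper argument to compare against, and I will simply assess your proposal on its merits.

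Your treatment of (i) $\Leftrightarrow$ (ii) and (i) $\Rightarrow$ (iii) is correct and is the expected Stone-space argument. Your analysis of (iii) $\Rightarrow$ (i) is also correct, including the subtlety you flag: condition (iii), read literally for a \emph{single} formula $\phi$, yields only that each sublevel set $\{F^\phi_{\mu,A}\leq\epsilon\}$ is closed, i.e., lower semicontinuity of $F^\phi_{\mu,A}$, and one does need the companion instance for $\neg\phi$ (equivalently, closedness of the superlevel sets) to upgrade to continuity. There is no hidden trick that extracts upper semicontinuity from (iii) for $\phi$ alone; a lower semicontinuous, non-continuous $[0,1]$-valued function on a Stone space is a genuine obstruction, and nothing in the measure-theoretic setup rules this out per formula.

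In practice this is harmless: the fact is only ever invoked in the paper when passing to the \emph{global} notion of definability (Definition~\ref{def:definable}), where (iii) is assumed for every $\cL$-formula, and in particular for $\neg\phi$. So your proposed resolution---pair the $\phi$-instance with the $\neg\phi$-instance---is exactly the intended one, and your write-up should simply make this pairing explicit rather than leave it as a caveat. If you want the per-formula statement to stand on its own, the cleanest fix is to rephrase (iii) as ``for every closed $C\subseteq[0,1]$, the set $\{b:\mu(\phi(x,b))\in C\}$ is type-definable over $A$'' (equivalently, preimages of open intervals are $\bigvee$-definable), which is how the condition appears in some of the literature and is manifestly equivalent to (i).
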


\begin{definition}\label{def:definable}
A measure $\mu\in\kM_x(\cU)$ is \textbf{definable} if there is some $A\subset \cU$ such that $\mu$ is $A$-invariant and, for any $\cL$-formula $\phi(x,y)$, the equivalent conditions of Fact \ref{fact:defdefs} hold. In this case, we also say that $\mu$ is \textbf{definable over $A$}.
\end{definition}

Note that condition $(iii)$ of Fact \ref{fact:defdefs} makes sense without assuming $\mu$ is invariant. Moreover, if $(iii)$ holds for all $\cL$-formulas $\phi(x,y)$, then it follows that $\mu$ is $A$-invariant. Therefore, a measure $\mu\in\kM_x(\cU)$ is definable over $A\subset\cU$ if and only if, for any $\cL$-formula $\phi(x,y)$, condition $(iii)$ holds.

In \cite{CoGa}, it is shown that definable measures are closed under Morley products and satisfy associativity.  Here we prove a more general associativity result when only one definable measure is involved. Note first that any definable measure is clearly Borel definable. The next lemma strengthens this fact.

\begin{lemma}\label{lem:defBD2}
If $\mu\in\kM_x(\cU)$ is definable over $A\subset\cU$, then it is BD$^{++}$ over $A$. 
\end{lemma}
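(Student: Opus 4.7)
The plan is to leverage the extra content of definability over Borel definability, namely that $F^\phi_{\mu,A}$ is not merely Borel but continuous for every $\cL_A$-formula $\phi(x,y)$. For the BD$^{+}$ half of the claim, fix an open $U\subseteq S_{xy}(A)$ and write $U=\bigcup_{i\in I}[\phi_i(x,y)]$, where $I$ indexes the $\cL_A$-formulas with $[\phi_i]\subseteq U$, arranged to be directed under implication. For $q\in S_y(A)$ and $b\models q$, $U(x,b)=\bigcup_i[\phi_i(x,b)]$ is a directed union of clopens in $S_x(\cU)$. Applying Fact \ref{fact:basicBorel}$(a)$ to $\mu\in\kM_x(\cU)$ and using compactness, any clopen $[\psi]\subseteq U(x,b)$ sits inside a single $[\phi_j(x,b)]$ by directedness, so $\mu(U(x,b))=\sup_i\mu(\phi_i(x,b))$. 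Hence $F^U_{\mu,A}=\sup_i F^{\phi_i}_{\mu,A}$ pointwise, and since each $F^{\phi_i}_{\mu,A}$ is continuous, $F^U_{\mu,A}$ is lower semi-continuous, in particular Borel.

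For the BD$^{++}$ integral identity, fix $\nu\in\kM_y(\cU)$. Applying Fact \ref{fact:basicBorel}$(a)$ to $(\mu\otimes\nu)|_A$ on $S_{xy}(A)$, together with the definition of the Morley product on formulas, gives
\[
(\mu\otimes\nu)|_A(U)=\sup_i(\mu\otimes\nu)(\phi_i)=\sup_i\int_{S_y(A)}F^{\phi_i}_\mu\,d\nu.
\]
The goal is to show this equals $\int_{S_y(A)}F^U_\mu\,d\nu$. The inequality $\geq$ is immediate since each $F^{\phi_i}_\mu\leq F^U_\mu$; the reverse amounts to exchanging the directed supremum with integration against $\nu$ for the net $(F^{\phi_i}_\mu)$ of continuous functions.

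This exchange is the main obstacle, since the index set $I$ can be uncountable (once $T$ or $A$ is) and the Monotone Convergence Theorem fails for general nets. The plan is to pass through the layer-cake formula and reduce to a countable cofinal subfamily. Set $h_i(c)\coloneqq\nu(\{F^{\phi_i}_\mu>c\})$, a monotone decreasing function on $[0,1]$, with $(h_i)$ directed increasing in $i$. A second regularity-and-compactness argument, now applied in $S_y(A)$ and using continuity of each $F^{\phi_i}_\mu$, shows $\sup_i h_i(c)=\nu(\{F^U_\mu>c\})=:h(c)$ for every $c$. For each rational $c$ pick a sequence in $I$ whose $h_i$-values at $c$ converge to $h(c)$; the union over $c\in\Q\cap[0,1]$ yields a countable $I_0\subseteq I$ with $\sup_{i\in I_0}h_i=h$ on the rationals, and hence almost everywhere on $[0,1]$ (two monotone decreasing functions agreeing on a dense set differ only at their common jump points, which are countable). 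Using directedness of $I$, re-enumerate $I_0$ along an increasing sequence $(j_n)$ in $I$ so that $h_{j_n}\nearrow h$ almost everywhere, and apply the ordinary sequence MCT to obtain $\int_0^1 h\,dc\leq\sup_i\int_0^1 h_i\,dc$. The layer-cake identities $\int F^U_\mu\,d\nu=\int_0^1 h\,dc$ and $\int F^{\phi_i}_\mu\,d\nu=\int_0^1 h_i\,dc$ then translate this into the required inequality, completing the proof.
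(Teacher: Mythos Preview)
Your argument is correct. For the BD$^{+}$ part you follow essentially the paper's route: write $U$ as a directed union of clopens, observe $F^U_{\mu,A}=\sup_i F^{\phi_i}_{\mu,A}$ via regularity, and conclude lower semi-continuity (the paper writes ``upper'' here, which appears to be a slip).

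For BD$^{++}$ the paper and you diverge. The paper simply invokes the monotone convergence theorem for uniformly bounded increasing nets of \emph{continuous} functions on a compact Hausdorff space (citing \cite[Theorem~IV.15]{ReedSim}), which immediately gives $\sup_i\int F^{\phi_i}_\mu\,d\nu=\int F^U_\mu\,d\nu$. You instead reprove this special case from scratch: pass to the layer-cake representation, use regularity in $S_y(A)$ to show the level-set measures $h_i(c)=\nu(\{F^{\phi_i}_\mu>c\})$ increase to $h(c)=\nu(\{F^U_\mu>c\})$ pointwise, extract a countable cofinal subfamily via the rationals together with the fact that two monotone functions agreeing on a dense set agree off a countable set, and finally apply the ordinary sequential MCT on $[0,1]$. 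The paper's approach is a one-line citation and highlights that this is really a general Radon-measure phenomenon; your approach is longer but entirely self-contained and makes transparent exactly how continuity of the $F^{\phi_i}_\mu$ lets one sidestep the failure of net-MCT that causes trouble in Section~\ref{sec:badBD}.
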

\begin{proof}
  Let $U\seq S_{xy}(A)$ be open. Similar to the proof of Lemma \ref{lem:BDcountable}, we can write $U=\bigcup_{i\in I}[\phi_i(x,y)]$ for some collection $\{\phi_i(x,y):i\in I\}$ of $\cL_A$-formulas where $I$ is a directed partial order and for any $i, j \in I$, if $i \leq j$ then $\phi_{i}(\cU^{xy}) \seq \phi_{j}(\cU^{xy})$.  
  Then $F^U_{\mu,A}$ is the pointwise limit of the \emph{increasing} net $(F^{\phi_i}_{\mu,A})_{i\in I}$. Moreover, given $r\in [0,1]$, we claim that $(F_{\mu,A}^U)\inv(\lp r,1\rb)= \bigcup_{i \in I} (F_{\mu,A}^{\phi_i})\inv(\lp r,1\rb)$. Indeed, $p \in (F_{\mu,A}^U)\inv(\lp r,1\rb)$ implies that $\mu(U(x,b)) \in \lp r,1\rb$. By regularity, there exists some $\varphi_{i}(x,b)$ such that $\mu(\varphi_i(x,b)) \in \lp r,1\rb$. The other direction is similar. 
Since each $F^{\varphi_i}_{\mu,A}$ is continuous, we now have that for any $r\in [0,1]$, $(F^U_{\mu,A})\inv(\lp r,1\rb)$ is open.  Since sets of the form $\lp r,1\rb$ generate the Borel $\sigma$-algebra on $[0,1]$, it follows that $F_{\mu,A}^{U}$ is Borel (in fact, upper semi-continuous).
Now, for any $\nu\in\kM_y(\cU)$, we have $\int_{S_y(A)}F^U_{\mu}\,d\nu=\lim_i \int_{S_y(A)}F^{\phi_i}_{\mu}\, d\nu$ by the monotone convergence theorem for uniformly bounded increasing nets of continuous functions on compact Hausdorff spaces (see \cite[Theorem IV.15]{ReedSim}).  
It follows that $\mu$ is BD$^{++}$ over $A$, as in the proof of Lemma \ref{lem:BDcountable}. 
\end{proof}

We can now prove the main associativity result for definable measures.

\begin{theorem}\label{thm:defassoc}
Suppose $\mu\in\kM_x(\cU)$ and $\nu\in\kM_y(\cU)$ are Borel definable over some $A\subset\cU$, and at least one of $\mu$ or $\nu$ is definable over $A$. Then $\mu\otimes\nu$ is Borel definable over $A$ and, for any $\lambda\in\kM_z(\cU)$, we have $(\mu\otimes\nu)\otimes\lambda=\mu\otimes(\nu\otimes\lambda)$.
\end{theorem}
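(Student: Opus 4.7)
The argument splits into two cases depending on which of $\mu$ or $\nu$ is the definable measure, and in both cases the core work is reduced to Theorem \ref{thm:SBDmain}. Before describing the cases, I note that once we have the conclusions over $A$, global equality of $(\mu\otimes\nu)\otimes\lambda$ and $\mu\otimes(\nu\otimes\lambda)$ follows formally: given any $\cL_\cU$-formula $\phi(x,y,z)$, choose a small $B\supseteq A$ containing the parameters of $\phi$; since definability and Borel definability are both preserved under enlarging the parameter set, the hypotheses hold over $B$, and the $B$-version of the argument yields agreement on $\phi$.

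In Case 1 (where $\nu$ is definable over $A$), the plan is immediate: Lemma \ref{lem:defBD2} gives that $\nu$ is BD$^{++}$ over $A$, so Theorem \ref{thm:SBDmain} directly yields that $\mu\otimes\nu$ is Borel definable over $A$ and $((\mu\otimes\nu)\otimes\lambda)|_A=(\mu\otimes(\nu\otimes\lambda))|_A$.

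In Case 2 (where $\mu$ is definable over $A$ but $\nu$ is only Borel definable), we cannot invoke Theorem \ref{thm:SBDmain} directly, since $\nu$ is not known to be BD$^{+}$. Instead, we retrace the proof of Theorem \ref{thm:SBDmain} while exploiting the stronger hypothesis on $\mu$. The key observation is that Fact \ref{fact:defdefs}$(ii)$ allows us, for each $\cL_A$-formula $\phi(x,y,z)$ and each $n\geq 1$, to find $\cL_A$-formulas $\psi_{n,1}(y,z),\ldots,\psi_{n,m_n}(y,z)$ and reals $\alpha_{n,i}\in[0,1]$ with
\[
\Bigl\|F^\phi_{\mu,A}-\sum_{i=1}^{m_n}\alpha_{n,i}\boldsymbol{1}_{[\psi_{n,i}]}\Bigr\|_\infty<\tfrac{1}{n}.
\]
That is, in the proof of Theorem \ref{thm:SBDmain} we may take the approximating Borel sets $W_{n,i}\seq S_{yz}(A)$ to be \emph{clopen}, with the convergence upgraded from pointwise to uniform. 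Then the auxiliary functions $F^{W_{n,i}}_{\nu,A}=F^{\psi_{n,i}}_{\nu,A}$ are Borel merely from the Borel definability of $\nu$ (no BD$^{+}$ needed), and the identity $(\nu\otimes\lambda)|_A(W_{n,i})=\int_{S_z(A)}F^{W_{n,i}}_\nu\,d\lambda$ holds for clopen $W_{n,i}$ by the very definition of the Morley product (no BD$^{++}$ needed). The appeals to dominated convergence in the original proof are replaced by uniform convergence on a probability space, and Claims 1 and 2 of Theorem \ref{thm:SBDmain}, together with the computation in its part $(b)$, go through essentially verbatim.

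The main obstacle is Case 2: one has to sidestep the apparent asymmetry of Theorem \ref{thm:SBDmain}, which demands BD$^{++}$ on the second factor. The escape is that definability of the first factor upgrades the Borel approximations in that proof to \emph{clopen} approximations with uniform convergence, which in turn downgrades the required hypothesis on the second factor from BD$^{++}$ all the way to plain Borel definability. I expect the only bookkeeping subtlety is tracking the clopen approximations through the nested integrals cleanly, but uniform convergence on a probability space makes every interchange of limit and integral automatic.
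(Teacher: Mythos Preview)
Your proposal is correct and follows essentially the same approach as the paper: Case 1 invokes Lemma \ref{lem:defBD2} and Theorem \ref{thm:SBDmain} directly, and Case 2 reruns the proof of Theorem \ref{thm:SBDmain} with the Borel approximants $W_{n,i}$ replaced by clopen sets via Fact \ref{fact:defdefs}, so that only Borel definability of $\nu$ is needed. The only cosmetic difference is that you use uniform convergence where the paper sticks with pointwise convergence and dominated convergence (in order to set up Remark \ref{rem:Baire}), but both are valid here.
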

\begin{proof}
First assume $\nu$ is definable over $A$. Then, by Lemma \ref{lem:defBD2} and Theorem \ref{thm:SBDmain}, $\mu\otimes\nu$ is Borel definable over $A$ and $((\mu\otimes\nu)\otimes\lambda)|_A=(\mu\otimes(\nu\otimes\lambda))|_A$ for any $\lambda\in\kM_z(\cU)$. Moreover, note that if $B\supseteq A$ then $\mu$ is definable over $B$ and $\nu$ is Borel definable over $B$. So this argument works over any $B\supseteq A$.
 
Now assume $\mu$ is definable over $A$.  In this case, the proof is almost the same as that of Theorem \ref{thm:SBDmain} (applied to $\mu$ and $\nu$), and so we just explain the necessary adjustments. In particular, since $\mu$ is definable, we can use Fact \ref{fact:defdefs} to assume that the Borel sets $W_{n,i}$ in the proof of Theorem \ref{thm:SBDmain} are actually \emph{clopen}. Therefore, one only needs Borel definability of $\nu$ to conclude that the maps $F^{W_{n,i}}_{\nu,A}$ and $h_n$ are each Borel. This is all that is needed to conclude $\mu\otimes\nu$ is Borel definable over $A$. Finally, in the associativity argument, we do not need to assume $\nu$ is BD$^{++}$ to know that $(\nu\otimes\lambda)|_A(W_{n,i})=\int_{S_z(A)}F^{W_{n,i}}_{\nu}\,d\nu$. Indeed, since $W_{n,i}$ is clopen, this follows from the definition of the Morley product. So we have $((\mu\otimes\nu)\otimes\lambda)|_A=(\mu\otimes(\nu\otimes\lambda))|_A$ by the same steps. Once again, this argument works over any $B\supseteq A$.
\end{proof}

\begin{remark}\label{rem:Baire}
Call an $A$-invariant measure $\mu\in\kM_x(\cU)$ \emph{Baire-1 definable over $A$} if 
for any $\cL$-formula $\phi(x,y)$, $F^\phi_{\mu,A}$ is a function of Baire  class 1, i.e., the pointwise limit of  a sequence of continuous functions. Note that, as a property of measures, Baire-1 definability is stronger than Borel definability, but weaker than definability.
 We claim that if $\mu\in\kM_x(\cU)$ is Baire-1 definable over $A$, and $\nu\in\kM_y(\cU)$ is Borel definable over $A$,  then $\mu\otimes\nu$ is Borel definable over $A$ and, for any $\lambda\in\kM_z(\cU)$, we have $(\mu\otimes\nu)\otimes\lambda=\mu\otimes(\nu\otimes\lambda)$. Indeed, the proof of Theorem \ref{thm:SBDmain} only required pointwise limits, and so one can argue using the same adjustments as in Theorem \ref{thm:defassoc} (together with the exercise that a Baire-1 function on a Stone space is a pointwise limit of finite linear combinations of indicator functions of clopen sets).
\end{remark}

In light of Theorem \ref{thm:defassoc}, it is natural to ask if one gains any traction in proving associativity by assuming that the measure in the third position is definable. In Corollary \ref{cor:TRassoc}, we will give an example of Borel definable types $p$ and $q$, and a definable measure $\lambda$, such that $p\otimes q$ is Borel definable,  but $(p\otimes q)\otimes\lambda\neq p\otimes(q\otimes\lambda)$. On the other hand, we do have the following result.  

\begin{corollary}\label{cor:DCassoc}
Fix $\mu\in\kM_x(\cU)$ and $\nu\in\kM_y(\cU)$ such that $\mu$, $\nu$, and $\mu\otimes\nu$ are each Borel definable over some $A\subset\cU$. Suppose $\lambda\in\kM_z(\cU)$ is such that  $\lambda|_A$ has a  definable global extension $\hat{\lambda}\in\kM_z(\cU)$ that commutes with $\mu$, $\nu$, and $\mu\otimes\nu$. Then $((\mu\otimes\nu)\otimes\lambda)|_A=(\mu\otimes(\nu\otimes\lambda))|_A$.
\end{corollary}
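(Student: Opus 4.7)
The plan is to reduce the statement to a chain of rewrites for $\hat{\lambda}$ in place of $\lambda$. First, I would verify that replacing $\lambda$ by $\hat{\lambda}$ does not change either side of the sought equality on $\cL_A$-formulas. For an $\cL_A$-formula $\phi(x,y,z)$, unwinding the definition of the Morley product gives
\[
((\mu \otimes \nu) \otimes \lambda)(\phi) = \int_{S_z(A)} F^\phi_{\mu \otimes \nu} \, d\lambda = \int_{S_z(A)} F^\phi_{\mu \otimes \nu} \, d\hat{\lambda} = ((\mu \otimes \nu) \otimes \hat{\lambda})(\phi),
\]
since the two integrals are of the same Borel function and depend only on $\lambda|_A = \hat{\lambda}|_A$. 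An analogous computation on $\cL_A$-formulas of the form $\psi(y,z)$ shows that $(\nu \otimes \lambda)|_A$ and $(\nu \otimes \hat{\lambda})|_A$ agree on clopens, hence coincide as regular Borel measures by Fact \ref{fact:basicBorel}$(b)$. Unwinding the Morley product formula for $\mu \otimes (\nu \otimes \lambda)$ on an $\cL_A$-formula $\phi(x,y,z)$ then yields $(\mu \otimes (\nu \otimes \lambda))|_A = (\mu \otimes (\nu \otimes \hat{\lambda}))|_A$.

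With this reduction in hand, it suffices to prove the global equality $\mu \otimes (\nu \otimes \hat{\lambda}) = (\mu \otimes \nu) \otimes \hat{\lambda}$. For this I would perform a standard five-step chain alternating commutativity (to move $\hat{\lambda}$ past each factor in turn) with associativity via Theorem \ref{thm:defassoc} (which applies whenever $\hat{\lambda}$ occupies one of the first two positions of a triple product, since $\hat{\lambda}$ is definable and the neighboring factor is Borel definable over $A$):
\begin{align*}
\mu \otimes (\nu \otimes \hat{\lambda}) &= \mu \otimes (\hat{\lambda} \otimes \nu) && (\nu \text{ and } \hat{\lambda} \text{ commute})\\
&= (\mu \otimes \hat{\lambda}) \otimes \nu && (\text{Theorem \ref{thm:defassoc}})\\
&= (\hat{\lambda} \otimes \mu) \otimes \nu && (\mu \text{ and } \hat{\lambda} \text{ commute})\\
&= \hat{\lambda} \otimes (\mu \otimes \nu) && (\text{Theorem \ref{thm:defassoc}})\\
&= (\mu \otimes \nu) \otimes \hat{\lambda} && (\mu \otimes \nu \text{ and } \hat{\lambda} \text{ commute}).
\end{align*}

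The only bookkeeping concern is the natural identification of $\kM_{uv}(\cU)$ with $\kM_{vu}(\cU)$ under coordinate swap each time commutativity is invoked, but this is routine. I do not expect a substantial obstacle: Theorem \ref{thm:defassoc} supplies the needed associativity the moment $\hat{\lambda}$ sits next to a factor in the first two slots, and the three commutativity hypotheses on $\hat{\lambda}$ (with $\mu$, $\nu$, and $\mu \otimes \nu$) are exactly calibrated to shuffle $\hat{\lambda}$ into and out of those slots.
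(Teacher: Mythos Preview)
Your proposal is correct and essentially identical to the paper's proof: both replace $\lambda$ by $\hat{\lambda}$ using $\lambda|_A=\hat{\lambda}|_A$, then run the same five-step chain alternating the three commutativity hypotheses with two applications of Theorem \ref{thm:defassoc} (your chain is simply the paper's chain read in reverse). Your write-up is slightly more explicit about the reduction step for the $\mu\otimes(\nu\otimes\lambda)$ side, but there is no substantive difference.
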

\begin{proof}
Fix an $\cL_A$-formula $\phi(x,y,z)$. Then we have the following calculations (individual steps are justified afterward):
\begin{multline*}
((\mu\otimes\nu)\otimes\lambda)(\phi(x,y,z))=((\mu\otimes\nu)\otimes\hat{\lambda})(\phi(x,y,z))\\
=(\hat{\lambda}\otimes(\mu\otimes\nu))(\phi(x,y,z))=((\hat{\lambda}\otimes\mu)\otimes\nu)(\phi(x,y,z))\\
=((\mu\otimes\hat{\lambda})\otimes\nu)(\phi(x,y,z))=(\mu\otimes(\hat{\lambda}\otimes\nu))(\phi(x,y,z))\\
=(\mu\otimes(\nu\otimes\hat{\lambda}))(\phi(x,y,z))=(\mu\otimes(\nu\otimes\lambda))(\phi(x,y,z)).
\end{multline*}
In the above calculations, the first and last equalities  use $\lambda|_A=\hat{\lambda}|_A$, the third and fifth equalities use Theorem \ref{thm:defassoc} and definability of $\hat{\lambda}$, and the remaining equalities use the commutativity assumptions on $\hat{\lambda}$. 
\end{proof}

\begin{remark}\label{rem:thirdtype}
Note that in the previous result we do not need to assume that $\hat{\lambda}$ is definable \emph{over $A$}. For example, given  $p\in S_x(\cU)$ and $A\subset \cU$, if $a\models p|_A$ and $\hat{p}=\tp(a/\cU)$, then $p|_A=\hat{p}|_A$, $\hat{p}$ is definable over $\{a\}$, and $\hat{p}$ commutes with any invariant measure. So  if the measure $\lambda$ in Corollary \ref{cor:DCassoc} is a type, then such a $\hat{\lambda}$ exists for any $A\subset\cU$ (See Fact \ref{fact:thirdtype} below for a full account of associativity  when the measure in the third position is a type.) 
\end{remark}

For a more interesting example of when Corollary \ref{cor:DCassoc} is applicable, one can turn to the class of NIP theories. Recall that if $T$ is NIP then any invariant global Keisler measure is Borel definable by \cite{HP}. Moreover, by the original work of Keisler \cite{Keis}, any  measure  over a small model $M$ of an NIP theory has a global extension that is \emph{smooth}, i.e., it is the unique global extension of its restriction to some small model $N\succeq M$  (see also \cite[Proposition 7.9]{Sibook}). For example, if $p\in S_x(M)$ and $a\in\cU^x$ realizes $p$, then $\tp(a/\cU)$ is a smooth global extension of $p$. It is not hard to show that smooth measures are definable and commute with all Borel definable measures (see \cite[Section 2]{HPS}). So if $T$ is NIP then any global measure $\lambda$ satisfies the assumptions of Corollary \ref{cor:DCassoc} for any $A\subset\cU$. Altogether, this reaffirms associativity of the Morley product for invariant measures in NIP theories (recall Remark \ref{rem:NIPnice}). 

\begin{corollary}\label{cor:NIPassoc}
If $T$ is NIP then the Morley product of invariant measures is associative.
\end{corollary}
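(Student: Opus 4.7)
The plan is to invoke Corollary \ref{cor:DCassoc} after verifying that, in an NIP theory, all of its hypotheses are automatically satisfied. First, I would fix invariant global measures $\mu \in \kM_x(\cU)$, $\nu \in \kM_y(\cU)$, and $\lambda \in \kM_z(\cU)$, and note that, by the Hrushovski--Pillay theorem recalled in the preceding discussion, all three are Borel definable. By Fact \ref{fact:basicinv}, $\mu \otimes \nu$ is invariant (using that $\mu$ is Borel definable), and so $\mu \otimes \nu$ is also Borel definable by Hrushovski--Pillay.

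Next, to verify associativity at the level of a single formula $\phi(x,y,z)$, I would choose $A \subset \cU$ to be a small model containing all parameters of $\phi$ and large enough that $\mu$, $\nu$, $\mu \otimes \nu$, and $\lambda$ are all invariant over $A$; by Corollary \ref{cor:BDdrop}, they are then Borel definable over $A$. By Keisler's existence of smooth extensions in NIP theories (also recalled above), $\lambda|_A$ has a smooth global extension $\hat{\lambda} \in \kM_z(\cU)$. Since smooth measures are definable and commute with every Borel definable measure, $\hat{\lambda}$ is definable and commutes with $\mu$, $\nu$, and $\mu \otimes \nu$, satisfying exactly the hypotheses of Corollary \ref{cor:DCassoc}.

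Applying Corollary \ref{cor:DCassoc} then yields $((\mu \otimes \nu) \otimes \lambda)|_A = (\mu \otimes (\nu \otimes \lambda))|_A$, and in particular the two sides agree on $\phi(x,y,z)$. Since $\phi$ was arbitrary, this gives $(\mu \otimes \nu) \otimes \lambda = \mu \otimes (\nu \otimes \lambda)$ globally. There is no real obstacle here: the substantive content lives in the NIP-specific facts (Borel definability of invariant measures, existence of smooth extensions, and commutativity of smooth measures with Borel definable ones) together with Corollary \ref{cor:DCassoc}, so the proof is essentially a compilation of previously recorded results. The only mild care required is in the choice of $A$ — it must be both a small model (to invoke the Keisler smoothing result) and large enough to absorb the parameters of the test formula and witness invariance of all measures involved — but this is always achievable since we may enlarge $A$ freely.
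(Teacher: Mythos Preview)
Your proposal is correct and follows essentially the same route as the paper: invoke Corollary \ref{cor:DCassoc} using that in NIP theories every invariant measure is Borel definable (so in particular $\mu\otimes\nu$ is), and that $\lambda|_A$ admits a smooth global extension, which is definable and commutes with all Borel definable measures. Your explicit care in choosing $A$ to be a small model (so that Keisler's smoothing result applies) is exactly the right refinement.
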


\section{Counterexamples in Borel definability}\label{sec:badBD}

The goal of this section is to show that, over uncountable sets, the Morley product of two Borel definable measures need not be Borel definable and, moreover, that the Morley product can fail to be associative (even when all products involved are well-defined and Borel definable). In fact, we will demonstrate this behavior in a relatively straightforward simple unstable (countable) theory.  Before getting into this example, we first discuss some preliminaries.

\subsection{Strongly continuous measures}
The purpose of this subsection is mainly to provide context for how  Morley products can fail associativity. Let $T$ be a complete $\cL$-theory with monster model $\cU$. It is well-known (and easy to show) that the Morley product is associative with respect to invariant types (see \cite[Fact 2.20]{Sibook}), and so a  failure of associativity must involve at least one `true' measure. Toward making this remark more precise, we state the following fact, which is left as an exercise (the mechanics of the proof are similar to that of Corollary \ref{cor:DCassoc}).

\begin{fact}\label{fact:thirdtype}
Suppose $\mu\in\kM_x(\cU)$ is Borel definable and $\nu\in\kM_y(\cU)$ is invariant. Then $(\mu\otimes\nu)\otimes r=\mu\otimes(\nu \otimes r)$ for any $r\in S_z(\cU)$. 
\end{fact}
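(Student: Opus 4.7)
The plan is to verify equality formula by formula. Fix an $\cL_\cU$-formula $\phi(x,y,z)$, and choose a small $A\subset\cU$ containing the parameters of $\phi$, such that $\mu$ is Borel definable over $A$ and $\nu$ is invariant over $A$. Then $\mu\otimes\nu$ is $A$-invariant by Fact \ref{fact:basicinv}, so the Morley product of $\mu\otimes\nu$ with the type $r$ is well-defined (by the discussion before Fact \ref{fact:basicinv}), and of course so is $\mu\otimes(\nu\otimes r)$. Let $c\in\cU^z$ realize $r|_A$.

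For the left side, the definition of the Morley product with a type gives
\[
((\mu\otimes\nu)\otimes r)(\phi(x,y,z)) = (\mu\otimes\nu)(\phi(x,y,c)),
\]
and expanding (using that $\phi(x,y,c)$ is over $Ac$ and $\mu$ is Borel definable over $Ac$) yields $\int_{S_y(Ac)} F^{\phi(x,y,c)}_{\mu}\, d\nu$. For the right side,
\[
(\mu\otimes(\nu\otimes r))(\phi(x,y,z)) = \int_{S_{yz}(A)} F^\phi_{\mu}\, d(\nu\otimes r).
\]
The key observation is that, as a regular Borel measure on $S_{yz}(A)$, the restriction $(\nu\otimes r)|_A$ is the pushforward of $\nu|_{Ac}$ along the continuous map $\sigma\colon S_y(Ac)\to S_{yz}(A)$ sending $\tp(b/Ac)$ to $\tp(b,c/A)$. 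By Fact \ref{fact:basicBorel}$(b)$, it suffices to verify this on clopen sets, where it is immediate from $(\nu\otimes r)(\psi(y,z)) = \nu(\psi(y,c))$. Applying change of variables and noting that $F^\phi_{\mu,A}\circ\sigma = F^{\phi(x,y,c)}_{\mu,Ac}$ (both send $\tp(b/Ac)$ to $\mu(\phi(x,b,c))$), the right side also evaluates to $\int_{S_y(Ac)} F^{\phi(x,y,c)}_{\mu}\, d\nu$, matching the left.

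The only step requiring any care is the pushforward identification of $(\nu\otimes r)|_A$, and that is genuinely easy since clopen sets determine the measure. Crucially, no convergence issues or subtle Borel-definability conditions of the sort that drove Section \ref{sec:BD} arise here, because $r$ is a type: integrals against $r$ reduce to evaluations at a realization, which is exactly what lets the argument bypass any BD$^{++}$-style hypothesis on $\mu\otimes\nu$.
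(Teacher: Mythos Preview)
Your proof is correct. The paper leaves this fact as an exercise and hints that the mechanics mirror those of Corollary~\ref{cor:DCassoc}: one replaces $r$ by the realized type $\hat{r}=\tp(c/\cU)$ (which is definable and commutes with every invariant measure, as in Remark~\ref{rem:thirdtype}) and then runs the commutativity-plus-Theorem~\ref{thm:defassoc} chain from the proof of Corollary~\ref{cor:DCassoc}. Your route is genuinely different and more direct: rather than invoking Theorem~\ref{thm:defassoc}, you identify $(\nu\otimes r)|_A$ explicitly as the pushforward of $\nu|_{Ac}$ along $\sigma$ and apply change of variables. This bypasses the machinery of Section~\ref{sec:definable} entirely and makes transparent why the type hypothesis on $r$ dissolves the convergence issues of Section~\ref{sec:BD}. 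The paper's intended approach, by contrast, situates the result within the broader ``commuting definable extensions'' framework, which is conceptually unifying but leans on heavier earlier results. One minor point: to invoke Fact~\ref{fact:basicBorel}$(b)$ you implicitly use that the pushforward $\sigma_*(\nu|_{Ac})$ is regular; this is standard (e.g., apply Fact~\ref{fact:push} to $\sigma$ viewed as a surjection onto its closed image), but merits half a sentence.
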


Next, we recall some terminology (taken from the theory of \emph{charges} \cite{RaoRao}) that will be used to make the idea of a `true' measure more rigorous. 

\begin{definition}
Given $A\seq\cU$, a measure $\mu\in\kM_x(A)$ is \textbf{strongly continuous} if, for any $\epsilon>0$, there is a partition of $S_x(A)$ into finitely  many clopen sets of measure less than $\epsilon$. 
\end{definition}

Despite the use of the word `continuous' in the previous definition, we caution the reader that there is no general connection between strongly continuous measures and definable measures.

\begin{remark}\label{rem:SCp}
By compactness, a measure $\mu\in\kM_x(A)$ is strongly continuous if and only if $\mu(\{p\})=0$ for all $p\in S_x(A)$. Note also that if $\mu\in\kM_x(\cU)$ is strongly continuous, then there is some countable $A\subset\cU$ such that $\mu|_A$ is strongly continuous.
\end{remark}

Let $\mu\in\kM_x(\cU)$ be a Keisler measure. By the Sobczyk-Hammer decomposition theorem for finitely additive bounded charges (see \cite[Theorem 5.2.7]{RaoRao}), one can write $\mu=\alpha_0\mu_0+\sum_{n=1}^\infty \alpha_n p_n$ where each $\alpha_n$ is in $[0,1]$ with $\sum_{n=0}^\infty \alpha_n=1$, each $p_n$ is a type in $ S_x(\cU)$, and either $\mu_0$ is a strongly continuous  measure in $\kM_x(\cU)$, or $\alpha_0=0$ and $\mu_0$ is  the identically zero measure (in this case, we call $\mu$ \emph{atomic}). The next fact, which we leave as an exercise, follows from Fact \ref{fact:thirdtype} together with standard measure-theoretic computations.

\begin{fact} Fix $\mu \in \kM_{x}(\cU)$, $\nu \in \kM_{y}(\cU)$, and $\lambda \in \kM_{z}(\cU)$. Let $\lambda=\alpha_0\lambda_0+\sum_{n=1}^\infty \alpha_n p_n$ be the Sobczyk-Hammer decomposition of $\lambda$ described above. 
\begin{enumerate}[$(a)$]
\item Assume $\lambda$ is atomic. If $\mu$ is Borel definable and $\nu$ is invariant, then $(\mu\otimes\nu)\otimes\lambda$ is well-defined and equal to $\mu\otimes(\nu\otimes\lambda)$.
\item Assume $\lambda$ is not atomic. If  $\mu$, $\nu$, and $\mu\otimes\nu$ are each Borel definable,  then $(\mu\otimes\nu)\otimes\lambda=\mu\otimes(\nu\otimes\lambda)$ if and only if $(\mu\otimes\nu)\otimes\lambda_0=\mu\otimes(\nu\otimes\lambda_0)$
\end{enumerate}
\end{fact}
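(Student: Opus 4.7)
The plan is to reduce both parts to the type-case associativity of Fact~\ref{fact:thirdtype} via a distributivity lemma for Morley products over countable convex combinations. The preliminary observation is: if $\eta \in \kM_x(\cU)$ is Borel definable over $A \subset \cU$, and $\tau \in \kM_y(\cU)$ admits a countable convex decomposition $\tau = \sum_{i \geq 0} \beta_i \tau_i$ with $\beta_i \in [0,1]$, $\sum_i \beta_i = 1$, and each $\tau_i$ a global Keisler measure, then $\eta \otimes \tau = \sum_i \beta_i (\eta \otimes \tau_i)$. Indeed, viewing restrictions as regular Borel probability measures on $S_y(A)$, one has $\tau|_A = \sum_i \beta_i (\tau_i|_A)$: both sides agree on clopens and the right-hand side is a regular Borel probability measure (regularity is preserved under countable convex combinations on compact Hausdorff spaces), so Fact~\ref{fact:basicBorel}$(b)$ applies. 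Then for any $\cL_A$-formula $\varphi(x,y)$, the uniform bound $\|F^\varphi_{\eta,A}\|_\infty \leq 1$ permits interchange of sum and integral, giving $(\eta \otimes \tau)(\varphi) = \int_{S_y(A)} F^\varphi_\eta \, d\tau = \sum_i \beta_i \int_{S_y(A)} F^\varphi_\eta \, d\tau_i = \sum_i \beta_i (\eta \otimes \tau_i)(\varphi)$.

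For part~(a), atomicity of $\lambda$ gives $\lambda = \sum_{n \geq 1} \alpha_n p_n$. Since $\mu \otimes \nu$ is only known to be invariant (Fact~\ref{fact:basicinv}), the product $(\mu \otimes \nu) \otimes \lambda$ is \emph{defined} to be the convex combination $\sum_n \alpha_n ((\mu \otimes \nu) \otimes p_n)$, which makes sense because each summand is a well-defined Morley product of an invariant measure with a type. Similarly, $\nu \otimes \lambda$ is defined as $\sum_n \alpha_n (\nu \otimes p_n)$, a convex combination of invariant measures. Applying the distributivity lemma with $\eta = \mu$ yields $\mu \otimes (\nu \otimes \lambda) = \sum_n \alpha_n \mu \otimes (\nu \otimes p_n)$, and Fact~\ref{fact:thirdtype} identifies each summand with $(\mu \otimes \nu) \otimes p_n$, giving the desired equality.

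For part~(b), all four Morley products are honest by the Borel definability hypotheses. Expanding both sides via the distributivity lemma (applied once to $\mu \otimes \nu$ on the left, and twice on the right --- first to $\nu$ on the inside, then to $\mu$ on the outside),
\begin{align*}
(\mu \otimes \nu) \otimes \lambda &= \alpha_0 \bigl((\mu \otimes \nu) \otimes \lambda_0\bigr) + \sum_{n \geq 1} \alpha_n \bigl((\mu \otimes \nu) \otimes p_n\bigr), \\
\mu \otimes (\nu \otimes \lambda) &= \alpha_0 \bigl(\mu \otimes (\nu \otimes \lambda_0)\bigr) + \sum_{n \geq 1} \alpha_n \bigl(\mu \otimes (\nu \otimes p_n)\bigr).
\end{align*}
Fact~\ref{fact:thirdtype} cancels the series term by term, so subtracting gives $(\mu \otimes \nu) \otimes \lambda - \mu \otimes (\nu \otimes \lambda) = \alpha_0 \bigl[(\mu \otimes \nu) \otimes \lambda_0 - \mu \otimes (\nu \otimes \lambda_0)\bigr]$, and nonatomicity ($\alpha_0 > 0$) yields the stated equivalence.

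The main obstacle is the countable distributivity lemma: the finite case is immediate from linearity, but the countable case requires that $\sum_i \beta_i (\tau_i|_A)$ is genuinely the unique regular Borel extension of the Keisler measure $\tau|_A$, together with a dominated-convergence interchange of summation and integration. Once this is in hand, both parts reduce to applying Fact~\ref{fact:thirdtype} componentwise and collecting terms.
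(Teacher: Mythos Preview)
Your argument is correct and follows exactly the approach the paper indicates (the Fact is left as an exercise, citing Fact~\ref{fact:thirdtype} together with standard measure-theoretic computations). One refinement for part~(a): rather than \emph{defining} $(\mu\otimes\nu)\otimes\lambda$ as the convex combination, observe that the integral definition of the Morley product is genuinely well-defined---any bounded function on $S_z(A)$ is measurable with respect to the completion of a purely atomic measure---and then reduces to your convex combination; this is what ``well-defined'' means in the statement.
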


In other words, this fact says that in any situation where the Morley product of Keisler measures fails associativity, the measure in the third coordinate cannot be atomic, and so the failure of associativity can be traced back to an underlying strongly continuous measure.

Finally, we recall a known result from the folklore characterizing the existence of strongly continuous Keisler measures. 

\begin{fact}\label{fact:SC}
Given a complete theory $T$, the following are equivalent.
\begin{enumerate}[$(i)$]
\item $T$ is totally transcendental (i.e., every formula has ordinal Morley rank).
\item There is no strongly continuous measure in $\kM_x(\cU)$ for any $x$.
\item There is no strongly continuous measure in $\kM_x(\cU)$ for any tuple of variables $x$ of length one. 
\end{enumerate}
\end{fact}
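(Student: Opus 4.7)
The plan is to prove the cycle $(i) \Rightarrow (ii) \Rightarrow (iii) \Rightarrow (i)$, with $(ii) \Rightarrow (iii)$ immediate. For $(i) \Rightarrow (ii)$, I would argue by contrapositive: given a strongly continuous $\mu \in \kM_x(\cU)$ for some tuple $x$, I would build inductively an infinite binary tree $\{\psi_\sigma(x) : \sigma \in 2^{<\omega}\}$ of $\cL_\cU$-formulas with $[\psi_{\sigma 0}] \sqcup [\psi_{\sigma 1}] = [\psi_\sigma]$ and $\mu([\psi_\sigma]) > 0$ for each $\sigma$. The inductive step is straightforward: given $[\psi_\sigma]$ of measure $r > 0$, strong continuity partitions $[\psi_\sigma]$ into finitely many clopens of measure less than $r/2$, and greedily accumulating them produces a splitting into two groups each of positive measure. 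Such a tree, in which every subnode itself admits an infinite binary subtree, forces $\psi_\emptyset$ to have Morley rank $\geq \alpha$ for every ordinal $\alpha$ by a standard iteration, contradicting total transcendentality.

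For $(iii) \Rightarrow (i)$, again by contrapositive, suppose $T$ is not totally transcendental and fix a formula $\phi(\xbar)$ over a small $A_0$ with Morley rank $\infty$. Using the recursive characterization that any formula of Morley rank $\infty$ admits two pairwise inconsistent subformulas both of rank $\infty$, I would build over some small $A \supseteq A_0$ a tree $\{\psi_\sigma(\xbar) : \sigma \in 2^{<\omega}\}$ with $\psi_\emptyset = \phi$, every $\psi_\sigma$ of rank $\infty$, and the \emph{partition} property $[\psi_\sigma] = [\psi_{\sigma 0}] \sqcup [\psi_{\sigma 1}]$. The partition property is ensured by setting $\psi_{\sigma 1} := \psi_\sigma \wedge \neg \psi_{\sigma 0}$, which still has rank $\infty$ since it implies one of the two original rank-$\infty$ subformulas. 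Declaring $\nu([\psi_\sigma]) = 2^{-|\sigma|}$ gives a consistent finitely additive probability on the Boolean subalgebra generated by $\{[\psi_\sigma]\}$, which extends to a Keisler measure $\nu \in \kM_{\xbar}(A)$ via the \L{}o\'{s}--Marczewski extension theorem. This $\nu$ is strongly continuous, since any type $p \in S_{\xbar}(A)$ lies on a unique branch $\eta \in 2^\omega$, so $\nu(\{p\}) \leq \nu([\psi_{\eta \upharpoonright n}]) = 2^{-n}$ for every $n$.

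To descend from the tuple variable $\xbar$ to a singleton, I would prove by induction on $n$ that if $S_1(B)$ is scattered for every small $B \subset \cU$, then so is $S_n(B)$. In the inductive step, a nonempty perfect subset $P \subseteq S_n(B)$ projects to a compact scattered set $\pi(P) \subseteq S_{n-1}(B)$, which has an isolated point $q$; the fiber $\pi^{-1}(q) \cap P$ is then clopen in $P$ (hence perfect), and by saturation the type $q$ is realized by some $\bar{a} \in \cU^{n-1}$, so this fiber embeds as a perfect subset of $S_1(B\bar{a})$, contradicting the hypothesis. The strongly continuous measure $\nu$ constructed above forces $S_{\xbar}(A)$ to be non-scattered, since every regular Borel probability measure on a scattered compact Hausdorff space is purely atomic (otherwise the support of the continuous part, being closed in a scattered space, would have an isolated point of measure zero, contradicting the definition of support). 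By the contrapositive of the induction, some $S_1(B)$ has a perfect subset; a Cantor-style dyadic tree of nested clopens inside that perfect subset (feasible because in a perfect Stone space every nonempty clopen splits into two nonempty clopens) yields a strongly continuous Keisler measure $\nu_0 \in \kM_x(B)$. Any finitely additive extension to $\kM_x(\cU)$ remains strongly continuous: for any $p \in S_x(\cU)$, downward continuity of regular Borel measures along directed intersections of clopens gives $\mu(\{p\}) \leq \nu_0(\{p|_B\}) = 0$.

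The main obstacle is the descent from tuple to singleton variables. The individual components (the fiber/projection scattering argument, saturation-based realization of $q$, the dyadic Cantor construction inside a perfect Stone space, and the preservation of strong continuity under extension) are each routine, but weaving them together requires careful bookkeeping, particularly in verifying that the identification $\pi^{-1}(q) \cong S_1(B\bar{a})$ is homeomorphic and that the final extension step does not introduce any new atoms.
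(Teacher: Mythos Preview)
Your argument is correct and follows the standard route that the paper merely references without detail (the paper's proof is just a pointer to results on strongly continuous charges in \cite{RaoRao} and to \cite[Lemma~1.7]{Keis} via \cite{ChGan}). In particular, your descent from tuples to singletons via scatteredness of type spaces recovers the content of Keisler's lemma, and the binary-tree constructions in both directions are the expected ones; the only cosmetic gaps are to name explicitly the pairwise-inconsistent family $\{\psi_{\sigma 0^n 1}:n<\omega\}$ that pushes Morley rank up in the $(i)\Rightarrow(ii)$ step, and to dispose separately of types $p\notin[\psi_\emptyset]$ (trivial, since $\nu(\neg\psi_\emptyset)=0$) when checking strong continuity of $\nu$.
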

\begin{proof}
This follows from standard results on type spaces in totally transcendental theories, combined with various facts about strongly continuous measures (see \cite[Theorem 5.3.2, Lemma 5.3.8, Theorem 5.3.9]{RaoRao}). See also \cite[Fact 3.3]{ChGan}, which discusses some details, including the relevance of \cite[Lemma 1.7]{Keis}. 
\end{proof}

\subsection{Relative measurability}\label{sec:bern}
Later in this section, we will construct Borel definable global types $p$ and $q$ (in a specific theory) such that $p\otimes q$ is not Borel definable. In this case, one might wonder if $p\otimes q$ still admits Morley products with restricted classes of measures with nice behavior (e.g., if $r$ is a type then $(p\otimes q)\otimes r$ is well-defined since $p\otimes q$ is still invariant). However, our construction will show that $p\otimes q$ can be arbitrarily bad. This will be made precise using the following notions.

\begin{definition}
Let $X$ be a topological space. Given a Borel measure $\mu$ on $X$, a subset of $X$ is called \textbf{$\mu$-measurable} if it is measurable with respect to the completion of $\mu$. A subset $Z$ of  $X$  is called a \textbf{Bernstein set} if both $Z$ and $X\backslash Z$ nontrivially intersect every uncountable closed subset of $X$.
\end{definition}

Recall that, in the above context, a subset of $X$ is $\mu$-measurable if and only if it is of the form $B\cup E$, where $B$ is Borel and $E$ is contained in a $\mu$-null Borel set. It is also a standard fact that any Polish space contains a Bernstein set (see, e.g., Theorem 4 in \cite[Chapter 11]{JustWeese}).

\begin{lemma}\label{lem:bern}
Suppose $\rho\colon X\to Y$ is a surjective continuous map between compact Hausdorff spaces, and $\mu$ is a regular Borel  measure on $X$. Let $\nu$ be the pushforward of $\mu$ along $\rho$, and assume that any singleton in $Y$ is $\nu$-null. Then, for any Bernstein set $Z\seq Y$, the set $\rho\inv(Z)$ is not $\mu$-measurable.
\end{lemma}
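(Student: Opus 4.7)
The plan is to argue by contradiction: suppose $\rho^{-1}(Z)$ is $\mu$-measurable, and derive that $\mu(\rho^{-1}(Z)) = \mu(X \setminus \rho^{-1}(Z)) = 0$, contradicting the nontriviality of $\mu$ (which in the intended applications is a probability measure).

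First I would use inner regularity of the completion of $\mu$ to extract a $\sigma$-compact set $F = \bigcup_{n<\omega} K_n \subseteq \rho^{-1}(Z)$ with $\mu(F) = \mu(\rho^{-1}(Z))$. Pushing forward, $\rho(F) = \bigcup_{n<\omega} \rho(K_n)$ is a countable union of compact (hence closed, since $Y$ is Hausdorff) subsets of $Y$, each contained in $Z$. The decisive step is now the Bernstein property: since $Y \setminus Z$ meets every uncountable closed subset of $Y$, no closed subset of $Y$ contained in $Z$ can be uncountable. So each $\rho(K_n)$ is countable, hence so is $\rho(F)$. The singleton-null hypothesis on $\nu$ together with countable additivity then forces $\nu(\rho(F)) = 0$. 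Since $\rho(F)$ is $F_\sigma$, hence Borel, and $F \subseteq \rho^{-1}(\rho(F))$, the pushforward relation gives
\[
\mu(\rho^{-1}(Z)) = \mu(F) \leq \mu(\rho^{-1}(\rho(F))) = \nu(\rho(F)) = 0.
\]
The same reasoning applied to $Y \setminus Z$, which is also Bernstein, yields $\mu(X \setminus \rho^{-1}(Z)) = \mu(\rho^{-1}(Y \setminus Z)) = 0$, delivering the contradiction.

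The only step warranting any real care is the inner regularity extraction of the $\sigma$-compact $F$: this is standard for regular Borel measures on a compact Hausdorff space, since every Borel set is inner regular by compacta and any $\mu$-measurable set differs from a Borel set by a subset of a $\mu$-null Borel set, which itself can be exhausted from within by compacta of measure zero. Everything else is a routine combination of the pushforward identity, the Bernstein property, and countable additivity applied to the countable image $\rho(F)$; I do not anticipate a serious obstacle.
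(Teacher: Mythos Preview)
Your proposal is correct and follows essentially the same approach as the paper: inner regularity yields compact sets inside $\rho^{-1}(Z)$, their images under $\rho$ are closed subsets of $Z$ and hence countable by the Bernstein property, and the singleton-null hypothesis forces measure zero via the pushforward identity. The paper's argument is marginally more direct---it extracts a single compact $C\subseteq\rho^{-1}(Z)$ with $\mu(C)>0$ (after reducing WLOG to $\mu(\rho^{-1}(Z))>0$) rather than a $\sigma$-compact set of full measure---but the substance is identical.
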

\begin{proof}
Suppose $\rho\inv(Z)$ is $\mu$-measurable. Since $Y\backslash Z$ is also a Bernstein set, we may assume without loss of generality that $\mu(\rho\inv(Z))>0$. By regularity, there is a closed set $C\seq \rho\inv(Z)$ such that $\mu(C)>0$. Since $\rho(C)$ is closed, and contained in $Z$, it must be countable. So $\nu(\rho(C))=0$ by assumption on $\nu$ and countable additivity. But then $\mu(C)\leq \mu(\rho\inv(\rho(C)))=\nu(\rho(C))=0$, which is a contradiction.
\end{proof}

\begin{corollary}\label{cor:bern}
Let $T$ be a complete $\cL$-theory with monster model $\cU$. Fix $A\seq\cU$ and suppose $\mu\in\kM_x(A)$ is strongly continuous. Then there is a subset of $S_x(A)$ that is not $\mu$-measurable.
\end{corollary}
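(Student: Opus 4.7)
The plan is to construct a continuous surjection $\pi \colon S_x(A) \to Y$ onto an uncountable Polish space $Y$ whose pushforward measure has no atoms, and then invoke Lemma \ref{lem:bern} applied to a Bernstein set of $Y$. The initial move is in the spirit of Remark \ref{rem:SCp}: I want to extract a countable amount of data from strong continuity.

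By strong continuity, for each $n \geq 1$ I would fix a finite partition $\{\phi_{n,i}(x) : i \leq k_n\}$ of $S_x(A)$ into $\cL_A$-definable clopens of $\mu$-measure less than $1/n$. Let $\cB_0 \seq \Def_x(A)$ be the countable Boolean subalgebra generated by these formulas, let $Y$ be its Stone space, and let $\pi \colon S_x(A) \to Y$ be the canonical map sending a type $p$ to $p \cap \cB_0$. Standard Stone duality makes $\pi$ a continuous surjection between compact Hausdorff spaces, and since $\cB_0$ is countable, $Y$ has a countable clopen base and is therefore compact metrizable, in particular Polish. Let $\nu$ be the pushforward of $\mu$ along $\pi$, which is a regular Borel probability measure on $Y$.

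The key computation is that $\nu$ has no atoms: for any $q \in Y$ and any $n \geq 1$, $q$ contains exactly one of the $\phi_{n,i}$, say $\phi_{n,i_n}$, whence $\pi\inv(\{q\}) \seq [\phi_{n,i_n}(x)]$ and therefore $\nu(\{q\}) \leq \mu(\phi_{n,i_n}) < 1/n$; letting $n \to \infty$ yields $\nu(\{q\}) = 0$. Since $\nu$ is a probability measure with all singletons null, $Y$ must be uncountable, and as an uncountable Polish space it contains a Bernstein set $Z$. Lemma \ref{lem:bern} then delivers that $\pi\inv(Z) \seq S_x(A)$ is not $\mu$-measurable, which is the desired conclusion. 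The main obstacle is the initial reduction to a Polish target $Y$; once the countable clopen data has been bundled into $\cB_0$, atom-freeness of $\nu$ is immediate, and the Bernstein set plus Lemma \ref{lem:bern} close the argument. A subtle point is that $\cL$ and $A$ may be uncountable, so one cannot simply restrict to $S_x(A_0)$ for a countable $A_0 \seq A$ and must instead cut down at the Boolean-algebra level.
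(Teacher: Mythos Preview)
Your proof is correct and follows essentially the same strategy as the paper: reduce to a Polish target space via countable data, show the pushforward is atomless, and apply Lemma~\ref{lem:bern} to a Bernstein set. The only difference is cosmetic---the paper obtains the Polish space by restricting to a countable sublanguage $\cL_0$ and countable parameter set $A_0$, whereas you cut down directly to a countable Boolean subalgebra of $\Def_x(A)$; these yield the same conclusion, and your packaging is arguably a bit cleaner since it handles uncountable $\cL$ and $A$ in one step.
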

\begin{proof}
Choose a countable set $A_0\seq A$, and a countable sub-language $\cL_0\seq\cL$, such that if $\nu$ is the restriction of $\mu|_{A_0}$ to $(\cL_0)_{A_0}$-formulas, then $\nu$ is still strongly continuous. Let $X=S^{\cL}_x(A)$ and $Y=S^{\cL_0}_x(A_0)$, and define $\rho\colon X\to Y$ to be the composition of $\rho^x_{A,A_0}$ with restriction to $\cL_0$. Then $\nu$ is the pushforward of $\mu$ along $\rho$ (as in Remark \ref{rem:push}). Not that any singleton in $Y$ is $\nu$-null by strong continuity of $\mu$. Since $Y$ is Polish, there is a Bernstein set $Z\seq Y$. Altogether, $\rho\inv(Z)\seq S_x(A)$  is not $\mu$-measurable by Lemma \ref{lem:bern}. 
\end{proof}

\subsection{The random ternary relation}\label{sec:TR}
Given any finite relational language $\cL$, the class of finite $\cL$-structures is a \Fraisse\ class, and the complete theory of the corresponding \Fraisse\ limit is $\aleph_0$-categorical and has quantifier elimination (this follows from \cite[Theorem 7.4.1]{Hobook}). It is well known, and not hard to prove, that any theory obtained this way is supersimple of SU-rank $1$, and is also unstable if and only if $\cL$ contains a relation of arity at least $2$. 

In this subsection, we work with the theory $T_R$ obtained in the above fashion, where $\cL$ consists of a single \emph{ternary} relation $R(x,y,z)$. Throughout this section, $\cU$ is a monster model of $T_R$. 
We will first show that in $T_R$, Borel definability of measures is not preserved by Morley products. So this refutes the unproven claim in \cite[Lemma 1.6]{HPS}. In fact, we show that the product of Borel definable \emph{types} is not necessarily Borel definable.

\begin{proposition}\label{prop:ternary1}
There are Borel definable $p,q\in S_1(\cU)$ such that $p\otimes q$ is not Borel definable.  
\end{proposition}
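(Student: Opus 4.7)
The plan is to exploit a gap, visible only over uncountable parameter sets, between Borel definability of $q$ (which ensures the fiber map $\tau_q\colon S_1(A) \to S_2(A)$, $\tp(c/A) \mapsto \tp(b_c,c/A)$ with $b_c \models q|_{Ac}$, pulls clopens back to Borel sets) and the stronger requirement that $\tau_q$ pull open sets back to Borel sets; the latter is nontrivial because open subsets of $S_2(A)$ are \emph{uncountable} unions of clopens when $A$ is uncountable. For types $p, q \in S_1(\cU)$, one computes $(p\otimes q)(R(x,y,c)) = g_p(\tau_q(\tp(c/A)))$, where $g_p(\tp(u,v/A)) := p(R(x,u,v))$. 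So if some open $U \subseteq S_2(A)$ satisfies $\tau_q^{-1}(U)$ non-Borel, then taking $p$ with $g_p = \boldsymbol{1}_U$ (Borel, since $U$ is open) yields a non-Borel $F^{R(x,y,z)}_{p \otimes q, A}$, obstructing Borel definability of $p \otimes q$ via Corollary~\ref{cor:BDdrop}.

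Setup: Fix a countable $A_0 \subset \cU$ and an analytic, non-Borel subset $E_0 \subseteq S_1(A_0)$ (which exists since $S_1(A_0)$ is Polish). Let $A \supset A_0$ be uncountable, containing an indexed family $\{a_r : r \in [0,1]\}$ of distinct elements. Set $E := (\rho^x_{A,A_0})^{-1}(E_0) \subseteq S_1(A)$, which is non-Borel by Theorem~\ref{thm:HolSpur}. Writing $E_0$ as the projection $\pi_1(C_0)$ of a closed $C_0 \subseteq S_1(A_0) \times [0,1]$, pull $C_0$ back through $\rho^x_{A,A_0} \times \mathrm{id}$ to a Borel $B_0 \subseteq S_1(A) \times [0,1]$ with $\pi_1(B_0) = E$, and denote its fibers $F_r := \{s : (s,r) \in B_0\}$, so $E = \bigcup_{r \in [0,1]} F_r$.

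Construction: Define $q \in S_1(\cU)$ so that $R(y,u,v) \in q$ iff $v = a_r$ for some $r$ with $\tp(u/A) \in F_r$, and with $R$ holding in no other configuration involving $y$. Define $p \in S_1(\cU)$ so that $R(x,u,v) \in p$ iff $R(u,v,a_r)$ for some $r$, with $R$ holding in no other configuration involving $x$. Quantifier elimination in $T_R$ makes both into consistent, complete, $A$-invariant types. Then $g_p = \boldsymbol{1}_U$ for the open set $U := \bigcup_r [R(z_1,z_2,a_r)] \subseteq S_2(A)$, so $p$ is Borel definable over $A$. For $q$, the critical preimage $(F^{R(y,z_1,z_2)}_{q,A})^{-1}(1)$ lies in the open subset $V := \pi_2^{-1}(J) \subseteq S_2(A)$, with $J := \{\tp(a_r/A) : r \in [0,1]\}$. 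Since each $\{\tp(a_r/A)\}$ is an isolated clopen point of $S_1(A)$, the indexing map $\tp(a_r/A) \mapsto r$ is Borel (its preimages, being subsets of $J$, are unions of isolated clopens, hence open in $S_1(A)$). Composing, $\Psi\colon V \to S_1(A) \times [0,1]$, $\tp(u,v/A) \mapsto (\tp(u/A), r)$, is a Borel map, and the set in question equals $\Psi^{-1}(B_0)$, so is Borel; hence $q$ is Borel definable over $A$.

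Finally, $(p\otimes q)(R(x,y,c)) = g_p(\tp(b_c,c/A)) = 1$ iff $R(b_c,c,a_r)$ for some $r$, iff $R(y,c,a_r) \in q$ for some $r$, iff $\tp(c/A) \in F_r$ for some $r$, iff $\tp(c/A) \in E$. Hence $F^{R(x,y,z)}_{p\otimes q, A} = \boldsymbol{1}_E$ is non-Borel, so $p \otimes q$ is not Borel definable over $A$; by Fact~\ref{fact:basicinv} and Corollary~\ref{cor:BDdrop}, it is not Borel definable at all. The main obstacle is the Borel-definability check for $q$: the type $q$ is engineered to encode a continuum-indexed family $\{F_r\}$ of Borel subsets of $S_1(A)$, and arranging that the assembled ``diagonal'' set in $S_2(A)$ remains Borel requires essential use of the isolated clopen structure of the realized singleton types $\{\tp(a_r/A)\}_r \subseteq S_1(A)$, which is what lets one uniformize the family $\{F_r\}$ through a single Borel set $B_0 \subseteq S_1(A) \times [0,1]$.
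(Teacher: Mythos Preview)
Your argument has a genuine error: you claim $E_0 = \pi_1(C_0)$ for a \emph{closed} $C_0 \subseteq S_1(A_0)\times [0,1]$. But $S_1(A_0)\times [0,1]$ is compact, so any closed $C_0$ is compact, whence $\pi_1(C_0)$ is compact and therefore Borel---contradicting your choice of $E_0$ as non-Borel. The fix is straightforward: take $C_0$ merely Borel (every analytic subset of a Polish space $X$ is the first-coordinate projection of a Borel subset of $X\times [0,1]$, via a Borel embedding of $\N^\N$ into $[0,1]$). Nothing downstream uses closedness of $C_0$; you only need $B_0$ Borel, which follows once $C_0$ is. With this correction your argument is sound.

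The overall architecture matches the paper's: one takes $p$ with $dp(R(x;z_1,z_2))$ an open union $\bigcup_i [R(z_1,z_2,a_i)]$, and $q$ encoding an $A$-indexed family of subsets of $S_1(A)$ whose union is non-Borel, so that $F^{R(x,y;z)}_{p\otimes q,A}$ is the indicator of that non-Borel union. The execution differs. The paper simply chooses an \emph{arbitrary} non-Borel $Z\subseteq S_1(B)$ by a cardinality count (there are $2^{2^{|B|}}$ subsets and only $2^{|B|}$ Borel ones), enumerates $Z=\{r_i:i<\kappa\}$ and $A=\{a_i:i<\kappa\}$ by the same index set, and lets each $a_i$ point to the single closed set $\{r_i\}$; Borel definability of $q$ then reduces to writing $dq(R_2)=K\cap U$ with $K$ closed and $U$ open. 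Your route instead needs $E$ analytic so that it decomposes into Borel fibers $F_r$ over $[0,1]$, and you verify Borel definability of $q$ by pulling $B_0$ back through a continuous map built from the discreteness of the realized types $\tp(a_r/A)$. The paper's version is more elementary (no descriptive set theory) and more general (it works for any non-Borel $Z$, not just analytic ones, which is exploited immediately afterward to make $F^{R}_{p\otimes q}$ non-$\lambda$-measurable); yours is a perfectly valid alternative once the closedness slip is repaired.
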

\begin{proof}
Fix an infinite set $B\subset\cU$ and an arbitrary set $Z\seq S_1(B)$ such that $\kappa\coloneqq |Z|\geq|B|$.
We will construct types $p,q\in S_1(\cU)$ satisfying the following properties.
\begin{enumerate}[$(i)$]
\item $p$ and $q$ are Borel definable over some $A\supseteq B$ of cardinality $\kappa$.
\item For any $c\in\cU$, $R(x,y,c)\in p\otimes q$ if and only if $\tp(c/B)\in Z$.
\end{enumerate}
In particular, setting $Z^*= (\rho^z_{A,B})\inv(Z)$, we have  $F^{R(x,y;z)}_{p\otimes q,A}=\boldsymbol{1}_{Z^*}$ by $(ii)$. So if $Z$ is not Borel then $p\otimes q$ is not Borel definable by Theorem \ref{thm:HolSpur} and Corollary \ref{cor:BDdrop}.
 Note also that $S_1(B)$ has a topological basis of size $|B|$, and thus has at most $2^{|B|}$ Borel subsets. On the other hand, $S_1(B)$ has $2^{2^{|B|}}$ subsets (since it has size $2^{|B|}$). So there are non-Borel choices for the set $Z$ above. 

Fix $A\supseteq B$ of cardinality $\kappa$. Given an $A$-invariant type $p\in S_{\xbar}(\cU)$ and a formula $\phi(\xbar;\ybar)$, we define  $dp(\phi)=\{s\in S_{\ybar}(A):\phi(\xbar;\bbar)\in p\text{ for }\bbar\models s\}$ (in particular, we have $F_{p,A}^{\varphi} = \mathbf{1}_{dp(\varphi)}$). Let $x,y,z$ be tuples of variables of length one, and let $R_1(x;y,z)$ and $R_2(y;x,z)$ be partitions of $R(x,y,z)$. Enumerate $A=\{a_i:i<\kappa\}$ and $Z=\{r_i:i<\kappa\}$. We construct the desired types $p$ and $q$. 

First, define $p\in S_x(\cU)$ so that the positive instances of $R$ in $p$ (which actually involve $x$) are precisely those of the form $R(x,b,c)$ where $b,c\in\cU$ are such that 
$R(a_i,b,c)$ holds for some $i<\kappa$.
Note that $p$ is $A$-invariant. To prove Borel definability of $p$, it suffices by quantifier elimination to focus on atomic formulas; and by definition of $p$, we only need to consider $R_1(x;y,z)$. 
By construction, $dp(R_1)=\bigcup_{i<\kappa}[R(m_i,y,z)]$, which is open. 

Now define $q\in S_y(\cU)$ so that the positive instances of $R$ in $q$ (which actually involve $y$) are precisely those of the form $R(a,y,c)$ where $a,c\in\cU$ are such that $a=a_i$ and $c\models r_i$ for some $i<\kappa$.
Then $q$ is $A$-invariant by construction. We claim that $q$ is Borel-definable. By quantifier elimination, and the definition of $q$, it suffices to consider $R_2(y;x,z)$. Given $i<\kappa$, set $K_i=\{s\in S_{xz}(A):s_{z}|_B=r_i\}$, and note that $K_i$ is closed. From the definition of $q$, we have that $dq(R_2)=\bigcup_{i<\kappa} K_i\cap [x=a_i]$, which is an infinite union of closed sets. However, if we set $K=\bigcap_{i<\kappa}K_i\cup [x\neq a_i]$ and $U=\bigcup_{i<\kappa}[x=a_i]$, then $K$ is closed, $U$ is open, and $dq(R_2)=K\cap U$.

We have now built $p$ and $q$ satisfying $(i)$. It remains to show that $p\otimes q$ satisfies $(ii)$. It is easy to check that any positive instance of $R$  in $p\otimes q$ involving the variables $x$ and $y$ must have the form $R(x,y,c)$ for some $c\in\cU$. 
So fix $c\in\cU$.  Using the definition of $p$, we have $R(x,y,c)\in p\otimes q$ if and only if there are $i<\kappa$ and $b\models q|_{Ac}$ such that $R(a_i,b,c)$. Using the definition of $q$, we conclude that $R(x,y,c)\in p\otimes q$ if and only if $c\models r_i$ for some $i<\kappa$.  Altogether, we have property $(ii)$.
\end{proof}

We now use the previous construction  to produce Borel definable types $p,q\in S_1(\cU)$, and a measure $\lambda\in\kM_1(\cU)$, such that the Morley product of $p\otimes q$ with $\lambda$ is not well-defined.  First, using Fact \ref{fact:SC}, we may fix a strongly continuous measure $\lambda\in\kM_1(\cU)$ (one can even choose $\lambda$ to be \emph{definable} via Lemma \ref{lem:LebTR} below). Let $B\subset \cU$ be an infinite set such that $\lambda|_B$ is strongly continuous. By Corollary \ref{cor:bern}, there is a set $Z\seq S_1(B)$ that is not $\lambda|_B$-measurable. Now choose $p,q\in S_1(\cU)$ as in the proof of Proposition \ref{prop:ternary1}; in particular, $R(x,y,c)\in p\otimes q$ if and only if $\tp(c/B)\in Z$. Then $(p\otimes q)\otimes \lambda$ is not well-defined since $F^R_{p\otimes q,B}=\boldsymbol{1}_Z$. Note that $p\otimes (q\otimes\lambda)$ is well-defined however, and so this also produces a rather cheap failure of associativity. 

Next we will demonstrate a more substantial failure of associativity  in which all Morley products involved are well-defined. Intuitively speaking, the construction uses something like the `first-year probability theory paradox' that the measure of $[0,1]$ is $1$, yet the measure of $\{x\}$ for any $x \in [0,1]$ is $0$.

\begin{proposition}\label{prop:ternary2}
Suppose $\lambda\in\kM_1(\cU)$ is a strongly continuous measure. Then there are types $p,q\in S_1(\cU)$ such that $p$, $q$, and $p\otimes q$ are Borel definable, the measures $q\otimes\lambda$, $(p\otimes q)\otimes\lambda$, and $p\otimes(q\otimes\lambda)$ are well-defined, but $((p\otimes q)\otimes\lambda)(R(x,y,z))=1$ and $(p\otimes(q\otimes\lambda))(R(x,y,z))=0$.
\end{proposition}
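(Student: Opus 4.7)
The plan is to specialize the construction in Proposition \ref{prop:ternary1} by choosing the set $Z \subseteq S_1(B)$ to carry full $\lambda$-mass, and then to exploit the contrast between how $F^R_{p\otimes q,A}$ looks against $\lambda$ and how the open set $dp(R)\subseteq S_{yz}(A)$ is measured by the regular Borel extension of $q\otimes\lambda$. The heuristic is precisely the first-year probability paradox mentioned just before the proposition: each $[R(a_i,y,z)]$ will be a null set for $q\otimes\lambda$, but their uncountable union will nevertheless have to meet $R(x,y,c)$ for every $c$.

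First, using Remark \ref{rem:SCp}, I would pass to a countable infinite $B\subset\cU$ over which $\lambda|_B$ remains strongly continuous. Set $Z=S_1(B)$, which is trivially Borel and satisfies $\lambda|_B(Z)=1$, and apply the construction of Proposition \ref{prop:ternary1} to this $Z$ (with $\kappa=|Z|$ and $|A|=\kappa$) to obtain Borel definable types $p,q\in S_1(\cU)$ with the three properties listed there; in particular, $R(x,y,c)\in p\otimes q$ iff $\tp(c/B)\in Z$, which now holds for \emph{every} $c\in\cU$. Consequently $F^{R(x,y,z)}_{p\otimes q,A}\equiv 1$, and for the remaining atomic $\cL$-formulas in free variables $x,y$ with parameter $z$, the ``default-negative'' convention built into $p$ and $q$ renders the corresponding fiber functions identically $0$ after a short case analysis over the permutations of $R$; this gives Borel definability of $p\otimes q$ over $A$.

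The first Morley product is then immediate from integrating a constant function:
\[
((p\otimes q)\otimes\lambda)(R(x,y,z)) \;=\; \int_{S_z(A)} F^R_{p\otimes q}\,d\lambda \;=\; 1.
\]
For the second, since $p$ is a type, $F^R_{p,A}$ is the indicator of $U\coloneqq \bigcup_{i<\kappa}[R(a_i,y,z)]$, so $(p\otimes(q\otimes\lambda))(R(x,y,z))=(q\otimes\lambda)(U)$. The heart of the argument is to compute this measure. For each individual $i$, Borel definability of $q$ shows that $F^{R(a_i,y,z)}_{q,A}$ is the indicator of $(\rho^z_{A,B})^{-1}(\{r_i\})$, so the pushforward description in Remark \ref{rem:push}, combined with strong continuity of $\lambda|_B$, yields $(q\otimes\lambda)([R(a_i,y,z)])=\lambda|_B(\{r_i\})=0$. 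Inner regularity of the regular Borel measure $q\otimes\lambda$ together with compactness then lifts this to the uncountable union: any compact $C\subseteq U$ is already covered by finitely many $[R(a_{i_j}, y, z)]$ by compactness of $C$ and openness of the cover, hence has measure $0$ by finite subadditivity, and thus $(q\otimes\lambda)(U)=0$.

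The step I expect to be most delicate is the verification that $p\otimes q$ is Borel definable as a \emph{global} measure and not merely at the specific formula $R(x,y,z)$; this requires checking that the asymmetric stipulation of positive $R$-instances in $p$ and $q$ really does trivialize all the other permutations of $R$. Once this bookkeeping is in hand, the rest of the argument is a routine combination of inner regularity with the strong continuity hypothesis.
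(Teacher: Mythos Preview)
Your proposal is correct and matches the paper's own proof essentially line for line: both specialize the construction of Proposition~\ref{prop:ternary1} to $Z=S_1(B)$, read off $((p\otimes q)\otimes\lambda)(R)=1$ from the constant fiber function, and obtain $(p\otimes(q\otimes\lambda))(R)=0$ by combining inner regularity of $(q\otimes\lambda)|_A$ on the open set $U=\bigcup_{i<\kappa}[R(a_i,y,z)]$ with the computation $(q\otimes\lambda)(R(a_i,y,z))=\lambda|_B(\{r_i\})=0$. The paper handles your ``delicate step'' in the same way you do, simply noting that $p\otimes q$ is Borel definable because $Z$ is Borel (indeed the whole space) without spelling out the remaining permutations of $R$.
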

\begin{proof}
We use similar notation as in the proof of Proposition \ref{prop:ternary1}. Using Remark \ref{rem:SCp}, we may choose infinite $B\seq A\subset\cU$ such that $\kappa\coloneqq |A|=2^{|B|}$ and $\lambda|_A(\{r\})=0$ for all $r\in S_1(A)$. Enumerate $A=\{a_i:i<\kappa\}$ and $S_1(B)=\{r_i:i<\kappa\}$.

Define $p\in S_x(\cU)$ so that the positive instances of $R$ in $p$ are precisely those of the form $R(x,b,c)$ where $b,c\in\cU$ are such that $R(a_i,b,c)$ holds for some $i<\kappa$. Define $q\in S_y(\cU)$ so that the positive instances of $R$ in $q$ are precisely those of the form $R(a,y,c)$ where $a,c\in \cU$ are such that $a=a_i$ and $c\models r_i$ for some $i<\kappa$. In other words, $p$ and $q$ are exactly as in the proof of Proposition \ref{prop:ternary1}, if one chooses $Z=S_z(B)$ in the definition of $q$. So $p$, $q$, and $p\otimes q$ are Borel definable over $A$. (Note that, in the general construction from the proof of Proposition \ref{prop:ternary1}, $p\otimes q$ is Borel definable if and only if $Z$ is Borel.)

 Note that $q\otimes\lambda$, $(p\otimes q)\otimes\lambda$, and $p\otimes(q\otimes\lambda)$ are well-defined since the left most term in each product is Borel definable.  Let $\eta_1=(p\otimes q)\otimes \lambda$ and $\eta_2=p\otimes(q\otimes\lambda)$.  Then  $\eta_1(R(x,y,z))=1$ since $F^{R(x,y;z)}_{p\otimes q,A}$ takes the constant value $1$ on $S_z(A)$. It remains to  show that $\eta_2(R(x,y,z))=0$. 
 
By definition, we have $\eta_2(R(x,y,z))=(q\otimes\lambda)|_A(U)$ where $U\coloneqq dp(R(x;y,z))=\bigcup_{i<\kappa}[R(a_i,y,z)]$. So $U$  is an open set in $S_{yz}(A)$.  Moreover,  if $i<\kappa$ then 
\[
(q\otimes\lambda)(R(a_i,y,z))=\lambda|_A(dq(R(a_i,y;z)))=\lambda|_A(\{r_i\})=0.
\] 
So by compactness, Fact \ref{fact:basicBorel}$(a)$, and finite additivity of $(q\otimes\lambda)|_A$, we have
\[
\eta_2(R(x,y,z))=(q\otimes \lambda)|_A(U) \leq \sup_{I\in [\kappa]^{<\omega}}\sum_{i\in I}(q \otimes \lambda)|_A (R(a_i,y,z))=0.\qedhere
\]
\end{proof}

In fact, we can strengthen the previous result using the existence of a \emph{definable} strongly continuous measure in $T_R$, namely, the  `coin-flipping' measure, which independently assigns $R$-neighborhoods measure $\frac{1}{2}$. Similar measures on the random graph are studied by Albert in \cite{AlbRG}.

\begin{lemma}\label{lem:LebTR}
There is an $\emptyset$-definable strongly continuous measure in $\kM_1(\cU)$.
\end{lemma}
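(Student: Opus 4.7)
The plan is to construct the coin-flipping measure: roughly, $\mu$ should be the unique Keisler measure for which each atomic $R$-formula in $x$ (i.e., of the form $R(t_1,t_2,t_3)$ with $t_i \in \cU \cup \{x\}$ and some $t_i = x$) has measure $\frac{1}{2}$, distinct such atoms are mutually independent, and each isolated type $x = a$ has measure $0$. To realize this rigorously, fix a finite $B \subset \cU$ and let $J_B$ denote the finite set of atomic $R$-formulas in $x$ with parameters from $B$. Equip $\Omega_B = 2^{J_B}$ with the uniform Bernoulli measure $\nu_B$, and define a continuous map $\tau_B \colon \Omega_B \to S_1(B)$ sending $\omega$ to the unique 1-type $p_\omega$ extending $\{x \neq b : b \in B\}$ such that, for each $\phi \in J_B$, one has $\phi \in p_\omega$ iff $\omega(\phi) = 1$. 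Existence of $p_\omega$ uses the \Fraisse\ amalgamation property for $T_R$, and uniqueness uses quantifier elimination. Set $\mu_B = (\tau_B)_* \nu_B$. One then checks that the coordinate projection $\pi \colon \Omega_B \to \Omega_A$ for $A \seq B$ satisfies $\rho^x_{B,A} \circ \tau_B = \tau_A \circ \pi$ and $\pi_* \nu_B = \nu_A$, so the family $\{\mu_B\}_B$ is compatible under restriction and glues to a finitely additive probability measure $\mu \in \kM_1(\cU)$.

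Next I would verify $\emptyset$-invariance and $\emptyset$-definability. Any $\sigma \in \Aut(\cU)$ permutes the atomic $R$-formulas involving $x$, and the uniform Bernoulli measures are invariant under such permutations, so $\sigma_* \mu = \mu$. For definability, since $T_R$ is $\aleph_0$-categorical (being the \Fraisse\ limit of a finite relational class), $S_{\bar{y}}(\emptyset)$ is finite, hence discrete, for every tuple $\bar{y}$; so each function $F^\phi_{\mu,\emptyset} \colon S_{\bar{y}}(\emptyset) \to [0,1]$ is automatically continuous and $\mu$ is definable over $\emptyset$ via Fact \ref{fact:defdefs}.

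Finally, strong continuity reduces to showing $\mu(\{p\}) = 0$ for each $p \in S_1(\cU)$ by Remark \ref{rem:SCp}. For realized types this is immediate since $\mu(x = a) = 0$. For non-realized $p$, pick distinct $a_n \in \cU$ for $n < \omega$ and let $\psi_n(x)$ be whichever of $R(x, a_n, a_n)$ or its negation lies in $p$. Then $\psi_1, \dots, \psi_n$ are literals of $n$ distinct atomic $R$-formulas, so $\mu(\bigwedge_{i \leq n} \psi_i) = 2^{-n}$ by construction, and regularity of $\mu$ gives $\mu(\{p\}) \leq 2^{-n}$ for every $n$. I expect the main obstacle to be the compatibility verification for the local measures $\mu_B$: while conceptually straightforward, it requires careful bookkeeping about quantifier elimination and about the fact that $\tau_B$ only targets non-realized types, whereas $S_1(B)$ also contains the isolated points $[x = a]$ for $a \in B$.
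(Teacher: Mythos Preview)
Your approach is correct and produces the same coin-flipping measure as the paper, but the routes diverge in two places. For existence, the paper invokes an abstract independence lemma (Corollary~\ref{cor:IE2}, proved in the appendix): the family of positive $R$-instances in one free variable is independent in $\Def_1(\cU)$, so any assignment of masses---here the constant $\tfrac{1}{2}$---extends to a Keisler measure satisfying the product rule on Boolean combinations. Your Bernoulli pushforward construction is more hands-on and avoids that appendix machinery, at the cost of the compatibility bookkeeping you flag. For $\emptyset$-definability, the paper argues directly via quantifier elimination that every consistent instance of a fixed conjunctive $\phi(x;\bar{y})$ receives the same measure, whereas your observation that $\aleph_0$-categoricity makes $S_{\bar y}(\emptyset)$ finite (so every $F^\phi_{\mu,\emptyset}$ is trivially continuous) is shorter and reusable for any $\emptyset$-invariant measure in an $\aleph_0$-categorical theory. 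The strong continuity arguments are essentially the same idea---both use the formulas $R(x,a_i,a_i)$---just phrased through the partition definition in the paper and through Remark~\ref{rem:SCp} in yours. One minor point: you do not need regularity to get $\mu(\{p\})\leq 2^{-n}$; monotonicity suffices since $\{p\}\subseteq\bigl[\bigwedge_{i\leq n}\psi_i\bigr]$.
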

\begin{proof}
Let $\lambda$ be the unique measure in $\kM_1(\cU)$ satisfying the property that if $\theta_1(x),\ldots,\theta_n(x)$ are pairwise distinct (positive) instances of $R$ in one free variable, and $\psi_i(x)$ is either $\theta_i(x)$ or $\neg\theta_i(x)$, then  
\[
\lambda(\psi_1(x)\wedge\ldots\wedge\psi_n(x))=\frac{1}{2^n}.
\]
The justification that such a measure exists is given in Section \ref{app:TR} of the appendix. 

To see that $\lambda$ is strongly continuous, fix $n>0$ and distinct $a_1,\ldots,a_n\in \cU$. Suppose $\theta(x)=\bigwedge_{i=1}^n\theta_i(x)$, where $\theta_i(x)$ is either $R(x,a_i,a_i)$ or $\neg R(x,a_i,a_i)$. Then $\lambda(\theta(x))=\frac{1}{2^n}$. Since the collection of all such $\theta(x)$ forms a finite partition of $\cU^x$, we conclude that $\lambda$ is strongly continuous.

Finally, to see that $\lambda$ is $\emptyset$-definable, fix a formula $\phi(x;y_1,\ldots,y_n)$. By quantifier-elimination, we may assume $\phi$ is a conjunction of atomic and negated atomic formulas. We may also assume without loss of generality that $\phi$ contains $y_i\neq y_j$ for all $i\leq j$. Note that $\lambda(X)=0$ for any finite $X\seq\cU$. Altogether, every consistent instance of $\phi$ has the same measure. Therefore for any formula $\theta(x;y_1,...,y_n) := \varphi(x;y_1,...,y_n) \bigwedge_{1 \leq i < j \leq n}^{n} y_i \neq y_j$, the map $F_{\lambda,\emptyset}^{\theta}:S_{\ybar}(\emptyset) \to [0,1]$ is well-defined and constant, and in particular, continuous. Hence $\lambda$ is $\emptyset$-definable.
\end{proof}

The two previous results together yield a  strong failure of associativity for the Morley product in $T_R$, which also provides a counterpoint to Theorem \ref{thm:defassoc}.

\begin{corollary}\label{cor:TRassoc}
There are $p,q\in S_1(\cU)$ and $\lambda\in S_1(\cU)$ such that $p$, $q$, and $p\otimes q$ are Borel definable, and $\lambda$ is $\emptyset$-definable, but $((p\otimes q)\otimes\lambda)(R(x,y,z))=1$ and $(p\otimes(q\otimes\lambda))(R(x,y,z))=0$.
\end{corollary}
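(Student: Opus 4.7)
The plan is to observe that Corollary \ref{cor:TRassoc} is essentially a direct aggregation of the two preceding results, Proposition \ref{prop:ternary2} and Lemma \ref{lem:LebTR}, and nothing further is required beyond checking that their hypotheses match.

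First I would invoke Lemma \ref{lem:LebTR} to produce an $\emptyset$-definable measure $\lambda\in\kM_1(\cU)$ which is strongly continuous. (Note that the statement of the corollary as written places $\lambda$ in $S_1(\cU)$, but in view of Lemma \ref{lem:LebTR} and the way $\lambda$ is used in the product, the intended meaning is $\lambda\in\kM_1(\cU)$.) Since $\lambda$ is strongly continuous, its hypotheses in Proposition \ref{prop:ternary2} are satisfied.

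Next I would feed this $\lambda$ into Proposition \ref{prop:ternary2}. That proposition directly yields types $p,q\in S_1(\cU)$ such that $p$, $q$, and $p\otimes q$ are all Borel definable (in fact over an explicit set $A\subset\cU$ of size $2^{|B|}$ for some infinite $B$ with $\lambda|_A$ atomless), while
\[
((p\otimes q)\otimes\lambda)(R(x,y,z))=1\qquad\text{and}\qquad (p\otimes(q\otimes\lambda))(R(x,y,z))=0.
\]
Combining the output of the two results gives exactly the statement of Corollary \ref{cor:TRassoc}.

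There is no real obstacle here, since the corollary is assembled from two already-established ingredients. The only point worth remarking on (and which I would mention in the write-up for emphasis) is how this example sits relative to Theorem \ref{thm:defassoc}: the measure $\lambda$ is definable and $p\otimes q$ is Borel definable, yet associativity fails. This shows that in Theorem \ref{thm:defassoc} one genuinely needs the definable measure to occupy one of the first two coordinates of the triple product, and cannot move it to the third coordinate, confirming the discussion immediately preceding the corollary.
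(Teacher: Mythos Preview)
Your proposal is correct and matches the paper's own proof, which simply combines Lemma \ref{lem:LebTR} (to obtain the $\emptyset$-definable strongly continuous $\lambda$) with Proposition \ref{prop:ternary2} (applied to this $\lambda$). Your observation that $\lambda\in S_1(\cU)$ in the statement should read $\lambda\in\kM_1(\cU)$ is also right.
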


Note that in the previous result, $q\otimes\lambda$ and $(p\otimes q)\otimes\lambda$ are also Borel definable by Theorem \ref{thm:defassoc}. One can further show that if $\lambda$ is the specific measure from Lemma \ref{lem:LebTR}, then $p\otimes (q\otimes\lambda)$ is Borel definable. But this involves a somewhat technical case analysis  so we omit the details.

\section{Fim fam flim flam}\label{sec:fimfam}
We now change our overall focus from Borel definability to stronger notions motivated by the study of model-theoretic tameness.
In this section, we review several properties which, in the setting of NIP theories, characterize a canonical notion of `generic stability' for invariant Keisler measures. These properties are referred to using the descriptors \fim, \fam, and \dfs, which stand for \emph{frequency interpretation measure}, \emph{finitely approximated measure}, and \emph{definable and finitely satisfiable}, respectively. See Definition \ref{def:fimfam} below for full details. Much of the motivation for studying these notions comes from the fundamental result, due to Hrushovski, Pillay, and Simon \cite{HPS}, that if $T$ is NIP then \fim, \fam, and \dfs\ are equivalent. More precisely, we have the following  implications:
\[
\fim\makebox[.3in]{$\Rightarrow$}\fam\makebox[.3in]{$\Rightarrow$}\dfs\makebox[.4in]{$\stackrel{\raisebox{1pt}{\text{\tiny NIP}}}{\Rightarrow}$}\fim.
\]
The first implication is clear from the definitions (given below), the second is a standard exercise (e.g., \cite[Proposition 2.30]{Ganthesis}; see also Proposition \ref{prop:dfschar} below), and the third is  \cite[Theorem 3.2]{HPS}. The purpose of this section is to rapidly review the parade of definitions and basic facts about \fim, \fam, and \dfs\ that we will need for later results.

Let $T$ be a complete theory with monster model $\cU$.  Given measures $\mu,\nu\in\kM_x(\cU)$, and some $\cL_{\cU}$-formula $\phi(x,y)$, we write $\mu\approx^\phi_\epsilon\nu$ to denote that $\mu(\phi(x,b))\approx_\epsilon\nu(\phi(x,b))$ for all $b\in\cU^y$. Note that if $\mu$ and $\nu$ are invariant over $A\subset\cU$ and $\phi(x,y)$ is an $\cL_A$-formula, then $\mu\approx^\phi_\epsilon\nu$ if and only if $\|F^\phi_{\mu,A}-F^\phi_{\nu,A}\|_\infty<\epsilon$.

Given a Borel definable measure $\mu\in\kM_x(\cU)$ and some $n\geq 1$, we  define $\mu^{(n)}\in \kM_{x_1\ldots x_n}(\cU)$ by setting $\mu^{(1)}=\mu_{x_1}$ and $\mu^{(n+1)}=\mu_{x_{n+1}}\otimes\mu^{(n)}_{x_1\ldots x_n}$. Note that even if $\mu^{(n)}$ is not Borel definable for some $n$, the product involved in the definition of $\mu^{(n+1)}$ is still well-defined.   Also, if $\mu$ is a type $p\in S_x(\cU)$, then one only needs invariance in order to define $p^{(n)}$.

We now recall the definitions of the properties mentioned above. For various reasons, these notions are more effective when formulated over small models, rather than arbitrary parameter sets. Thus we will now shift our focus to small models.

\begin{definition}\label{def:fimfam}
Fix $\mu\in\kM_x(\cU)$.
\begin{enumerate}[$(1)$]
\item $\mu$ is \textbf{finitely satisfiable in $M\prec\cU$} if for any $\cL_\cU$-formula $\phi(x)$, if $\mu(\phi(x))>0$ then $\phi(x)$ is realized in $M$.
\item $\mu$ is \textbf{\textit{dfs}} if there is some $M\prec\cU$ such that $\mu$ is definable over $M$ and finitely satisfiable in $M$. In this case, we also say that $\mu$ is \textbf{\textit{dfs} over $M$}.
\item $\mu$ is  \textbf{\textit{fam}} (`finitely approximated measure') if there is some $M\prec\cU$ such that, for any $\cL$-formula $\phi(x,y)$ and any $\epsilon>0$, there is $\abar\in (M^x)^n$ such that $\mu \approx^\phi_\epsilon \Av(\abar)$. In this case, we also say that $M$ is \textbf{\textit{fam} over $M$}. 
\item $\mu$ is \textbf{\textit{fim}} (`frequency interpretation measure') if there is some $M\prec\cU$ such that, for any $\cL$-formula $\phi(x,y)$, there is  a sequence $(\theta_n(x_1,\ldots,x_n))_{n=1}^\infty$ of consistent $\cL_M$-formulas satisfying the following properties:
\begin{enumerate}[$(i)$]
\item For any $\epsilon>0$ there is some $n(\epsilon)\geq 1$ such that, if $n\geq n(\epsilon)$ and  $\abar\models\theta_n$, then $\mu\approx^\phi_\epsilon\Av(\abar)$.
\item $\lim_{n\to\infty}\mu^{(n)}(\theta_n)=1$.
\end{enumerate}
In this case, we also say that $M$ is \textbf{\textit{fim} over $M$}. 
\end{enumerate}
\end{definition}

The definition of \fim\ implicitly assumes that the Morley products $\mu^{(n)}$ are well-defined. This is justified by the fact that condition $(i)$ ensures $\mu$ is \fam\ (since we work over small models), and thus definable. We also note the following easy facts.

\begin{remark}
Fix $\mu\in\kM_x(\cU)$ and $M\prec\cU$. 
\begin{enumerate}[$(1)$]
\item If $\mu$ is finitely satisfiable in $M$ then it is invariant over $M$.
\item If $\abar\in (M^x)^n$ then $\Av(\abar)\in\kM_x(\cU)$ is \fim\ over $M$.
\end{enumerate}
\end{remark}

Next we take the opportunity  to provide a novel characterization of \dfs, which is formulated using \fam-like behavior. In particular, we show that \dfs\ is equivalent to being ``piecewise" \fam. This result also provides further evidence that  \dfs\ is a natural notion in its own right, rather than just an arbitrary combination of two separate notions. 

\begin{proposition}\label{prop:dfschar}
Fix $\mu\in\kM_x(\cU)$ and $M\prec\cU$. Then $\mu$ is \dfs\ over $M$ if and only if for any $\cL$-formula $\varphi(x,y)$ and any $\epsilon>0$, there are  tuples $\abar_1\in(M^x)^{n_1},\ldots,\abar_k\in(M^x)^{n_k}$ and $\cL_M$-formulas $\psi_1(y),\ldots,\psi_k(y)$ partitioning $M^y$ such that for any $b\in \cU^y$, if $\cU\models\psi_i(b)$ then $\mu(\varphi(x,b))\approx_\epsilon \Av(\abar_i)(\varphi(x,b))$. 
\end{proposition}
\begin{proof}
Suppose first that $\mu$ satisfies the latter condition. Then $\mu$ is clearly invariant over $M$ and, for any $\cL$-formula $\varphi(x,y)$, $F_{\mu,M}^\varphi$ is a uniform limit of continuous functions on $S_y(M)$ (specifically piecewise constant functions with clopen pieces). Therefore $F_{\mu,M}^\varphi$ is continuous for any $\varphi(x,y)$, whence $\mu$ is definable over $M$. Finite satisfiability of $\mu$ in $M$ is straightforward and left to the reader (the argument is nearly identical to showing that \fam\ over $M$ implies finitely satisfiable in $M$).

Conversely, suppose $\mu$ is \dfs\ over $M$. Fix an $\cL$-formula $\varphi(x,y)$. Then $F^\varphi_{\mu,M}$ is continuous. Let $\cF$ be the set of functions $F^{\varphi}_{\Av(\abar),M}$ for $\abar\in (M^x)^{<\omega}$. In particular, $\cF$ is a set of continuous functions from $S_y(M)$ to $[0,1]$.  To ease notation, let $g=F^\varphi_{\mu,M}$ and $X=S_y(M)$.

\medskip

\noindent\emph{Claim:} For any $q\in X$ and any $\epsilon>0$, there is some $f\in\cF$ such that $|f(q)-g(q)|<\epsilon$.

\noindent\emph{Proof:} Fix $q\in X$ and $b\models q$. There are three cases: either $\mu(\varphi(x,b)) = 0$, $\mu(\varphi(x,b)) = 1$, or $\mu(\varphi(x,b)) \in (0,1)$. 
    
    In the first case, we can find $a \in M^x$ such that $\neg \varphi(a,b)$ holds. In the second case, we can find $a \in M^x$ such that $\varphi(a,b)$ holds. So in either case, if $f=F^{\varphi}_{\Av(a),M}=\boldsymbol{1}_{\varphi(a,y)}$, then $f\in\cF$ and $f(q)=g(q)$.

    In the third case, we can find $a_+,a_-\in M^x$ such that $\varphi(a_+,b)$ and $\neg \varphi(a_-,b)$ both hold. Fix $\epsilon>0$, and choose a rational $r=m/n\in [0,1]$ such that $|r-g(q)|<\epsilon$. Let $\abar=(a_1,\ldots,a_n)$ with $a_i=a_+$ for $1\leq i\leq m$ and $a_i=a_-$ for $m<i\leq n$. If $f=F^{\varphi}_{\Av(\abar),M}$ then $f\in \cF$ and $f(q)=r$, hence $|f(q)-g(q)|<\epsilon$.\claim
 
 \medskip 
 
Fix $\e>0$. We will now find a clopen partition $A_1,\ldots,A_k$ of $X$, along with functions $f_1,\ldots,f_k\in \cF$, such that for any $q\in X$ and $1\leq i\leq k$, if $q\in A_i$ then $|f_i(q)-g(q)|<\epsilon$. By choice of $\cF$ and $g$, this will finish the proof.

For each $q \in X$, we apply the claim to find $f_q \in \cF$ such that $|f_q(x)-g(q)| < \frac{1}{3}\e$. Since $g$ and $f_q$ are continuous, we can also find a clopen neighborhood $B_q$ of $q$ such that for any $p \in B_q$, $|f_q(p)-f_q(q)|< \frac{1}{3}\e$ and $|g(p)-g(q)| < \frac{1}{3}\e$. In particular, this implies that for any $p \in B_q$,
    \[
    |f_q(p) - g(p)| \leq |f_q(p)-f_q(q)|+|f_q(q)-g(q)|+|g(q)-g(p)| < \e.
    \]
    
    By compactness, we can find a finite sequence $q_1,\dots,q_k$ such that $B_{q_1},\dots,B_{q_{k}}$ covers $X$. Setting $f_i = f_{q_i}$ and $A_i = B_{q_i} \backslash \bigcup_{j < i} B_{q_j}$, we have the required functions and partition (after possibly decreasing $k$ and discarding any empty $A_i$).
\end{proof}

Note that $\mu\in\kM_x(\cU)$ is \fam\ over $M\prec\cU$ if and only if it satisfies the conditions of the previous proposition with $k=1$.

Next we review basic properties about approximations.

\begin{definition}
Given a measure $\mu\in\kM_x(\cU)$, an $\cL_{\cU}$-formula $\phi(x,y)$, an integer $n\geq 1$, and some $C\seq[0,1]$, let $\xbar=(x_1,\ldots x_n)$ and define 
\[
\Avd{n}{C}{\mu}{\phi} = \{\abar\in \cU^{\xbar}:|\mu(\phi(x,b))-\Av(\abar)(\phi(x,b))|\in C\text{ for all $b\in\cU^y$}\}.
\]
\end{definition}

Note that a measure $\mu\in\kM_x(\cU)$ is \fam\ over $M\prec\cU$ if and only if, for any $\cL$-formula $\phi(x,y)$ and $\epsilon>0$, there is some $n\geq 1$ such that $\Avo{n}{\epsilon}{\mu}{\phi}\cap (M^x)^n\neq\emptyset$.

\begin{lemma}\label{lem:famints}
Suppose $\mu\in\kM_x(\cU)$ is Borel definable and $\varphi(x,y)$ is an $\cL_{\cU}$-formula. Then for any $\abar\in \Avo{n}{\epsilon}{\mu}{\phi}$ and any $\nu\in\kM_y(\cU)$, we have
\[
(\mu\otimes\nu)(\phi(x,y))\approx_\epsilon (\Av(\abar)\otimes\nu)(\phi(x,y))=\frac{1}{n}\sum_{n=1}^n\nu(\phi(a_i,y)).
\]
\end{lemma}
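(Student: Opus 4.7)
The statement is essentially a computation, so my plan is quite concrete. I would fix a small $A \subset \cU$ containing $a_1, \ldots, a_n$ and the parameters of $\phi$, and such that $\mu$ is Borel definable over $A$ (possible by Corollary~\ref{cor:BDdrop}, since $\mu$ is $A$-invariant). Observe that $\Av(\abar) = \frac{1}{n}\sum_{i=1}^n \delta_{a_i}$ is Borel definable over $A$ as well, with fiber function
\[
F^\phi_{\Av(\abar),A}(q) = \frac{1}{n}\sum_{i=1}^n \boldsymbol{1}_{[\phi(a_i,y)]}(q),
\]
since each set $[\phi(a_i,y)] \subseteq S_y(A)$ is clopen. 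So both Morley products in the statement are well-defined.

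Next I would compute the right-hand equality directly from the definition of the Morley product. Integrating the simple function $F^\phi_{\Av(\abar),A}$ against $\nu$ gives
\[
(\Av(\abar)\otimes\nu)(\phi(x,y)) = \int_{S_y(A)} F^\phi_{\Av(\abar)}\,d\nu = \frac{1}{n}\sum_{i=1}^n \nu|_A([\phi(a_i,y)]) = \frac{1}{n}\sum_{i=1}^n \nu(\phi(a_i,y)),
\]
which is the claimed equality.

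For the approximation, the assumption $\abar \in \Avo{n}{\epsilon}{\mu}{\phi}$ unpacks (by $A$-invariance of both $\mu$ and $\Av(\abar)$) to $\|F^\phi_{\mu,A} - F^\phi_{\Av(\abar),A}\|_\infty < \epsilon$. Applying this bound pointwise under the integral,
\[
\bigl|(\mu\otimes\nu)(\phi(x,y)) - (\Av(\abar)\otimes\nu)(\phi(x,y))\bigr| \leq \int_{S_y(A)} \bigl|F^\phi_{\mu} - F^\phi_{\Av(\abar)}\bigr|\,d\nu < \epsilon,
\]
completing the proof. There is no real obstacle here; the only care needed is to justify that $\Av(\abar)$ is Borel definable (so the left-hand Morley product expression is well-defined) and to note that the sup norm bound from the definition of $\Avo{n}{\epsilon}{\mu}{\phi}$ passes through the integral.
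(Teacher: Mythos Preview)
Your proposal is correct and takes essentially the same approach as the paper, which merely says ``integrate over $S_y(M)$, where $M\prec\cU$ is such that $\phi(x,y)$ is over $M$, $\mu$ is Borel definable over $M$, and $\abar\in M^{\xbar}$.'' The only quibble is that invoking Corollary~\ref{cor:BDdrop} is unnecessary: you can simply enlarge $A$ to contain a set over which $\mu$ is already Borel definable (as the paper does by taking a model $M$), rather than appealing to $A$-invariance of $\mu$, which you have not separately established.
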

\begin{proof}
This is a straightforward calculation (integrate over $S_y(M)$, where $M\prec\cU$ is such that $\phi(x,y)$ is over $M$, $\mu$ is Borel definable over $M$, and $\abar\in M^{\xbar}$).
\end{proof}

Next we work toward a characterization of \fim\ (Proposition \ref{prop:fimtest} below), which will be useful in several later results. Recall that a set (in $\cU$) is \emph{co-type-definable} if its complement is type-definable. 

\begin{lemma}\label{lem:defapprox}
Suppose $\mu\in\kM_x(\cU)$ is definable over $A\subset\cU$. Then, for any $\cL_A$-formula $\phi(x,y)$ and any $n\geq 1$ and $\epsilon>0$, $\Avc{n}{\epsilon}{\mu}{\phi}$ is type-definable over $A$, and $\Avo{n}{\epsilon}{\mu}{\phi}$ is  co-type-definable over $A$.  
\end{lemma}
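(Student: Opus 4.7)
The plan is to rewrite membership in the two sets so that (i) the dependence of $\Av(\abar)(\phi(x,b))$ on $b$ is captured by finitely many data, and (ii) the ``measure condition'' on $b$ becomes type-definable over $A$ thanks to definability of $\mu$. For each $S \seq [n]$, set $E_S(\xbar,y) := \bigwedge_{i \in S} \phi(x_i,y) \wedge \bigwedge_{i \notin S} \neg\phi(x_i,y)$ and $r_S := |S|/n$; then $\Av(\abar)(\phi(x,b)) = r_S$ precisely when $\models E_S(\abar,b)$. Applying Fact \ref{fact:defdefs}$(iii)$ to $\mu$ (and to $\neg\phi$ to handle the opposite inequality), the weak-inequality sets $B^{\leq}_S := \{b : |\mu(\phi(x,b)) - r_S| \leq \epsilon\}$ and $B^{\geq}_S := \{b : |\mu(\phi(x,b)) - r_S| \geq \epsilon\}$ are both type-definable over $A$, while their strict-inequality complements $B^{>}_S$ and $B^{<}_S$ are unions of $A$-definable sets.

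For the $\Avc$ case, I would negate the defining condition:
\[
\abar \notin \Avc{n}{\epsilon}{\mu}{\phi} \quad\Longleftrightarrow\quad \exists b\ \exists S \seq [n]\ \bigl(E_S(\abar,b) \wedge b \in B^{>}_S\bigr).
\]
Writing $B^{>}_S = \bigcup_j [\chi^S_j(y)]$ for $\cL_A$-formulas $\chi^S_j$, this becomes $\bigvee_{S,j}\exists y\,(\chi^S_j(y) \wedge E_S(\xbar,y))$, a union of $A$-definable conditions on $\xbar$. So $\Avc{n}{\epsilon}{\mu}{\phi}$ is type-definable over $A$.

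For the $\Avo$ case, the analogous negation replaces $B^{>}_S$ by $B^{\geq}_S = \bigcap_k [\eta^S_k(y)]$, which is only type-definable. By saturation of $\cU$, the existence of a $b$ realizing $\{\eta^S_k(y)\}_k \cup \{E_S(\abar,y)\}$ is equivalent to finite satisfiability of this partial type, i.e., to the infinite conjunction over finite $F$ of the $\cL_A$-formulas $\exists y\,\bigl(\bigwedge_{k\in F}\eta^S_k(y) \wedge E_S(\xbar,y)\bigr)$. Taking the finite disjunction over $S \seq [n]$ and distributing (finite unions of type-definable sets are type-definable), we conclude that the complement of $\Avo{n}{\epsilon}{\mu}{\phi}$ is type-definable over $A$, as required. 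The main obstacle is exactly this $\Avo$ step: because the $b$-condition is merely type-definable rather than definable, one needs the compactness/saturation reduction plus the distributivity observation to repackage the complement as an intersection of $\cL_A$-formulas in $\xbar$, whereas for $\Avc$ the $b$-condition is outright a union of definable sets and no compactness is required.
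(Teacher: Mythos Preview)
Your proof is correct and follows essentially the same approach as the paper's. Both arguments partition according to the finitely many possible values $i/n$ of $\Av(\abar)(\phi(x,b))$, use definability of $\mu$ to make the measure constraint on $b$ type-definable, and then apply the standard fact that projections of type-definable sets are type-definable. The only cosmetic difference is that the paper packages the case split into a single partial type $q_C(\xbar,y)$ (for closed $C\subseteq[0,1]$) and handles $\Avc{n}{\epsilon}{\mu}{\phi}$ directly via $\forall y\,q_{[0,\epsilon]}$ rather than via the complement as you do; both cases in the paper are then dispatched in one line using closure of types under $\forall y$ and $\exists y$.
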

\begin{proof}
Fix $\phi(x,y)$, $n\geq 1$, and $\epsilon>0$. For any closed set $C\seq[0,1]$, we have a well-defined partial $y$-type ``$\mu(\phi(x,y))\in C$" over $A$. 
 Let $q_C(\xbar,y)$ be the type defined by
\[
\bigwedge_{i=0}^n\left((\Av(\xbar)(\phi(x,y))=\textstyle\frac{i}{n})\rightarrow \left( \mu(\phi(x,y))\in C+\textstyle\frac{i}{n}\right)\vee\left(\mu(\phi(x,y))\in \textstyle\frac{i}{n}-C\right)\right).
\]
Then $(\abar,b)\models q_C$ if and only if $|\mu(\phi(x,b))-\Av(\abar)(\phi(x,b))|\in C$. Now $\Avc{n}{\epsilon}{\mu}{\phi}$ is defined by $\forall y\,q_{[0,\epsilon]}(\xbar,y)$, and $\neg \Avo{n}{\epsilon}{\mu}{\phi}$ is defined by $\exists y\,q_{[\epsilon,1]}(\xbar,y)$, both of which are types over $A$.
\end{proof}

\begin{definition}
Suppose $\mu\in\kM_x(\cU)$ and $\phi(x,y)$ is an $\cL$-formula. Given $\epsilon>0$, we say that a sequence $(\chi_n(x_1,\ldots,x_n))_{n=1}^\infty$ of $\cL_{\cU}$-formulas is a \textbf{$(\phi,\epsilon)$-approximation sequence for $\mu$} if, for all $n\geq 1$, we have $\Avc{n}{\epsilon/2}{\mu}{\phi}\seq \chi_n(\cU^{\xbar})\seq \Avo{n}{\epsilon}{\mu}{\phi}$. We also say that such a sequence is \textbf{over $A\subset\cU$} if each $\chi_n$ is an $\cL_A$-formula.
\end{definition}

Note that, by Lemma \ref{lem:defapprox}, if $\mu\in\kM_x(\cU)$ is definable over $A\subset\cU$, then for any $\cL$-formula $\phi(x,y)$ and $\epsilon>0$, there is a $(\phi,\epsilon)$-approximation sequence for $\mu$ over $A$ (but the formulas in the sequence may be unsatisfiable). 

\begin{proposition}\label{prop:fimtest}
Suppose $\mu\in\kM_x(\cU)$ is definable over $M\prec\cU$. Then the following are equivalent.
\begin{enumerate}[$(i)$]
    \item $\mu$ is \fim\ over $M$.
    \item For any $\cL$-formula $\phi(x,y)$ and $\epsilon>0$, there is a $(\phi,\epsilon)$-approximation sequence $(\chi_n)_{n=0}^\infty$ for $\mu$ over $M$ such that $\lim_{n\to\infty}\mu^{(n)}(\chi_n)=1$.
    \item For any $\cL$-formula $\phi(x,y)$ and $\epsilon>0$, if $(\chi_n)_{n=0}^\infty$ is a $(\phi,\epsilon)$-approximation sequence  for $\mu$ over $M$, then $\lim_{n\to\infty}\mu^{(n)}(\chi_n)=1$.
\end{enumerate}
\end{proposition}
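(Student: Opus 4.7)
The plan is to prove the cycle $(i) \Rightarrow (iii) \Rightarrow (ii) \Rightarrow (i)$. For $(i) \Rightarrow (iii)$, I would fix an $\cL$-formula $\phi(x,y)$, an $\epsilon > 0$, and an arbitrary $(\phi,\epsilon)$-approximation sequence $(\chi_n)$ for $\mu$ over $M$. By \fim, there is a sequence $(\theta_n)$ of consistent $\cL_M$-formulas satisfying the two clauses of Definition \ref{def:fimfam}(4). Applying the approximation clause with tolerance $\epsilon/2$, for all sufficiently large $n$ one has
\[
\theta_n(\cU^{\xbar}) \seq \Avo{n}{\epsilon/2}{\mu}{\phi} \seq \Avc{n}{\epsilon/2}{\mu}{\phi} \seq \chi_n(\cU^{\xbar}),
\]
so monotonicity of $\mu^{(n)}$ gives $\mu^{(n)}(\theta_n) \leq \mu^{(n)}(\chi_n)$, and the convergence clause $\mu^{(n)}(\theta_n) \to 1$ then forces $\mu^{(n)}(\chi_n) \to 1$.

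For $(iii) \Rightarrow (ii)$, I would invoke the observation recorded just after Lemma \ref{lem:defapprox}: since $\mu$ is definable over $M$, a $(\phi,\epsilon)$-approximation sequence for $\mu$ over $M$ exists for any $\phi$ and $\epsilon$. Explicitly, Lemma \ref{lem:defapprox} makes $\Avc{n}{\epsilon/2}{\mu}{\phi}$ type-definable and the complement of $\Avo{n}{\epsilon}{\mu}{\phi}$ type-definable over $M$; these two partial types are inconsistent, so compactness yields a single $\cL_M$-formula $\chi_n$ sandwiched between them. Condition $(iii)$ applied to such a sequence delivers $(ii)$.

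For $(ii) \Rightarrow (i)$, I would fix $\phi(x,y)$ and diagonalize over $\epsilon_k = 1/k$. For each $k \geq 1$, $(ii)$ supplies a $(\phi,1/k)$-approximation sequence $(\chi^k_n)_{n=1}^\infty$ over $M$ with $\mu^{(n)}(\chi^k_n) \to 1$. Choose an increasing, unbounded sequence $N_1 < N_2 < \cdots$ such that $\mu^{(n)}(\chi^k_n) > 1 - 1/k$ whenever $n \geq N_k$, let $k(n)$ be the largest $k$ with $N_k \leq n$, and set $\theta_n := \chi^{k(n)}_n$ for $n \geq N_1$ (padding with any tautology for smaller $n$). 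Since $k(n) \to \infty$, given any $\delta > 0$ eventually $1/k(n) < \delta$, whence $\theta_n(\cU^{\xbar}) \seq \Avo{n}{\delta}{\mu}{\phi}$, verifying the approximation clause of \fim; meanwhile $\mu^{(n)}(\theta_n) > 1 - 1/k(n) \to 1$, which forces consistency of $\theta_n$ for large $n$ and establishes the convergence clause.

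The main obstacle is more bookkeeping than conceptual. Two points deserve care: first, $\mu^{(n)}$ is well-defined at each step because $\mu$ always occupies the leading position of the tensor product and is Borel definable (as $\mu$ is definable), regardless of Borel definability of the tail $\mu^{(n-1)}$; second, the diagonalization in $(ii) \Rightarrow (i)$ must simultaneously control the $\epsilon$-approximation and the measure convergence, which the scheme above handles by tying both quantities to the single parameter $k(n)$.
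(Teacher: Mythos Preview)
Your proof is correct and follows the same cycle $(i)\Rightarrow(iii)\Rightarrow(ii)\Rightarrow(i)$ as the paper, with the same sandwiching argument for $(i)\Rightarrow(iii)$ and the same appeal to Lemma~\ref{lem:defapprox} for $(iii)\Rightarrow(ii)$. The only difference is that for $(ii)\Rightarrow(i)$ the paper extracts a single formula $\chi_{n_i}$ for each $i$ and defers the assembly into a \fim\ witness to \cite[Proposition~3.2]{CoGa}, whereas you spell out the full diagonalization; your version is correct and arguably more self-contained.
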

\begin{proof}
$(i)\Rightarrow (iii)$. Assume $\mu$ is \fim\ over $M$. Fix an $\cL$-formula $\phi(x,y)$. Since $\mu$ is \fim, there are formulas $(\theta_n(x_1,\ldots,x_n))_{n=1}^\infty$ such that $\lim_{n\to\infty}\mu^{(n)}(\theta_n)=1$ and, for all $\epsilon>0$, we have $\theta_n(\cU^{\xbar})\seq \Avc{n}{\epsilon/2}{\mu}{\phi}$ for sufficiently large $n$. Now fix $\epsilon>0$, and let $(\chi_n)_{n=1}^\infty$ be a $(\phi,\epsilon)$-approximation sequence for $\mu$ over $M$. Then $\theta_n(\cU^{\xbar})\seq \Avc{n}{\epsilon/2}{\mu}{\phi}\seq \chi_n(\cU^{\xbar})$ for sufficiently large $n$, and so $\lim_{n\to\infty}\mu^{(n)}(\chi_n)=1$.  

$(iii)\Rightarrow (ii)$ is trivial since approximation sequences for $\mu$ exist by Lemma \ref{lem:defapprox}.

$(ii)\Rightarrow (i)$. Assume $\mu$ satisfies $(ii)$, and fix an  $\cL$-formula $\phi(x,y)$. For all $i\geq 1$, we have an $\cL_M$-formula $\chi_{n_i}(x_1,\ldots,x_{n_i})$ such that $\chi_{n_i}(\cU^{x_1\ldots x_{n_i}})\seq \Avo{n_i}{1/i}{\mu}{\phi}$ and $\mu^{(n_i)}(\chi_{n_i})\geq 1-1/i$. This suffices to prove that $\mu$ is \fim\ over $M$ (similar to the proof of \cite[Proposition 3.2]{CoGa}).
\end{proof}

We now summarize the situation concerning the analogue of Corollary \ref{cor:BDdrop} for various properties of measures.

\begin{proposition}\label{prop:alldrop}
Suppose $\mu\in\kM_x(\cU)$ is \fim\ (resp., \fam, finitely satisfiable in some small model, definable, or Borel definable), and also invariant over $M\prec\cU$. Then $\mu$  is \fim\ over $M$ (resp., \fam\ over $M$, finitely satisfiable in $M$, definable over $M$, or Borel definable over $M$). 
\end{proposition}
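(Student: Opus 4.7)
The Borel definable case is Corollary \ref{cor:BDdrop}. For definability, if $\mu$ is definable over some small $N$, then $\mu$ is also definable over $N \cup M$, since $F^\phi_{\mu, N \cup M}$ factors as $F^\phi_{\mu,N} \circ \rho^y_{N \cup M, N}$ and composition with a continuous restriction map preserves continuity; Remark \ref{rem:BDdrop} applied with $A = M$ and $B = N \cup M$ then gives definability over $M$. The remaining three cases require more substantial arguments.

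The main obstacle is the finite satisfiability case. Suppose $\mu$ is FS in some small $N$ and $M$-invariant, and for contradiction take $\phi(x,b)$ with $\mu(\phi(x,b)) > 0$ but no realization in $M$. I would first show the partial type $\pi(y) := \tp(b/M) \cup \{\neg \phi(n,y) : n \in N^x\}$ is consistent, then realize it by some $b' \in \cU^y$ with $b' \equiv_M b$ and $\phi(x,b')$ not realized in $N$; FS in $N$ then forces $\mu(\phi(x,b')) = 0$, and $M$-invariance yields $\mu(\phi(x,b)) = 0$, the desired contradiction. Consistency of a finite fragment $\pi_0(y) \cup \{\neg\phi(n_1,y), \ldots, \neg\phi(n_k,y)\}$ is the key step: if inconsistent, then $\cU \models \exists z_1 \ldots z_k\, \forall y\, (\pi_0(y) \to \bigvee_i \phi(z_i, y))$, which is an $\cL_M$-sentence, so by $M \prec \cU$ the same holds in $M$ with witnesses $m_i \in M^x$; applying to $y = b$ (using $\pi_0 \subseteq \tp(b/M)$) produces $\phi(m_j, b)$ for some $j$, contradicting non-realization in $M$.

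For \fam, I would first use the standard implication \fam\ $\Rightarrow$ definable together with the definability drop above to get $\mu$ definable over $M$. Given an $\cL$-formula $\phi(x,y)$ and $\epsilon > 0$, pick $\abar \in N^{\xbar}$ in $\Avc{n}{\epsilon/2}{\mu}{\phi}$ via \fam\ over $N$. By Lemma \ref{lem:defapprox}, $\Avc{n}{\epsilon/2}{\mu}{\phi}$ is type-defined over $M$ by some $\pi(\xbar)$ and $\cU^{\xbar} \setminus \Avo{n}{\epsilon}{\mu}{\phi}$ by some $\pi'(\xbar)$; these two sets are disjoint, so compactness supplies a single $\cL_M$-formula $\theta(\xbar)$ implied by $\pi$ with $\theta(\cU^{\xbar}) \subseteq \Avo{n}{\epsilon}{\mu}{\phi}$. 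Since $\theta(\abar)$ holds, $\exists \xbar\, \theta(\xbar)$ is witnessed in $M$ by elementarity, producing the desired $\abar' \in M^{\xbar}$.

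The \fim\ case reduces cleanly via Proposition \ref{prop:fimtest}: $\mu$ is definable over $M$ as above, so it suffices to verify condition $(iii)$, namely that for any $(\phi,\epsilon)$-approximation sequence $(\chi_n)$ over $M$, $\lim_n \mu^{(n)}(\chi_n) = 1$. I would build a tighter $(\phi, \epsilon/2)$-approximation sequence $(\chi_n')$ over $N$ using Lemma \ref{lem:defapprox} (and definability of $\mu$ over $N$); then $\Avo{n}{\epsilon/2}{\mu}{\phi} \subseteq \Avc{n}{\epsilon/2}{\mu}{\phi}$ gives $\chi_n' \vdash \chi_n$, so $\mu^{(n)}(\chi_n') \leq \mu^{(n)}(\chi_n)$, and Proposition \ref{prop:fimtest}$(iii)$ applied with $N$ forces $\mu^{(n)}(\chi_n') \to 1$, hence $\mu^{(n)}(\chi_n) \to 1$.
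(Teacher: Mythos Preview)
Your proposal is correct and follows essentially the same approach as the paper. The paper handles the finitely satisfiable case by citing the analogous argument for types (Simon's book and Gannon's thesis) rather than writing out the details you supply, but your compactness-and-elementarity argument is precisely the standard one; the \fam\ and \fim\ cases match the paper's proof almost verbatim, using Lemma \ref{lem:defapprox} to sandwich the approximation sets between $M$-definable formulas and then invoking Proposition \ref{prop:fimtest}.
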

\begin{proof}
Corollary \ref{cor:BDdrop} provides the Borel definable case, and the definable case is similar (see Remark \ref{rem:BDdrop}). The finitely satisfiable case is a straightforward modification of the proof for types, e.g., as in \cite[Lemma 2.18]{Sibook}. See  \cite[Proposition 2.18]{Ganthesis} for details. 
It remains to consider \fim\ and \fam.

Suppose $\mu$ is \fam. Fix an $\cL$-formula $\phi(x,y)$ and $\epsilon>0$. We want to find $n\geq 1$ such that $\Avo{n}{\epsilon}{\mu}{\phi}\cap M^{\xbar}\neq\emptyset$. By assumption, there is some $n\geq 1$ and $\abar^*\in \Avc{n}{\epsilon/2}{\mu}{\phi}\cap \cU^{\xbar}$. Since $\mu$ is definable and $M$-invariant, it is definable over $M$. So $\Avc{n}{\epsilon/2}{\mu}{\phi}$ is type-definable over $M$, and contained in $\Avo{n}{\epsilon}{\mu}{\phi}$, which is co-type-definable over $M$. Therefore we may find an $\cL_M$-formula $\chi(x_1,\ldots,x_n)$ such that $\Avc{n}{\epsilon/2}{\mu}{\phi}\seq \chi(\cU^{\xbar})\seq \Avo{n}{\epsilon}{\mu}{\phi}$. Then $\cU\models\chi(\abar^*)$ and so, since $M\prec \cU$, there is $\abar\in (M^x)^n$ such that $M\models\chi(\abar)$. So $\abar\in \Avo{n}{\epsilon}{\mu}{\phi}\cap M^{\xbar}$. 

Finally, suppose $\mu$ is \fim. Fix an $\cL$-formula $\phi(x,y)$ and $\epsilon>0$. Then there is a $(\phi,\epsilon/2)$-approximation sequence  $(\chi^*_n)_{n=0}^\infty$ for $\mu$ such that $\lim_{n\to\infty}\mu^{(n)}(\chi^*_n)=1$. As before, $\mu$ is definable over $M$. Let $(\chi_n)_{n=0}^\infty$ be  $(\phi,\epsilon)$-approximation sequence for $\mu$ over $M$. Then $\chi^*_n(\cU^{\xbar})\seq \chi_n(\cU^{\xbar})$ for all $n\geq 1$, which implies $\lim_{n\to\infty}\mu^{(n)}(\chi_n)=1$. So $\mu$ is \fim\ over $M$ by Proposition \ref{prop:fimtest}. 
\end{proof}

Next we summarize what is known about the preservation of various properties with respect to Morley products.

\begin{proposition}\label{prop:MPbasic}
Fix $\mu\in\kM_x(\cU)$,  $\nu\in\kM_y(\cU)$, and $M\prec\cU$.
\begin{enumerate}[$(a)$]
\item If $\mu$ and $\nu$ are \fam\ (resp., definable) over $M$, then $\mu\otimes\nu$ is \fam\ (resp., definable) over $M$.
\item $\mu$ and $\nu$ are finitely satisfiable in $M$, and $\mu$ is Borel definable or $\nu$ is a type, then $\mu\otimes\nu$ is finitely satisfiable in $M$. 
\end{enumerate}
\end{proposition}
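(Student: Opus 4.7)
For part (a), \fam\ case, my plan is to chain approximations. Given an $\cL$-formula $\phi(x,y,z)$ and $\epsilon>0$, I would first apply \fam\ of $\mu$ to $\phi$ viewed with object variable $x$, obtaining $\abar = (a_1,\ldots,a_n) \in (M^x)^n$ with $\abar \in \Avo{n}{\epsilon/2}{\mu}{\phi}$. Since \fam\ implies definable and hence Borel definable, Lemma \ref{lem:famints} gives
\[
(\mu\otimes\nu)(\phi(x,y,c)) \approx_{\epsilon/2} \frac{1}{n}\sum_{i=1}^n \nu(\phi(a_i,y,c))
\]
uniformly in $c \in \cU^z$. Next I would apply \fam\ of $\nu$ to $\phi$ viewed with object variable $y$ (so with parameters $x$ and $z$), producing $\bbar = (b_1,\ldots,b_m) \in (M^y)^m$ such that $\nu(\phi(a,y,c)) \approx_{\epsilon/2} \frac{1}{m}\sum_j \boldsymbol{1}_{\phi(a,b_j,c)}$ uniformly in $a \in \cU^x$ and $c \in \cU^z$. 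Substituting $a = a_i$, averaging over $i$, and combining the two approximations, the tuple $((a_i,b_j))_{i \leq n,\, j \leq m}$ in $(M^{xy})^{nm}$ yields the required $\epsilon$-approximation, establishing \fam\ for $\mu \otimes \nu$.

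For the definable case of (a), my plan is to verify continuity of $F^\phi_{\mu\otimes\nu,M}$ on $S_z(M)$ directly (invariance of $\mu \otimes \nu$ over $M$ is automatic by Fact \ref{fact:basicinv}). Fixing $\phi(x,y,z)$ and $\epsilon>0$, definability of $\mu$ and Fact \ref{fact:defdefs}$(ii)$ yield $\cL_M$-formulas $\psi_1(y,z),\ldots,\psi_k(y,z)$ and $r_i \in [0,1]$ with $\|F^\phi_{\mu,M} - \sum_i r_i \boldsymbol{1}_{\psi_i}\|_\infty < \epsilon$. Unwinding the Morley product and integrating against $\nu$ gives
\[
(\mu\otimes\nu)(\phi(x,y,c)) \approx_\epsilon \sum_{i=1}^k r_i \nu(\psi_i(y,c))
\]
for every $c \in \cU^z$. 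Since $\nu$ is definable over $M$, each map $c \mapsto \nu(\psi_i(y,c))$ factors through a continuous function on $S_z(M)$, so the approximating sum is continuous. Thus $F^\phi_{\mu\otimes\nu,M}$ is a uniform limit of continuous functions on the compact space $S_z(M)$, hence continuous, giving definability of $\mu \otimes \nu$ over $M$.

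For part (b), suppose $(\mu\otimes\nu)(\phi(x,y)) > 0$ for some $\cL_\cU$-formula $\phi$; I must realize $\phi$ in $M^{xy}$. When $\nu$ is a type $q$, the product equals $\mu(\phi(x,b))$ for any $b \models q|_A$ (with $A \supseteq M$ small and containing the parameters of $\phi$), so finite satisfiability of $\mu$ in $M$ gives $a \in M^x$ with $\models \phi(a,b)$; then $\phi(a,y) \in q$, and finite satisfiability of $q$ in $M$ produces $b' \in M^y$ realizing $\phi(a,y)$. When instead $\mu$ is Borel definable over $A \supseteq M$, the Borel set $V = \{q \in S_y(A) : F^\phi_{\mu,A}(q) > 0\}$ has $\nu|_A$-measure at least $(\mu\otimes\nu)(\phi) > 0$, since $F^\phi_{\mu,A} \leq \boldsymbol{1}_V$. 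By finite satisfiability of $\mu$ in $M$, any $b$ with $\mu(\phi(x,b)) > 0$ already satisfies $\models \phi(a,b)$ for some $a \in M^x$, so $V$ sits inside the open set $W = \bigcup_{a \in M^x} [\phi(a,y)]$; consequently $\nu(W) > 0$. Fact \ref{fact:basicBorel}$(a)$ then supplies an $\cL_A$-formula $\chi(y)$ with $[\chi] \seq W$ and $\nu(\chi) > 0$; finite satisfiability of $\nu$ in $M$ realizes $\chi$ at some $b \in M^y$, and compactness of $[\chi]$ reduces the open cover by the $[\phi(a,y)]$ to a finite subcover, yielding $a \in M^x$ with $\models \phi(a,b)$. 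The main obstacle is this last case: $V$ is genuinely only Borel, not formulaic, so the crucial step is using finite satisfiability of $\mu$ to inflate $V$ into the open \emph{definable-witness} set $W$, after which outer regularity via Fact \ref{fact:basicBorel}$(a)$, together with finite satisfiability of $\nu$, extracts a single witnessing formula.
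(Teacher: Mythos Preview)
Your proposal is correct in all parts. The paper itself does not give a direct proof of this proposition; it simply cites Propositions~2.10 and~2.6 of \cite{CoGa} for part~$(a)$ and \cite[Lemma~1.6]{HPS} together with \cite[Proposition~2.25]{Ganthesis} for part~$(b)$. Your arguments are the standard ones and match what appears in those references: chaining finite approximations for \fam, uniform approximation by continuous functions for definability, and for part~$(b)$ the key maneuver of passing from the Borel set $V=\{q:F^\phi_{\mu,A}(q)>0\}$ to the open set $W=\bigcup_{a\in M^x}[\phi(a,y)]$ via finite satisfiability of $\mu$, followed by inner regularity and finite satisfiability of $\nu$. One small remark: in the last step the compactness argument for $[\chi]$ is harmless but unnecessary, since once you have $b\in M^y$ with $\tp(b/A)\in[\chi]\subseteq W$, the definition of $W$ immediately yields some $a\in M^x$ with $\models\phi(a,b)$.
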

\begin{proof}
Part $(a)$. See Propositions 2.10 and 2.6 of \cite{CoGa}.

Part $(b)$. See \cite[Lemma 1.6]{HPS}. The authors there do not explicitly assume $\mu$ is Borel definable (which is needed for $\mu\otimes\nu$ to be well-defined). Instead, they assume $T$ is NIP so that this becomes automatic. See also  \cite[Proposition 2.25]{Ganthesis}.  
\end{proof} 

In contrast to the previous result, we have already seen that Borel definable measures are not necessarily closed under Morley products. In \cite{CoGa} it is claimed that this is also the case for \fim\ measures, due to an example from \cite{ACPgs}. However, that example turns out not to work (see Section \ref{sec:PER}) and it remains an open question whether \fim\ measures are closed under Morley products (see the end of Section \ref{sec:fim} for further discussion).

Finally, in preparation for the main result in Section \ref{sec:fim}, we make some easy observations about convex combinations. 

\begin{proposition}\label{prop:CCbasic}
Fix $M\prec\cU$, and suppose $\mu,\nu\in\kM_x(\cU)$ are \fam\ over $M$ (resp., finitely satisfiable in $M$, definable over $M$, or Borel definable over $M$). Then, for any $r\in[0,1]$, $\lambda\coloneqq r\mu+(1-r)\nu$ is \fam\ over $M$ (resp, finitely satisfiable in $M$, definable over $M$, or Borel definable over $M$).
\end{proposition}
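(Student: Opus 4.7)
The plan is to verify each property in turn. For all four cases, the basic idea is that $\lambda = r\mu + (1-r)\nu$ inherits its structural data linearly from $\mu$ and $\nu$. Concretely, one has $\lambda(\phi(x)) = r\mu(\phi(x)) + (1-r)\nu(\phi(x))$ for every $\cL_\cU$-formula $\phi(x)$, and consequently, when $\mu$ and $\nu$ are $M$-invariant (as each of the four hypotheses entails),
\[
F^\phi_{\lambda,M} = rF^\phi_{\mu,M} + (1-r)F^\phi_{\nu,M}
\]
for any $\cL_M$-formula $\phi(x,y)$. The finitely satisfiable case is then immediate: if $\lambda(\phi(x))>0$ then $\mu(\phi(x))>0$ or $\nu(\phi(x))>0$, and either way $\phi(x)$ is realized in $M$. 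The Borel definable case is also immediate, since a linear combination of Borel functions is Borel. For the definable case, use condition $(ii)$ of Fact \ref{fact:defdefs}: given $\epsilon>0$, approximate $F^\phi_{\mu,M}$ and $F^\phi_{\nu,M}$ to within $\epsilon/2$ by finite linear combinations $\sum s_i\boldsymbol{1}_{\psi_i}$ and $\sum t_j\boldsymbol{1}_{\chi_j}$ of indicator functions of $\cL_M$-formulas, and combine linearly with weights $r$ and $1-r$ to approximate $F^\phi_{\lambda,M}$ to within $\epsilon$ by a finite linear combination of the same form.

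The only case requiring actual work is the \fam\ case, and this is the expected obstacle. Fix an $\cL$-formula $\phi(x,y)$ and $\epsilon>0$. Choose $\abar\in (M^x)^n$ and $\bbar\in (M^x)^m$ with $\mu\approx^\phi_{\epsilon/3}\Av(\abar)$ and $\nu\approx^\phi_{\epsilon/3}\Av(\bbar)$. Since $\Av$ is unchanged by repeating a tuple, we may pad both $\abar$ and $\bbar$ to have a common length (for instance $nm$); so assume they both have length $n$. Now choose a large integer $K$ and an integer $0\le s\le K$ with $|s/K-r|<\epsilon/3$, and let $\cbar\in (M^x)^{Kn}$ be the concatenation of $s$ copies of $\abar$ followed by $K-s$ copies of $\bbar$. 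Then
\[
\Av(\cbar)=\tfrac{s}{K}\Av(\abar)+\tfrac{K-s}{K}\Av(\bbar).
\]
For any $b\in \cU^y$, a triangle inequality computation gives
\begin{multline*}
|\lambda(\phi(x,b))-\Av(\cbar)(\phi(x,b))|\le r|\mu(\phi(x,b))-\Av(\abar)(\phi(x,b))| \\ + (1-r)|\nu(\phi(x,b))-\Av(\bbar)(\phi(x,b))|+|r-\tfrac{s}{K}|\le \tfrac{\epsilon}{3}+\tfrac{\epsilon}{3}<\epsilon,
\end{multline*}
so $\lambda\approx^\phi_\epsilon\Av(\cbar)$, as required.

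The main (minor) subtlety is the linear-combination trick for \fam: one must reconcile the two averages on a common index set before forming a weighted mixture, which is precisely what the padding plus the concatenation of $s$ copies of $\abar$ and $K-s$ copies of $\bbar$ achieves. All other cases reduce to noticing that the integrand (or the measure of a formula) is exactly the convex combination of the corresponding quantities for $\mu$ and $\nu$.
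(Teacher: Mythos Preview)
Your proof is correct and follows essentially the same approach as the paper. The paper's argument is slightly more terse: for definability it uses condition $(i)$ of Fact~\ref{fact:defdefs} (continuity of $F^\phi_{\lambda,M}$ as a linear combination of continuous functions) rather than condition $(ii)$, and for the \fam\ case it simply sets $\eta=r\Av(\abar)+(1-r)\Av(\bbar)$, notes $\lambda\approx^\phi_\epsilon\eta$, and asserts that one can easily find $\cbar$ with $\eta\approx^\phi_\epsilon\Av(\cbar)$---your padding-and-concatenation construction is exactly the ``easy'' step the paper omits.
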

\begin{proof}
Fix a formula $\phi(x,y)$. Note that, in any case, $\mu$ is invariant over $M$.  If  $\mu$ and $\nu$ are definable (resp., Borel definable), then $F^\phi_{\mu,M}$ and $F^\phi_{\nu,M}$ are continuous (resp., Borel), and so $F^\phi_{\lambda,M}=rF^\phi_{\mu,M}+(1-r)F^\phi_{\nu,M}$ is continuous (resp., Borel), which implies $\lambda$ is definable (resp., Borel definable) with respect $\phi(x,y)$. Also, if $\mu$ and $\nu$ are both finitely satisfiable in $M$, then it is clear that $\lambda$ is too. 

Finally, suppose $\mu$ and $\nu$ are \fam\ over $M$. Fix $\epsilon>0$. Then there are $\abar,\bbar\in (M^x)^{<\omega}$ such that $\mu\approx^\phi_\epsilon \Av(\abar)$ and $\nu\approx^\phi_\epsilon \Av(\bbar)$. Let $\eta=r\Av(\abar)+(1-r)\Av(\bbar)$. Then $\lambda\approx^\phi_{\epsilon} \eta$, and it is easy to find some $\cbar\in (M^x)^{<\omega}$ such that $\eta\approx^\phi_\epsilon \Av(\cbar)$. So $\lambda\approx^\phi_{2\epsilon}\Av(\cbar)$. This shows $\lambda$ is \fam\ over $M$. 
\end{proof}

Once again, \fim\ measures are missing from the previous result. We will show in Theorem \ref{thm:fimCC} that \fim\ measures are also closed under convex combinations.  

\section{Commuting measures}\label{sec:com}
Let $T$ be a complete $\cL$-theory with monster model $\cU$.
In this section, we investigate pairs  of measures that commute. Let us start with some results from the literature. The first is an easy exercise. 

\begin{proposition}\label{prop:dfscom}
If $p\in S_x(\cU)$ is definable and $\nu\in \kM_y(\cU)$ is finitely satisfiable in some small model, then $p\otimes \nu=\nu\otimes p$.
\end{proposition}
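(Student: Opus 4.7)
The plan is to unfold both sides of the claimed equality using the definitions of the Morley product, reducing the whole question to a computation of $\nu$ on two formulas in $y$ whose symmetric difference is unrealized in the base model.

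First I would choose a single small model $M\prec\cU$ that simultaneously witnesses both hypotheses: $p$ is definable over $M$ and $\nu$ is finitely satisfiable in $M$. This is harmless by Proposition \ref{prop:alldrop}: take models $M_0, M_1$ separately witnessing each property and enlarge to any small $M\succ M_0M_1$, noting that each property transfers to $M$ (definability via Remark \ref{rem:BDdrop} and invariance of $p$; finite satisfiability because $p$ is still $M$-invariant, definable measures being $A$-invariant via condition $(iii)$ of Fact \ref{fact:defdefs}). Fix an arbitrary $\cL_M$-formula $\phi(x,y)$; since $p\otimes\nu$ and $\nu\otimes p$ are both $M$-invariant, it suffices to check they agree on such formulas.

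Next I would compute the left-hand side. Since $p$ is a definable type over $M$, $F^\phi_{p,M}\colon S_y(M)\to\{0,1\}$ is continuous, hence the indicator of a clopen set; so there is an $\cL_M$-formula $d_p x\,\phi(x,y)$ whose clopen set in $S_y(M)$ equals $dp(\phi)=\{q\in S_y(M):\phi(x,b)\in p\text{ for }b\models q\}$. By definition of the Morley product,
\[
(p\otimes\nu)(\phi(x,y))=\int_{S_y(M)}F^\phi_p\,d\nu=\nu(d_p x\,\phi(x,y)).
\]
On the right-hand side, the explicit formula for a Morley product with a type gives $(\nu\otimes p)(\phi(x,y))=\nu(\phi(a,y))$ where $a\models p|_M$ (note $\nu$ is $M$-invariant since it is finitely satisfiable in $M$, so the product is well-defined).

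The heart of the argument is then to show $\nu(\phi(a,y))=\nu(d_p x\,\phi(x,y))$. Let $\theta(y)$ be the $\cL_{Ma}$-formula $\phi(a,y)\smd d_p x\,\phi(x,y)$. By finite satisfiability of $\nu$ in $M$, it suffices to verify $\theta(y)$ has no realization in $M$. For any $b\in M^y$ we have the chain of equivalences
\[
\models d_p x\,\phi(x,b)\iff \phi(x,b)\in p\iff \models\phi(a,b),
\]
the first by the defining property of $d_p x\,\phi$ and the second because $\phi(x,b)$ is an $\cL_M$-formula and $a\models p|_M$. Hence $\theta(y)$ is unrealized in $M$, so $\nu(\theta(y))=0$ and the two measures agree on $\phi(x,y)$. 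Since $\phi$ was arbitrary, $p\otimes\nu=\nu\otimes p$. There is no real obstacle here beyond carefully arranging a common witnessing model; the proof is an easy exercise precisely because the definable defining schema for $p$ converts the product with $\nu$ into a single $\nu$-measurement, which finite satisfiability of $\nu$ in $M$ pins down.
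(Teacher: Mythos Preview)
Your core argument is exactly the paper's: obtain the $\cL_M$-formula $d_p x\,\phi(x,y)$ defining $\phi(x,b)\in p$, realize $p|_M$ by some $a$, and use finite satisfiability of $\nu$ in $M$ to kill the symmetric difference $\phi(a,y)\smd d_p x\,\phi(x,y)$ since it has no realizations in $M$. That part is correct and identical to the paper.

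There is, however, a genuine slip in your reduction step. You fix $M$ once and then write ``since $p\otimes\nu$ and $\nu\otimes p$ are both $M$-invariant, it suffices to check they agree on $\cL_M$-formulas.'' This inference is false: two $M$-invariant global measures can agree on $\cL_M$-formulas yet differ globally (think of distinct $M$-invariant global extensions of a single type over $M$, e.g.\ the two $M$-invariant completions of an irrational cut in DLO). $M$-invariance only says the value of each formula depends on the type of its parameters over $M$; it does not let you recover the global measure from its restriction to $M$. The paper avoids this by choosing $M$ \emph{after} fixing the arbitrary $\cL_{\cU}$-formula $\phi(x,y)$, taking $M$ large enough to contain the parameters of $\phi$ as well as to witness definability of $p$ and finite satisfiability of $\nu$. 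With that order of quantifiers your argument goes through verbatim for every $\phi$, and you are done. (As a minor aside, your parenthetical justification for passing to a common $M$ invokes Proposition~\ref{prop:alldrop} and Remark~\ref{rem:BDdrop}, which concern dropping to \emph{smaller} parameter sets; passing \emph{up} to a larger model is trivial for both properties and needs no citation.)
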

\begin{proof}
The argument is similar to that of \cite[Lemma 3.4]{HP} (which assumes $\nu$ is a type). Fix an $\cL_{\cU}$-formula $\phi(x,y)$ and let $M\prec\cU$ be such that $\phi(x,y)$ is over $M$, $p$ is definable over $M$, and $\nu$ is finitely satisfiable in $M$. Choose an $\cL_M$-formula $\psi(y)$ such that $\phi(x,b)\in p$ if and only if $\cU\models\psi(b)$.  Let $a\models p|_M$. Then we have $\phi(a,M^y)=\psi(M^y)$, and so $\nu(\phi(a,y)\smd \psi(y))=0$ since $\nu$ is finitely satisfiable in $M$. Therefore $(p\otimes\nu)(\phi(x,y))=\nu(\psi(y))=\nu(\phi(a,y))=(\nu\otimes p)(\phi(x,y))$. 
\end{proof}

An obvious question is whether the previous result holds when $p$ is replaced by a definable global measure (and $\nu$ is also Borel definable so that both products are well-defined). We will show later on that this is not the case (see Example \ref{ex:com1}). However, Hrushovski, Pillay, and Simon \cite{HPS} proved the following generalization and elaboration of Proposition \ref{prop:dfscom} in the setting of  NIP theories.

\begin{theorem}[Hrushovski, Pillay, Simon \cite{HPS}]\label{thm:HPSc}
Assume $T$ is NIP. 
\begin{enumerate}[$(a)$]
\item If $\mu\in\kM_x(\cU)$ is definable, and $\nu\in\kM_y(\cU)$ is finitely satisfiable in some small model, then $\mu\otimes\nu=\nu\otimes\mu$. 
\item If $\mu\in\kM_x(\cU)$ is \dfs, then $\mu\otimes\nu=\nu\otimes\mu$ for any invariant $\nu\in\kM_y(\cU)$. 
\item If $\mu\in\kM_x(\cU)$ is invariant, then it is \dfs\ if and only if $\mu_x\otimes\mu_{x'}=\mu_{x'}\otimes\mu_{x}$. 
\end{enumerate}
\end{theorem}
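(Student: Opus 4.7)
The unifying thread for all three parts is that NIP provides strong approximation and regularity properties for Keisler measures which are absent in general theories. In particular, I would repeatedly use the earlier HPS results that (i) every invariant Keisler measure in NIP is Borel definable, so every Morley product below is automatically well-defined, and (ii) every Keisler measure over a small model admits a smooth global extension which commutes with all Borel definable measures.

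For part $(a)$, my main strategy is to approximate $\mu$ by convex combinations of definable types and then appeal to Proposition \ref{prop:dfscom}. Fix a small model $M$ over which $\mu$ is definable and in which $\nu$ is finitely satisfiable, and let $\phi(x,y)$ be an $\cL_M$-formula. In NIP, VC-type arguments produce, for each $\phi$ and $\epsilon>0$, definable types $p_1,\dots,p_k$ and weights $r_i$ summing to $1$ with $\sigma_\epsilon \coloneqq \sum r_i p_i$ satisfying $\|F^\phi_{\mu,M}-F^\phi_{\sigma_\epsilon,M}\|_\infty<\epsilon$. Proposition \ref{prop:dfscom} gives $p_i \otimes \nu = \nu \otimes p_i$ for each $i$, so $\sigma_\epsilon \otimes \nu = \nu \otimes \sigma_\epsilon$ by linearity. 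Sending $\epsilon \to 0$ on the $\mu \otimes \nu$ side is immediate via integration of $F^\phi_\mu$ against $\nu|_M$; the trickier step is $\nu \otimes \sigma_\epsilon \to \nu \otimes \mu$, since this involves integrating the merely Borel function $F^{\phi^*}_{\nu,M}$ against $\sigma_\epsilon|_M$ and $\mu|_M$. The main obstacle is thus upgrading the local formula-wise approximation of $\mu$ to one strong enough for integration against Borel functions. A cleaner alternative is to bypass this by taking a smooth global extension $\nu^s$ of $\nu|_M$ (which exists by NIP), observe that $(\mu \otimes \nu)(\phi) = (\mu \otimes \nu^s)(\phi)$ for $\cL_M$-formulas $\phi$ since the integral only sees $\nu|_M=\nu^s|_M$, argue similarly for the $\nu \otimes \mu$ side using Borel definability of both $\nu$ and $\nu^s$ over $M$, and then invoke $\mu \otimes \nu^s = \nu^s \otimes \mu$ from smoothness.

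For part $(b)$, the key additional NIP ingredient is the HPS equivalence $\dfs \Leftrightarrow \fam$ in NIP. Fix $M$ witnessing that $\mu$ is \dfs; in NIP, $\mu$ is then also \fam\ over $M$. For each $\phi(x,y)$ and $\epsilon>0$, pick $\abar \in (M^x)^n$ with $\mu \approx^\phi_\epsilon \Av(\abar)$. Each Dirac $\delta_{a_i}$ with $a_i \in M$ is realized in $M$, and a direct calculation shows $\delta_{a_i} \otimes \nu = \nu \otimes \delta_{a_i}$ for any invariant $\nu$; linearity then gives $\Av(\abar) \otimes \nu = \nu \otimes \Av(\abar)$. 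Lemma \ref{lem:famints} yields $(\mu \otimes \nu)(\phi) \approx_\epsilon (\Av(\abar) \otimes \nu)(\phi)$, so combining we get $(\mu \otimes \nu)(\phi) \approx_\epsilon (\nu \otimes \Av(\abar))(\phi)$. The subtle step is closing the loop by showing $(\nu \otimes \Av(\abar))(\phi) \approx_\epsilon (\nu \otimes \mu)(\phi)$, which compares $\frac{1}{n}\sum_i \nu(\phi(a_i,y))$ to $\int F^{\phi^*}_\nu \, d\mu$ and again requires a strong enough NIP approximation to control integrals of Borel functions. Sending $\epsilon \to 0$ then gives equality on every $\cL_\cU$-formula. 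The main obstacle is, as in $(a)$, the Borel-integration upgrade; once granted, the argument is formulaic.

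For part $(c)$, the forward direction is immediate from $(b)$ by taking $\nu=\mu$. For the converse, assume $\mu$ is invariant with $\mu_x \otimes \mu_{x'} = \mu_{x'} \otimes \mu_x$; in NIP, $\mu$ is automatically Borel definable. Iterating the symmetry yields that any Morley sequence $(a_i)_{i<\omega}$ for $\mu$ over a small model is \emph{totally} indiscernible, not just order-indiscernible. In NIP, this total indiscernibility is the hallmark of generic stability: standard extraction arguments then let one show that the distribution of such a sequence determines $\mu$, which forces both finite satisfiability in any small model containing enough of the sequence and, via continuity of the averaging functions $F^\phi_{\mu}$, definability over such a model. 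The main obstacle is the technical extraction of \dfs\ from total indiscernibility of Morley sequences, which is the heart of the NIP generic stability equivalence and where the full force of NIP (not just invariance plus symmetry) is used.
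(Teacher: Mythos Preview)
Your sketch for part $(b)$ is essentially the argument the paper gives in Proposition~\ref{prop:fimcom} and Theorem~\ref{thm:fimcom}: in NIP, \dfs\ equals \fim, and the ``subtle step'' you flag---getting $\Av(\abar)$ to approximate $\mu$ not just on instances of $\phi$ but on the Borel function $F^{\phi^*}_{\nu,M}$---is exactly what \fim\ (via Lemma~\ref{lem:fimcom} and the weak law of large numbers) provides. So that part is fine, and matches the paper's route.

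Part $(a)$, however, has a real gap in both approaches. For the first, there is no general fact in NIP that a \emph{definable} measure is uniformly approximated by \emph{definable} types; the standard VC approximation gives types in the support of $\mu$, which inherit finite satisfiability but not definability, so Proposition~\ref{prop:dfscom} does not apply to them. For the second, you take a smooth extension $\nu^s$ of $\nu|_M$ and want $(\nu\otimes\mu)(\phi)=(\nu^s\otimes\mu)(\phi)$. But the paper warns explicitly (just before Definition~5.5) that agreeing on $M$ does \emph{not} let you swap measures in the \emph{first} coordinate of a Morley product: the integral $\int_{S_x(M)} F^{\phi^*}_{\nu}\,d\mu$ involves values $\nu(\phi(a,y))$ for $a\models p\in S_x(M)$, and such $a$ need not lie in $M$, so $\nu|_M=\nu^s|_M$ says nothing about them. (Indeed $\nu^s$ need not even be $M$-invariant---for a type, the smooth extension is a realized type $\tp(b/\cU)$.)

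The paper's fix is to take the smooth extension on the \emph{other} side: extend $\mu|_M$ to a smooth (hence definable, hence commuting-with-everything) $\hat{\mu}$, and approximate $\nu$ by a net $(\nu_i)$ of averages of Diracs at points of $M^y$ (possible since $\nu$ is finitely satisfiable in $M$). Then one runs the chain in Lemma~\ref{lem:comtech}: continuity of $\lambda\mapsto(\mu\otimes\lambda)(\phi)$ (Lemma~\ref{lem:defcont}) handles the limits, and every swap of $\mu$ with $\hat{\mu}$ happens in the \emph{second} coordinate, where $\mu|_M=\hat{\mu}|_M$ is enough. This is Remark~\ref{rem:hpsca}. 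The asymmetry between first and second coordinates is the whole point; your argument put the smooth extension on the wrong side.
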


In this section, we pursue  results along the lines of adapting Theorem \ref{thm:HPSc} to arbitrary theories. First, we briefly note that outside of NIP, self-commuting measures (in the sense of Theorem \ref{thm:HPSc}$(c)$) need not have any special properties.

\begin{example}
Let $T$ be the theory of the random graph. Then any invariant global type in a \emph{one} free variable commutes with itself. On other hand, for any $M\prec\cU$ and $Z\seq S_1(M)$, there is a unique non-algebraic $M$-invariant type $p_Z\in S_1(\cU)$ such that $E(x,b)\in p_Z$ if and only if $\tp(b/M)\in Z$. So $p_Z$ is Borel definable (resp., definable) if and only if $Z$ is Borel (resp., clopen). Note also that the `generic' definable types $p_\emptyset$ and $p_{S_1(M)}$ are not finitely satisfiable in $M$. In fact, $T$ has no nontrivial \dfs\ global measures (see \cite[Theorem 4.9]{CoGa}).  
\end{example}

The first goal this section is a suitable generalization of Theorem \ref{thm:HPSc}$(a)$ for arbitrary theories. The original proof of this theorem relied on a fundamental property of measures in the NIP setting, namely, that any Keisler measure can be \textit{locally uniformly approximated} by averaging on a finite collection of types in the support of the given measure. Using this, the authors of \cite{HPS} were able to reduce the problem of whether a finitely satisfiable \textit{measure} commutes with a definable measure to the question of whether a finitely satisfiable \textit{type} commutes with a definable measure (note the duality to Proposition \ref{prop:dfscom} in this statement). That being said, the proof of this `easier' problem remained nontrivial and still required the use of NIP, along with the weak law of large numbers.  Unfortunately there are two major obstacles one finds when trying to directly adapt this proof of Theorem \ref{thm:HPSc}$(a)$ to the general setting. First, Keisler measures in the wild do not admit approximations by types as discussed above. Secondly, and more importantly, the statement in total generality is false. Indeed, Proposition \ref{prop:nocom} gives an example of  a $\dfs$ type and a definable measure that do not commute. Therefore the dual version of Proposition \ref{prop:dfscom} alluded to above fails outside of NIP.

 Fortunately however, one can give a simpler proof of Theorem \ref{thm:HPSc}$(a)$ in the NIP context by treating smooth extensions of measures as analogous to realizations of types, along with some elementary topology (see \cite[Proposition 3.6]{GanSA}). By embracing this ideology, and widening the focus to commuting extensions of measures, we will recover a `deviant' generalization of Theorem \ref{thm:HPSc}$(a)$, which applies  to general theories and has a purely topological proof. This generalization is given in  Theorem \ref{thm:gencom} below. We start with some topological lemmas.

 Recall that $\kM_x(\cU)$ is a compact Hausdorff space under the subspace topology induced from $[0,1]^{\Def_x(\cU)}$. 
 
 \begin{lemma}\label{lem:defcont}
 Suppose $\mu\in\kM_x(\cU)$ is definable. Then for any $\cL_{\cU}$-formula $\phi(x,y)$, the map $\nu\mapsto (\mu\otimes\nu)(\phi(x,y))$ is continuous from $\kM_y(\cU)$ to $[0,1]$. 
 \end{lemma}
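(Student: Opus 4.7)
The plan is to reduce the assertion, via the key definability property from Fact \ref{fact:defdefs}$(ii)$, to the observation that on $\kM_y(\cU)$ each evaluation map $\nu \mapsto \nu(\psi(y))$ is continuous by definition of the subspace topology inherited from $[0,1]^{\Def_y(\cU)}$.

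First, I would fix a small set $A\subset\cU$ such that $\phi(x,y)$ is over $A$ and $\mu$ is definable over $A$. By the definition of the Morley product, for every $\nu\in\kM_y(\cU)$ we have
\[
(\mu\otimes\nu)(\phi(x,y))=\int_{S_y(A)} F^\phi_{\mu,A}\, d\nu.
\]
So the task is to show that $\nu\mapsto \int_{S_y(A)} F^\phi_{\mu,A}\, d\nu$ is continuous on $\kM_y(\cU)$.

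Next, I would apply Fact \ref{fact:defdefs}$(ii)$: given $\epsilon>0$, there exist $\cL_A$-formulas $\psi_1(y),\ldots,\psi_n(y)$ and reals $r_1,\ldots,r_n\in[0,1]$ with
\[
\Bigl\|F^\phi_{\mu,A}-\textstyle\sum_{i=1}^n r_i\boldsymbol{1}_{[\psi_i(y)]}\Bigr\|_\infty<\epsilon.
\]
Since $\nu|_A$ is a probability measure, integrating against $\nu$ yields the uniform (in $\nu$) bound
\[
\Bigl|(\mu\otimes\nu)(\phi(x,y))-\textstyle\sum_{i=1}^n r_i\nu(\psi_i(y))\Bigr|<\epsilon
\]
for every $\nu\in\kM_y(\cU)$. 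But the map $\nu\mapsto \sum_{i=1}^n r_i\nu(\psi_i(y))$ is continuous on $\kM_y(\cU)$, being a finite linear combination of the coordinate evaluation maps $\nu\mapsto \nu(\psi_i(y))$, each of which is continuous by the definition of the topology on $\kM_y(\cU)$.

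Letting $\epsilon\to 0$, the function $\nu\mapsto (\mu\otimes\nu)(\phi(x,y))$ is a uniform limit of continuous $[0,1]$-valued functions on $\kM_y(\cU)$, and hence is itself continuous. There is essentially no obstacle here; the only point requiring care is that the uniform sup-norm approximation of $F^\phi_{\mu,A}$ transfers to an approximation of the integrals that is uniform in $\nu$, which is automatic because each $\nu|_A$ has total mass $1$. The argument genuinely uses full definability of $\mu$ (as opposed to mere Borel definability), precisely because we need uniform approximation by simple \emph{clopen}-indicator functions in order to reach the continuous coordinate functionals on $\kM_y(\cU)$.
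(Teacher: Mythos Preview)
Your proof is correct and follows essentially the same approach as the paper: fix $A$ over which $\mu$ is definable, use Fact~\ref{fact:defdefs}$(ii)$ to uniformly approximate $F^\phi_{\mu,A}$ by a finite $[0,1]$-linear combination of clopen indicators, integrate to get a uniform-in-$\nu$ approximation of $(\mu\otimes\nu)(\phi(x,y))$ by a continuous function of $\nu$, and conclude by uniform limits of continuous functions. Your added remarks on why the approximation is uniform in $\nu$ and why full definability is needed are accurate and match the paper's reasoning.
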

 \begin{proof}
 This involves similar calculations as in the proofs of \cite[Proposition 6.3]{ChGan} and \cite[Proposition 2.6]{CoGa}. Fix an $\cL_{\cU}$-formula $\phi(x,y)$, and fix $A\subset\cU$ such that $\phi(x,y)$ is over $A$ and $\mu$ is definable over $A$. Fix $\epsilon>0$. By Fact \ref{fact:defdefs}, there are $\cL_A$-formulas $\psi_1(y),\ldots ,\psi_n(y)$ and $r_1,\ldots,r_n\in [0,1]$ such that $\|F_{\mu,A}^{\varphi} - \sum_{i=1}^{n} r_i \mathbf{1}_{\psi_{i}(y)}\|_\infty < \epsilon$. 
Hence, for arbitrary $\nu \in \kM_{y}(\mathcal{U})$, we have 
\[
(\mu \otimes \nu)(\varphi(x,y)) = \int_{S_{y}(A)} F_{\mu}^{\varphi}\, d\nu \approx_{\epsilon} \int_{S_{y}(A)} \sum_{i=1}^{n} \mathbf{1}_{\psi_{i}(y)}\, d\nu = \sum_{i=1}^{n} r_i \nu(\psi_{i}(y)). 
\]
By definition of the topology on $\kM_y(\cU)$, the map $\nu \mapsto \nu(\psi_{i}(y))$ is continuous. So the map $\nu \mapsto \sum_{i=1}^{n} r_i \nu(\psi_{i}(y))$ is also continuous since it is a linear combination of continuous functions. Since $\nu$ was arbitrary we have
\[
\sup_{ \nu \in \kM_{y}(\mathcal{U})} \left|(\mu \otimes \nu)(\varphi(x,y)) - \sum_{i=1}^{n} r_i \nu(\psi_{i}(y))\right| < \epsilon. 
\]
Therefore $\nu \mapsto (\mu \otimes \nu)(\varphi(x,y))$ is a uniform limit of continuous functions, and hence is continuous. 
\end{proof}

  In the subsequent results, we will consider pairs of global measures in the same variable sort, which have the same restriction to some small model. Thus we take a moment to point out various subtleties that arise. In particular, suppose $\mu\in\kM_x(\cU)$ is Borel definable over $A\subset\cU$, and $\nu,\hat{\nu}\in\kM_y(\cU)$ are such that $\nu|_A=\hat{\nu}|_A$. Then we trivially have $(\mu\otimes\nu)|_A=(\mu\otimes\hat{\nu})|_A$.  But note that this can fail if $\mu$ is only Borel definable over some larger $B\supseteq A$, since in this case the Morley products with $\mu$ must be computed with respect to $\nu|_B$ and $\hat{\nu}|_B$ (even when applied to $\cL_A$-formulas). It is also important to point out that if we instead have $\mu,\hat{\mu}\in\kM_x(\cU)$ Borel definable over $A$, with $\mu|_A=\hat{\mu}|_A$, then one \emph{cannot} necessarily conclude $(\mu\otimes\nu)|_A=(\hat{\mu}\otimes\nu)|_A$ for a given $\nu\in\kM_y(\cU)$. 
 
 \begin{definition}
 Suppose $\mu\in\kM_x(\cU)$ is Borel definable over $A\subset\cU$. Then a Borel definable measure $\nu\in\kM_y(\cU)$ \textbf{$A$-commutes} with $\mu$ if $(\mu\otimes\nu)|_A=(\nu\otimes\mu)|_A$. Define $C^\mu_y(A)$ to be the set of measures in $\kM_y(\cU)$ that are Borel definable over $A$ and $A$-commute with $\mu$.
\end{definition}

As we will see below,  Theorem \ref{thm:HPSc}$(a)$ can be viewed as  a question of when $C^\mu_y(A)$ contains certain limit points.  The next lemma describes a technical scenario in which this can happen.
 
 \begin{lemma}\label{lem:comtech}
 Suppose $\mu\in\kM_x(\cU)$ is definable over $A$, and $\nu\in\kM_y(\cU)$ is a Borel definable measure, which is the limit of a net $(\nu_i)_{i\in I}$ from $C^\mu_y(A)$. If $\mu|_A$ has a definable global extension $\hat{\mu}\in\kM_x(\cU)$, which $A$-commutes with $\nu$ and $\nu_i$ for all $i\in I$, then $\nu\in C^\mu_y(A)$.
 \end{lemma}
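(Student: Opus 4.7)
The plan is to verify $(\mu\otimes\nu)|_A=(\nu\otimes\mu)|_A$ formula by formula: I will fix an $\cL_A$-formula $\phi(x,y)$ and build a chain of equalities that routes through $\hat{\mu}$ and the net $(\nu_i)_{i\in I}$. Before starting the chain, I note that $\nu$ is Borel definable over $A$: as a pointwise limit (in the topology of $\kM_y(\cU)$) of the $A$-invariant measures $\nu_i$, the measure $\nu$ is itself $A$-invariant, and then Corollary \ref{cor:BDdrop} applies. So $F^\phi_{\nu,A}$ is a Borel function on $S_x(A)$, and the Morley products involving $\nu$ on either side are well-defined.

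The central auxiliary observation is a ``legal swap'' of $\mu$ and $\hat{\mu}$ whenever the relevant factor sits on the \emph{right} of a Morley product. Namely, for any $\eta \in \kM_y(\cU)$ that is Borel definable over $A$, one has $(\eta \otimes \mu)(\phi) = (\eta \otimes \hat{\mu})(\phi)$, because both sides are integrals of the Borel function $F^\phi_{\eta,A}$ on $S_x(A)$, taken against the coinciding regular Borel measures $\mu|_A$ and $\hat{\mu}|_A$ (which agree by Fact \ref{fact:basicBorel}$(b)$). The analogous swap when $\mu$ sits on the \emph{left} is not available, since it would require $\mu(\phi(x,b)) = \hat{\mu}(\phi(x,b))$ for parameters $b \notin A$, which $\mu|_A = \hat{\mu}|_A$ does not provide.

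The chain now proceeds as follows. Start with $(\mu \otimes \nu)(\phi)$ and apply Lemma \ref{lem:defcont} to the definable measure $\mu$ to rewrite this as $\lim_i (\mu \otimes \nu_i)(\phi)$. Commute inside the limit using $\nu_i \in C^\mu_y(A)$; then swap $\mu$ for $\hat{\mu}$ on the right using the auxiliary observation; then commute back using that $\hat{\mu}$ $A$-commutes with each $\nu_i$; and pull out of the limit using Lemma \ref{lem:defcont} applied to the definable measure $\hat{\mu}$. This delivers $(\hat{\mu} \otimes \nu)(\phi)$. A final application of the hypothesis that $\hat{\mu}$ $A$-commutes with $\nu$ produces $(\nu \otimes \hat{\mu})(\phi)$, and one last auxiliary swap converts this to $(\nu \otimes \mu)(\phi)$, as desired.

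The main conceptual obstacle is recognizing precisely why $\hat{\mu}$ is needed as an intermediary: because $\mu$ and $\hat{\mu}$ need only agree on $\Def_x(A)$, the direct identification $(\mu \otimes \nu)(\phi) = (\hat{\mu} \otimes \nu)(\phi)$ fails in general. The role of the net is to smuggle the computation into configurations in which the $\mu$-for-$\hat{\mu}$ swap is permissible (i.e., with the measure in question on the right), while continuity of the Morley product in the right factor, for a definable measure on the left, lets us re-enter and exit the net cleanly on both sides of the chain.
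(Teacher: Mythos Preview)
Your proof is correct and follows essentially the same approach as the paper: the same seven-step chain of equalities, with the same justifications at each step (continuity in the right factor via Lemma~\ref{lem:defcont} for both $\mu$ and $\hat{\mu}$, the commutativity hypotheses on $\nu_i$ and $\hat{\mu}$, and the swap $\mu\leftrightarrow\hat{\mu}$ on the right coming from $\mu|_A=\hat{\mu}|_A$). Your explicit articulation of why the swap is legal only on the right is a nice addition that the paper leaves implicit.
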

  \begin{proof}
  We first note that $\nu$ is $A$-invariant, and thus Borel definable over $A$ by Corollary \ref{cor:BDdrop}. 
Now fix an $\cL_A$-formula $\phi(x,y)$. Then we have the following calculations (individual steps are justified afterward):
 \begin{multline*}
 (\mu\otimes\nu)(\phi(x,y))=\lim_{i\in I}(\mu\otimes\nu_i)(\phi(x,y))=\lim_{i\in I}(\nu_i\otimes\mu)(\phi(x,y))\\
 =\lim_{i\in I}(\nu_i\otimes\hat{\mu})(\phi(x,y))=\lim_{i\in I}(\hat{\mu}\otimes\nu_i)(\phi(x,y))=(\hat{\mu}\otimes\nu)(\phi(x,y))\\
 = (\nu\otimes\hat{\mu})(\phi(x,y))=(\nu\otimes\mu)(\phi(x,y)).
 \end{multline*}
 The first and fifth equalities above use Lemma \ref{lem:defcont}; the second equality uses the assumption that $\nu_i$ is in $C^\mu_y(A)$; the third and seventh equalities use $\mu|_A=\hat{\mu}|_A$; and the fourth and sixth equalities use the commutativity assumptions on $\hat{\mu}$.
 \end{proof}

 Note that in the statement of Lemma \ref{lem:comtech}, we do not need to assume that $\hat{\mu}$ is definable \emph{over $A$}. For example, if $\mu$ is a type then such a $\hat{\mu}$ exists as in Remark \ref{rem:thirdtype}. So we see  that if $p\in S_x(\cU)$ is definable over $A\subset\cU$, then the set of $A$-invariant measures in $\kM_y(\cU)$ that $A$-commute with $p$ is closed (as usual, when working with types, Borel definability can be weakened to invariance). However, when $\mu$ is a measure, the existence of $\hat{\mu}$ as in Lemma \ref{lem:comtech} is a nontrivial assumption. 
 
 We can now prove a generalization of Theorem \ref{thm:HPSc}$(a)$ for arbitrary theories.
 
 \begin{theorem}\label{thm:gencom}
 Suppose $\mu\in\kM_x(\cU)$ is definable over $M\prec\cU$, and $\nu\in\kM_y(\cU)$ is Borel definable and finitely satisfiable in $M$. If $\mu|_M$ has a  definable global extension that $M$-commutes with $\nu$, then $(\mu\otimes\nu)|_M=(\nu\otimes\mu)|_M$.
 \end{theorem}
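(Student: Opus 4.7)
The plan is to apply Lemma \ref{lem:comtech} with $A = M$. Let $\hat{\mu}$ denote the given definable global extension of $\mu|_M$ that $M$-commutes with $\nu$. I will exhibit $\nu$ as the limit in $\kM_y(\cU)$ of a net $(\Av(\bbar_i))_{i \in I}$ with $\bbar_i \in (M^y)^{<\omega}$ such that each $\Av(\bbar_i)$ lies in $C^\mu_y(M)$ and $M$-commutes with $\hat{\mu}$. Those are precisely the hypotheses of Lemma \ref{lem:comtech}, and its conclusion is the statement of the theorem.

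The first step is to approximate $\nu$ by average measures on tuples from $M$, which is a standard consequence of finite satisfiability. Given finitely many $\cL_\cU$-formulas $\phi_1(y),\ldots,\phi_k(y)$ and $\epsilon > 0$, decompose $S_y(\cU)$ into the Boolean atoms of $\{[\phi_i]\}_{i\leq k}$ with $\nu$-measures $r_1,\ldots,r_{2^k}$. Finite satisfiability ensures each atom with $r_j > 0$ contains some $\tp(b_j/\cU)$ with $b_j \in M^y$; approximating $r_j \approx n_j/N$ for a common denominator $N$ produces a tuple $\bbar \in (M^y)^{<\omega}$ (consisting of $n_j$ copies of each $b_j$) with $\Av(\bbar)$ within $\epsilon$ of $\nu$ on each $\phi_i$. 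Thus $\nu$ lies in the weak-$*$ closure of $\{\Av(\bbar) : \bbar \in (M^y)^{<\omega}\}$, yielding the desired net.

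The second step is to verify the commutativity properties of each $\Av(\bbar)$. Such a measure is \fim\ over $M$, hence in particular Borel definable over $M$. A direct unwinding shows that for any Borel definable $\mu' \in \kM_x(\cU)$ and any $\cL_M$-formula $\phi(x,y)$,
\[
(\mu' \otimes \Av(\bbar))(\phi(x,y)) \;=\; \frac{1}{|\bbar|}\sum_{j} \mu'(\phi(x,b_j)) \;=\; (\Av(\bbar) \otimes \mu')(\phi(x,y)),
\]
where the second equality follows because $F^\phi_{\Av(\bbar),M}$ equals $\frac{1}{|\bbar|}\sum_j \mathbf{1}_{[\phi(x,b_j)]}$, a step function over clopen sets. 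Applied with $\mu'$ equal to $\mu$ and to $\hat{\mu}$ respectively, this shows $\Av(\bbar) \in C^\mu_y(M)$ and that $\Av(\bbar)$ $M$-commutes with $\hat{\mu}$.

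I do not anticipate a serious obstacle. The only mild subtlety — which is also why the hypothesis on $\hat{\mu}$ cannot be dropped — is that replacing $\mu$ with $\hat{\mu}$ inside a Morley product is only legitimate for formulas over $M$, so one must be careful that each implicit appeal to $\mu|_M = \hat{\mu}|_M$ occurs at that level. This is automatic here because we are only proving equality of measures restricted to $\Def_{xy}(M)$, which is exactly how Lemma \ref{lem:comtech} is phrased.
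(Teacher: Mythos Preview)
Your proof is correct and follows essentially the same route as the paper: both apply Lemma~\ref{lem:comtech} with $A=M$, approximate $\nu$ by finite convex combinations of Dirac measures at points of $M^y$ (the paper uses the full convex hull of $\{\delta_a:a\in M^y\}$, you use averages, but the closures coincide), and observe that such measures commute with every invariant measure. Your write-up is more detailed than the paper's, which defers the approximation step to \cite{ChGan} and leaves the commutation as an ``easy calculation.''
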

 \begin{proof}
 Let $X$ denote the convex hull of $\{\delta_a:a\in M^y\}$ in $\kM_y(\cU)$. Then it is not hard to show that $\nu$ is in the closure of $X$ (see also \cite[Proposition 2.11]{ChGan}). Moreover, by an easy calculation, a measure in $X$ commutes with \emph{every} invariant measure. Thus the hypotheses of Lemma \ref{lem:comtech} are satisfied, and so we have $\nu\in C^\mu_y(M)$.
 \end{proof}

 \begin{remark}\label{rem:hpsca}
Theorem \ref{thm:HPSc}$(a)$ is a consequence of Theorem \ref{thm:gencom} together with the fundamental results on NIP theories discussed before Corollary \ref{cor:NIPassoc}. Indeed, suppose $T$ is NIP, $\mu\in\kM_x(\cU)$ is definable, and $\nu$ is finitely satisfiable in some $M\prec\cU$. Without loss of generality, $\mu$ is definable over $M$. Moreover, $\nu$ is $M$-invariant and hence Borel definable over $M$. Finally, $\mu|_M$ has a global extension that is smooth, and thus is definable and commutes with $\nu$. Since all of this works over any $N\succeq M$, we have $\mu\otimes\nu=\nu\otimes\mu$ by Theorem \ref{thm:gencom}. 
 \end{remark}

In light of Proposition \ref{prop:dfscom}, it is natural to ask whether the assumption on $\mu|_M$ in Theorem \ref{thm:gencom} is necessary. A counterexample, which we only mention now, will be given later in the paper.

\begin{example}\label{ex:com1}
There is a complete theory $T$, a definable measure $\mu\in\kM_x(\cU)$ and a \dfs\ type $q\in S_y(\cU)$, such that $\mu\otimes q\neq q\otimes\mu$.  See Section \ref{sec:com1} for details.
\end{example}

On the other hand, the following question remains open.

\begin{question}\label{ques:dfscom}
Do any two \dfs\ global measures commute? (Note that for types this is a special case of Proposition \ref{prop:dfscom}.)
\end{question}

The next goal of this section is to show that \fim\ measures commute with Borel definable measures. In other words, Theorem \ref{thm:HPSc}$(b)$ generalizes to arbitrary theories, provided that \dfs\ is replaced by \fim\ (which is equivalent in NIP). This result also generalizes the easier fact that \emph{smooth} measures commute with Borel definable measures \cite{HPS}. In analogy to the comparison between Proposition \ref{prop:dfscom} and Theorem \ref{thm:gencom}, we will also see that \fim\ \emph{types} commute with \emph{invariant} measures. However, in this case the overall structure of the proof for types is not that much different than for measures. So to avoid repetitive arguments, we will use the relative notion of measurability from Section \ref{sec:bern}.

\begin{definition}\label{def:mumeas}
Suppose $\mu\in\kM_x(\cU)$ is invariant over $A\subset\cU$,  and $\nu\in\kM_y(\cU)$. Then $\mu$ is \textbf{$\nu$-measurable over $A$} if, for any $\cL_A$-formula $\phi(x,y)$, the map $F^\phi_{\mu,A}$ is $\nu|_A$-measurable, i.e., $(F^\phi_{\mu,A})\inv(U)$ is $\nu|_A$-measurable for any open $U\seq[0,1]$.
\end{definition}

Let us note the two examples of interest.

\begin{example}\label{ex:mu-meas} Fix $\mu\in\kM_x(\cU)$ and $A\subset\cU$.
\begin{enumerate}[$(1)$]
\item If $\mu$ is Borel definable over $A$ then it is $\nu$-measurable over $A$ for any $\nu\in\kM_y(\cU)$.
\item If $\mu$ is invariant over $A$ then it is $q$-measurable over $A$ for any $q\in S_y(\cU)$. 
\end{enumerate}
\end{example}

Suppose $\mu\in\kM_x(\cU)$ is invariant over $A\subset\cU$, and $\nu\in\kM_y(\cU)$, and $\mu$ is $\nu$-measurable over $A$. Given an $\cL_A$-formula $\phi(x,y)$, we set $(\mu\otimes_A\nu)(\phi(x,y))=\int_{S_y(A)}F^\phi_{\mu}\, d\nu$. This yields a well-defined Keisler measure $\mu\otimes_A\nu$ in $\kM_{xy}(A)$.  Note that if $\mu$ is either Borel definable over $A$, or $\nu$ is a type in $S_y(\cU)$, then $\mu\otimes_A\nu=(\mu\otimes\nu)|_A$. However, unlike the situation with Borel definability, it is possible for a measure $\mu$ to be $\nu$-measurable over some $A\subset\cU$, but not $\nu$-measurable over any proper $B\supset A$ (see Proposition \ref{prop:bern} for an example). So we may not have a well-defined global product  $\mu\otimes\nu$.

Next, we recall the weak law of large numbers (a special case of Chebyshev's inequality). Our formulation of this result follows \cite[Proposition B.4]{Sibook}, except we have sharpened the bound slightly (in a way that is evident from how Chebyshev is applied).

\begin{fact}\label{fact:wlln}
Suppose $(\Omega,\cB,\mu)$ is a probability space, and fix $X\in\cB$ and $\epsilon>0$. Given $n\geq 1$, let $\mu^n$ denote the usual product measure on $\Omega^n$, and let 
\[
X_{n,\epsilon}=\{\abar\in \Omega^n:\mu(X) \approx_\epsilon\Av(\abar)(X)\}.
\]
Then $X_{n,\epsilon}$ is $\mu^n$-measurable, and $\mu^n(X_{n,\epsilon})\geq 1-\frac{\mu(X)(1-\mu(X))}{\epsilon^2n}$. 
\end{fact}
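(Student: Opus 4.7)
The plan is to recognize this as a textbook application of Chebyshev's inequality to a sum of i.i.d. Bernoulli random variables on the product space. Writing $p = \mu(X)$, for each $i \in [n]$ let $f_i \colon \Omega^n \to \{0,1\}$ be the indicator of the cylinder $\pi_i^{-1}(X)$, where $\pi_i$ is the $i$-th coordinate projection. Under $\mu^n$, the $f_i$ are independent and identically distributed, each with mean $p$ and variance $p(1-p)$. The key identity is
\[
\Av(\abar)(X) = \frac{1}{n}\sum_{i=1}^n \boldsymbol{1}_X(a_i) = \frac{1}{n}\sum_{i=1}^n f_i(\abar),
\]
so $X_{n,\epsilon}$ is precisely the event that $|S_n/n - p| < \epsilon$, where $S_n = \sum_{i=1}^n f_i$. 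Since each $f_i$ is $\cB^n$-measurable, so is $S_n/n$, and hence $X_{n,\epsilon}$ is $\mu^n$-measurable (in fact Borel-measurable; no completion needed).

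For the probability bound, I would compute $\mathbb{E}[S_n/n] = p$ and, using independence, $\mathrm{Var}(S_n/n) = n^{-2}\sum_{i=1}^n \mathrm{Var}(f_i) = p(1-p)/n$. Chebyshev's inequality then gives
\[
\mu^n\!\left(\left|\tfrac{S_n}{n} - p\right| \geq \epsilon\right) \leq \frac{\mathrm{Var}(S_n/n)}{\epsilon^2} = \frac{p(1-p)}{\epsilon^2 n},
\]
and complementation yields $\mu^n(X_{n,\epsilon}) \geq 1 - p(1-p)/(\epsilon^2 n)$, as required.

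There is no real obstacle here — everything is completely standard once the random variables $f_i$ are identified. The only minor subtlety worth noting is that the stated bound uses the strict inequality $\mu(X) \approx_\epsilon \Av(\abar)(X)$ in the definition of $X_{n,\epsilon}$, so one takes the complementary event $\{|S_n/n - p| \geq \epsilon\}$ (a closed condition) when applying Chebyshev; this is fine because Chebyshev's inequality applies verbatim to the closed tail. The sharpening of the bound over \cite[Proposition B.4]{Sibook} comes from keeping the exact variance $p(1-p)$ rather than the cruder upper bound $1/4$.
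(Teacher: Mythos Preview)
Your proof is correct and matches the paper's approach exactly: the paper does not give a detailed proof but simply remarks that the statement is a special case of Chebyshev's inequality (with the sharper variance bound $p(1-p)$ retained rather than replaced by $1/4$ as in \cite[Proposition B.4]{Sibook}), which is precisely what you carry out.
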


The next lemma uses Fact \ref{fact:wlln} to highlight the leverage obtained when working with \fim\ measures. This distinction is further discussed after Theorem \ref{thm:fimcom}.

\begin{lemma}\label{lem:fimcom}
Suppose $\mu\in \kM_x(\cU)$ is \fim\ over $M\prec\cU$. Fix an $\cL$-formula $\phi(x,y)$ and $\mu|_M$-measurable sets $X_1,\ldots,X_n\seq S_x(M)$. Then for any $\epsilon>0$, there is an integer $k\geq 1$ and  a sequence $(a_1,\ldots,a_k)\in(\cU^x)^k$ such that $\mu\approx_\epsilon^\phi \Av(\abar)$ and $\mu|_M(X_i)\approx_\epsilon \Av(\abar)|_M(X_i)$ for all  $1\leq i\leq n$.
\end{lemma}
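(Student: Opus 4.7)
My plan is to combine the \fim\ approximation property (via Proposition \ref{prop:fimtest}) with a Fubini/independence argument powered by Chebyshev's inequality. Without loss of generality, I take each $X_i$ to be Borel, since any $\mu|_M$-measurable set differs from a Borel set by a $\mu|_M$-null set, and neither $\mu|_M(X_i)$ nor $\Av(\abar)|_M(X_i)$ is affected by such a modification. Because $\mu$ is \fim\ and hence definable over $M$, Lemma \ref{lem:defapprox} supplies a $(\phi,\epsilon)$-approximation sequence $(\chi_k)_{k\geq 1}$ for $\mu$ over $M$, and Proposition \ref{prop:fimtest}(iii) gives $\mu^{(k)}(\chi_k)\to 1$.

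Next I would control the empirical deviation on each $X_i$. A direct induction on $k$ from the definition of the Morley product yields
\[
\mu^{(k)}\Bigl(\bigwedge_{j=1}^k \psi_j(x_j)\Bigr) \;=\; \prod_{j=1}^k \mu(\psi_j(x))
\]
for any $\cL_M$-formulas $\psi_1,\ldots,\psi_k$ in the single variable $x$. Pushing $\mu^{(k)}|_M$ forward from $S_{\xbar}(M)$ along the coordinate-projection map to $S_x(M)^k$, both this pushforward and $(\mu|_M)^{\otimes k}$ are regular Borel probability measures that agree on clopen rectangles, hence are equal (by Fact \ref{fact:push} together with the uniqueness statement in Fact \ref{fact:basicBorel}(b)). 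Consequently, for each $i$, the Borel preimages $\tilde{X}_i^{(1)},\ldots,\tilde{X}_i^{(k)}\seq S_{\xbar}(M)$ of $X_i$ under the coordinate projections to $S_x(M)$ are mutually independent Bernoulli events under $\mu^{(k)}|_M$ with common mean $\mu|_M(X_i)$. Chebyshev's inequality (Fact \ref{fact:wlln}) plus a union bound over $i$ then yields
\[
\mu^{(k)}|_M(G_k) \;\geq\; 1 - \frac{n}{4\epsilon^2 k},
\]
where
\[
G_k \coloneqq \Bigl\{\bar{q}\in S_{\xbar}(M) : \text{for all } i,\ \bigl|\tfrac{1}{k}\bigl|\{j:\bar{q}|_{x_j}\in X_i\}\bigr| - \mu|_M(X_i)\bigr| < \epsilon\Bigr\}.
\]

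Finally, for $k$ large enough, $\mu^{(k)}(\chi_k)+\mu^{(k)}|_M(G_k)>1$, so $[\chi_k]\cap G_k$ is nonempty. Picking $\bar{q}$ in this intersection and realizing it as $\abar\in \cU^{\xbar}$, the fact that $\chi_k$ is over $M$ forces $\cU\models\chi_k(\abar)$, and then $\chi_k(\cU^{\xbar})\seq \Avo{k}{\epsilon}{\mu}{\phi}$ yields $\mu\approx^\phi_\epsilon\Av(\abar)$. Since $\tp(a_j/M)=\bar{q}|_{x_j}$ and $\bar{q}\in G_k$, we also get $\Av(\abar)|_M(X_i)\approx_\epsilon \mu|_M(X_i)$ for each $i$, as required.

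The main obstacle I anticipate is cleanly establishing the Fubini/independence identity for $\mu^{(k)}|_M$ on \emph{Borel} coordinate preimages rather than merely on clopen formula-defined rectangles. This is exactly where definability of $\mu$ --- and the induced definability and product structure of every $\mu^{(k)}$ from Proposition \ref{prop:MPbasic}(a) --- carries the argument, and it is what allows Chebyshev to be applied against $\mu^{(k)}|_M$ in the form required to dovetail with the \fim\ approximation bound on $\chi_k$.
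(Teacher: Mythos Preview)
Your approach is essentially the paper's: combine a \fim\ approximation sequence with the weak law of large numbers applied to the coordinate projections of $\mu^{(k)}|_M$, then intersect. The paper phrases the final step as choosing $k$ so that each of the $n+1$ relevant sets has $\mu^{(k)}$-measure greater than $\tfrac{n}{n+1}$, but this is only cosmetically different from your ``sum exceeds $1$'' argument.

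One slip: the reduction to Borel $X_i$ is incorrectly justified. Modifying $X_i$ on a $\mu|_M$-null set can certainly change $\Av(\abar)|_M(X_i)$, since $\Av(\abar)|_M$ is a finite average of Dirac measures and may assign positive mass to a $\mu|_M$-null set. The reduction is, however, unnecessary. Either work directly with $\mu|_M$-measurable $X_i$---the pushforward identity you prove on clopen rectangles extends to the completions, so your $\tilde X_i^{(j)}$ remain $\mu^{(k)}|_M$-measurable and Chebyshev still applies---or keep the Borel $B_i$ but intersect $G_k$ with the Borel co-null event that no coordinate type lies in a fixed Borel null envelope of any $X_i\smd B_i$; any $\bar q$ meeting this extra condition sees no difference between $X_i$ and $B_i$. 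The paper takes the first route, simply asserting $\mu^{(k)}|_M(Y_{i,k})=(\mu|_M)^k(X_{i,k})$ for measurable $X_i$.
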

\begin{proof}
The argument is similar to various parts of Section 3 in \cite{HPS} (see, e.g., \cite[Lemma 3.6]{HPS}). 
Fix $\epsilon>0$. Choose $\cL_M$-formulas $\theta_k(x_1,\ldots,x_k)$ such that $\lim_{k\to\infty}\mu^{(k)}(\theta_k)=1$ and, for $k$ sufficiently large, if $\theta_k(\abar)$ holds then $\mu\approx_\epsilon^\phi \Av(\abar)$. 
For $1\leq i\leq n$ and $k\geq 1$, define
\begin{align*}
X_{i,k} &= \{(p_1,\ldots,p_k)\in S_x(M)^k:\mu|_M(X_i)\approx_\epsilon \Av(\bar{p})(X_i)\},\text{ and }\\
Y_{i,k} &= \{p\in S_{x_1,\ldots,x_k}(M):(p|_{x_1},\ldots,p|_{x_k})\in X_{i,k}\}.
\end{align*}
Then each set $Y_{i,k}$ is $\mu^{(k)}|_M$-measurable and $\mu^{(k)}|_M(Y_{i,k})=(\mu|_M)^k(X_{i,k})$.  
So we have $\lim_{k\to\infty}\mu^{(k)}(Y_{i,k})=1$ by Fact \ref{fact:wlln}. Choose $k$ large enough so that $\mu^{(k)}(\theta_k)$, $\mu^{(k)}(Y_{1,k}),\ldots,\mu^{(k)}(Y_{n,k})$ are each strictly greater than $\frac{n}{n+1}$. Then there is some $p\in [\theta_k]\cap Y_{1,k}\cap\ldots\cap Y_{n,k}$. Let $\abar\in\cU^k$ realize $p$. Then $\abar$ satisfies the desired conditions. 
\end{proof}

We now prove a proposition that provides the heart of the result that \fim\ measures commute with Borel definable measures.

\begin{proposition}\label{prop:fimcom}
Suppose $\mu\in\kM_x(\cU)$ is \fim\ over $M\prec\cU$ and $\nu\in\kM_y(\cU)$ is $\mu$-measurable over $M$. Then $(\mu\otimes\nu)|_M=\nu\otimes_M\mu$.
\end{proposition}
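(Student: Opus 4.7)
The plan is to fix an $\cL_M$-formula $\phi(x,y)$ and $\epsilon>0$, and show that $(\mu\otimes\nu)|_M(\phi(x,y))\approx_\epsilon(\nu\otimes_M\mu)(\phi(x,y))$; sending $\epsilon\to 0$ then finishes the proof. The heart of the argument is that Lemma \ref{lem:fimcom} produces, in one shot, an average $\Av(\bar a)$ that approximates $\mu$ simultaneously with respect to $\phi$ and with respect to the $\mu|_M$-measures of any prescribed finite list of $\mu|_M$-measurable subsets of $S_x(M)$. Applying this to level sets of $F^\phi_{\nu,M}$ produces a genuine Fubini-type identity via Lemma \ref{lem:famints}, allowing us to swap the order of integration in the two sides.

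First I would use the $\mu$-measurability hypothesis to replace the bounded function $F^\phi_{\nu,M}\colon S_x(M)\to[0,1]$ with a step function. Fix $n$ with $1/n<\epsilon/4$, partition $[0,1]$ into $n$ equal half-open intervals, and let $X_j\seq S_x(M)$ be the preimage of the $j$-th interval under $F^\phi_{\nu,M}$. Each $X_j$ is $\mu|_M$-measurable, and the step function $g\coloneqq\sum_{j=1}^n\tfrac{j-1}{n}\boldsymbol{1}_{X_j}$ satisfies $\|F^\phi_{\nu,M}-g\|_\infty<\epsilon/4$. Then I would apply Lemma \ref{lem:fimcom} to $\phi$, the sets $X_1,\ldots,X_n$, and tolerance $\epsilon/(4n)$, obtaining $\bar a=(a_1,\ldots,a_k)\in(\cU^x)^k$ with $\mu\approx^\phi_{\epsilon/(4n)}\Av(\bar a)$ and $|\mu|_M(X_j)-\Av(\bar a)|_M(X_j)|<\epsilon/(4n)$ for each $j$.

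The remaining work is a chain of four estimates, each contributing at most $\epsilon/4$: Lemma \ref{lem:famints} gives $(\mu\otimes\nu)(\phi(x,y))\approx_{\epsilon/4}\tfrac{1}{k}\sum_i\nu(\phi(a_i,y))$, which by $M$-invariance of $\nu$ equals $\int_{S_x(M)} F^\phi_{\nu,M}\,d\Av(\bar a)|_M$; replacing $F^\phi_{\nu,M}$ by $g$ inside this integral costs at most $\epsilon/4$ by the uniform bound; switching the integrating measure from $\Av(\bar a)|_M$ to $\mu|_M$ costs at most $\sum_j\tfrac{j-1}{n}\cdot\tfrac{\epsilon}{4n}\le\epsilon/4$ by the $X_j$-estimates combined with $|c_j|\le 1$; finally replacing $g$ back by $F^\phi_{\nu,M}$ costs another $\epsilon/4$, and the resulting integral is by definition $(\nu\otimes_M\mu)(\phi(x,y))$. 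Chaining these four steps yields the desired $\epsilon$-estimate.

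The main obstacle is already encapsulated in Lemma \ref{lem:fimcom}, which is exactly where the \fim\ assumption does its work: one needs a single finite average $\Av(\bar a)$ witnessing $\epsilon$-closeness uniformly across both the $y$-parameters of $\phi$ and the finite list of level sets, and it is precisely this uniform-in-$y$-plus-in-Borel-sets concentration that is unavailable for merely Borel definable or \fam\ measures. Granted Lemma \ref{lem:fimcom}, the rest is bookkeeping, with the only subtle choice being to fix $n$ before invoking the lemma so that its tolerance $\epsilon/(4n)$ compensates for the $n$ terms that appear when passing from $\Av(\bar a)|_M$ to $\mu|_M$.
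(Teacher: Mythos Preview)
Your proposal is correct and follows essentially the same approach as the paper: approximate $F^{\phi^*}_{\nu,M}$ (which you write as $F^\phi_{\nu,M}$, implicitly swapping object and parameter variables) by a simple $\mu|_M$-measurable function, invoke Lemma~\ref{lem:fimcom} to obtain a single average close to $\mu$ both for $\phi$ and for the measures of the level sets, and chain the resulting estimates. The only differences from the paper's proof are cosmetic: you use the explicit equipartition of $[0,1]$ rather than a generic simple-function approximation, and you track the error constants more carefully (four $\epsilon/4$ steps with tolerance $\epsilon/(4n)$ in the lemma), whereas the paper is content with looser bookkeeping.
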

\begin{proof}
Fix an $\cL_M$-formula $\phi(x,y)$ and some $\epsilon>0$. Let $\varphi^*(y,x)$ denote the same formula $\varphi(x,y)$, but with the roles of object and parameter variables exchanged. Since $F^{\phi^*}_{\nu,M}$ is bounded and $\mu|_M$-measurable, it can be approximated uniformly by simple $\mu|_M$-measurable functions (see Fact \ref{fact:Bapprox}). So there are $\mu|_M$-measurable sets $X_1,\ldots,X_n\seq S_x(M)$, and $r_1,\ldots,r_n\in [0,1]$ such that $\|F^{\phi^*}_{\nu,M} -\sum_{i=1}^n r_i\boldsymbol{1}_{X_i}\|_\infty<\epsilon$. By Lemma \ref{lem:fimcom}, there is some $\abar\in (\cU^x)^k$ such that $\mu\approx^\phi_{\epsilon}\Av(\abar)$ and $\mu(X_i)|_M\approx_{\epsilon/n}\Av(\abar)|_M(X_i)$ for all $1\leq i\leq n$. Let $p_j=\tp(a_j/M)$. Then
\begin{multline*}
(\mu\otimes\nu)(\phi(x,y)) \approx_\epsilon (\Av(\abar)\otimes\nu)(\phi(x,y))=\frac{1}{k}\sum_{j=1}^k\nu(\phi(a_j,y))=\frac{1}{k}\sum_{j=1}^kF^{\phi^*}_{\nu,M}(p_j)\\
\approx_\epsilon \frac{1}{k}\sum_{j=1}^k\sum_{i=1}^nr_i\boldsymbol{1}_{X_i}(p_j)
= \frac{1}{k}\sum_{i=1}^n\sum_{j=1}^k r_i\delta_{a_j}|_M(X_i)
=\sum_{i=1}^n r_i\Av(\abar)|_M(X_i)\\
 \approx_\epsilon \sum_{i=1}^n r_i\mu|_M(X_i)=\int_{S_x(M)}\sum_{i=1}^n r_i\boldsymbol{1}_{X_i}\, d\mu\approx_\epsilon \int_{S_x(M)} F^{\phi^*}_{\nu}\, d\mu=(\nu\otimes_M\mu)(\phi(x,y)). 
\end{multline*}
Since $\epsilon>0$ was arbitrary, we have the desired result. 
\end{proof}

\begin{theorem}\label{thm:fimcom}$~$
\begin{enumerate}[$(a)$]
\item If $\mu\in\kM_x(\cU)$ is \fim\ and $\nu\in\kM_y(\cU)$ is Borel definable, then $\mu\otimes\nu=\nu\otimes\mu$.
\item If $p\in S_x(\cU)$ is \fim\ and $\nu\in\kM_y(\cU)$ is invariant, then $p\otimes\nu=\nu\otimes p$.
\end{enumerate}
\end{theorem}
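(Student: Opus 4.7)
The plan is to deduce both parts of the theorem directly from Proposition~\ref{prop:fimcom}, after selecting a sufficiently rich small model $M$ over which the various hypotheses are realized. All of the analytic content has already been packaged into Lemma~\ref{lem:fimcom} and Proposition~\ref{prop:fimcom}; what remains is to match the correct clause of Example~\ref{ex:mu-meas} with the appropriate identification of $\otimes_M$ with the (restricted) global Morley product.

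For part~(a), I would fix an arbitrary $\cL_\cU$-formula $\phi(x,y)$ and choose $M\prec\cU$ containing the parameters of $\phi$ together with defining sets for $\mu$ and $\nu$. By Proposition~\ref{prop:alldrop} (invoking invariance of \fim\ and of Borel definability under enlargement of the parameter set, together with Corollary~\ref{cor:BDdrop}), $\mu$ is \fim\ over $M$ and $\nu$ is Borel definable over $M$. Then Example~\ref{ex:mu-meas}(1) gives that $\nu$ is $\mu$-measurable over $M$, so Proposition~\ref{prop:fimcom} applies and yields $(\mu\otimes\nu)|_M = \nu\otimes_M\mu$. Since $\nu$ is Borel definable over $M$, the remark immediately following Example~\ref{ex:mu-meas} identifies $\nu\otimes_M\mu$ with $(\nu\otimes\mu)|_M$. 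Thus $\mu\otimes\nu$ and $\nu\otimes\mu$ agree on $\phi$, and as $\phi$ was arbitrary, $\mu\otimes\nu = \nu\otimes\mu$.

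For part~(b), the argument is structurally identical, but the extra flexibility afforded by $p$ being a type lets us weaken the hypothesis on $\nu$ from Borel definability to mere invariance. Given $\phi(x,y)$, I would pick $M\prec\cU$ containing its parameters, large enough that $p$ is \fim\ over $M$ and $\nu$ is invariant over $M$. This time I invoke Example~\ref{ex:mu-meas}(2): since $p$ is a type and $\nu$ is invariant over $M$, $\nu$ is $p$-measurable over $M$. Proposition~\ref{prop:fimcom} then produces $(p\otimes\nu)|_M = \nu\otimes_M p$, and the other clause of the remark after Example~\ref{ex:mu-meas} --- the one asking that the \emph{second} factor be a type --- identifies $\nu\otimes_M p$ with $(\nu\otimes p)|_M$. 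This gives $p\otimes\nu = \nu\otimes p$.

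I do not anticipate a substantive obstacle; the main thing to keep straight is which direction of $\mu$-measurability is needed in each case. In part~(a), Borel definability of $\nu$ serves two purposes simultaneously: it supplies the hypothesis of Proposition~\ref{prop:fimcom} via Example~\ref{ex:mu-meas}(1), and it enables the output identification $\nu\otimes_M\mu = (\nu\otimes\mu)|_M$. In part~(b), both roles are instead played by the fact that $p$ is a type --- invariance of $\nu$ then suffices on the input side, and ``second factor is a type'' suffices on the output side.
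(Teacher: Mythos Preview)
Your proof is correct and is precisely the argument the paper gives: both parts are deduced from Proposition~\ref{prop:fimcom} by invoking the appropriate clause of Example~\ref{ex:mu-meas} (and the identification $\nu\otimes_M\mu=(\nu\otimes\mu)|_M$ that follows it). The paper compresses this into a single sentence, but your expanded version tracks exactly the same dependencies.
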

\begin{proof}
In light of Example \ref{ex:mu-meas}, this follows immediately from Proposition \ref{prop:fimcom}.
\end{proof}

A natural question is whether  one can get away with using \fam\ measures in place of \fim\ measures in Theorem \ref{thm:fimcom}.  However, note that in the proof of Proposition \ref{prop:fimcom}, we need to approximate $\mu$ simultaneously on instances of the formula $\varphi(x,y)$ \textit{as well as} finitely many Borel sets. \emph{A priori}, the assumption of \fam\ is not sufficient to obtain this level of approximation. That being said, it is perhaps worth emphasizing that this kind of approximation for a measure $\mu$ is all that is needed to prove that $\mu$ commutes with any Borel definable measure. Thus it may be worth pursuing the question of whether this is strictly weaker than the \fim\ assumption.

On the other hand, if in Proposition \ref{prop:fimcom} we restrict to the case where $\nu$ is definable, then we only need to approximate $\mu$ on instances of $\varphi(x,y)$ and finitely many \emph{clopen} sets (i.e., formulas). In this case, such an approximation is possible when $\mu$ is only \fam. These observations yield the next result, which was first shown by the second author using different methods (see \cite[Corollary 3.8]{GanSA}).

\begin{proposition}\label{prop:fdcom}
Suppose $\mu\in\kM_x(\cU)$ is \fam\ and $\nu\in\kM_y(\cU)$ is definable. Then 
$\mu\otimes\nu=\nu\otimes\mu$. 
\end{proposition}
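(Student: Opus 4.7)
The plan is to adapt the proof of Proposition \ref{prop:fimcom} by exploiting definability of $\nu$ to replace the $\mu|_M$-measurable sets $X_i$ (which in general are only Borel) with clopen sets, i.e., formulas. This reduces the demand on $\mu$ from a fim-style simultaneous approximation over arbitrary Borel sets (which needs the weak law of large numbers via Lemma \ref{lem:fimcom}) to a purely fam-style simultaneous approximation over finitely many formulas.

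Concretely, I would fix an $\cL_M$-formula $\phi(x,y)$ and $\epsilon>0$, with $\mu$ fam over $M$ and $\nu$ definable over $M$ (using Proposition \ref{prop:alldrop}, one may then extend to arbitrary $\cL_\cU$-formulas by passing to a larger small model). Writing $\phi^*(y,x)$ for the same formula with object and parameter variables swapped, definability of $\nu$ and Fact \ref{fact:defdefs} yield $\cL_M$-formulas $\psi_1(x),\ldots,\psi_n(x)$ and scalars $r_1,\ldots,r_n\in[0,1]$ with $\|F^{\phi^*}_{\nu,M}-\sum_{i=1}^n r_i\boldsymbol{1}_{[\psi_i]}\|_\infty<\epsilon$. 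Next I would use the fam property to produce a single $\abar\in (M^x)^k$ that \emph{simultaneously} satisfies $\mu\approx^\phi_\epsilon\Av(\abar)$ and $|\mu(\psi_i(x))-\Av(\abar)(\psi_i(x))|<\epsilon/n$ for each $i\leq n$. The standard way to obtain this is to combine all the approximation targets into one $\cL_M$-formula by introducing a ``switch'' variable $w$ and constants $c_0,c_1,\ldots,c_n\in M$, defining
\[
\Phi(x;y,w) \;:=\; (w=c_0\wedge\phi(x,y)) \;\vee\; \bigvee_{i=1}^n (w=c_i\wedge\psi_i(x)),
\]
so that applying fam to $\Phi$ with small enough error yields a single tuple $\abar$ controlling all the required instances at once.

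With the simultaneous approximation in hand, the remainder is the telescoping chain of $\approx_\epsilon$ estimates from the proof of Proposition \ref{prop:fimcom}, with $[\psi_i]$ in place of $X_i$ throughout, giving
\[
(\mu\otimes\nu)(\phi(x,y)) \;\approx_{4\epsilon}\; (\nu\otimes\mu)(\phi(x,y)),
\]
and hence the desired equality since $\epsilon$ is arbitrary. The main obstacle is not conceptual but organizational: one must verify that the definability of $\nu$ really does reduce the approximation of $F^{\phi^*}_{\nu,M}$ to a linear combination of indicators of formulas (as opposed to Borel sets), and that fam over a single $\cL_M$-formula transfers to simultaneous control over finitely many via the switch-variable coding. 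Once those two points are in place, no probabilistic input (Chebyshev/WLLN) is needed, which is exactly why fam, rather than fim, suffices here.
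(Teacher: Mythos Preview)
Your proposal is correct and follows essentially the same approach as the paper: use definability of $\nu$ to replace the Borel sets $X_i$ by clopen sets $[\psi_i]$, then apply \fam\ to the finite collection of formulas $\{\phi,\psi_1,\ldots,\psi_n\}$ and run the same chain of estimates. You are in fact more explicit than the paper about how to obtain the simultaneous approximation (via the switch-variable coding), which the paper leaves implicit.
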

\begin{proof}
Following the proof of Proposition \ref{prop:fimcom}, we can use definability of $\nu$ (and Fact \ref{fact:defdefs}) to ensure each set $X_i$ is clopen (say given by the formula $\psi_i(x)$). Then, since $\mu$ is \fam, we may choose an $\frac{\epsilon}{n}$-appproximation  $(a_1,\ldots,a_k)$ for the  finite set of \emph{formulas} $\{\phi(x,y),\psi_1(x),\ldots,\psi_n(x)\}$. The rest of the calculations are the same.
\end{proof}

  In \cite{CoGa}, it is shown that a type $p\in S_x(\cU)$ is \fim\ over $M\prec\cU$ if and only if it is \textbf{generically stable over $M$}, i.e., $p$ is $M$-invariant and there does not exist an $\cL$-formula $\phi(x,y)$, a sequence $(b_i)_{i<\omega}$ from $\cU^y$, and a Morley sequence $(a_i)_{i<\omega}$ in $p|_M$, such that $\cU\models\phi(a_i,b_j)$ if and only if $i\leq j$.
So we have shown that generically stable types commute with invariant measures. 

\begin{remark}\label{rem:gstype}
There is a more direct proof that generically stable types commute with invariant \emph{types}. For the sake of completeness, we include the argument  (which is similar to the proof that in NIP theories, \dfs\ types commute with invariant types \cite[Proposition 2.33]{Sibook}). Suppose $p\in S_x(\cU)$ is generically stable, and $q\in S_y(\cU)$ is invariant. Fix $\phi(x,y)\in q\otimes p$. Let $M\prec\cU$ be such that $\phi(x,y)$ is over $M$, $p$ is generically stable over $M$, and $q$ is $M$-invariant. Let $(a_i)_{i<\omega}$ be a Morley sequence in $p$ over $M$, and fix $b\models q|_{Ma_{<\omega}}$. Then, for all $i<\omega$, we have $(a_i,b)\models (q\otimes p)|_M$ and so $\phi(a_i,b)$ holds. By generic stability, $p$ coincides with the average type of $(a_i)_{i<\omega}$ (see \cite[Section 3]{CoGa}), and so $\phi(x,b)\in p$. Thus $\phi(x,y)\in p\otimes q$ since $b\models q|_M$.  
\end{remark}

It is natural to ask at this point whether commuting with all Borel definable measures characterizes \fim. This turns out to not be the case.

\begin{example}\label{ex:com2}
There is a complete theory $T$ and a \fam\ (but not \fim) global type that commutes with every invariant measure. See Section \ref{sec:Henson} for details.
\end{example}

On the other hand, we know from Example \ref{ex:com1} that \dfs\ is not sufficient to ensure commuting with Borel definable measures. So this leaves the following questions. 

\begin{question}\label{ques:famcom}
Let $T$ be a complete theory, and fix $\mu\in\kM_x(\cU)$.
\begin{enumerate}[$(1)$]
\item Suppose $\mu$ is \fam. Does $\mu$ commute with every Borel definable measure?
\item Suppose $\mu$ commutes with every Borel definable measure. Is $\mu$ \fam?
\end{enumerate}
\end{question}

\section{Closure properties of \fim\ measures}\label{sec:fim}

In this section, we focus specifically on frequency interpretation measures, which seem to provide the `right' generalization of the notion of generically stable types (recall the discussion before Remark \ref{rem:gstype}) to the setting of Keisler measures. Our goal is to investigate preservation of \fim\ under natural operations on Keisler measures. We first consider convex combinations.

 One fundamental difference between the space of types and the space of Keisler measures is that the latter admits a convex structure. More explicitly, given any two Keisler measures $\mu$ and $\nu$ in $\mathfrak{M}_{x}(\mathcal{U})$ and any real number $r$ in the interval $[0,1]$, one can construct the measure $r\mu + (1-r)\nu\in \kM_x(\cU)$. Thus the question arises as to which collections of measures are preserved under this construction, or which subsets of the space $\mathfrak{M}_{x}(\mathcal{U})$ are \textit{convex}. While it is easily observed that the classes of \dfs\ and \fam\ measures form convex sets (see Proposition \ref{prop:CCbasic}), this property does not obviously hold for the class of \fim\ measures. In this section, we demonstrate that the class of \fim\ measures is also convex. This result provides some fundamental geometric information about the space of \fim\ measures, and also provides a process for making new \fim\ measures from old ones. For example, the average of finitely many \fim\ measures is still \fim. In fact, showing this even in the case of \fim\ types is nontrivial.

The proof that \fim\ is preserved under convex combinations will require the following tail bound for a binomial distribution. Note that, for any real number $r$ and integer $n\geq 1$, we have $\sum_{X\seq[n]}r^{|X|}(1-r)^{n-|X|}=1$. Given $n\geq 1$, $\epsilon>0$, and $r\in[0,1]$, let $\cP_{r,\epsilon}(n)$ denote the collection of subsets $X\seq [n]$ such that $|X|/n\approx_\epsilon r$.

 \begin{fact}\label{fact:binomial}
If $r\in [0,1]$ and $\epsilon>0$ then 
\[
\sum_{X\in \cP_{r,\epsilon}(n)}r^{|X|}(1-r)^{n-|X|}\geq 1-\frac{r(1-r)}{\epsilon^2n}.
\] 
 \end{fact}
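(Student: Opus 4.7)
The approach is to recognize this as exactly the tail bound for a binomial distribution, which falls out of the weak law of large numbers / Chebyshev inequality already recorded in Fact \ref{fact:wlln}. The key observation is that $r^{|X|}(1-r)^{n-|X|}$ is precisely the probability, under the product of $n$ independent Bernoulli$(r)$ trials, that the outcome is the indicator sequence of $X$. So the sum on the left-hand side has a natural probabilistic meaning, and matches the form of Fact \ref{fact:wlln} after the right identifications.

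Concretely, I would take $\Omega = \{0,1\}$ with $\cB$ the full power set, and let $\mu$ be the probability measure on $\Omega$ with $\mu(\{1\}) = r$ and $\mu(\{0\}) = 1-r$. Set $X = \{1\} \in \cB$, so $\mu(X) = r$. Under the bijection between subsets $Y \seq [n]$ and their indicator tuples $\bar{a}_Y \in \Omega^n$, the product measure satisfies $\mu^n(\{\bar{a}_Y\}) = r^{|Y|}(1-r)^{n-|Y|}$, and moreover $\Av(\bar{a}_Y)(X) = |Y|/n$. Thus the event $X_{n,\epsilon}$ from Fact \ref{fact:wlln} becomes precisely $\{\bar{a}_Y : Y \in \cP_{r,\epsilon}(n)\}$, so
\[
\sum_{Y \in \cP_{r,\epsilon}(n)} r^{|Y|}(1-r)^{n-|Y|} \;=\; \mu^n(X_{n,\epsilon}).
\]
Applying Fact \ref{fact:wlln} then yields the claimed lower bound $1 - \frac{r(1-r)}{\epsilon^2 n}$.

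There is no real obstacle here; this is a purely bookkeeping step whose only content is making the identification between subsets of $[n]$ and elements of $\Omega^n$ explicit. One could equally well bypass Fact \ref{fact:wlln} and apply Chebyshev's inequality directly to the binomial random variable $S \sim \text{Bin}(n,r)$ (mean $nr$, variance $nr(1-r)$), noting that $P(|S - nr| \geq \epsilon n) \leq \frac{nr(1-r)}{\epsilon^2 n^2}$, but since Fact \ref{fact:wlln} is already stated in the paper it is cleaner to invoke it.
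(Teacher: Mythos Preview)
Your proposal is correct and is essentially identical to the paper's own proof: the paper likewise observes that the sum is $P(|B(n,r)-rn|<\epsilon n)$ and obtains the bound either by Chebyshev applied to the binomial or, equivalently, by invoking Fact~\ref{fact:wlln} with $\Omega=\{0,1\}$, $\cB=\cP(\Omega)$, $X=\{1\}$, and $\mu(X)=r$.
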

 \begin{proof}
 This follows from Chebyshev's inequality applied to the biniomial distribution $B(n,r)$ (the sum above is precisely $P(|B(n,r)-rn|<\epsilon n))$. Equivalently, apply Fact \ref{fact:wlln} with $\Omega=\{0,1\}$, $\cB=\cP(\Omega)$,  $X=\{1\}$, and $\mu(X)=r$. 
 \end{proof}

 \begin{theorem}\label{thm:fimCC}
Suppose $\mu,\nu\in\kM_x(\cU)$ are \fim\ over $M\prec\cU$, and fix $r\in[0,1]$. Then $\lambda=r\mu+(1-r)\nu$ is \fim\ over $M$. 
\end{theorem}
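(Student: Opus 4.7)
The plan is to apply Proposition~\ref{prop:fimtest}: given an $\cL$-formula $\phi(x,y)$, I will construct an $\cL_M$-formula sequence $(\theta_n)_{n=1}^\infty$ such that (i) if $\bar{a}\models\theta_n$ then $\lambda\approx^\phi_{\epsilon_n}\Av(\bar{a})$ with $\epsilon_n\to 0$, and (ii) $\lambda^{(n)}(\theta_n)\to 1$. The case $r\in\{0,1\}$ is trivial, so I assume $0<r<1$.

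First, I record the setup. Since $\mu,\nu$ are \fim\ over $M$ they are definable over $M$, so $\lambda$ is definable over $M$ by Proposition~\ref{prop:CCbasic}, and all Morley powers $\lambda^{(n)}$ are definable by Proposition~\ref{prop:MPbasic}$(a)$. Because $\mu$ commutes with $\nu$ (Theorem~\ref{thm:fimcom}) and because Morley products of definable measures are associative (Theorem~\ref{thm:defassoc}), for any $S\seq[n]$ there is a well-defined order-independent Morley product $M^{(n)}_S\in\kM_{x_1\ldots x_n}(\cU)$ using $\mu$ at coordinates in $S$ and $\nu$ at coordinates in $[n]\setminus S$. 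Using bilinearity of $\otimes$ in each argument on Borel definable measures, induction on $n$ gives the decomposition
\[
\lambda^{(n)}=\sum_{S\seq [n]} r^{|S|}(1-r)^{n-|S|}\,M^{(n)}_S.
\]

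Next, fix $\cL_M$-formula sequences $(\theta^\mu_k)_{k=1}^\infty$ and $(\theta^\nu_k)_{k=1}^\infty$ witnessing \fim\ for $\mu$ and $\nu$ on $\phi$. Choose a sequence $\delta_n\downarrow 0$ (slowly enough that $\cP_{r,\delta_n}(n)$ is nonempty and both $\lfloor(r-\delta_n)n\rfloor\to\infty$ and $\lfloor(1-r-\delta_n)n\rfloor\to\infty$), and set
\[
\theta_n(x_1,\ldots,x_n)\;:=\;\bigvee_{S\in\cP_{r,\delta_n}(n)}\theta^\mu_{|S|}(\bar{x}_S)\wedge\theta^\nu_{n-|S|}(\bar{x}_{[n]\setminus S}),
\]
where $\bar{x}_S$ is the sub-tuple indexed by $S$ in increasing order.

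For (i): if $\bar{a}\models\theta_n$, some disjunct indexed by $S\in\cP_{r,\delta_n}(n)$ is satisfied, so for $n$ large the \fim\ property of $\mu,\nu$ yields $\mu\approx^\phi_{\epsilon^\mu_n}\Av(\bar{a}_S)$ and $\nu\approx^\phi_{\epsilon^\nu_n}\Av(\bar{a}_{[n]\setminus S})$ with $\epsilon^\mu_n,\epsilon^\nu_n\to 0$. The identity $\Av(\bar{a})=\tfrac{|S|}{n}\Av(\bar{a}_S)+\tfrac{n-|S|}{n}\Av(\bar{a}_{[n]\setminus S})$ combined with $|S|/n\approx_{\delta_n}r$ and the triangle inequality gives $\lambda\approx^\phi_{\epsilon_n}\Av(\bar{a})$ with $\epsilon_n\leq 2\max(\epsilon^\mu_n,\epsilon^\nu_n)+2\delta_n\to 0$.

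For (ii): for each $T\in\cP_{r,\delta_n}(n)$, the specific disjunct indexed by $T$ inside $\theta_n$ gives
\[
M^{(n)}_T(\theta_n)\;\geq\;M^{(n)}_T\bigl(\theta^\mu_{|T|}(\bar{x}_T)\wedge\theta^\nu_{n-|T|}(\bar{x}_{[n]\setminus T})\bigr)\;=\;\mu^{(|T|)}(\theta^\mu_{|T|})\cdot\nu^{(n-|T|)}(\theta^\nu_{n-|T|}),
\]
the final equality being a Fubini-type factorization: writing $M^{(n)}_T$ as $\mu^{(|T|)}_{\bar{x}_T}\otimes\nu^{(n-|T|)}_{\bar{x}_{[n]\setminus T}}$ (permissible by commutativity and associativity), and noting that the two conjuncts live on disjoint variable sets, the integral defining the Morley product factors. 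Summing against the binomial weights,
\[
\lambda^{(n)}(\theta_n)\;\geq\;\sum_{T\in\cP_{r,\delta_n}(n)} r^{|T|}(1-r)^{n-|T|}\,\mu^{(|T|)}(\theta^\mu_{|T|})\,\nu^{(n-|T|)}(\theta^\nu_{n-|T|}).
\]
As $n\to\infty$, for $T\in\cP_{r,\delta_n}(n)$ both $|T|$ and $n-|T|$ tend to infinity uniformly, so the two factor terms tend to $1$ uniformly; meanwhile $\sum_{T\in\cP_{r,\delta_n}(n)} r^{|T|}(1-r)^{n-|T|}\to 1$ by Fact~\ref{fact:binomial}. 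Hence $\lambda^{(n)}(\theta_n)\to 1$, and together with (i) this shows $\lambda$ is \fim\ over $M$.

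The main obstacle is the rigorous justification of the decomposition of $\lambda^{(n)}$ and of the Fubini factorization on $M^{(n)}_T$. This is where the commutativity of \fim\ with Borel definable measures (Theorem~\ref{thm:fimcom}) and the associativity for definable measures (Theorem~\ref{thm:defassoc}) are essential: without them, the products $M^{(n)}_S$ would be order-dependent and the clean bilinear expansion of $\lambda^{(n)}$ would not be available. Once these structural results are in place, the remainder is a combinatorial estimate driven by binomial concentration.
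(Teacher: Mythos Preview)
Your argument is correct and follows the same strategy as the paper: decompose $\lambda^{(n)}$ as a binomial mixture $\sum_{S\seq[n]}r^{|S|}(1-r)^{n-|S|}\lambda_{n,S}$, use Fact~\ref{fact:binomial} to concentrate on $S$ with $|S|/n\approx r$, and factor $\lambda_{n,S}$ on a conjunction of formulas with disjoint variable supports. Two small points deserve comment.

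First, your constraints on $\delta_n$ are not quite sufficient for the last step: Fact~\ref{fact:binomial} gives the lower bound $1-r(1-r)/(\delta_n^2 n)$, so you need $\delta_n^2 n\to\infty$ (e.g.\ $\delta_n=n^{-1/3}$), not merely that $\delta_n\to 0$ slowly. The paper avoids this bookkeeping by fixing $\epsilon$ and verifying condition $(iii)$ of Proposition~\ref{prop:fimtest} directly, showing $\lambda^{(n)}(\chi^\lambda_{n,6\epsilon})\to 1$ for an arbitrary $(\phi,\epsilon)$-approximation sequence.

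Second, your appeal to Theorem~\ref{thm:fimcom} is unnecessary, and your claim that commutativity is ``essential'' overstates its role. The paper defines $\lambda_{n,X}$ as the \emph{ordered} product $\bigotimes_{i=1}^n\sigma_i$ with $\sigma_i\in\{\mu,\nu\}$, invoking only associativity for definable measures (Theorem~\ref{thm:defassoc}). The factorization $\lambda_{n,X}\bigl(\chi^\mu(\bar{x}_X)\wedge\chi^\nu(\bar{x}_{[n]\setminus X})\bigr)=\mu^{(|X|)}(\chi^\mu)\cdot\nu^{(n-|X|)}(\chi^\nu)$ then follows directly from the iterated definition of the Morley product: at each stage one integrates out a single variable appearing in exactly one of the two conjuncts, so no reordering is needed.
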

\begin{proof}
Note that $\mu$ and $\nu$ are definable over $M$, and thus so is $\lambda$ by Proposition \ref{prop:CCbasic}.
Fix a formula $\phi(x,y)$. For each $\e > 0$, let $(\chi^{\mu}_{n,\e})_{n=0}^\infty$, $(\chi^{\nu}_{n,\e})_{n=0}^\infty$, and $(\chi^{\lambda}_{n,\e})_{n=0}^\infty$  be $(\phi,\e)$-approximation sequences over $M$ for $\mu$, $\nu$, and $\lambda$, respectively.
By Proposition \ref{prop:fimtest}, we have that for every $\e>0$, $\lim_{n\to \infty}\mu^{(n)} (\chi^{\mu}_{n,\e})= 1$, and likewise for $\nu$. We need to show $\lim_{n\to\infty}\lambda^{(n)}(\chi^\lambda_{n,\epsilon})=1$ for all $\epsilon>0$. So fix some $\epsilon>0$. Without loss of generality, assume $\epsilon<\min\{\frac{r}{2},\frac{1-r}{2}\}$. What we will end up showing is that for any $\delta>0$, there is an integer $n(\delta)\geq 1$ such that, if $n\geq n(\delta)$ then $\lambda^{(n)}(\chi^{\lambda}_{n,6\epsilon})>(1-\delta)^3$. Given this, we can conclude $\lim_{n\to\infty}\lambda^{(n)}(\chi^\lambda_{n,6\epsilon})=1$. Since $\epsilon>0$ is arbitrarily small, this suffices to yield the desired result. 

Given $n\geq 1$ and $X\seq [n]$, let 
   \[
   \lambda_{n,X}=\bigotimes_{i=1}^{n} \left\{\begin{matrix}\mu, & i \in X \\ \nu, & i \notin X \end{matrix}\right\}.
   \]
   Note that $\lambda_{n,X}$ is well-defined by associativity for definable measures.
By linearity of the Morley product, we have that for any $n\geq 1$,
    \[
    \lambda^{(n)} = \sum_{X \subseteq [n]} r^{|X|}(1-r)^{n-|X|} \lambda_{n,X}.
    \]

Now fix some $\delta>0$.  Choose $n_*\geq 1$ so that if $n\geq\frac{r}{2}n_*$ then $\mu^{(n)}(\chi^\mu_{n,\epsilon})>1-\delta$, and if $n\geq \frac{1-r}{2}n_*$ then $\nu^{(n)}(\chi^\nu_{n,\epsilon})>1-\delta$. Since $\epsilon<\min\{\frac{r}{2},\frac{1-r}{2}\}$, we have that for any $n \geq n_*$ and any $m \leq n$, if $\frac{m}{n}\approx_\epsilon r$ then $m \geq \frac{r}{2} n_*$ and $n-m \geq \frac{1-r}{2} n_*$. 
   
   Suppose $n\geq n_*$ and $X\in\cP_{r,\epsilon}(n)$. We will show $\lambda_{n,X}(\chi^\lambda_{n,6\epsilon})>(1-\delta)^2$. Let $m=|X|$. By construction, we have $m \geq \frac{r}{2} n_*$ and $n - m \geq \frac{1-r}{2} n_*$. Enumerate
   \[
   X=\{i_1,\dots,i_{m}\}\makebox[.5in]{and} [n]\backslash X=\{j_1,\dots,j_{n-m}\}.
   \]
 Consider the formula $\Phi(x_1,\ldots,x_n)\coloneqq \chi^\mu_{m,\e}(x_{i_1},\dots,x_{i_{m}}) \wedge \chi^\nu_{n-m,\e}(x_{j_1},\dots,x_{j_{n-m}})$. Then we have
 \[
 \lambda_{n,X}(\Phi)=\mu^{(m)}(\chi^\mu_{m,\epsilon})\cdot\nu^{(n-m)}(\chi^\nu_{n-m,\epsilon})>(1-\delta)^2.
 \]
Furthermore, for any $\abar\models\Phi$ and $b\in \cU^y$, we have 
    \begin{align*}
    \lambda( \phi(x,b)) &= r\mu(\phi(x,b)) + (1-r) \nu(\phi(x,b)) \\
    & \approx_{2\e} \frac{m}{n} \mu (\phi(x,b)) + \frac{n-m}{n} \nu(\varphi(x,b)) \\
    & \approx_{\e} \frac{m}{n} \Av(a_{i_1},\dots,a_{i_{m}})( \phi (x,b)) + \frac{n - m}{n} \Av(a_{j_1},\dots,a_{j_{n-m}})( \phi (x,b)) \\
    & = \Av(a_1,\dots,a_n)( \phi (x,b)).
    \end{align*}
Thus  $\Phi(\cU^{\xbar})\seq \Avo{n}{3\epsilon}{\lambda}{\phi}\seq\chi^\lambda_{n,6\epsilon}(\cU^{\xbar})$, and so $\lambda_{n,X}(\chi^\lambda_{n,6\epsilon})\geq\lambda_{n,X}(\Phi)>(1-\delta)^2$.

Finally, fix $n\geq n(\delta)\coloneqq \max\{n_*,\frac{r(1-r)}{\epsilon^2\delta}\}$. 
We show  $\lambda^{(n)}(\chi^\lambda_{n,6\e}) > (1-\delta)^3$. Indeed, we have just shown  that $\lambda_{n,X}(\chi^\lambda_{n,6\epsilon})>(1-\delta)^2$ for all $X\in \cP_{r,\epsilon}(n)$. So
    \begin{align*}
    \lambda^{(n)}(\chi^\lambda_{n,6\epsilon}) &= \sum_{X\seq[n]} r^{|X|}(1-r)^{n-|X|}\lambda_{n,X}(\chi^\lambda_{n,2\epsilon})\\
    &\geq \sum_{X\in \cP_{r,\epsilon}(n)}r^{|X|}(1-r)^{n-|X|}\lambda_{n,X}(\chi^\lambda_{n,2\epsilon})\\
    &>(1-\delta)^2\left(\sum_{X\in \cP_{r,\epsilon}(n)}r^{|X|}(1-r)^{n-|X|}\right)\\
    &>(1-\delta)^3,
    \end{align*}
    where the final inequality uses Fact \ref{fact:binomial} and choice of $n$.  
   \end{proof}
 
Finally, we discuss the question of whether \fim\ measures are closed under the Morley product. In \cite{CoGa}, a negative answer to this question was claimed by the first two authors, due to an example from \cite{ACPgs} of a generically stable type $p$ such that $p\otimes p$ is not generically stable. However, a gap in the proof was later noted by the third author. In fact, we will show in Section \ref{sec:PER} that the ambient theory defined in \cite{ACPgs} admits no nontrivial \dfs\ measures. In an earlier draft of this paper, we further claimed that \fim\ measures are indeed closed under Morley products, but an error in our proof was found by Silvain Rideau and Paul Wang. Thus the question remains open, and so we close this section with some remarks on the underlying subtleties.

Suppose $\mu\in\kM_x(\cU)$ and $\nu\in\kM_y(\cU)$ are \fim\ over some $M\prec\cU$. To analyze the question of \fim\ for $\mu\otimes\nu$, we consider a formula $\varphi(x,y,z)$ which, without loss of generality, is over $M$. Let $(\theta_{n}(x_1,...,x_n))_{n = 0}^{\infty}$ and $(\chi_{n}(y_1,...,y_n))_{n=0}^{\infty}$ be sequences of $\mathcal{L}_{M}$-formulas obtained by applying the definition of \fim\ to $\mu$ and $\nu$ with respect to the relevant bi-parititions of $\varphi(x,y,z)$. If we let $\psi_{n}(x_1,y_1,...,x_n,y_n)$ denote the $\cL_M$-formula $\theta_{n}(x_1,...,x_n) \wedge \chi_{n}(y_1,...,y_n)$, then it is not difficult to show that $\lim_{n \to \infty} (\mu \otimes \nu)^{(n)}(\psi_n) = 1$ and, for any $\epsilon>0$, if $n$ is sufficiently large then $\mu\otimes\nu \approx^{\varphi}_\epsilon \Av(a_i,b_j)_{i,j\leq n}$ for any $(\abar,\bbar)\models\psi_n$. However, this does not (a priori) imply the same conclusion for $\Av(a_i,b_i)_{i\leq n}$, which would be needed to conclude $\mu\otimes\nu$ is \fim\ using this argument. Indeed, despite obtaining arbitrarily good approximations along the full array $(a_i,b_j)_{i,j\leq n}$, there could be very different behavior on the `diagonal'. This suggests that perhaps the original intuition from \cite{ACPgs} is correct after all, and counterexamples may exist in sufficiently complicated theories. Such issues will be considered in future work, along with other questions about \fim\ measures and generic stability.

\section{Examples: \dfs\ and not \fam}\label{sec:dfsfam}

One of the main questions left open in \cite{CoGa} was on the existence of a global measure that is \dfs\ but not \fam. What was done in \cite{CoGa} was a local version of this phenomenon. Specifically, it was shown that for any $s> r\geq 3$, if $T_{r,s}$ is the theory of the generic $K^r_s$-free $r$-uniform hypergraph (where $K^r_s$ is the complete $r$-uniform hypergraph on $s$ vertices), then there is a formula $\phi(x,y)$ and a $\phi$-type in $S_\phi(\cU)$ that is \dfs\ and not \fam\ with respect to $\phi(x,y)$. However, it is also proved in \cite{CoGa} that this type cannot be extended to a \dfs\ global type (or measure).

The goal of this section is to construct complete theory with a \dfs\ global type that is not \fam. Our theory, denoted $\Ti$, will be much more complicated than $T_{r,s}$ (although the proof of \dfs\ and not \fam\ will be easier in some ways). Therefore, we will first construct a less complicated theory  $\Te$ with a complete \emph{quantifier-free} type $q_0$ that is \dfs\ and not \fam. We will then note some problems that arise when investigating quantifier elimination for $\Te$ which, in particular, suggest that finding a complete \dfs\ extension of $q_0$  is likely to be difficult, if not impossible.  This will motivate the construction of $\Ti$, a complicated variation of $\Te$ which  admits a global complete type $q$ that is \dfs\ and not \fam. We will also construct a definable measure $\mu$ in $\Ti$ that does not commute with  $q$ (as promised in Example \ref{ex:com1}).

\subsection{Sets that are half full}\label{sec:Te}
In this section we define $\Te$. This theory seems to represent the paradigm for the combinatorial separation of \dfs\ from \fam. We will work with the interval $\lb 0,1\rp$. Given $n\geq 1$, let $\cI_n=\{\lb\frac{i-1}{n},\frac{i}{n}\rp:1\leq i\leq n\}$.  

Let $\cL=\{P,Q,\sqin\}$ where $P$ and $Q$ are unary sorts and $\sqin$ is a binary relation on $P\times Q$. Define an $\cL$-structure $\Me$ such that
 \begin{enumerate}[\hspace{5pt}$\ast$]
 \item $P(\Me)$ is the interval $\lb 0,1\rp$,
 \item $Q(\Me)$ is the set of subsets of $\lb 0,1\rp$ obtained as the union of exactly $n$ distinct intervals in $\cI_{2n}$ for some $n\geq 1$, and 
 \item $\sqin^{\Me}$ is the membership relation.
 \end{enumerate}
Note that any set in $Q(\Me)$ has Lebesgue measure $\frac{1}{2}$. 
 
Define $\Te=\Th(\Me)$. Let $\cU\succ \Me$ be a monster model. Define
 \[
 q_0(y)=\{a\sqin y:a\in P(\cU)\}.
 \]
 Note that $q_0\models y\neq b$ for all $b\in Q(\cU)$. So $q_0$ determines a unique complete quantifier-free type, which is $\emptyset$-definable with respect to quantifier-free formulas.

\begin{proposition}\label{prop:Tetype}
$q_0$ is finitely satisfiable in $\Me$, but not finitely approximated in $\Me$ with respect to $x\sqin y$.  
\end{proposition}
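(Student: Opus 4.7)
The plan is to prove finite satisfiability by an elementarity argument and the failure of finite approximation by a Lebesgue-integration argument. The key observation driving both is that every $b \in Q(\Me)$ is ``half-full'', i.e., has Lebesgue measure $\tfrac{1}{2}$.

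First I would establish finite satisfiability. Fix any finite set of parameters $a_1, \ldots, a_k \in P(\cU)$; I need to produce $b \in Q(\Me)$ with $a_i \sqin b$ for all $i$. Choose $n \geq k$ and let $\mathcal{B}_n \subseteq Q(\Me)$ denote the finite collection of all unions of exactly $n$ distinct intervals from $\cI_{2n}$. I claim that in $\Me$, the formula
\[
\Phi_{k,n} \equiv \forall x_1 \ldots \forall x_k \Bigl( \bigwedge_{i=1}^k P(x_i) \to \bigvee_{b \in \mathcal{B}_n} \bigwedge_{i=1}^k x_i \sqin b \Bigr)
\]
holds. Indeed, any $k$ points of $[0,1)$ occupy at most $k \leq n$ intervals of $\cI_{2n}$, which can be extended to a collection of exactly $n$ such intervals whose union lies in $\mathcal{B}_n$. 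Since $\mathcal{B}_n$ is finite and consists entirely of elements of $\Me$, $\Phi_{k,n}$ is a first-order formula with parameters in $\Me$. By $\Me \prec \cU$, it holds in $\cU$ as well, yielding the required $b \in \mathcal{B}_n \subseteq Q(\Me)$.

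For the failure of finite approximation with respect to $\phi(x,y) \equiv x \sqin y$, I would argue by contradiction. Suppose some $\bar{b} = (b_1, \ldots, b_n) \in Q(\Me)^n$ satisfies $q_0 \approx_\epsilon^{\phi} \Av(\bar{b})$ with $\epsilon < \tfrac{1}{2}$. Since $a \sqin y \in q_0$ for every $a \in P(\cU)$, the approximation would force
\[
\tfrac{1}{n} \, \bigl| \{i : a \sqin b_i\} \bigr| \;=\; \Av(\bar{b})(a \sqin y) \;>\; 1 - \epsilon \;>\; \tfrac{1}{2}
\]
for every $a \in P(\cU)$, and in particular for every $a \in [0,1) = P(\Me) \subseteq P(\cU)$. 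Integrating this inequality over $[0,1)$ with respect to Lebesgue measure yields
\[
\tfrac{1}{2} \;<\; \int_0^1 \tfrac{1}{n} \, \bigl| \{i : a \in b_i\} \bigr| \, da \;=\; \tfrac{1}{n} \sum_{i=1}^n \mathrm{Leb}(b_i) \;=\; \tfrac{1}{2},
\]
since each $b_i$ has Lebesgue measure exactly $\tfrac{1}{2}$ by construction of $Q(\Me)$. This contradiction completes the proof.

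The integration argument is soft and poses no real difficulty. The only subtle step is in the proof of finite satisfiability, where one must recognize the witness $b$ as an element of a \emph{finite} $\Me$-parametrized set, so that the quantifier-switch via $\Me \prec \cU$ yields $b \in Q(\Me)$ rather than merely $b \in Q(\cU)$.
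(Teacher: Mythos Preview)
Your proof is correct and follows essentially the same approach as the paper's: finite satisfiability via an elementarity transfer using the finite family of unions of $n$ intervals from $\cI_{2n}$, and failure of finite approximation via integrating the counting function $a\mapsto |\{i:a\in b_i\}|$ against Lebesgue measure to exploit that each $b_i$ has measure $\tfrac12$. The only cosmetic differences are that the paper takes $n=k$ rather than $n\geq k$, and states the non-approximation directly (finding a bad $a$) rather than by contradiction, but these are the same argument.
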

\begin{proof}
We first show $q_0$ is finitely satisfiable in $\Me$. Fix $a_1,\ldots,a_n\in P(\cU)$. We need to find some $b\in Q(\Me)$ such that $a_i\sqin b$ holds for all $1\leq i\leq n$. Let $b_1,\ldots,b_k\in Q(\Me)$ enumerate the sets obtained as the union of  exactly $n$ intervals in $\cI_{2n}$ (so $k={2n\choose n}$). Then $\Me$ satisfies the $\cL_{\Me}$-sentence saying that for any $n$ elements from $P$, there is some $b_t$ containing them.

Now we show that $q_0$ is not finitely approximated in $\Me$ with respect to $x\sqin y$. Fix a tuple $\bbar\in Q(\Me)^n$. We find $a\in \lb 0,1\rp$ such that $|\{1\leq i\leq n:a\sqin b_i\}|\leq \frac{n}{2}$. Define $f\colon \lb 0,1\rp\to \{0,1,\ldots,n\}$ such that $f(a)=|\{1\leq i\leq n:a\sqin b_i\}|$, i.e., $f=\sum_{i=1}^n\boldsymbol{1}_{b_i}$. Then $f$ is integrable and we have
\[
\int_0^1 f\, dx=\int_0^1\sum_{i=1}^n\boldsymbol{1}_{b_i}\,dx=\sum_{i=1}^n\int_0^1\boldsymbol{1}_{b_i}\, dx=\frac{n}{2}.
\]
So there is some $a\in \lb 0,1\rp$ such that $f(a)\leq \frac{n}{2}$, as desired.
\end{proof}

Altogether, $q_0$ is a complete quantifier-free global type, which is both $\emptyset$-definable and finitely satisfiable in $\Me$. By $\emptyset$-definability, we conclude that $q_0$ is finitely satisfiable in any small model, but not finitely approximated in any small model. However, the theory $\Te$ does not have quantifier elimination. For example, we can define relations on $Q$ of the form $f(\xbar)=g(\ybar)$ where $f$ and $g$ are terms in the language of Boolean algebras. A natural route to a theory with reasonable quantifier elimination might involve replacing $Q(\Me)$ by a suitable Boolean algebra. But, as we show in the next section, this would cause problems for finding nontrivial \dfs\ types.  Therefore we will abandon $\Te$, and replace it with a more complicated theory that is able to sidestep the obstacles created by Boolean algebras.

 \subsection{Interlude on \dfs\ types in Boolean algebras}

\begin{proposition}\label{prop:BAbad}
Let $T$ be the complete theory of a Boolean algebra, and 
 suppose $p\in S_1(\cU)$ is \dfs\ over $M\prec\cU$. Then $p$ is realized in $M$.
 \end{proposition}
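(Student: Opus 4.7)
The plan is to use the definability of $p$ to convert ``the realization of $p$'' into an $\cL_M$-definable notion, then invoke elementarity to conclude the realization lies in $M$, with finite satisfiability ruling out the possibility that $p$ fails to be realized in $\cU$ at all.

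First, I would apply definability of $p$ to the formula $\phi(x,y) := (y \wedge x = y) \wedge (x \wedge y = x)$, which expresses $x = y$ in the Boolean algebra signature. This yields an $\cL_M$-formula $\delta(y)$ such that $\delta(c) \Leftrightarrow (x = c) \in p$ for $c \in \cU$. Since $p$ is a type, $\delta(\cU)$ has at most one element: it is either $\{b\}$, where $b \in \cU$ realizes $p$, or empty. In the first case, $b$ is $\cL_M$-definable, so by elementarity of $M \prec \cU$ we conclude $b \in M$ and $p$ is realized in $M$.

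The heart of the argument is ruling out $\delta(\cU) = \emptyset$. For this, I would apply definability to the formulas $y \leq x$ and $x \leq y$ to obtain $\cL_M$-formulas $d_L(y)$ and $d_U(y)$ defining the ideal $L = \{c : (c \leq x) \in p\}$ and the filter $U = \{c : (x \leq c) \in p\}$. In a Boolean algebra, an element is uniquely determined by the pair (principal ideal below it, principal filter above it), so the formula
\[
\theta(x) := d_L(x) \wedge d_U(x) \wedge \forall y(d_L(y) \to y \leq x) \wedge \forall y(d_U(y) \to x \leq y)
\]
captures ``$x$ is simultaneously the maximum of $L$ and the minimum of $U$''. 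Using the canonical nature of the defining schema (available in the $\omega$-stable theory of Boolean algebras), I would argue $\theta(x) \in p$. Then by finite satisfiability, $\theta(a)$ holds for some $a \in M$, which forces $a \leq b$ and $b \leq a$ for any realization $b$ of $p$, hence $a = b$, so $p$ is realized by $a \in M$.

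The main obstacle is verifying that $\theta(x) \in p$, particularly the conjuncts $d_L(x)$ and $d_U(x)$: these require the defining formulas $d_L, d_U$ to evaluate correctly at a realization $b$ in an elementary extension, a uniform behavior that follows from the canonical defining schemas available in stable theories. An alternative route, avoiding any explicit appeal to stability, is to argue directly that for any ``pathological'' $c \in \cU$ (e.g., one for which $c \wedge a \in \{0, c\}$ for every $a \in M$), the formula $(c \wedge x \neq 0) \wedge (c \wedge x \neq c)$ cannot be finitely satisfied in $M$, so $p$ must assign such $c$ one of the trivial interactions $c \wedge b \in \{0, c\}$; pursuing this rigidity across enough test elements forces $b$ to be determined by $\cL_M$-data and hence to lie in $M$.
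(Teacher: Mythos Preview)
Your proposal has a genuine gap. The complete theory of an infinite Boolean algebra is \emph{not} $\omega$-stable---indeed not even stable: the partial order $\sqsubseteq$ has infinite chains (any atomless interval is densely ordered, and if there are infinitely many atoms one gets chains by taking increasing finite joins), so the theory has the strict order property. Your justification for $d_L(x), d_U(x) \in p$ rests entirely on ``canonical defining schemas available in stable theories,'' which therefore does not apply. This is precisely the step you yourself flag as the main obstacle, and without it the argument collapses. Concretely, $d_L(b)$ for a realization $b$ of $p$ in an extension $\cU'\succ\cU$ asks whether $(b \leq x)$ lies in the definable \emph{heir} of $p$ over $\cU'$, but $b$ need not realize that heir---it only realizes $p$ itself. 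The same difficulty afflicts the universal conjuncts $\forall y(d_L(y) \to y \leq x)$: new elements of $\cU'$ may satisfy $d_L$ without lying below $b$. Your alternative route (``rigidity across enough test elements'') is too vague to evaluate as a proof.

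The paper's argument proceeds quite differently and avoids this issue entirely. It uses the general fact (not requiring stability) that for a \dfs\ type the defining formula for $c \sqsubseteq x$ can be taken to be a Boolean combination of formulas $c \sqsubseteq a$ with $a \in M$, and dually for $x \sqsubseteq c$. From this explicit syntactic shape one reads off concrete elements $a, b \in M$ with $p \models b \sqsubseteq x \sqsubseteq a$ and such that $p$ makes $x$ incomparable with every element strictly between $b$ and $a$. One then shows directly---via an ultrafilter on the interval $[b,a]$ computed in $M$, together with finite satisfiability of $p$ in $M$---that this interval has at most two points, forcing $p$ to be realized by $a$ or $b$.
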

 \begin{proof}
 We use the symbols $\sqcup$, $\sqcap$,\,$\cop$, $\top$, $\bot$, and $\sqseq$ to denote the join, meet, complement, top element, bottom element, and induced partial order, respectively. Choose $\cL_M$-formulas $\phi(y)$ and $\psi(y)$ such that, given $c\in\cU$, we have $p\models x\sqseq c$ if and only if $\phi(c)$, and $p\models c\sqseq x$ if and only if $\psi(c)$. By \cite[Proposition 2.9]{CoGa}, we may assume $\phi(y)$ is a Boolean combination of formulas of the form $a\sqseq y$ for $a\in M$, and $\psi(y)$ is a Boolean combination of formulas of the form $y\sqseq a$ for $a\in M$. So we can choose $a_1,\ldots,a_k,b_1,\ldots,b_\ell\in M$ and finite sets $X_1,\ldots,X_k,Y_1,\ldots,Y_k\seq M$ such that
  \begin{align*}
  \phi(y) &= \bigvee_{i=1}^k  \left((a_i\sqseq y)\wedge\bigwedge_{m\in X_i}\neg (m\sqseq y)\right)\\
  \psi(y) &= \bigvee_{i=1}^\ell  \left((y\sqseq b_i)\wedge\bigwedge_{n\in Y_i}\neg (y\sqseq n)\right)
  \end{align*}
  Note that both $\phi(y)$ and $\psi(y)$ are consistent since we have $\phi(\top)$ and $\psi(\bot)$. So we can discard any inconsistent disjuncts in either formula. It then follows that $\phi(a_i)$ holds for all $1\leq i\leq k$, and $\psi(b_i)$ holds for all $1\leq i\leq \ell$. So $p\models x\sqseq a_i$ for all $i\leq k$, and $p\models b_i\sqseq a$ for all $i\leq\ell$.  Thus if $a=a_1\sqcap\ldots\sqcap a_k$ and $b=b_1\sqcup\ldots\sqcup b_\ell$, then we have $p\models b\sqseq x\sqseq a$. 
 
 Since $p$ is consistent, we know that $b\sqseq a$.  Moreover, for any $c\in\cU$, if $b\sqsubset c\sqsubset a$ then $\neg\phi(c)\wedge\neg\psi(c)$ holds, and so $p\models\neg(x\sqseq c\vee c\sqseq x)$. 

 Let $B=\{m\in M:b\sqseq m\sqseq a\}$. We view $B$ as a Boolean algebra, with bottom element $b$ and top element $a$. We will show that $|B|\leq 2$, from which it follows that $M\models\neg\exists x(b\sqsubset x\sqsubset a)$, and so $p$ must be realized by $a$ or by $b$. Toward a contradiction, suppose $|B|>2$.  
 
  Let $P$ be an ultrafilter over $B$. Then the type $\{b\sqsubset x\sqseq m:m\in P\}$ is finitely satisfiable, and thus realized by some $c\in \cU$. Note that $b\sqsubset c\sqsubset a$. Let $d=b\sqcup (a\sqcap c\cop)$ (i.e., $d$ is the complement of $c$ in $B(\cU)$). Then $b\sqsubset d\sqsubset a$ as well. Therefore $p\models (b\sqseq x\sqseq a)\wedge \neg(c\sqseq x)\wedge\neg (x\sqseq d)$. So we can find some $m\in M$ realizing this formula. Then $m\in B$ since $b\sqseq m\sqseq a$. Since $\neg(c\sqseq m)$, we must have $m\not\in P$. So $b\sqcup (a\sqcap m\cop)\in P$, and so $c\sqseq b\sqcup (a\sqcap m\cop)$. It follows that $m\sqseq d$, which contradicts the choice of $m$.
 \end{proof}

After relativizing the previous argument, we have the following conclusion.

\begin{corollary}\label{cor:BAbad}
Let $T$ be a complete theory, and suppose $p\in S_x(\cU)$ is a global type, which is \dfs\ over some $M\prec\cU$. Given an $\cL_M$-formula $\phi(x)\in p$, if there is an $M$-definable Boolean algebra on $\phi(M^x)$, then $p$ is realized in $\phi(M^x)$.  
\end{corollary}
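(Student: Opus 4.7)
The plan is to apply the preceding proposition to the relativized Boolean algebra on $\phi(\cU^x)$. Since the Boolean algebra operations on $\phi(M^x)$ are $M$-definable, the same defining formulas make $\phi(\cU^x)$ into a Boolean algebra extending $\phi(M^x)$ (with $\phi(M^x)\preceq\phi(\cU^x)$ in the Boolean algebra language by elementarity of $M\prec\cU$); in particular, the order $x\sqseq y$ is given by an $\cL_M$-formula, relativized to $\phi$. The strategy is then to rerun the proof of the preceding proposition inside this relativized setting.

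The main observation is that definability of $p$ over $M$ in $\cL$ yields $\cL_M$-formulas $\Phi(y)$ and $\Psi(y)$ defining $\{c:(x\sqseq c)\in p\}$ and $\{c:(c\sqseq x)\in p\}$, and by \cite[Proposition 2.9]{CoGa}, applied exactly as in the preceding proof, these may be taken as Boolean combinations of instances $a\sqseq y$ and $y\sqseq a$ for $a\in\phi(M^x)$. Moreover, finite satisfiability of $p$ in $M$ automatically produces witnesses inside $\phi(M^x)$ for every formula in $p$, since each such formula implies $\phi(x)$. With these ingredients in hand, the remainder of the earlier argument transfers unchanged: one extracts a meet $a_*\in\phi(M^x)$ with $p\models x\sqseq a_*$ and a join $b_*\in\phi(M^x)$ with $p\models b_*\sqseq x$, and then, assuming the interval $[b_*,a_*]\cap\phi(M^x)$ has more than two elements, forms an ultrafilter on it and uses finite satisfiability to derive the same contradiction as before, yielding a realization of $p$ in $\phi(M^x)$ (by $a_*$ or $b_*$).

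The only point requiring verification is that no step of the earlier proof escapes the relativized setting. This is immediate: every Boolean algebra operation used there -- meet, join, complement, the order $\sqseq$, and the formation of the interval $[b_*,a_*]$ and of ultrafilters on it -- is $\cL_M$-definable on $\phi(\cU^x)$ by hypothesis, and the presence of $\phi(x)$ in $p$ confines all finite-satisfiability witnesses to $\phi(M^x)$. No new machinery is required.
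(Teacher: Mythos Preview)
Your proposal is correct and is precisely the approach the paper takes: the paper's proof consists of the single sentence ``After relativizing the previous argument, we have the following conclusion,'' and your write-up spells out exactly this relativization. The only point worth double-checking (which you handle) is that \cite[Proposition 2.9]{CoGa} applies to the $\cL_M$-definable relation $\sqseq$ and yields parameters in $\phi(M^x)$; this goes through since instances $a\sqseq y$ with $a\notin\phi(M^x)$ are degenerate and can be discarded.
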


\begin{remark}\label{rem:BAbad}
We briefly note that one can construct \emph{measures} in Boolean algebras that are nontrivial in the sense of Definition \ref{def:triv}. (This was posed as a question in a preliminary version of the paper.) In particular, let $\ABA$ be the theory of atomless Boolean algebras, and fix $\Uc\models\ABA$. We construct a nontrivial \dfs\ (in fact, smooth) measure $\sigma\in \kM_1(\cU)$. 

Say that an $\cL_{\Uc}$-formula $\varphi(x)$ (in a single free variable) is \emph{convex} if whenever $\varphi(a)$ holds, $\varphi(c)$ holds, and $a \sqsubseteq b \sqsubseteq c$, then $\varphi(b)$ holds. It is not too hard to show, using quantifier elimination for $\ABA$, that every formula in a single free variable is equivalent to a (finite) disjunction of convex formulas. 
    
    Let $\cH$ be the Boolean algebra of subsets of $[0,1)$ generated by half-open intervals of the form $[a,b)$ with $0\leq a<b\leq 1$. Note that $\cH$ is an atomless Boolean algebra. We will regard $\cH$ as an elementary sub-structure of $\Uc$.  Given an $\cL_\cU$-formula $\varphi(x)$, we define $X(\varphi)=\{r\in[0,1]:\cU\models\varphi(\lp 0,r\rb)\}$. 
    
    For any convex $\cL_{\Uc}$-formula $\varphi(x)$, it is easy to see that $X(\varphi)$ is an interval. Therefore, for any $\cL_{\Uc}$-formula $\psi(x)$, we have that $X(\psi)$ is a finite union of intervals and is thus Lebesgue measurable. Define a measure $\sigma$ by setting $\sigma(\psi(x))$ equal to the Lebesgue measure of $X(\psi)$. Then $\sigma$ is clearly not a trivial measure. Moreover, $\sigma$ is the unique global extension of $\sigma{\upharpoonright}\cH$ (this follows using an argument analogous to the proof that Lebesgue measure restricted to the interval $[0,1]$ is smooth in $\mathsf{DLO}$). 
\end{remark}
 
\subsection{Amalgamated Boolean algebras}\label{sec:Ti}
In this section, we construct a theory, denoted $\Ti$, which admits a complete \dfs\ global type that is not \fam. This theory will be obtained from $\Te$ by making various modifications, which are guided by two opposing forces. On the one hand, quantifier elimination is made easier by working with a richer structure in the $Q$ sort, such as a Boolean algebra. 
So we start with the Boolean algebra $\cH$ from Remark \ref{rem:BAbad}, i.e., the sub-algebra of $\cP([ 0,1))$ generated by half-open intervals $\lb a,b\rp$.
On the other hand, Corollary \ref{cor:BAbad} tells us that using $\cH$ in the $Q$ sort will destroy any chance of obtaining a nontrivial \dfs\ type. In order to fix this issue, we will replace $\cH$ by a certain algebraic lattice, denoted $\cQ$, which will be obtained by taking infinitely many disjoint copies of $\cH$ and identifying all of the top elements and all of the bottom elements. 
By doing this, we will be able to construct a complete global type, which is built from the same local instance of \dfs\ and not \fam\ found in $\Te$, but is also able to avoid concentrating on any `standard' Boolean algebra defined in $\cU$, and thus maintain \dfs\ globally.

We now work toward the definition of $\Ti$, starting with a precise description of $\cQ$ (which, in fact, will be a lattice with a complement operator). First, let  $\cH$ be as defined above, and set $\cH_0 = \cH \backslash \{ \emptyset, \lb 0,1\rp\}$. Set $\cQ=(\N\times\cH_0)\cup\{\bot,\top\}$, where $\bot$ and $\top$ are new symbols. Define the structure $(\cQ,\sqcap,\sqcup,\,\cop,\bot,\top)$ as follows.
\begin{enumerate}[\hspace{5pt}$\ast$]
\item Given $m,n\in\N$ and $X,Y\in\cH_0$,
\begin{align*}
(m,X)\sqcap(n,Y) &= \begin{cases}
(m,X\cap Y) & \text{if $m=n$}\\
\bot & \text{if $m\neq n$,}
\end{cases}\\
(m,X)\sqcup(n,Y) &=\begin{cases}
(m,X\cup Y) & \text{if $m=n$}\\
\top & \text{if $m\neq n$, and}
\end{cases}\\
(m,X)\cop &= (m,\lb 0,1\rp\backslash X).
\end{align*}
\item Given $b\in\cQ$, 
\[
\begin{tabular}{ccc}
\begin{tabular}{l}
$\bot \sqcap b = b \sqcap \bot = \bot$\\
$\top \sqcap b = b \sqcap \top = b$
\end{tabular} & and & 
\begin{tabular}{l}
$\bot \sqcup b = b \sqcup \bot = b$\\
$\top \sqcup b = b \sqcup \top = \top$.
\end{tabular}
\end{tabular}
\]
\item $\bot\cop=\top$ and $\top\cop=\bot$.
\end{enumerate}
In fact, $\cQ$ is an orthocomplemented lattice; however, general familiarity with such structures will not be necessary, and the reader will only need the definition above. 
We let $\cQ_0$ denote $\cQ\backslash\{\bot,\top\}=\N\times\cH_0$.

The next goal is to describe the set that will play the same role held by $\lb 0,1\rp$  in the $P$ sort of $\Te$. Since $\cQ$ is not a Boolean algebra of subsets of some ground set, we first need to define an analogous notion of `membership' in elements of $\cQ$.

\begin{definition}\label{def:sqin}
Define a binary relation $\sqin$ on $\lb 0,1\rp^{\N}\times\cQ$ such that $a\sqin  b$ if and only if either $b=\top$ or $b=(n,X)\in\cQ_0$ and $a(n)\in X$. 
\end{definition}

In order to obtain quantifier elimination for $\Ti$, we will not use $\lb 0,1\rp^{\N}$ in the $P$ sort, but rather a countable subset $S$ satisfying certain properties (described in Lemma \ref{lem:dense-seq} below). We first need some terminology.

 \begin{definition}
   A \textbf{cube} is a nonempty  subset of $\lb 0,1\rp^\N$ of the form $C=\prod_{n\in\N}\lb a_n,b_n\rp$ with $a_n = 0$ and $b_n = 1$ for all but finitely many $n\in \N$. If, moreover, $a_n$ and $b_n$ are rational for all $n$, then $C$ is a \textbf{rational cube}. 
 \end{definition}
 
  The following facts, which will use later on, are easy to verify.
 \begin{fact}\label{fact:cubes-for-babies}
 Let $B$ be the Boolean algebra on $\lb 0,1\rp^{\N}$ generated by sets of the form $\{a \in \lb0,1\rp^\N : a\sqin b\}$ for all $b \in \cQ$.
 \begin{enumerate}[$(a)$]
   \item $B$ is the same as the Boolean algebra generated by cubes.
   \item Every element of $B$ can be written as a disjoint union of finitely many cubes.
   \item Suppose $f: \lb 0,1\rp^\N\to \R$ is a finite linear combination of indicator functions of sets in $B$. Then $f$ can be written as $\sum_{i<k} r_i \boldsymbol{1}_{C_i}$, where $\{C_i\}_{i<k}$ is a partition of $\lb 0,1\rp^\N$ into disjoint cubes and each $r_i$ is in the image of $f$.
   \end{enumerate}
 \end{fact}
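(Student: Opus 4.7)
The plan is to prove (a) directly from the definitions, (b) via a coordinatewise common-refinement argument, and (c) as a corollary of (b).

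For (a), I would first unpack the generators of $B$. Each set $\{a \in \lb 0,1\rp^\N : a \sqin b\}$ is either $\lb 0,1\rp^\N$ (when $b = \top$), empty (when $b = \bot$), or of the form $A_{n,X} \coloneqq \{a : a(n) \in X\}$ for some $n \in \N$ and $X \in \cH_0$. Using the standard fact that every element of $\cH$ is a finite disjoint union of half-open intervals $\lb c_j, d_j \rp$, each $A_{n,X}$ decomposes as a finite disjoint union of cubes (namely, the cubes that restrict coordinate $n$ to some $\lb c_j, d_j\rp$ and leave every other coordinate at $\lb 0,1\rp$). Conversely, any cube $\prod_n \lb a_n, b_n\rp$ is the finite intersection of the cylinders $A_{n,\lb a_n, b_n\rp}$ over the finitely many coordinates $n$ where $\lb a_n, b_n\rp \neq \lb 0,1\rp$. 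Hence $B$ and the Boolean algebra generated by cubes have the same generators up to finite Boolean operations, and so they coincide.

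For (b), let $E \in B$. Then $E$ lies in the finite Boolean subalgebra generated by some finite family $\{A_{n_i, X_i}\}_{i < k}$, which involves only finitely many coordinates $N \seq \N$. For each $n \in N$, I would refine $\lb 0,1\rp$ into a finite disjoint union of half-open intervals, compatibly with each $X_i$ for which $n_i = n$; this again uses the structural fact about $\cH$. The induced product partition of $\lb 0,1\rp^\N$ (leaving every coordinate outside $N$ as $\lb 0,1\rp$) consists of cubes and refines the subalgebra in which $E$ lives. So $E$ is the disjoint union of those cubes in the partition that it contains.

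Part (c) then follows by applying the refinement of (b) to all the $E_j$ simultaneously in the representation $f = \sum_{j<m} s_j \boldsymbol{1}_{E_j}$: take the union of the finite families of cylinders required for each $E_j$ and perform a single coordinatewise refinement. The resulting cube partition $\{C_i\}_{i<k}$ has each $E_j$ equal to a union of a subfamily, so $f$ is constant on each $C_i$ with value $r_i \coloneqq \sum_{j:\, C_i \seq E_j} s_j$; since cubes are nonempty by definition, $r_i$ is attained by $f$ and thus lies in its image. The only real obstacle is to be careful about the coordinatewise refinement in (b), tracking the half-open endpoints so that the pieces genuinely form cubes; the remaining work is routine Boolean-algebra bookkeeping.
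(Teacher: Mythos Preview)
Your proposal is correct. The paper does not actually prove this statement; it is introduced with the remark that these facts ``are easy to verify'' and left to the reader. Your coordinatewise common-refinement argument is exactly the kind of routine verification the authors have in mind, and it handles all three parts cleanly.
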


   \begin{lemma}\label{lem:dense-seq}
   There is a countably infinite set $S\seq\lb 0,1\rp^\N$ such that:
   \begin{enumerate}[$(i)$]
   \item $S$ nontrivially intersects any cube in $\lb 0,1\rp^\N$, and
   \item the map $(a,n)\to a(n)$ from $S\times\N$ to $\lb 0,1\rb$ is injective.
   \end{enumerate}
   \end{lemma}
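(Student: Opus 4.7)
The plan is to build $S = \{a_k : k \in \N\}$ as a countable sequence in $\lb 0,1\rp^{\N}$ by recursion, choosing $a_k$ to lie in the $k$-th rational cube in some fixed enumeration while forcing all coordinate values $a_k(n)$ to avoid each other as well as any coordinate values used at earlier stages. Condition (i) will fall out of the hitting process, while (ii) will fall out of the avoidance process.

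First I would reduce (i) to the analogous statement for \emph{rational} cubes. Given a cube $C = \prod_{n \in \N}\lb c_n, d_n\rp$ with $c_n = 0$ and $d_n = 1$ for all but finitely many $n$, I can pick rationals $c_n \leq c'_n < d'_n \leq d_n$ on the finite support of $C$ to produce a rational cube $C' \seq C$. So any set that meets every rational cube also meets every cube. Since there are only countably many rational cubes, they can be enumerated as $(C_k)_{k \geq 1}$.

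Next, I would recursively construct $a_k \in C_k$. Write $C_k = \prod_n \lb \alpha_n, \beta_n\rp$, and let
\[
F_k = \{a_i(n) : 1 \leq i < k,\ n \in \N\} \seq \lb 0,1\rp,
\]
which is countable. Enumerating the coordinates $n = 0, 1, 2, \ldots$, I pick $a_k(n)$ from
\[
\lb \alpha_n, \beta_n\rp \setminus \bigl( F_k \cup \{a_k(m) : m < n\}\bigr),
\]
which is nonempty because $\lb \alpha_n, \beta_n\rp$ is uncountable while the subtracted set is countable. This simultaneously ensures $a_k \in C_k$ and that the values $a_k(n)$ are all new and pairwise distinct.

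Verification is then immediate. For (i), an arbitrary cube $C$ contains some rational cube $C_k$, and $a_k \in C_k \seq C$. For (ii), the recursive choice guarantees that all values $a_k(n)$, as $k$ and $n$ both vary, are pairwise distinct, so the coordinate map is injective. The construction is essentially routine diagonalization; the only point requiring care is recognizing that forcing coordinates to be distinct (for (ii)) never obstructs hitting the prescribed cube (for (i)), because uncountably many values remain available in each factor at every stage.
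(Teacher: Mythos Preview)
Your proof is correct and follows essentially the same approach as the paper: enumerate the rational cubes, and at each stage pick an element of the current cube whose coordinates avoid all previously used values (which is possible since only countably many reals have been used). Your version is slightly more explicit in spelling out the coordinate-by-coordinate choice of $a_k$, whereas the paper simply asserts that such an $a_\ell$ can be chosen, but the underlying argument is identical.
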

   \begin{proof}
     Let $\{C_\ell\}_{\ell \in \N}$ be an enumeration of all rational cubes. We will build the set $S=\{a_\ell\}_{\ell\in\N}$ in stages. At stage $\ell$, pick $a_\ell \in \lb 0,1\rp^\N$ so that
     \begin{enumerate}[$(1)$]
     \item $a_\ell \in C_\ell$,
     \item $a_\ell$ is an injective map whose range  is disjoint from those of $a_i$ for each $i<\ell$.
     \end{enumerate}
      Since at each stage we have only used countably many elements of $\lb 0,1\rp$, it is clear that we can always make such a choice of $a_\ell$. Now, since any cube in $\lb 0,1\rp^\N$ contains a rational cube, we have condition $(i)$ of the lemma by part $(1)$ of the construction. Condition $(ii)$ of the lemma follows from part $(2)$.  
   \end{proof}

We now have all of the ingredients necessary to define $\Ti$. In order to mimic the behavior  in Proposition \ref{prop:Tetype}, we need to be able to pick out sets in $\cQ$ of `measure' $\frac{1}{2}$. With an eye toward  quantifier elimination, we will add a third sort $R$ for the ordered group of reals, and a unary function from $Q$ to $R$ for the appropriate measure. Altogether, we 
define   $\cL=\{P,Q,R,\sqin,\sim,\ell,\sqcap,\sqcup,\,\cop,\bot,\top,+,<,0,1\}$ where 
 \begin{enumerate}[\hspace{5pt}$\ast$]
 \item $P$, $Q$, and $R$ are unary sorts, 
 \item $\sqin$ is a binary relation on $P\times Q$,  
 \item $\sim$ is a binary relation on $Q$, 
 \item $\ell$ is a unary function from $Q$ to $R$,  
 \item $\{\sqcap,\sqcup,\,\cop,\bot,\top\}$ is the language of `lattices with complements' on $Q$, and
 \item $\{+,<,0,1\}$ is the language of ordered abelian groups on $R$, with an additional constant symbol $1$.
 \end{enumerate}
 Now let $\lambda$ denote the Lebesgue measure on $\lb 0,1\rp$. We define an $\cL$-structure $\Mi$ via the following interpretation of $\cL$.
 \begin{enumerate}[\hspace{5pt}$\ast$]
 \item $P(\Mi)$ is a fixed set $S\seq\lb 0,1\rp^\N$ as in  Lemma~\ref{lem:dense-seq}.
 \item $(Q(\Mi),\sqcap,\sqcup,\,\cop,\bot,\top)$ is $(\cQ,\sqcap,\sqcup,\,\cop,\bot,\top)$.
 \item $(R(\Mi),+,<,0,1)$ is $(\R,+,<,0,1)$.
 \item $\sqin$ is as described in Definition \ref{def:sqin} (but restricted to $P(\Mi)\times\cQ$).
 \item If $b,c\in  Q(\Mi)$, then $b\sim c$ holds if and only if $b$ and $c$ are both in $\cQ_0$ and have the same first coordinates.
 \item If $b\in Q(\Mi)$ then $\ell(b)=\begin{cases}
\lambda(X) & \text{if $b=(n,X)\in\N\times\cH_0$}\\
 0 & \text{if $b=\bot$, and}\\
 1 & \text{if $b=\top$.}
 \end{cases}$
 \end{enumerate}
Finally, we define $\Ti=\Th(\Mi)$.

Let $\cU\succ \Mi$ be a monster model. For the rest of this section, we let $y$ denote a variable (of length one) in the $Q$ sort. Define 
\[
\textstyle q_1(y)=\{a\sqin y:a\in P(\cU)\}\cup\{\ell(y)=\frac{1}{2}\}\cup\{y\not\sim b:b\in Q(\cU)\}.
\]
We now observe that $q_1$ is a partial global type exhibiting the same behavior found in Proposition \ref{prop:Tetype}.

\begin{proposition}\label{prop:Titype}
$q_1$ is finitely satisfiable in $\Mi$, but not finitely approximated in $\Mi$ with respect to the formula $\psi(y,x)\coloneqq (x\sqin y) \wedge(\ell(y)=\frac{1}{2})$. 
\end{proposition}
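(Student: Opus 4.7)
Both halves rest on combinatorics within $\cH$ together with the density property of $S$ from Lemma~\ref{lem:dense-seq}. I will use the conventions $(k,\emptyset)=\bot$ and $(k,\lb 0,1\rp)=\top$ when viewing $\cH$ inside $\cQ$. For finite satisfiability, given a finite fragment of $q_1$ coming from $a_1,\dots,a_n\in P(\cU)$ and $b_1,\dots,b_m\in Q(\cU)$, I first observe that each $b_j$ is $\sim$-equivalent to at most one standard class of $Q(\Mi)$, so I pick $k\in\N$ outside the finitely many resulting indices; then any $b=(k,X)$ with $X\in\cH_0$ already satisfies $b\not\sim b_j$ for every $j$. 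For each $a_i$, the set
\[
U_i\;\coloneqq\;\{X\in\cH:\cU\models a_i\sqin (k,X)\}
\]
is an ultrafilter on $\cH$, because within the $\sim$-class indexed by $k$ the operations $\sqcap,\sqcup,\cop$ on $\cQ$ match $\cap,\cup$, and complementation on $\cH$, and the first-order axioms relating $\sqin$ to these operations hold in $\Mi$ and transfer to $\cU$. Iteratively bisecting the half-open interval currently in $U_i$ produces elements of $U_i$ of arbitrarily small Lebesgue measure; I pick $Y_i\in U_i$ with $\lambda(Y_i)<\tfrac{1}{2n}$, set $Y=Y_1\cup\cdots\cup Y_n$, and enlarge $Y$ by a disjoint half-open interval to some $X$ with $\lambda(X)=\tfrac12$. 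Then $0<\lambda(X)<1$ forces $X\in\cH_0$, $X\supseteq Y_i$ gives $X\in U_i$ for every $i$ by upward closure, and $b=(k,X)\in Q(\Mi)$ realizes the fragment.

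For non-approximation, fix an arbitrary $\bbar\in Q(\Mi)^n$ and let $J=\{i:\ell(b_i)=\tfrac12\}$. Since $\ell(\top)=1$ and $\ell(\bot)=0$, each $b_i$ with $i\in J$ lies in $\cQ_0$, so write $b_i=(k_i,X_i)$ with $\lambda(X_i)=\tfrac12$ and set $J_k=\{i\in J:k_i=k\}$. The integer-valued step function $f_k\colon\lb 0,1\rp\to\N$ given by $f_k(x)=|\{i\in J_k:x\in X_i\}|$ satisfies $\int_0^1 f_k\,dx=\tfrac{|J_k|}{2}$, so $A_k\coloneqq\{x:f_k(x)\le |J_k|/2\}$ is a positive-measure element of $\cH$ and hence contains a nondegenerate half-open interval $\lb\alpha_k,\beta_k\rp$. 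Applying Lemma~\ref{lem:dense-seq} to the cube whose $k$-th factor is $\lb\alpha_k,\beta_k\rp$ on the finitely many $k$ with $J_k\ne\emptyset$, and $\lb 0,1\rp$ elsewhere, yields $a\in S=P(\Mi)$ with $a(k)\in A_k$ for each such $k$. Since $a\sqin b_i \iff a(k_i)\in X_i$ for $a\in S$, I conclude
\[
|\{i:\cU\models\psi(b_i,a)\}|\;=\;\sum_k f_k(a(k))\;\le\;\sum_k\tfrac{|J_k|}{2}\;\le\;\tfrac{n}{2},
\]
so $\Av(\bbar)(\psi(y,a))\le\tfrac12$ while $q_1(\psi(y,a))=1$. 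Thus no $\bbar\in Q(\Mi)^{<\omega}$ can $\tfrac12$-approximate $q_1$ on $\psi$.

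The main point to check carefully is that $U_i$ is really an ultrafilter on $\cH$: one has to align the Boolean structure of the $k$-th $\sim$-class of $\cQ$ (including the way $\top$ and $\bot$ absorb degenerate joins and meets) with that of $\cH$, and confirm that the first-order axioms relating $\sqin$ to $\sqcap$, $\sqcup$, $\cop$ hold in $\Mi$ (and hence in $\cU$). Everything else is straightforward measure-theoretic bookkeeping combined with the density of $S$.
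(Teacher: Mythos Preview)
Your proof is correct, and both halves take routes that differ from the paper's but are sound.

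For finite satisfiability, the paper argues exactly as in the $\Te$ case (Proposition~\ref{prop:Tetype}): having chosen an unused index $s$, it lists the finitely many elements $c_t=(s,X_t)$ where $X_t$ runs over unions of $n$ intervals from $\cI_{2n}$, and appeals to the single $\cL_{\Mi}$-sentence ``any $n$ points of $P$ lie in some $c_t$'' to transfer from $\Mi$ to $\cU$. Your ultrafilter-and-bisection argument is a genuinely different mechanism: you exploit that the $\sqin$-relation respects the Boolean operations within a fixed $\sim$-class (a first-order fact), so each $a_i$ determines an ultrafilter on the standard copy of $\cH$ sitting at index $k$, and then you shrink and reassemble. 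Both work; the paper's argument is shorter and uniform in $n$, while yours makes more explicit use of the Boolean-algebra structure that was built into $\cQ$ precisely for quantifier-elimination purposes. One small imprecision: the enlargement of $Y$ to measure $\tfrac12$ may require several disjoint half-open intervals rather than one (cf.\ Fact~\ref{fact:duh}); this is harmless.

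For non-approximation, the paper works with the product measure $\lambda^{\N}$ on $\lb 0,1\rp^{\N}$ directly: it integrates $f=\sum_i \boldsymbol{1}_{B_i}$ to get $n/2$, invokes Fact~\ref{fact:cubes-for-babies}$(c)$ to write $f$ as a step function on a cube partition, and then uses density of $S$ in a cube where $f\leq n/2$. Your coordinate-by-coordinate decomposition (summing $f_k$ over the finitely many indices $k$ involved) is essentially the same integral unpacked by Fubini, and it lets you bypass the product-measure formalism and Fact~\ref{fact:cubes-for-babies} entirely; the density of $S$ in cubes (Lemma~\ref{lem:dense-seq}) is used identically at the end. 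Your version is slightly more elementary; the paper's is more compact.
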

\begin{proof}
We first show $q_1$ is finitely satisfiable in $\Mi$. Fix $a_1,\ldots,a_n\in P(\cU)$ and $b_1,\ldots,b_m\in Q(\cU)$.  We need to find some $c\in \cQ$ such that $\ell(c)=\frac{1}{2}$, $a_i\sqin c$ holds for all $1\leq i\leq n$, and $c\not\sim b_i$ for all $1\leq i\leq m$. Fix $s\in\N$ such that, for all $1\leq i\leq m$, $b_i\not\sim (s,X)$ for any $X\in\cH_0$. Let $X_1,\ldots,X_k\in\cH_0$ enumerate the sets obtained as the union of exactly $n$ intervals in $\cI_{2n}$, and consider $c_t=(s,X_t)\in\cQ$  for $t\leq k$. Then by construction there exists some $t_*\leq k$ such that $\cU\models a_i\sqin c_{t_*}$ for all $1\leq i\leq n$, as desired.

Now we show that $q_1$ is not finitely approximated in $\Mi$ with respect to $\psi(y,x)$. Fix a tuple $\bbar\in \cQ^n$. We find $a\in P(\Mi)$ such that $|\{1\leq i\leq n:\psi(b_i,a)\}|\leq \frac{n}{2}$. Without loss of generality, we may assume $\ell(b_i)=\frac{1}{2}$ for all $1\leq i\leq n$. Let $\lambda^{\N}$ denote the measure on $\lb 0,1\rp^{\N}$ obtained from the product of $\lambda$ on $\lb 0,1\rp$.  
Define $f\colon \lb 0,1\rp^{\N}\to\{0,1,\ldots,n\}$ such that $f(a)=|\{1\leq i\leq n: a\sqin b_i\}|$. Given $1\leq i\leq n$, set $B_i=\{a\in \lb 0,1\rp^{\N}:a\sqin b_i\}$. 
Then each $B_i$ is $\lambda^{\N}$-measurable, with $\lambda^{\N}(B_i)=\ell(b_i)=\frac{1}{2}$, and $f=\sum_{i=1}^n\boldsymbol{1}_{B_i}$. Therefore $f$ is $\lambda^{\N}$-integrable  and
\[
\int_{\lb 0,1\rp^{\N}} f\, d\lambda^{\N}=\int_{\lb 0,1\rp^{\N}}\sum_{i=1}^n\boldsymbol{1}_{B_i}\,d\lambda^{\N}=\sum_{i=1}^n\int_{\lb 0,1\rp^{\N}} \boldsymbol{1}_{B_i}\, d\lambda^{\N}=\frac{n}{2}.
\]
By Fact~\ref{fact:cubes-for-babies}, $f=\sum_{j<k}r_j \boldsymbol{1}_{C_j}$ where $\{C_j\}_{j<k}$ is a partition of $\lb 0,1\rp^\N$ into disjoint cubes and each $r_j$ is in $\{0,1,\ldots,n\}$.
 So it must be the case that $r_j \leq \frac{n}{2}$ for some $j<k$. By our choice of $P(\Mi)$, there must be some $a \in P(\Mi) \cap C_j$, which therefore has $f(a)\leq \frac{n}{2}$, as desired.
\end{proof}

We now turn to the main goal, which is to show that $q_1$ extends to a complete global type in $S_Q(\cU)$ that is \dfs\ and not \fam\ (here $S_Q(\cU)$ denotes the space of complete global types concentrating on the $Q$ sort). In fact, we will show that $q_1$ determines a unique complete type in $S_Q(\cU)$ and that this type has the desired properties. The first step is quantifier elimination.

\begin{theorem}\label{thm:TiQE}
$\Ti$ has quantifier elimination.
\end{theorem}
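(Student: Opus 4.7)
The plan is to verify quantifier elimination via the standard back-and-forth test: for $M_1, M_2 \models \Ti$ with $M_2$ sufficiently saturated, any partial $\cL$-isomorphism $f \colon A \to B$ between finitely generated substructures extends to include any further element $c \in M_1$. A substructure decomposes as $A = A_P \sqcup A_Q \sqcup A_R$, where $A_R$ is a finitely generated subgroup of $R(M_1)$ containing $0,1$ (and in particular $\ell(A_Q)$), $A_Q$ is a finite sub-(lattice with complements) of $Q(M_1)$ containing $\bot,\top$, and $A_P \subseteq P(M_1)$ is a finite set. Using the axioms induced by the structure of $\cQ$, such an $A_Q$ is forced to consist of $\bot$, $\top$, and finitely many $\sim$-classes each forming a finite Boolean algebra; this normal form is essential for analyzing types. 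I then split into three cases by the sort of $c$.

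The $R$ case is immediate: the induced structure $(\R,+,<,0,1)$ is an ordered divisible abelian group with a distinguished positive element, which has quantifier elimination, so the q.f.\ type of $c$ over $A_R$ pins down its type and is realized in $M_2$ by saturation. The $P$ case reduces to showing that the q.f.\ type of a new element $c \in P(M_1)$ over $A$ is controlled by the set $\Sigma := \{b \in A_Q : c \sqin b\}$. The consistency constraints on $\Sigma$ (behavior under $\sqcap, \sqcup, \cop$ inside a $\sim$-class, and the default collapse between classes) are first-order expressible, and every such consistent $\Sigma$ is realized in $\Mi$ thanks to condition $(i)$ of Lemma~\ref{lem:dense-seq}: the consistent patterns correspond exactly to (nonempty intersections with) cubes in $[0,1]^\N$, and $P(\Mi)=S$ meets every cube. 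I axiomatize this by a scheme of $\forall\exists$ sentences, then realize the required $c$ in $M_2$ by saturation.

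The $Q$ case is the heart of the proof. A new element $c \in Q(M_1)$ carries four pieces of q.f.\ data over $A$: $(i)$ its $\sim$-class, either an existing class in $A_Q$ or a fresh one; $(ii)$ its lattice position within its class (the subset of $A_Q$ below or above $c$, equivalently its position in the Boolean subalgebra spanned within that class); $(iii)$ its measure $\ell(c)$ as a type over $A_R$ in the $R$-sort; and $(iv)$ its $\sqin$-pattern $\{a \in A_P : a \sqin c\}$. The required fact is that any combination of these four satisfying the forced compatibility relations (monotonicity of $\ell$ along the lattice, preservation of $\sqin$ under $\sqsubseteq$, bounds on $\ell(c)$ coming from elements above/below, etc.) is realizable. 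This reduces to a single combinatorial lemma about $(\cH, \lambda)$: given a finite Boolean subalgebra $B \subseteq \cH$, finitely many real numbers $t_1, \ldots, t_m \in [0,1)$, a prescribed measure $r \in [0,1]$ compatible with $B$, a prescribed refinement position of a new element in $B$, and a prescribed subset of the $t_i$'s to contain, there is $X \in \cH_0$ realizing all of these requirements. I then axiomatize the relevant existence statements as a first-order scheme and verify them in $\Mi$ by a direct construction of suitable unions of half-open intervals.

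The main obstacle will be writing down the precise axiom scheme for the $Q$-case and the associated combinatorial richness lemma about $\cH$; in particular, one must handle the case when the new $\sim$-class is fresh (which requires producing a new ``copy'' index $n$ unused by $A_Q$, using saturation in $M_2$) separately from the case when $c$ slots into an existing class (where the Boolean refinement within one copy of $\cH_0$ must simultaneously meet a measure constraint and a finite membership pattern). Once the combinatorial input is established, the back-and-forth terminates routinely, yielding quantifier elimination.
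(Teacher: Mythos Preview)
Your overall strategy is sound and is a legitimate alternative to the paper's route. The paper does not run a single back-and-forth on the full language; instead it first proves quantifier elimination for the reduct $T_{PQ}$ (via $\omega$-categoricity and a back-and-forth that ignores $\ell$ entirely), and then performs a \emph{syntactic} elimination in $\Ti$ by showing that any quantifier $\exists y_0$ over the $Q$-sort can be traded for a block of $R$-quantifiers $\exists \bar m$, after which QE for divisible ordered abelian groups finishes the job. Your direct back-and-forth collapses these two stages into one, which is fine, but it forces you to confront the $PQ$-combinatorics and the measure constraints simultaneously in the $Q$-case.

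There is, however, a genuine gap in your $Q$-case. When the new element $c$ lies in an existing $\sim$-class with minimal elements $e_1,\ldots,e_k$ in $A_Q$, the quantifier-free type of $c$ over $A$ records not a single value $\ell(c)$ but the entire tuple $(\ell(c\sqcap e_1),\ldots,\ell(c\sqcap e_k))$, together with its complete type over $A_R$. Your item $(iii)$ and your combinatorial lemma (``a prescribed measure $r\in[0,1]$'') treat this as one number, which is not enough: two elements with the same $\ell(c)$ and the same Boolean refinement position can have different splittings of the atoms and hence different quantifier-free types. The paper handles exactly this point with the tuple $\bar m = (m_s)_{s\in E_\sim}$ and the compatibility formula $\eta(\bar m,\bar u)$. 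Your combinatorial lemma about $\cH$ must therefore be upgraded to: given a finite Boolean subalgebra $B\subseteq\cH$ with atoms $e_1,\ldots,e_k$, finitely many distinct points $t_1,\ldots,t_m\in[0,1)$, prescribed values $r_i$ with $0\le r_i\le\lambda(e_i)$ (strict where the refinement position demands a proper split), and a prescribed subset of the $t_j$'s lying in each $e_i$ to be captured, there exists $X\in\cH$ with $\lambda(X\cap e_i)=r_i$ and the correct membership pattern. This strengthened statement is still true (essentially Fact~\ref{fact:duh} applied atom by atom, using that the $t_j$ are finitely many points in an atomless algebra), and once you make this correction your back-and-forth goes through.
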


The proof of the previous theorem is rather involved and so, to avoid stalling the primary exposition, we have cordoned off the details in Section \ref{app:QE} of the appendix.  So let us now continue toward the main goal.

\begin{lemma}\label{lem:0def}
For any $\cL$-formula $\phi(y,\zbar)$, there is an $\cL$-formula $\psi(\zbar)$ such that, for any $\bbar\in\cU^{\zbar}$, if $\cU\models\psi(\bbar)$ then $q_1\models \phi(y,\bbar)$, and if $\cU\models\neg\psi(\bbar)$ then $q_1\models\neg\phi(y,\bbar)$.
\end{lemma}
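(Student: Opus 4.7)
The plan is to combine Theorem~\ref{thm:TiQE} with a normal form analysis for $Q$-sort terms modulo $q_1$. By quantifier elimination, $\phi(y,\zbar)$ may be assumed quantifier-free, and the class of formulas admitting a defining $\psi$ is closed under Boolean combinations ($\psi_1\wedge\psi_2$ works for $\phi_1\wedge\phi_2$, $\neg\psi_1$ for $\neg\phi_1$, and so on, with no circularity since we never invoke completeness of $q_1$ but only the defining property of the $\psi_i$), so it suffices to treat atomic $\phi$.

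The central step is a normal form lemma: for every $Q$-term $F(y,\zbar)$ there are quantifier-free $\cL$-formulas $\psi_1(\zbar),\dots,\psi_k(\zbar)$ partitioning $\cU^{\zbar}$, and $Q$-terms $G_1,\dots,G_k$ each lying in $\{\bot,\top,y,y\cop\}$ or equal to a $Q$-term $s_i(\zbar)$ in $\zbar$ alone, such that $\cU\models\psi_i(\bbar)$ implies $q_1\models F(y,\bbar)=G_i(y,\bbar)$. This is proved by induction on the construction of $F$; the base cases ($y$, $\bot$, $\top$, or a $Q$-sort variable from $\zbar$) are immediate. The key inductive fact is that $q_1$ forces $y\not\sim b$ for every $b\in Q(\cU)$, so whenever $y$ or $y\cop$ is combined via $\sqcap$ or $\sqcup$ with a parameter $Q$-term $s(\zbar)$, a case-split on whether $s(\zbar)\in\{\bot,\top\}$ or $s(\zbar)\in\cQ_0$ reduces the combination to an element of $\{\bot,\top,y,y\cop\}$ (in the generic case $s(\zbar)\in\cQ_0$, using $y\not\sim s(\zbar)$) or leaves it as a $Q$-term in $\zbar$ alone. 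With the normal form in hand, each atomic $\phi$ is resolved by substitution. For $a\sqin F(y,\zbar)$ (with $a$ a $P$-sort variable in $\zbar$): the answer in case $\psi_i$ is true if $G_i\in\{\top,y\}$ (using $q_1\models a\sqin y$), false if $G_i\in\{\bot,y\cop\}$ (using the identity $a\sqin b\leftrightarrow\neg(a\sqin b\cop)$, valid in $\Ti$), and given by the atomic formula $a\sqin s_i(\zbar)$ otherwise. For $F\sim G$ and $F=G$: use that $q_1$ forces $y\sim y\cop$, $y\neq y\cop$, $y\notin\{\bot,\top\}$, and $y\not\sim s(\bbar)$ (hence $y\neq s(\bbar)$ whenever $s(\bbar)\in\cQ_0$), while $\sim$ fails as soon as $\bot$ or $\top$ appears. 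For $R$-sort atomics involving $\ell$, substitute $\ell(\bot)=0$, $\ell(\top)=1$, $\ell(y)=\ell(y\cop)=1/2$ (the latter expressible as $\ell(y)+\ell(y)=1$), and leave $\ell(s_i(\zbar))$ intact, obtaining an $R$-formula in $\zbar$ alone. The desired $\psi(\zbar)$ is then the disjunction of those $\psi_i$ (conjoined with the reduced $\bbar$-only formula where applicable) on which $q_1\models\phi(y,\bbar)$.

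The main obstacle is the normal form lemma itself. Because $\cQ$ is not a Boolean algebra — meets and joins across distinct first coordinates collapse to $\bot$ or $\top$, rather than satisfying the usual distributivity — standard DNF arguments do not apply directly, and the induction must carefully track when each subterm of $F(y,\bbar)$ evaluates to $\bot$, to $\top$, or to an element of $\cQ_0$. Each such distinction is expressible by a quantifier-free $\cL$-formula in $\zbar$, so although the bookkeeping multiplies the number of cases at each step, only finitely many survive in total and the induction goes through.
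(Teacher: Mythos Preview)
Your proposal is correct and follows the same strategy as the paper's proof: reduce to atomic formulas via quantifier elimination, normalize each $Q$-term $t(y,\zbar)$ modulo $q_1$ to one of $y$, $y\cop$, $\bot$, $\top$, or a term in $\zbar$ alone, and then resolve each atomic shape by direct substitution (using $q_1\models a\sqin y$, $q_1\models\ell(y)=\ell(y\cop)=\tfrac{1}{2}$, and $q_1\models y\not\sim b$). Your explicit case-split over $\zbar$ is in fact slightly more careful than the paper's version, which asserts a single uniform normal form for each term and thereby glosses over the boundary cases where a $\zbar$-subterm evaluates to $\top$ or $\bot$ (e.g., $y\sqcap\top=y$ rather than $\bot$).
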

\begin{proof}
Let $\cL_Q$ and $\cL_R$ denote the restrictions of $\cL$ to the $Q$ and $R$ sorts, respectively. 
We first claim that, for any $\cL_Q$-term $t(y,\zbar)$, there is a term $s(y,\zbar)$ of the form $y$, $y\cop$, or $u(\zbar)$ such that $q\models t(y,\bbar)=s(y,\bbar)$ for all $\bbar\in \cU^{\zbar}$. Indeed, this can be proved by induction on terms. The main point is that, for any $b\in \cU$, $q_1\models y\not\sim b$, and so $q_1\models (y\sqcap b=\bot)\wedge (y\sqcup b=\top)$.  

Now we prove the lemma. It suffices to assume $\phi$ is atomic. We consider cases.

Suppose $\varphi(y,\zbar,x)$ is $x\sqin t(y,\zbar)$ where $t(y,\zbar)$ is an $\cL_Q$-term. Let $s$ be as in the initial claim. If $s$ is $y$, let $\psi(\zbar,x)$ be $x=x$. If $s$ is $y\cop$,  let $\psi(\zbar,x)$ be $x\neq x$. If $s$ is $u(\zbar)$, let $\psi(\zbar,x)$ be $x\sqin u(\zbar)$. 

Suppose $\varphi(y,\zbar)$ is $t_1(y,\zbar)\asymp t_2(y,\zbar)$ where $t_1,t_2$ are $\cL_Q$-terms and $\asymp$ is $=$ or $\sim$. Let $s_1$ and $s_2$ be as in the initial claim. If $s_1,s_2\in\{y,y\cop\}$, then let $\psi(\zbar)$ be $\zbar=\zbar$ if $s_1$ and $s_2$ are the same or $\asymp$ is $\sim$, and let $\psi(\zbar)$ be $\zbar\neq\zbar$ otherwise. If $s_1$ and $s_2$ are $u_1(\zbar)$ and $u_2(\zbar)$, let $\psi(\zbar)$ be $u_1(\zbar)\asymp u_2(\zbar)$. Otherwise, let $\psi(\zbar)$ be $\zbar\neq\zbar$.

Finally, suppose $\varphi(y,\zbar,\wbar)$ is $f(\ell(t_1(y,\zbar)),\ldots,\ell(t_n(y,\zbar)),\wbar)\asymp 0$ 
 where $\asymp$ is $=$ or $<$, each $t_i$ is an $\cL_Q$-term, and $f(v_1,\ldots,v_n,\wbar)$  is an $\cL_R$-term.
Let $s_1,\ldots,s_n$ be as in the initial claim. Recall that $q\models \ell(y)=\ell(y\cop)=\frac{1}{2}$. Let $v_i(\zbar)$ be either $\frac{1}{2}$, if $s_i$ is $y$ or $y\cop$, or  $\ell(u_i(\zbar))$ if $s_i$ is $u_i(\zbar)$. Then we may take $\psi(\zbar,\wbar)$ to be $f(v_1(\zbar),\ldots,v_n(\zbar),\wbar)\asymp 0$.
\end{proof}
 
 \begin{corollary}\label{cor:dfsnotfam}
 There is a unique complete type $q\in S_Q(\cU)$ extending $q_1$. Moreover, $q$ is $\emptyset$-definable and finitely satisfiable in any small model, but is not \fam. 
 \end{corollary}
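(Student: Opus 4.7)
For uniqueness and $\emptyset$-definability, I would apply Lemma \ref{lem:0def} directly: it assigns to each formula $\phi(y,\zbar)$ an $\cL$-formula $\psi(\zbar)$ such that $q_1\models \phi(y,\bbar)$ iff $\cU\models \psi(\bbar)$, and $q_1\models \neg\phi(y,\bbar)$ iff $\cU\models \neg\psi(\bbar)$. Thus $q_1$ already decides every $\cL_{\cU}$-formula; combined with the consistency of $q_1$ (from Proposition \ref{prop:Titype}), this yields a unique complete extension $q\in S_Q(\cU)$, and the schema $\phi\mapsto\psi$ exhibits $q$ as $\emptyset$-definable.

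For finite satisfiability in every small model, I would first establish it for $\Mi$, then propagate. If $\phi(y,\bbar)\in q$, the previous step gives $q_1\models \phi(y,\bbar)$, so by compactness some finite $q_0\seq q_1$ implies $\phi(y,\bbar)$. Proposition \ref{prop:Titype} provides a realization $c\in\Mi$ of $q_0$, whence $\cU\models \phi(c,\bbar)$. Since $q$ is $\emptyset$-definable and hence invariant over every small $M\prec\cU$, Proposition \ref{prop:alldrop} upgrades finite satisfiability from $\Mi$ to every small model.

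For failure of \fam, I would use $\psi(y,x)\coloneqq (x\sqin y)\wedge(\ell(y)=\frac{1}{2})$ from Proposition \ref{prop:Titype}. The non-approximability argument there --- that any $\bbar\in Q(\Mi)^n$ admits $a\in P(\Mi)$ with $|\{i:\cU\models \psi(b_i,a)\}|\leq n/2$ --- can be encoded by a parameter-free $\cL$-sentence $\varphi_n$ (a universally quantified disjunction over sufficiently large subsets of $[n]$). By elementarity $\varphi_n$ holds in every small $M\prec\cU$, so for any $\bbar\in Q(M)^n$ there is $a\in P(M)$ with $\Av(\bbar)(\psi(y,a))\leq \frac{1}{2}$. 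Since $\psi(y,a)\in q$ forces $q(\psi(y,a))=1$, the approximation gap is always at least $\frac{1}{2}$, ruling out \fam\ in any small model.

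The main subtlety, I expect, is the bootstrap from properties of $q_1$ in the single model $\Mi$ to analogous properties of the complete type $q$ over every small model; this hinges on the interplay between Lemma \ref{lem:0def} (deductive completeness of $q_1$ via a $\emptyset$-definable schema) and Proposition \ref{prop:alldrop} (transferring finite satisfiability under invariance).
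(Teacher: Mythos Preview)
Your proposal is correct and follows essentially the same approach as the paper. The only minor difference is in the failure of \fam: the paper simply notes that $q$ is not \fam\ over $\Mi$ (by Proposition~\ref{prop:Titype}) and then invokes $\emptyset$-invariance together with the contrapositive of Proposition~\ref{prop:alldrop}, whereas you transfer the non-approximability directly to every small model by encoding it as a first-order sentence and using elementarity---both routes work.
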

 \begin{proof}
 By Lemma \ref{lem:0def}, there is a unique type $q\in S_Q(\cU)$ extending $q_1$, and $q$ is $\emptyset$-definable. By Proposition \ref{prop:Titype}, $q$ is finitely satisfiable in $\Mi$ (in particular, for any $\phi(x)\in q$ there is some $\psi(x)\in q_1$ such that $\psi(x)$ implies $\phi(x)$), but not \fam\ over $\Mi$. 
 Since $q$ is $\emptyset$-invariant, the same is true over any small model. 
 \end{proof}

 \subsection{Failure to commute}\label{sec:com1}
Let $\cU\models \Ti$. In this section, we construct a definable measure $\mu\in\kM_P(\cU)$ that does not commute with the \dfs\ type $q\in S_Q(\cU)$ from the last section. This justifies the claim made in Example \ref{ex:com1}.  Throughout this section, $x$ and $y$ denote tuples of variables of length one in the $P$ and $Q$ sorts, respectively.

Recall that in the standard model $\Mi$ of $\Ti$, the set $P(\Mi)$ is a subset of $\lb 0,1\rp^{\N}$, which is equipped with the product-Lebesgue measure $\lambda^{\N}$. Now suppose $\phi(x)$ is an $\cL_{\Mi}$-formula in the $P$ sort. By quantifier elimination, the subset of $P(\Mi)$ defined by $\phi(x)$ differs by finitely many elements from a set of the form $P(\Mi)\cap \bigcup_{i<n}C_i$, with each $C_i$ a cube. Furthermore, the set $\bigcup_{i<n}C_i$ is uniquely determined by $\varphi(x)$. The map from $\Mi$-definable subsets of $P(\Mi)$ to subsets from $[0,1)^{\mathbb{N}}$ defined by taking each $\varphi(x)$ to the corresponding $\bigcup_{i < n} C_i$ is a Boolean algebra homomorphism. Let $\lambda^\ast$ be the pullback of $\lambda^\N$ along this homomorphism. Then $\lambda^\ast$ is automatically a finitely additive probability measure on the $\Mi$-definable subsets of $P(\Mi)$, so it is an element of $\mathfrak{M}_P(\Mi)$.

\begin{lemma}\label{lem:lebdef}
There is a unique $\emptyset$-definable measure $\mu\in\kM_P(\cU)$ extending $\lambda^*$.
\end{lemma}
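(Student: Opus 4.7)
For uniqueness, suppose $\mu,\mu'\in\kM_P(\cU)$ are both $\emptyset$-definable and extend $\lambda^\ast$. For any $\cL$-formula $\varphi(x,\bar z)$, the functions $F^\varphi_{\mu,\emptyset}$ and $F^\varphi_{\mu',\emptyset}$ on $S_{\bar z}(\emptyset)$ are continuous by $\emptyset$-definability, and they agree on the types realized in $\Mi$ since $\mu|_\Mi=\lambda^\ast=\mu'|_\Mi$. Because types realized in any model are dense in $S_{\bar z}(\emptyset)$ (for any complete theory), the two functions coincide, and hence $\mu=\mu'$.

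For existence, the plan is to show that $F^\varphi_{\lambda^\ast,\emptyset}$, defined a priori only on the realized types, extends continuously to all of $S_{\bar z}(\emptyset)$, and to use this extension to define $\mu$. By Theorem~\ref{thm:TiQE} we may assume $\varphi(x,\bar z)$ is quantifier-free. The only atomic formulas involving the $P$-sort variable $x$ are of the form $x=z_i$ for a $P$-sort component $z_i$ of $\bar z$, or $x\sqin t(\bar z)$ for a $Q$-sort term $t$. The equality conjuncts contribute nothing, since points of $\lb 0,1\rp^{\N}$ have $\lambda^{\N}$-measure zero, and thus $\lambda^\ast(x=a)=0$ for $a\in P(\Mi)$. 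After discarding these, $\varphi(x,\bar b)$ becomes a Boolean combination of formulas $x\sqin c_i$, where $c_1,\ldots,c_k\in\cQ$ are the values of the $Q$-terms appearing in $\varphi$ evaluated at $\bar b$.

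Writing such a Boolean combination in disjunctive normal form and pulling back along the canonical homomorphism defining $\lambda^\ast$ yields an explicit product-sum formula for $\lambda^\ast(\varphi(x,\bar b))$: partition the $c_i$'s into $\sim$-equivalence classes; then by the definition of $\cQ$ and by the independence of distinct coordinates under $\lambda^{\N}$, each DNF-disjunct contributes a product over $\sim$-classes of values $\ell\bigl(\bigsqcap_{j}c_j^{\epsilon_j}\bigr)$, where $c_j^+=c_j$ and $c_j^-=c_j\cop$. This exhibits $F^\varphi_{\lambda^\ast,\emptyset}$ on the realized types as a finite sum, split according to the $\sim$-and-$\top/\bot$ pattern of the $c_i$'s, of monomials in the $\ell$-values of finitely many $Q$-terms evaluated at $\bar b$. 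Since these patterns are $\emptyset$-definable (via the language symbols $\sim,\sqcap,\sqcup,\cop,\bot,\top$) and since each $\ell$-value lies in $[0,1]$ and is uniformly approximable by indicator functions of formulas $q\leq \ell(t(\bar z))<q'$ for $q,q'\in\Q$, the map $F^\varphi_{\lambda^\ast,\emptyset}$ satisfies condition $(ii)$ of Fact~\ref{fact:defdefs} on the realized types and hence extends to a continuous $G^\varphi\colon S_{\bar z}(\emptyset)\to[0,1]$.

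Define $\mu(\varphi(x,\bar b)):=G^\varphi(\tp(\bar b/\emptyset))$. The measure-theoretic axioms (normalization, additivity for disjoint $\varphi,\varphi'$) each reduce to identities among continuous functions $S_{\bar z}(\emptyset)\to[0,1]$, and these identities hold on the dense subset of realized types by finite additivity of $\lambda^\ast$; hence they hold everywhere. So $\mu$ is a Keisler measure extending $\lambda^\ast$, and it is $\emptyset$-definable by construction of the $G^\varphi$'s. The main technical obstacle is establishing the uniform approximation in the previous paragraph: one must simultaneously control the finitely many $\sim$-patterns and the continuous dependence on the $\ell$-values, which is precisely what the explicit product formula afforded by quantifier elimination provides.
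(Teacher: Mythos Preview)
Your proof is correct and follows essentially the same approach as the paper: reduce via quantifier elimination to conjunctions of atomics, discard equalities with $P$-sort parameters (measure zero), partition the remaining $Q$-terms by $\sim$-pattern, and use independence under $\lambda^{\N}$ to express the result as a product of $\ell$-values, which are then approximable by $\emptyset$-definable formulas via rational cuts. The only organizational difference is that the paper packages the passage from ``$\lambda^\ast$ is $\emptyset$-definable over $\Mi$'' to ``$\lambda^\ast$ has a unique $\emptyset$-definable global extension'' into a general appendix result on heirs of definable measures (Theorem~\ref{thm:heir} and Remark~\ref{rem:heir}), whereas you carry out that density-and-continuity argument inline.
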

\begin{proof}
Let $M=\Mi$.
The proof amounts to showing that $\lambda^*\in \kM_P(M)$ is ``$\emptyset$-definable". This notion of definability for Keisler measures over small models is similar to that for global measures. See Section \ref{app:heir} of the appendix for details. We will apply Remark \ref{rem:heirQE} and Theorem \ref{thm:heir}. Fix an $\cL$-formula $\Phi(x;\wbar,\ybar,\zbar)$ which is a conjunction of atomic and negated atomic formulas, where $\wbar$ is of sort $P$, $\ybar$ is of sort $Q$, and $\zbar$ is of sort $R$. Define the map $f^{\Phi}_{\lambda^*}\colon (\bbar,\cbar,\dbar)\mapsto\lambda^*(\Phi(x;\bbar,\cbar,\dbar))$ from $M^{\wbar\ybar\zbar}$ to $[0,1]$. We want to show that $f^{\Phi}_{\lambda^*}$ has an $\emptyset$-invariant continuous extension to $S_{\wbar\ybar\zbar}(M)$ (see also Definition \ref{def:babyfiber} and surrounding discussion).

Write $\Phi(x;\wbar,\ybar,\zbar)$ as $\phi(x,\ybar)\wedge \theta(x,\wbar)\wedge \chi(\wbar,\ybar,\zbar)$ where:
\begin{enumerate}[\hspace{5pt}$\ast$]
\item $\chi(\wbar,\ybar,\zbar)$ is some $\cL$-formula not mentioning $x$,
\item $\theta(x,\wbar)\coloneqq \bigwedge_{i=1}^m (x =_i w_i)$, where $=_i$ is  $=$ or $\neq$, and
\item  $\varphi(x,\ybar)\coloneqq  \bigwedge_{i=1}^n (x\sqin_i t_i(\ybar))$,
where $\sqin_i$ is $\sqin$ or $\not\sqin$, and each $t_i(\xbar)$ is an $\cL_Q$-term. 
\end{enumerate}
If some $=_i$ is $=$ then $f^{\Phi}_{\lambda^*}$ is identically $0$, in which case our task is trivial. So we may assume that each $=_i$ is $\neq$. In this case, we have $f^{\Phi}_{\lambda^*}=(f^{\phi}_{\lambda^*}\circ \rho)\boldsymbol{1}_{\chi}$, where $\rho\colon M^{\wbar\ybar\zbar}\to M^{\ybar}$ is the subtuple map. So it suffices to show that $f^{\phi}_{\lambda^*}$ has an $\emptyset$-invariant continuous extension to $S_{\ybar}(M)$.

Let $\sim^*$ denote the equivalence relation on $Q$ obtained from $\sim$ by making $\top$ and $\bot$ singleton classes, i.e., $y\sim^* y'$ iff $(y\sim y')\vee (y=y'=\top)\vee (y=y'=\bot)$. Note that $\sim^*$ is $\emptyset$-definable. Let $\Sigma$ denote the set of partitions of $[n]$. Given $\sigma\in\Sigma$, let $\theta_\sigma(\ybar)$ be the conjunction of $t_i(\ybar)\sim^* t_j(\ybar)$ for all $\sigma$-related $i,j\in[n]$, and $t_i(\ybar)\not\sim^* t_j(\ybar)$ for all $\sigma$-unrelated $i,j\in[n]$. Let $\phi_\sigma(x,\ybar)$ be the $\cL$-formula $\phi_\sigma(x,\ybar)\wedge\theta_\sigma(\ybar)$. Then $f^\phi_{\lambda^*}=\sum_{\sigma\in\Sigma}f^{\phi_\sigma}_{\lambda^*}$. So it suffices to show that each $f^{\phi_\sigma}_{\lambda^*}$ has an $\emptyset$-invariant continuous extension to $S_{\ybar}(M)$. 

Now fix $\sigma\in\Sigma$, and let $\sigma=\{\sigma_1,\ldots,\sigma_k\}$. For $1\leq j\leq k$, define the $\cL_Q$-term 
\[
u_j(\ybar)\coloneqq \bigsqcap_{i\in\sigma_j} \left\{\begin{matrix}t_i(\ybar), & \text{$\sqin_i$ is $\sqin$} \\ t_i(\ybar)\cop, & \text{$\sqin_i$ is $\not\sqin$} \end{matrix}\right\},
\]
and let $\zeta_j(x,\ybar)$ be the $\cL$-formula $x\sqin u_j(\ybar)$. Then, for any $\bbar\in M^{\ybar}$, we have  
\[
f^{\phi_\sigma}_{\lambda^*}(\bbar)=\lambda^*(\phi(x,\bbar))=\prod_{j=1}^k \ell(u_j(\bbar))=\prod_{j=1}^k f^{\zeta_j}_{\lambda^*}(\bbar)
\]
(note that here we are suppressing compositions with ``subtuple" functions, as in the first reduction from $\Phi$ to $\phi$).
So it suffices to show that each $f^{\zeta_j}_{\lambda^*}$ has an $\emptyset$-invariant continuous extension to $S_{\ybar}(M)$. For this, we apply Fact \ref{fact:defdefs2}. In particular, fix $1\leq j\leq k$ and $\epsilon<\delta$ in $[0,1]$. Let $\psi_j(\ybar)$ be the $\cL$-formula $\ell(u_j(\ybar))<\alpha$, where $\alpha\in (\epsilon,\delta)$ is rational.  Then 
\[
\left\{\bbar\in M^{\ybar}:f^{\zeta_j}_{\lambda^*}(\bbar)\leq\epsilon\right\}\seq\psi_j(M)\seq \left\{\bbar\in M^{\ybar}:f^{\zeta_j}_{\lambda^*}(\bbar)<\delta\right\},
\]
as desired.
\end{proof}

Now, since $p$ and $\mu$ are both definable, we have the Morley products $\mu\otimes q$ and $q\otimes\mu$, which are also both definable.

\begin{proposition}\label{prop:nocom}
$(\mu\otimes q)(x\sqin y)=\frac{1}{2}$ and $(q\otimes\mu)(x\sqin y)=1$.
\end{proposition}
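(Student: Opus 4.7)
The two computations are essentially independent, and the asymmetry behind the failure to commute is already visible in the definitions: the type $q$ concentrates on the set $\ell(y) = \frac{1}{2}$, yet it contains $a \sqin y$ for \emph{every} $a \in P(\cU)$, while $\mu$ is constructed to be the ``length-like'' measure on $P$.

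First, to compute $(\mu \otimes q)(x \sqin y)$, I would use that $q$ is a type, so the Morley product reduces to $\mu(x \sqin b)$ for any realization $b \models q|_{\Mi}$; in particular, such a $b$ satisfies $\ell(b) = \tfrac{1}{2}$. It remains to verify that $\mu(x \sqin c) = \ell(c)$ for every $c \in Q(\cU)$. On parameters from $M = \Mi$ this is immediate from the definition of $\lambda^{\ast}$: if $c = (n, X) \in \cQ_0$, then $\{a \in P(M) : a \sqin c\}$ pulls back, along the Boolean-algebra homomorphism used to define $\lambda^{\ast}$, to $\{a \in \lb 0,1\rp^{\N} : a(n) \in X\}$, which has $\lambda^{\N}$-measure $\lambda(X) = \ell(c)$. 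For arbitrary $c \in Q(\cU)$, this extends via the explicit description of the definition schema for $\mu$ worked out in the proof of Lemma~\ref{lem:lebdef}: in the single-conjunct case with $t_1(y) = y$ and $\sqin_1 = \sqin$, the schema produces the continuous function $y \mapsto \ell(y)$. Applying this to any $b \models q|_{\Mi}$ yields $(\mu \otimes q)(x \sqin y) = \mu(x \sqin b) = \ell(b) = \tfrac{1}{2}$.

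Next, to compute $(q \otimes \mu)(x \sqin y)$, I would unfold the Morley product as
\[
(q \otimes \mu)(x \sqin y) \;=\; \int_{S_x(\Mi)} F^{x \sqin y}_q \, d\mu,
\]
where for $p \in S_x(\Mi)$ and $a \models p$, $F^{x \sqin y}_q(p)$ equals $1$ if $a \sqin y \in q$ and $0$ otherwise. But $q \supseteq q_1$ contains $a \sqin y$ for every $a \in P(\cU)$, simply by the definition of $q_1$. So $F^{x \sqin y}_q$ is identically $1$ on $S_x(\Mi)$, and the integral evaluates to $1$.

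The only mild bookkeeping will be to confirm that the definition schema for $\mu$ at the atomic formula $x \sqin y$ is indeed given by the term $\ell(y)$; as noted, this is exactly what the proof of Lemma~\ref{lem:lebdef} produces. Beyond that, no real obstacle is anticipated: the noncommutativity is transparently encoded by the asymmetric treatment of the two variables in the constructions of $\mu$ and $q$.
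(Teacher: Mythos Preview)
Your proposal is correct and follows essentially the same approach as the paper: both compute $(q\otimes\mu)(x\sqin y)=1$ by observing that $F^{\sqin}_{q}$ is identically $1$, and both compute $(\mu\otimes q)(x\sqin y)=\tfrac12$ by reducing to $\mu(x\sqin b)$ for $b\models q|_{\Mi}$ with $\ell(b)=\tfrac12$ and then arguing that $\mu(x\sqin c)=\ell(c)$. The only cosmetic difference is that the paper invokes Remark~\ref{rem:heir} to pass from parameters in $\Mi$ to parameters in $\cU$, whereas you point directly to the explicit schema $y\mapsto\ell(y)$ extracted from the proof of Lemma~\ref{lem:lebdef}; these are the same fact viewed from two angles.
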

\begin{proof}
Note that $F^{\sqin}_{q,M}$ is the constant function $1$, and so $(q\otimes\mu)(x\sqin y)=1$. On the other hand, we have $(\mu\otimes q)(x\sqin y)=\mu(x\sqin b)$ where $b\models q|_{\Mi}$. So $\ell(b)=\frac{1}{2}$. For any $c\in \cQ$, if $\ell(c)=\frac{1}{2}$ then $\mu(x\sqin c)=\lambda^*(x\sqin c)=\frac{1}{2}$. By Remark \ref{rem:heir}, this also holds for all $c\in Q(\cU)$, and thus $\mu(x\sqin b)=\frac{1}{2}$. 
\end{proof}

With Question \ref{ques:dfscom} in mind, we point out that  $\mu$ is not finitely satisfiable in $\Mi$ (and hence not in any small model). Indeed, if $b\models q|_{\Mi}$ then $\mu(x\not\sqin b)=\frac{1}{2}$ by the previous proof, but $x\not\sqin b$ has no solution in $\Mi$ by definition of $q$. In fact, any \dfs\ measure in $\kM_P(\cU)$ must be a sum of countably many weighted Dirac measures at points in $P(\cU)$. This assertion follows from an argument nearly identical to the proof of \cite[Theorem 4.9]{CoGa}, together with the following genericity property in $\Mi$: For any finite disjoint sets $A,B\subset P(\Mi)$, there is some $c\in Q(\Mi)$ such that $a\sqin c$ for all $a\in A$ and $b\not\sqin c$ for all $b\in B$ (indeed, by construction of $P(\Mi)$, such an element $c$ can be found in $\{n\}\times\cH_0$ for any $n\in\N$).

Note that Lemma \ref{lem:lebdef}, Proposition \ref{prop:nocom}, and Proposition \ref{prop:fdcom} provide another demonstration that the type $q$ is not \fam. By Theorem \ref{thm:gencom}, we also have the following conclusion for $\lambda^*$. 

\begin{corollary}
No definable extension of $\lambda^*$ in $\kM_P(\cU)$ commutes with $q$. In particular, $\lambda^*$ has no smooth (or even \fim) global extensions in $\kM_P(\cU)$. 
\end{corollary}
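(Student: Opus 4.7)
The plan is to obtain the corollary as a contrapositive application of Theorem \ref{thm:gencom}, using the specific definable extension $\mu$ constructed in Lemma \ref{lem:lebdef} together with the explicit non-commutativity computation of Proposition \ref{prop:nocom}. All of the serious work has already been done: what remains is to verify that the hypotheses of Theorem \ref{thm:gencom} are in place and to deduce the statement by contradiction.

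More concretely, I would reason as follows. Corollary \ref{cor:dfsnotfam} tells us that $q$ is $\emptyset$-definable and finitely satisfiable in $\Mi$, so in particular $q$ is Borel definable and finitely satisfiable in $\Mi$. Now suppose toward contradiction that some definable global extension $\hat{\mu}\in\kM_P(\cU)$ of $\lambda^*$ commutes with $q$. Then $\hat\mu\otimes q=q\otimes\hat\mu$, and in particular $(\hat\mu\otimes q)|_{\Mi}=(q\otimes\hat\mu)|_{\Mi}$, i.e., $\hat\mu$ $\Mi$-commutes with $q$. Since the measure $\mu$ from Lemma \ref{lem:lebdef} is $\emptyset$-definable (hence definable over $\Mi$) and satisfies $\mu|_{\Mi}=\lambda^*=\hat\mu|_{\Mi}$, Theorem \ref{thm:gencom} applied with $M=\Mi$ yields $(\mu\otimes q)|_{\Mi}=(q\otimes\mu)|_{\Mi}$. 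But the formula $x\sqin y$ is over $\emptyset$, so evaluating both sides on it contradicts Proposition \ref{prop:nocom}, which gives $(\mu\otimes q)(x\sqin y)=\tfrac12\neq 1=(q\otimes\mu)(x\sqin y)$.

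For the \textit{in particular} clauses, I would invoke two known features of smooth and \fim\ measures to reduce each case to the first sentence. Any smooth global measure is definable and commutes with every Borel definable measure (a standard fact, noted just before Corollary \ref{cor:NIPassoc}); so a hypothetical smooth extension of $\lambda^*$ would be a definable extension commuting with the Borel definable $q$, contradicting what we just proved. Similarly, any \fim\ measure is definable, and Theorem \ref{thm:fimcom}(a) says it commutes with every Borel definable measure, so a \fim\ extension of $\lambda^*$ would again contradict the first sentence.

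There is no genuine obstacle in this proof: the only subtlety worth flagging is the direction of the implication in Theorem \ref{thm:gencom}, which must be used contrapositively, and the observation that \emph{global} commutation of $\hat\mu$ with $q$ automatically entails $\Mi$-commutation, which is precisely what Theorem \ref{thm:gencom} requires as input.
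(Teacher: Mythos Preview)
Your proposal is correct and follows essentially the same route as the paper, which simply states that the corollary follows from Theorem \ref{thm:gencom}. You have correctly spelled out the contrapositive application: a definable extension $\hat\mu$ of $\lambda^*$ commuting with $q$ would supply the hypothesis of Theorem \ref{thm:gencom} for the specific $\emptyset$-definable $\mu$ of Lemma \ref{lem:lebdef}, forcing $(\mu\otimes q)|_{\Mi}=(q\otimes\mu)|_{\Mi}$ in contradiction with Proposition \ref{prop:nocom}; the smooth and \fim\ cases then reduce to this via the standard commutation facts you cite.
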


\begin{remark} Furthermore, the restriction of $\lambda^{*}$ to the language $\mathcal{L}_{PQ}$ is an example of a measure with no definable extension (and is the first such example that we are aware of). Let $\lambda^\ast_{PQ}$ be this restriction.  Assume that $\lambda^\ast_{PQ}$ has some definable extension $\nu$ over some $N \succeq M_{PQ}$.  Then we must have some $\cL_N$-formula $\psi(y)$ such that for any $b \in P^N$, if $\nu(x \sqin b) < \frac{1}{3}$ then $\psi(b)$ holds, and if $\nu(x \sqin b) > \frac{2}{3}$ then $\psi(b)$ fails to hold. This implies that for any $b \in P^{M_{PQ}}$,  if $\lambda^\ast_{PQ}(x \sqin b) < \frac{1}{3}$ then $\psi(b)$ holds, and if $\nu(x \sqin b) > \frac{2}{3}$ then $\psi(b)$ fails to hold.

  Find a $\sim$-class $C$ in $M_{PQ}$ such that no parameter of $\psi$ is contained in $C$. Let $A$ be the set of $P$-parameters in $\psi$. For each $a \in A$, we can find $b_a \in C$ such that $a \sqin b_a$ and $\lambda^\ast_{PQ}(x \sqin b_a) < \frac{1}{4|A|}$. By construction, we have that $\lambda^\ast_{PQ}(x \sqin \bigsqcup_{a \in A} b_a) < \frac{1}{4}$. Therefore, we can find $c$ and $d$ in $C$ with $\lambda^\ast_{PQ}(x \sqin c) < \frac{1}{3}$ and $\lambda^\ast_{PQ}(x \sqin d)  > \frac{2}{3}$ such that $c$ and $d$ are disjoint from every $b_a$.

  By quantifier elimination (Proposition A.15), $c$ and $d$ have the same type over $A$, hence  $\cU\models\psi(c) \leftrightarrow \psi(d)$. But $\lambda^\ast_{PQ}(x \sqin c) < \frac{1}{3}$ and $\lambda^\ast_{PQ}(x \sqin d) > \frac{2}{3}$, which is a contradiction. Therefore $\lambda^\ast_{PQ}$ has no definable extensions.
\end{remark}

\begin{remark}
We showed above that $q$ does not admit \fam\ approximations for $\psi(y,x)\coloneqq (x\sqin y)\wedge(\ell(y)=\frac{1}{2})$ within error $\frac{1}{2}$ (i.e., $\Avo{n}{1/2}{q}{\psi}=\emptyset$ for all $n$). Given a fixed rational $\epsilon\in (0,1)$,  let $q_\epsilon$ be the result of replacing $\frac{1}{2}$ with $\epsilon$ in the definition of $q_1$. Then similar arguments show that $q_\epsilon$ determines a complete \dfs\ type, with no \fam\ approximations for $(x\sqin y)\wedge(\ell(y)=\epsilon)$ within error $1-\epsilon$. In other words, one can construct arbitrarily terrible failures of \fam\ approximations for a  \dfs\ type. This is in contrast to the result for $T_{r,s}$ from \cite{CoGa}, mentioned at the beginning of this section, which produced a \dfs\ local $\phi$-type with no \fam\ approximations for $\phi$ within error $(r-1)!/(r-1)^{r-1}$.  

The previous modification also results in a more severe failure of symmetry in that we obtain $(\mu\otimes q_\epsilon)(x\sqin y)=\epsilon$ and $(q_\epsilon\otimes\mu)(x\sqin y)=1$.
\end{remark}

Finally, we reiterate that $\Ti$ is much more complicated than $T_{r,s}$, both in its construction and with respect to its classification in neostability. In particular, $T_{r,s}$ is supersimple, while $\Ti$ interprets the theory of atomless Boolean algebras and thus has TP$_2$ and SOP. 

\begin{question}
Is there a simple theory $T$ and a global type $p\in S_x(\cU)$  such that $p$ is \dfs\ and not \fam?
\end{question}

In general, it would be interesting to find less complicated examples of complete theories separating \dfs\ and \fam, even at the level of measures.

\section{Examples: \fam\ and not \fim}\label{sec:famfim}

In this section, we discuss examples of Keisler measures that are \fam\ and not \fim, starting with a re-examination of some examples given in \cite{CoGa}. We first note in Section \ref{sec:PER} that one of those examples relied on an erroneous claim from \cite{ACPgs}, and we show that in fact the ambient theory in this example has no nontrivial \dfs\ measures. So this reduces the previously known examples of theories with \fam\ and non-\fim\ measures to essentially one family, namely, the generic $K_s$-free graphs for some fixed $s\geq 3$. In Section \ref{sec:Henson}, we will develop more features of this example, and then give a correct proof of a certain result from \cite{CoGa}. Finally, we will show that in the reduct of $\Ti$ obtained by forgetting the  measure $\ell$, the corresponding reduct of  the \dfs\ and non-\fam\ type in Section \ref{sec:Ti} becomes a new example of a \fam\ and non-\fim\ complete type.

\subsection{Parameterized equivalence relations}\label{sec:PER}

Let $T^*_{\feq 2}$ denote the model completion of the theory of parameterized equivalence relations in which each equivalence class has size $2$. In \cite[Example 1.7]{ACPgs}, it is claimed that this theory admits a generically stable (and thus \fim) type $p\in S_1(\cU)$ such that $p^{(2)}$ is not generically stable, and this was elaborated on in \cite[Section 5.1]{CoGa}.  However, it turns out that the type $p$ suggested in \cite{ACPgs} is not well-defined (see Remark \ref{rem:Tfeq2type} for details). Here we show that, in fact, there are no nontrivial \dfs\ measures in $T^*_{\feq 2}$. We first recall some definitions. Let $T$ be a complete $\cL$-theory with monster model $\cU$.

\begin{definition}\label{def:triv}
A measure $\mu\in\kM_x(\cU)$ is \textbf{trivial} if it can be written as $\sum_{n=0}^\infty r_n\delta_{a_n}$ where $a_n\in \cU^x$ and $r_n\in[0,1]$, with $\sum_{n=0}^\infty r_n=1$. We say that $T$ is \textbf{\textit{dfs}-trivial} if every \dfs\ measure is trivial.
\end{definition}

Note that a type $p\in S_x(\cU)$ is trivial if and only if it is realized in $\cU^x$. It is clear that any trivial measure is \fim. The following result was implicitly claimed in \cite{CoGa}, but the proof used an unjustified assumption involving localization of measures, namely, \cite[Remark 4.2]{CoGa}. In reality, the argument only requires a very weak version of this remark, which is easily proved. So we clarify the details.

\begin{proposition}\label{prop:dfstriv1}
A theory $T$ is \dfs-trivial if and only if for every $x$ of length one and every \dfs\ measure $\mu\in\kM_x(\cU)$, there is some $b\in\cU$ such that $\mu(x=b)>0$. 
\end{proposition}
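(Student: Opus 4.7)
The forward direction is immediate: a trivial measure $\mu = \sum_n r_n \delta_{a_n}$ with $\sum_n r_n = 1$ has some $r_{n_0} > 0$, so $\mu(x = a_{n_0}) \geq r_{n_0} > 0$.

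For the converse, my plan is to start with an arbitrary \dfs\ measure $\mu$ over some $M \prec \cU$ and decompose it into atomic and continuous parts. I would first set $A \coloneqq \{b \in \cU : \mu(x=b) > 0\}$ and $r_b \coloneqq \mu(x=b)$ for $b \in A$. The clopens $[x=b] = \{\tp(b/\cU)\}$ for distinct $b$ are pairwise disjoint, so $A$ is countable; and finite satisfiability of $\mu$ in $M$ forces $A \seq M$. Next I would form the (possibly sub-probability) measure $\mu_a \coloneqq \sum_{b \in A} r_b \delta_b$ on $\Def_1(\cU)$ and apply countable additivity of the regular Borel extension $\widetilde{\mu}$ to the disjoint clopen singletons $\{\tp(b/\cU)\}$ sitting inside $[\phi]$, yielding $\mu_a(\phi) \leq \mu(\phi)$ for every formula $\phi$.

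Then I would set $s \coloneqq 1 - \mu_a(\top)$. If $s = 0$, the Borel extension $\widetilde{\mu}$ is concentrated on the countable set $\{\tp(b/\cU) : b \in A\}$, which forces $\mu = \mu_a$ and exhibits $\mu$ as trivial. Otherwise I would define $\nu \coloneqq (\mu - \mu_a)/s$; the inequality above makes this a well-defined Keisler probability measure, finitely satisfiable in $M$ since $\nu(\phi) > 0$ implies $\mu(\phi) > 0$, and satisfying $\nu(x=b) = 0$ for every $b \in \cU$ by construction. An appeal to the hypothesis applied to $\nu$ will then produce a contradiction, leaving only the case $s = 0$ in which $\mu$ is trivial.

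The hard part will be verifying that $\nu$ is definable over $M$, which reduces (via Fact \ref{fact:defdefs}$(iii)$) to showing the same for $\mu_a$: for each $\cL$-formula $\phi(x,y)$ and each $\epsilon > 0$, I need the set $\{b' \in \cU^y : \mu_a(\phi(x,b')) \leq \epsilon\}$ to be type-definable over $M$. My plan is to identify it with the partial type over $A \seq M$ consisting of the formulas $\bigvee_{c \in S} \neg \phi(c,y)$ as $S$ ranges over finite subsets of $A$ with $\sum_{c \in S} r_c > \epsilon$; the equivalence with the inequality $\mu_a(\phi(x,b')) \leq \epsilon$ is then immediate from the definition of $\mu_a$. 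Granted this, definability of $\nu$ follows from that of $\mu$ and $\mu_a$ by passing differences and scalar multiples through condition $(ii)$ of Fact \ref{fact:defdefs}, and the rest of the argument is routine bookkeeping against the Borel extension.
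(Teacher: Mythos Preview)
Your decomposition argument is correct and is essentially the same idea as the paper's: split off the atomic part supported on realized types in $M$, observe it is definable over $M$ (the paper simply cites that trivial measures concentrated on points of $M$ are definable, while you spell out the type-definability of $\{b':\mu_a(\phi(x,b'))\leq\epsilon\}$ directly---both work), and then note that the remaining normalized piece $\nu$ is \dfs\ with $\nu(x=b)=0$ for all $b$, contradicting the hypothesis.

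However, there is a genuine gap. The conclusion you need is that $T$ is \dfs-trivial, which by Definition~\ref{def:triv} means \emph{every} \dfs\ measure in $\kM_x(\cU)$, for \emph{every} tuple $x$, is trivial. Your argument only treats $\mu\in\kM_1(\cU)$: you apply the hypothesis to the residual measure $\nu$, but the hypothesis is stated only for measures in one free variable. If you started with $\mu\in\kM_n(\cU)$ for $n>1$, your construction would produce a \dfs\ $\nu\in\kM_n(\cU)$ with no atoms, and the hypothesis says nothing about such $\nu$. The paper handles this by invoking \cite[Proposition~4.5]{CoGa}, which shows that triviality of all \dfs\ measures in one variable already forces triviality in all arities. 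You need either to cite that reduction or to supply it; without it, your proof establishes only the weaker statement that every \dfs\ measure in $\kM_1(\cU)$ is trivial.
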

\begin{proof}
As noted in the proof of \cite[Proposition 4.3]{CoGa}, the forward direction is trivial. For the reverse implication, assume that for every $x$ of length one and every \dfs\ measure $\mu\in\kM_x(\cU)$, there is some $b\in\cU$ such that $\mu(x=b)>0$. To show that $T$ is \dfs-trivial, it suffices by \cite[Proposition 4.5]{CoGa}, to show that for any $x$ of length one, every \dfs\ measure in $\kM_x(\cU)$ is trivial. (The cited result from \cite{CoGa} works in a one-sorted setting for simplicity; however the proof is by induction on the length of a tuple of variables, which need not be all in the same sort for the argument to work.) For this, it suffices by the proof of \cite[Proposition 4.3]{CoGa}, to show that for an arbitrary variable tuple $\xbar$, \dfs\ measures in $\kM_{\xbar}(\cU)$ are closed under the following special case of localization. In particular, suppose $\mu\in\kM_{\xbar}(\cU)$ is \dfs, and let $X=S_{\xbar}(\cU)\backslash S$ where $S$ is a fixed countable set $S\seq S_{\xbar}(\cU)$ of \emph{realized} types, with $\mu(S)<1$. Let $\mu_0$ be the localization $\mu_0$ of $\mu$ at $X$, i.e., $\mu_0(\phi(\xbar)):=\mu(\phi(\xbar)\cap X)/\mu(X)$. Then we claim that $\mu_0$ is \dfs. 

Fix $M\prec\cU$ such that $\mu$ is \emph{dfs} over $M$, and any type in $S$ is realized in $M$. We show that $\mu_0$ is \dfs\ over $M$.  We may assume that $\mu(S)>0$, since otherwise $\mu_0=\mu$. It is clear that $\mu_0$ is finitely satisfiable in $M$. Consider the trivial measure $\mu_1 =  \frac{1}{\mu(S)}\sum_{\abar \in S} \mu(\xbar=\abar)\delta_{\abar}$, which is definable over $M$. Set $r:=\mu(X)$, and notice that $\mu=r\mu_0+(1-r)\mu_1$. 
So for any $\cL$-formula $\phi(\xbar,\ybar)$, we have $F^\phi_{\mu_0,M}=\frac{1}{r}(F^\phi_{\mu,M}-(1-r)F^{\phi}_{\mu_1,M})$, and thus $F^\phi_{\mu_0,M}$ is continuous since it can be written as a linear combination of continuous functions. Therefore $\mu_0$ is definable over $M$.
\end{proof}

\begin{remark}\label{rem:CGcor}
For the sake of clarifying the literature, we note that Proposition \ref{prop:dfstriv1} (and its proof) can be used in place of Remark 4.2 and Proposition 4.3 in \cite{CoGa} to recover the proofs of Theorems 4.8, 4.9, and 5.10$(a)$ in \cite{CoGa}. The only other use of Remark 4.2 in \cite{CoGa} is in the proof of Theorem 5.10$(b)$, which we address in the next subsection (see Theorem \ref{thm:Hfimtriv} and preceding discussion).
\end{remark}

Next, we describe a way to lift a \dfs\ measure to an imaginary sort. Suppose $E(x,y)$ is a definable equivalence relation on $\cU$. We extend $E$ to tuples from $\cU^n$ in the obvious way. We view $\cU/E$ as a structure in a relational language $\cL_0$ such that for any $E$-invariant formula $\phi(x_1,\ldots,x_n)$, we have an $n$-ary relation symbol $R_\phi$ interpreted as $\phi(\cU^n)/E$. Note that any quantifier free $\cL_0$-formula is equivalent to $R_\phi$ for some equivariant $\phi$. 

Now suppose we have a \dfs\ measure $\mu$ in $\kM_1(\cU)$. Given an $E$-invariant formula $\phi(x;y_1,\ldots,y_n)$ and $b_1,\ldots,b_n\in\cU/E$, define $\mu_0(R_\phi(x;b_1,\ldots,b_n))=\mu(\phi(x;b_1^*,\ldots,b_n^*))$, where $b^*_i$ is a representative of $b_i$ in $\cU$. 

\begin{proposition}
$\mu_0$ is a \dfs\  measure on quantifier-free $\cL_0(\cU)$-formulas. 
\end{proposition}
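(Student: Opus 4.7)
The plan is to directly verify the two conditions of dfs for $\mu_0$ using the chosen witness for $\mu$. First I would check that $\mu_0$ is well-defined on quantifier-free $\cL_0(\cU)$-formulas: since $\phi$ is $E$-invariant, the value $\mu(\phi(x;\bar b^{\,*}))$ does not depend on the choice of representatives $\bar b^{\,*}$ of $\bar b$, and since every quantifier-free $\cL_0$-formula is equivalent to some $R_\phi$ for an $E$-invariant $\phi$, finite additivity and normalization of $\mu_0$ follow from the corresponding properties of $\mu$. Fix $M\prec\cU$ witnessing that $\mu$ is dfs, and set $M_0\coloneqq M/E\subseteq \cU/E$ as the candidate small parameter set.

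For finite satisfiability in $M_0$: suppose $R_\phi(x;\bar b)$ is a quantifier-free $\cL_0(\cU)$-formula with $\mu_0(R_\phi(x;\bar b))>0$. Then for any choice of representatives $\bar b^{\,*}\in\cU^n$, we have $\mu(\phi(x;\bar b^{\,*}))>0$, so by finite satisfiability of $\mu$ in $M$, there is some $a\in M$ with $\cU\models \phi(a;\bar b^{\,*})$; then $[a]_E\in M_0$ realizes $R_\phi(x;\bar b)$.

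For definability of $\mu_0$ over $M_0$, I would use condition $(iii)$ of Fact~\ref{fact:defdefs} in the quantifier-free $\cL_0$-setting: for each $E$-invariant $\phi(x;\bar y)$ and each $\epsilon<\delta$ in $[0,1]$, I need a quantifier-free $\cL_0$-formula $R_\psi(\bar y)$ over $M_0$ such that
\[
\{\bar b\in(\cU/E)^n:\mu_0(R_\phi(x;\bar b))\leq\epsilon\}\seq R_\psi((\cU/E)^n)\seq \{\bar b:\mu_0(R_\phi(x;\bar b))<\delta\}.
\]
By definability of $\mu$ over $M$, pick an $\cL_M$-formula $\psi_0(\bar y)$ with $\{\bar c:\mu(\phi(x;\bar c))\leq\epsilon\}\seq\psi_0(\cU)\seq\{\bar c:\mu(\phi(x;\bar c))<\delta\}$. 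The key trick is the $E$-invariantization
\[
\psi(\bar y)\coloneqq \forall \bar y'\,\big(E(\bar y,\bar y')\to \psi_0(\bar y')\big).
\]
Then $\psi$ is an $E$-invariant $\cL_M$-formula, and using $E$-invariance of $\phi$ (so that $\mu(\phi(x;\bar c))$ is constant on $E$-classes) together with $\psi\models\psi_0$, one checks the same sandwiching holds with $\psi$ in place of $\psi_0$. Lifting to the quotient via the correspondence $\phi\leftrightarrow R_\phi$ yields the desired quantifier-free $\cL_0$-formula $R_\psi(\bar y)$ over $M_0$.

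The main (minor) obstacle is just the bookkeeping in the definability step: one must ensure the approximating formula $\psi_0$ can be replaced by an $E$-invariant formula over $M$ without losing the sandwich, and then that its transfer to $\cU/E$ is genuinely quantifier-free in $\cL_0$. Both issues are handled by the $\forall$-invariantization above, together with the observation in the excerpt that every such $E$-invariant formula corresponds to an atomic $\cL_0$-predicate.
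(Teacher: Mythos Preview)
Your argument for well-definedness and finite satisfiability is fine and matches the paper. The gap is in the definability step, specifically in the claim that your invariantized formula $\psi$ yields a quantifier-free $\cL_0$-formula over $M_0=M/E$.

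Recall that the correspondence $\phi\leftrightarrow R_\phi$ is defined for \emph{parameter-free} $E$-invariant $\cL$-formulas. Your $\psi_0$ is an $\cL_M$-formula, say $\psi_0(\bar y)=\psi_0'(\bar y,\bar m)$ with $\bar m\in M$, and your invariantization gives $\psi(\bar y)=\forall\bar y'\,(E(\bar y,\bar y')\to\psi_0'(\bar y',\bar m))$. To lift this to a quantifier-free $\cL_0$-formula over $M_0$ you would need the underlying parameter-free formula $\hat\psi(\bar y,\bar z)=\forall\bar y'\,(E(\bar y,\bar y')\to\psi_0'(\bar y',\bar z))$ to be $E$-invariant in $\bar z$ as well as in $\bar y$; only then does $R_{\hat\psi}(\bar y,[\bar m]_E)$ make sense. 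There is no reason for this to hold, since $\psi_0'$ was an arbitrary $\cL$-formula witnessing definability of $\mu$. If you try to further invariantize in $\bar z$, the sandwich inclusions break: you only know $\psi_0'(\cdot,\bar m)$ satisfies the sandwich, not $\psi_0'(\cdot,\bar m')$ for other $\bar m'\mathrel{E}\bar m$.

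The paper avoids this by using finite satisfiability of $\mu$ in $M$ (not merely definability): since $\mu(\phi(x,\bar c))$ depends only on $\phi(M,\bar c)$, i.e., on the $\phi^*$-type of $\bar c$ over $M$, the set $Y=\{\bar c:\mu(\phi(x,\bar c))\leq\epsilon\}$ is type-definable over $M$ by Boolean combinations of instances $\phi^*(\bar y;a)$ with $a\in M$. The parameter-free formula $\phi^*$ is $E$-invariant in \emph{all} variables (because $\phi$ is), so these Boolean combinations lift directly to quantifier-free $\cL_0$-formulas over $M_0$. In short, you need to exploit the ``fs'' in \dfs, not just the ``d''.
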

\begin{proof}
First note that $\mu_0(R_\phi(x;b_1,\ldots,b_n))$ does not depend on the choice of representatives by $E$-invariance, and so $\mu_0$ is well-defined. From there one easily shows that $\mu_0$ is a finitely-additive probability measure on quantifier-free $\cL_0(\cU)$-formulas.

Fix $M\prec\cU$ such that $\mu$ is \dfs\ over $M$. We show that $\mu_0$ is \dfs\ over $M/E$. First, fix some $R_\phi(x;y_1,\ldots,y_n)$ and $b_1,\ldots,b_n\in \cU/E$, with $\mu_0(R_\phi(x;b_1,\ldots,b_n))>0$. Then there is $a\in M$ such that $\cU\models\phi(a;b^*_1,\ldots,b^*_n)$. So $[a]_E\in M/E$ and $\cU/E\models R_\phi([a]_E;b_1,\ldots,b_n)$. 

Finally, fix an $E$-invariant formula $\phi(x;y_1,\ldots,y_n)$ and some $\epsilon>0$. Define
\[
X=\{\bbar\in (\cU/E)^n:\mu_0(R_\phi(x;\bbar))\leq\epsilon\}.
\]
Then $X=Y/E$ where $Y=\{\bbar^*\in \cU^n:\mu(\phi(x;\bbar^*))\leq\epsilon\}$. Since $\mu$ is \dfs\ over $M$, there is a small collection $\{\psi_i(\ybar;\xbar):i\in I\}$ of Boolean combinations of $\phi^*(\ybar;x_i)$, and tuples $\abar^*_i$ from $M$, such that $Y=\bigcap_{i\in I}\psi_i(\cU;\abar^*_i)$. Let $\abar_i=[\abar^*_i]_E$. Then we have $X=\bigcap_{i\in I}R_{\psi_i}(\cU/E;\abar_i)$.  Therefore $\mu_0$ is definable over $M$.
\end{proof}

We now return to $T^*_{\feq 2}$. 
This theory is in a two-sorted language $\cL$ with sorts $O$ and $P$, and a ternary relation $E_z(x,y)$ on $O_x\times O_y\times P_z$. Then $T^*_{\feq 2}$ is the model completion of the $\cL$-theory asserting that for any $z$, $E_z(x,y)$ is an equivalence relation in which all classes have size $2$. We have quantifier elimination after expanding by the function $f\colon P\times O\to O$ such that, for any $z\in P$, the induced function $f_z\colon O\to O$ swaps the two elements in any $E_z$-class.

\begin{theorem}\label{thm:Tfeq2triv}
$T^*_{\feq2}$ is \dfs-trivial.
\end{theorem}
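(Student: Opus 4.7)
The plan is to apply (a two-sorted analog of) Proposition~\ref{prop:dfstriv1}, reducing the theorem to showing that every \dfs\ measure $\mu \in \kM_1(\cU)$ on a singleton variable of either sort $O$ or $P$ has some atom. I will argue by contradiction, assuming $\mu$ is \dfs\ over some $M \prec \cU$ with no atoms.

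First consider $\mu$ on sort $O$. The continuity of $F^{x=y}_{\mu,M}$ and $F^{E_z(x,y)}_{\mu,M}$ combined with the density of $M$-realized types in the corresponding Stone spaces, together with the identity $\mu(E_z(x,b)) = \mu(x=b) + \mu(x=f_z(b)) = 0$ for $b, z \in M$ (obtained from non-atomicity), forces $\mu(x=b) = 0$ and $\mu(E_z(x,b)) = 0$ for every $b \in O(\cU)$ and $z \in P(\cU)$. The plan to produce a contradiction is to apply the imaginary-sort lifting result established earlier in the section: for a fixed $z^{\ast} \in M \cap P$, the pushforward $\mu_0$ of $\mu$ to the quotient $O/E_{z^{\ast}}$ is a \dfs\ measure on quantifier-free formulas of the induced $\cL_0$-structure, and remains non-atomic since $\mu_0(\{[a]\}) = \mu(\{a\}) + \mu(\{f_{z^{\ast}}(a)\}) = 0$. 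The quotient still carries images of the other parameterized equivalence relations (via their $E_{z^{\ast}}$-invariant avatars), so the same vanishing analysis applies to $\mu_0$; iterating across different $z^{\ast}$, or invoking a well-founded rank on the imaginary structures that arise, should eventually yield a contradiction.

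For the sort-$P$ case, I would push $\mu$ forward along the definable map $c \mapsto f_c(a)$ for a chosen $a \in O(M)$, producing a \dfs\ measure on sort $O$, and then invoke the sort-$O$ result. A parallel preliminary analysis shows that any \dfs\ measure on $P$ induces an $M$-definable ``matching-like'' structure on $M$ whose local realizability in $M$ forces the measure to concentrate on $E_{c^{\ast}}$ for some $c^{\ast} \in M$, giving an atom.

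The main obstacle will be completing the contradiction in the sort-$O$ case by rigorously justifying the iteration or well-foundedness argument on imaginary structures. An alternative route, which may be technically cleaner, is to work directly with the Morley products $\mu^{(n)}$ (which remain \dfs\ by Proposition~\ref{prop:MPbasic}): using the elementarity-plus-model-completion fact that any two distinct elements of $M$ are $E_z$-paired by some $z \in M$, combined with $\sigma$-additivity of the Borel extension of $\mu^{(n)}$ over a countable elementary submodel $M_0 \preceq M$, one should extract a Borel set of full $\mu^{(n)}$-measure whose only realizations in $M_0^n$ lie in a proper subvariety (the generalized diagonal), producing the needed contradiction with finite satisfiability after a suitable formula-level approximation.
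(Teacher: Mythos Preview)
Your proposal shares the correct opening moves with the paper (reduce via Proposition~\ref{prop:dfstriv1}, then lift $\mu$ to a quotient by one fixed $E_{z^\ast}$ using the imaginary-sort proposition), but it has a genuine gap at the decisive step. After taking the quotient $O/E_{z^\ast}$, you propose to ``iterate across different $z^\ast$'' or invoke ``a well-founded rank on the imaginary structures that arise,'' but you give no mechanism by which iteration makes progress: the quotient structure still carries a full family of parameterized equivalence relations (as you yourself observe), so repeating the construction does not obviously simplify anything, and there is no natural ordinal-valued invariant that decreases. Your alternative route via $\mu^{(n)}$ and a ``generalized diagonal'' is likewise only a sketch of a hope, not an argument; you have not identified any formula-level statement that would contradict finite satisfiability.

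The paper's proof closes the gap with a single concrete observation that you are missing: after quotienting by $E_e$ for one fixed $e\in P$, one can exhibit an explicit $E_e$-invariant formula, namely $\phi(x,y)\coloneqq (f_e(x)=f_y(x))$, such that the induced relation on $(O/E_e)\times (P\setminus\{e\})$ realizes the random bipartite graph. One then invokes the known \dfs-triviality of the random bipartite graph from \cite[Theorem~4.9]{CoGa} to obtain the contradiction directly. In other words, the proof does not iterate at all; it identifies, in one step, a reduct for which \dfs-triviality is already established. Your sort-$P$ reduction is also incomplete (a pushforward of a non-atomic measure along a non-injective definable map need not be non-atomic), but this becomes moot once the one-sorted argument via the bipartite-graph reduct is in place.
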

\begin{proof}
Suppose not. By Proposition \ref{prop:dfstriv1}, there is some \dfs\ $\mu\in \kM_1(\cU)$ such that $\mu(x=b)=0$ for all $b\in\cU$.
Fix a parameter $e\in P(\cU)$. Let $E(x,y)$ be the equivalence relation on $\cU$ which coincides with $E_e(x,y)$ on $O(\cU)$ and equality on $P(\cU)$. Let $\mu_0$ be the (quantifier-free) \dfs\ measure induced on $\cU/E$ as above. Then $\mu_0(x=b)=0$ for any $b\in\cU/E$. Indeed,  if $b=[b^*]_E$ for some $b^*\in\cU$ then, since $[b^*]_E$ is finite, we have $\mu_0(x=b)=\mu(E(x,b^*))=0$.

Finally, we show that the theory $T_{\textit{rbg}}$ of the random bipartite graph is a (strong) reduct of $\cU/E$ using $\cL_0$-formulas. Given this, we will obtain a contradiction to \dfs-triviality of $T_{\textit{rbg}}$ (see \cite[Theorem 4.9]{CoGa}). We work with $T_{\textit{rbg}}$ in the language of bipartite graphs with unary predicates $U$ and $V$, and a binary relation $R$ on $U\times V$. We interpret $U=O(\cU)/E$ and $V=P(\cU)\backslash\{e\}$ (note that both $O(x)$ and $P(x)\wedge x\neq e$ are $E$-invariant). We then interpret $R(x,y)$ on $U\times V$ as $R_\phi(x,y)\wedge y\neq e$ where $\phi(x,y)$ is the formula $f_e(x)=f_y(x)$. To check that $\phi(x,y)$ is $E$-invariant, note that if $b\in\cP(\cU)$ and $a,a'\in O(\cU)$ are distinct and $E$-equivalent, then
\[
f_e(a)=f_b(a)\miff a'=f_b(a)\miff a=f_b(a')\miff f_e(a')=f_b(a').
\]

Let us now verify that $(U,V;R)\models T_{\textit{rgb}}$. First, fix finite disjoint sets $X,Y\seq U$. We want to find some $b\in V$ such that $R(x,b)$ holds for all $x\in X$ and $\neg R(y,b)$ holds for all $y\in Y$. Without loss of generality, assume $Y=\{[a_i]:i<n\}$ where $n$ is even. Set $Z=\bigcup_{x\in X\cup Y}x\seq O(\cU)$. Then we have a well-defined partition $\cP=X\cup\{\{a_i,a_j\}:\{i,j\}\in\cS\}\cup\{\{f_e(a_{2i}),f_e(a_{2i+1})\}:i<n/2\}$ of $Z$ into two-element sets. So there is $b\in\cP(\cU)\backslash\{e\}$ such that $E_b$ partitions $Z$ according to $\cP$. Then $b$ satisfies the desired properties. 

Now suppose $X,Y\seq\cP(\cU)$ are finite and disjoint. We want to find some $a\in O(\cU)$ such that $R([a],x)$ holds for all $x\in X$ and $\neg R([a],y)$ holds for all $y\in Y$. Since $X\cup\{e\}$ is still disjoint from $Y$, there is $a\in O(\cU)$ such that $f_x(a)=f_y(a)$ for all $x,y\in X\cup\{e\}$, and $f_y(a)\neq f_e(a)$ for all $y\in Y$. Then $a$ satisfies the desired properties. 
\end{proof}

\begin{remark}\label{rem:Tfeq2type}
In \cite[Remark 5.2]{CoGa}, it is claimed that any unary definable subset of the object sort $O$ of $T^*_{\feq 2}$ is finite or cofinite. 
This was used  to justify the claim in \cite[Example 1.7]{ACPgs}  that $T^*_{\feq 2}$ admits a global generically stable type $p$ such that $p^{(2)}$ is not generically stable.  
While $p$ is not explicitly defined in \cite{ACPgs}, it is implied to be the unique non-algebraic global type in $O$, the existence of which is equivalent to the remark from \cite{CoGa} described above. But this remark is false, e.g., consider instances of the formula $\phi(x;y,z)$ given by $f_y(x)=f_z(x)$. 
\end{remark}

\subsection{Henson graphs}\label{sec:Henson}

Fix $s\geq 3$ and let $T_s$ denote the theory of the generic $K_s$-free graph, in the language with a binary relation symbol $E$. Let $\cU\models T_s$ be a monster model. By quantifier elimination, we have a unique non-realized type $p_E\in S_1(\cU)$ containing $\neg E(x,b)$ for all $b\in\cU$. In \cite{CoGa}, it is proved that $p_E$ is \fam, but not \fim. The proof of \fam\ was a combinatorial argument relying on growth rates of certain Ramsey numbers. On the other hand, the failure of \fim\ for $p_E$ is easy to see (modulo the equivalence of \fim\ and generic stability for types), since one can clearly witness the order property for $E(x,y)$ using Morley sequences in $p_E$. 

The first goal of this section is to show that $p_E$ commutes with any invariant measure in $T_s$ (as promised in Example \ref{ex:com2}).  First, we state a well-known Borel-Cantelli-type result on finitely additive probability measures.

\begin{fact}\label{fact:BCL}
For any $\epsilon>0$ and $k\geq 1$, there is some $\delta>0$ and $n\geq 1$ such that the following holds.  Let $B$ be a Boolean algebra, and $\mu$ a finitely additive probability measure on $B$. Suppose $x_1,\ldots,x_n\in B$ and $\mu(x_i)\geq\epsilon$ for all $i\in [n]$. Then there is a $k$-element set $I\seq[n]$ such that $\mu(\bigsqcap_{i\in I}x_i)\geq\delta$.
\end{fact}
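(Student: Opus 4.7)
My plan is to reduce to a finite setting and then use a double-counting / convexity argument in the spirit of Jensen's inequality. The key quantity to estimate is $\sum_{|I|=k}\mu(\bigsqcap_{i\in I}x_i)$ from below: once this sum is $\geq c$, the pigeonhole principle immediately gives some $I$ with $\mu(\bigsqcap_{i\in I}x_i)\geq c/\binom{n}{k}$, so we will take $\delta$ to be the resulting quantity.

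The first step is to pass to the finite Boolean subalgebra $B_0\seq B$ generated by $x_1,\ldots,x_n$, whose atoms $y_1,\ldots,y_m$ form a (disjoint) partition of $1_B$. Set $d_j=|\{i\in[n]:y_j\leq x_i\}|$. Then two identities fall out by disjointness: first,
\[
\sum_{i=1}^n \mu(x_i)=\sum_{i=1}^n\sum_{j:y_j\leq x_i}\mu(y_j)=\sum_{j=1}^m d_j\mu(y_j),
\]
and second, for each $k$-element set $I\seq[n]$, $\mu(\bigsqcap_{i\in I}x_i)=\sum_{j:\,y_j\leq x_i\,\forall i\in I}\mu(y_j)$, hence
\[
\sum_{|I|=k}\mu\!\left(\bigsqcap_{i\in I}x_i\right)=\sum_{j=1}^m \binom{d_j}{k}\mu(y_j).
\]
So the question becomes a purely combinatorial estimate involving the distribution $\mu|_{B_0}$ viewed as weights on the atoms.

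The second step is to apply Jensen's inequality (or a direct convexity argument) to the convex function $\varphi(x)=\binom{x}{k}$ — convex on the relevant range and agreeing with the integer binomial coefficient on nonnegative integers. Since $\sum_j \mu(y_j)=1$, we get
\[
\sum_{j=1}^m\binom{d_j}{k}\mu(y_j)\geq \binom{\sum_j d_j\mu(y_j)}{k}=\binom{\sum_i\mu(x_i)}{k}\geq\binom{\epsilon n}{k}.
\]
Provided $\epsilon n\geq 2k$, each factor $\epsilon n-i$ (for $0\le i<k$) is at least $\epsilon n/2$, so $\binom{\epsilon n}{k}\geq(\epsilon n/2)^k/k!$. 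Pigeonhole then yields some $I$ with
\[
\mu\!\left(\bigsqcap_{i\in I}x_i\right)\geq\frac{\binom{\epsilon n}{k}}{\binom{n}{k}}\geq\frac{(\epsilon n/2)^k/k!}{n^k/k!}=(\epsilon/2)^k.
\]

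Thus taking $n=\lceil 2k/\epsilon\rceil$ and $\delta=(\epsilon/2)^k$ will suffice. The only point requiring care is the application of Jensen: I would make sure to justify convexity of $\binom{x}{k}$ on $[0,\infty)$ — the cleanest route is to note $\binom{x}{k}$ is nonnegative on nonnegative integers and its $(k{-}1)$-st derivative is a positive constant times $x$, so it is convex on $[k-1,\infty)$; and for $d_j\in\{0,\ldots,k-1\}$ both sides of the integer-valued inequality are zero, so one can first discard atoms with $d_j<k$ and reweight (or just observe that the resulting quadratic/polynomial inequality is elementary by a symmetry/rearrangement calculation). This is the only place where a routine verification is needed; everything else is a direct counting argument followed by pigeonhole.
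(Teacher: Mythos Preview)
The paper states this as a well-known fact and gives no proof, so there is nothing to compare against directly. Your strategy (pass to the finite subalgebra generated by the $x_i$, double count over atoms, apply a convexity inequality, then pigeonhole) is correct and gives explicit constants. The only real issue is the convexity step, where your justification does not work as written.

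The $(k{-}1)$-st derivative of $\binom{x}{k}$ is $x-\tfrac{k-1}{2}$, not ``a positive constant times $x$''; and in any case, for $k\geq 4$ information about the $(k{-}1)$-st derivative does not control the sign of the second derivative. Your remark that ``both sides of the integer-valued inequality are zero'' when $d_j<k$ also does not parse: the inequality in question is $\sum_j\binom{d_j}{k}\mu(y_j)\geq\binom{\epsilon n}{k}$, and small $d_j$'s do not make the right side vanish. The conclusion you want is nevertheless true. Since $\binom{x}{k}$ has simple roots at $0,1,\dots,k-1$, Rolle's theorem forces all $k-2$ roots of its second derivative into $(0,k-1)$; the positive leading coefficient then gives convexity on $[k-1,\infty)$. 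Your discard-and-reweight idea can then be made precise: with $S=\{j:d_j\geq k\}$ and $w=\sum_{j\in S}\mu(y_j)$, one checks that $\sum_j\binom{d_j}{k}\mu(y_j)=\sum_{j\in S}\binom{d_j}{k}\mu(y_j)\geq w\binom{D/w}{k}\geq\binom{D}{k}$ for $D=\sum_{j\in S}d_j\mu(y_j)\geq\epsilon n-(k-1)$, using $D-iw\geq D-i$ for $w\leq 1$. This yields your bound up to harmless changes in the constants.

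A cleaner variant that sidesteps all of this is to replace $\binom{d_j}{k}$ by $d_j^{\,k}$. Convexity of $x\mapsto x^k$ on $[0,\infty)$ is immediate, so
\[
\sum_j d_j^{\,k}\mu(y_j)\;=\;\sum_{(i_1,\dots,i_k)\in[n]^k}\mu\!\left(\bigsqcap_t x_{i_t}\right)\;\geq\;(\epsilon n)^k,
\]
and after discarding the at most $\binom{k}{2}n^{k-1}$ tuples with a repeated index (each contributing at most $1$) one obtains $k!\sum_{|I|=k}\mu(\bigsqcap_{i\in I}x_i)\geq(\epsilon n)^k-\binom{k}{2}n^{k-1}$, which suffices for pigeonhole once $n$ is large relative to $k/\epsilon$.
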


A standard consequence of the previous fact is that if $\mu\in \kM_x(\cU)$ is an $M$-invariant measure (in any theory) and $\phi(x)$ is an $\cL_{\cU}$-formula that forks over $M$, then $\mu(\phi(x))=0$. 

\begin{lemma}\label{lem:Enull}
Suppose $\mu\in \kM_1(\cU)$ is invariant over $M\prec\cU$. Then $\mu(E(x,a))=0$ for some/any $a\models p_E|_M$. 
\end{lemma}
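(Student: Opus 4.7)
The plan is to argue by contradiction. Suppose $r \coloneqq \mu(E(x,a)) > 0$ for some $a \models p_E|_M$; by $M$-invariance, $\mu(E(x,b)) = r$ for every $b \models p_E|_M$.

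The central observation exploits $K_s$-freeness: there exist tuples $(b_1, \ldots, b_{s-1})$ of realizations of $p_E|_M$ that form a $K_{s-1}$ in $\cU$. Such a tuple exists because the corresponding type over $M$ -- asserting each $b_i$ realizes $p_E|_M$ together with $E(b_i, b_j)$ for all $i \neq j$ -- is consistent: the $b_i$'s contribute only a $K_{s-1}$ (not $K_s$) with no connections to $M$. For any such tuple, the conjunction $\bigwedge_{i=1}^{s-1} E(x, b_i)$ is inconsistent in $T_s$, since any realization together with $b_1, \ldots, b_{s-1}$ would complete to a forbidden $K_s$. Hence $\mu\bigl(\bigwedge_{i=1}^{s-1} E(x, b_i)\bigr) = 0$.

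Next, the plan is to apply Fact~\ref{fact:BCL} to the family of formulas $\{E(x, b_i)\}_{i=1}^n$, where the $b_i$'s are carefully chosen realizations of $p_E|_M$. With $\epsilon = r$ and $k = s-1$, Fact~\ref{fact:BCL} produces constants $n$ and $\delta > 0$ such that some $(s-1)$-subset $I \subseteq [n]$ has $\mu\bigl(\bigwedge_{i \in I} E(x, b_i)\bigr) \geq \delta$. The goal is then to arrange the $b_i$'s so that the subset $I$ produced by Fact~\ref{fact:BCL} is forced to form a $K_{s-1}$-clique in $\cU$, producing an inconsistent conjunction and hence a contradiction with the lower bound $\delta > 0$.

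The main obstacle is the combinatorial step, since $K_s$-freeness of $T_s$ prevents having more than $s-1$ pairwise connected realizations of $p_E|_M$ in $\cU$, so the naive arrangement fails for $n > s - 1$. The plan to overcome this is either through a Ramsey-type argument applied with a larger value of $k$ in Fact~\ref{fact:BCL}, or -- perhaps more cleanly -- by first upgrading the inconsistency of $\bigwedge_{i=1}^{s-1} E(x, b_i)$ (where $b_1, \ldots, b_{s-1}$ is a $K_{s-1}$ of $p_E|_M$-realizations containing $a$) to a forking witness for $E(x,a)$ over $M$, and then invoking the standard consequence of Fact~\ref{fact:BCL}, noted in the paragraph preceding the lemma, that any $M$-invariant measure assigns zero mass to any formula that forks over $M$.
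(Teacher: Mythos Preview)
Your overall architecture (assume $r>0$, apply Fact~\ref{fact:BCL} with $k=s-1$, derive a contradiction from measure zero of some $(s-1)$-fold conjunction) is exactly the paper's. The gap is in the step you flag as the ``main obstacle'', and neither of your proposed fixes works as stated.

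Your second fix is simply false: $E(x,a)$ does \emph{not} fork over $M$. Any infinite $M$-indiscernible sequence $(a_i)_{i<\omega}$ in $p_E|_M$ must be an anticlique (indiscernibility forces either all $E(a_i,a_j)$ or all $\neg E(a_i,a_j)$, and the former gives an infinite clique, impossible in $T_s$). For an anticlique, $\{E(x,a_i):i<\omega\}$ is consistent, since a common neighbor of an independent set creates no $K_s$. So $E(x,a)$ does not divide over $M$, and by \cite{Co13} forking equals dividing in $T_s$. Your first fix (a Ramsey argument) is too vague, and runs into the same wall: if you choose $b_1,\ldots,b_n$ as any $n$ distinct realizations of $p_E|_M$, the $(s-1)$-subset $I$ returned by Fact~\ref{fact:BCL} may well be an anticlique, and then $\bigwedge_{i\in I}E(x,b_i)$ is perfectly consistent, so there is no contradiction.

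What the paper does instead is use a stronger forking fact: by \cite{Co13}, for \emph{any} pairwise distinct $a_1,\ldots,a_{s-1}\models p_E|_M$ (no clique assumption), the formula $\bigwedge_{i=1}^{s-1}E(x,a_i)$ forks over $M$, hence has $\mu$-measure zero. With this in hand, one simply chooses $n$ pairwise distinct realizations $a_1,\ldots,a_n\models p_E|_M$; Fact~\ref{fact:BCL} yields some $(s-1)$-element $I\subseteq[n]$ with $\mu\bigl(\bigwedge_{i\in I}E(x,a_i)\bigr)\geq\delta>0$, contradicting the forking fact. So the missing ingredient is precisely that the $(s-1)$-fold conjunction forks over $M$ regardless of the edge configuration among the $a_i$'s, not merely that it is inconsistent when they form a $K_{s-1}$.
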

\begin{proof}
Consider the formula $\phi(x,y_1,\ldots,y_{s-1})\coloneqq \bigwedge_{i=1}^{s-1}E(x,y_i)$. Then for any pairwise distinct $a_1,\ldots,a_{s-1}\models p_E|_M$, the formula $\phi(x,\abar)$ forks over $M$ by \cite[Corollary 4.8]{Co13}, and thus $\mu(\phi(x,\abar))=0$. Now let $\epsilon=\mu(E(x,a))$ where $a$ is some/any realization of $p_E|_M$. Toward a contradiction, suppose $\epsilon>0$. Let $n\geq1$ and $\delta>0$ be as in Fact \ref{fact:BCL}, with $k=s-1$. Choose pairwise distinct  $a_1,\ldots,a_n\models p_E|_M$. Then there is an $(s-1)$-element set $I\seq[n]$ such that $\mu(\phi(x,\abar_I))\geq\delta>0$, which is a contradiction. 
\end{proof}

\begin{corollary}
Suppose $\mu\in\kM_n(\cU)$ is invariant. Then $p_E\otimes\mu=\mu\otimes p_E$. 
\end{corollary}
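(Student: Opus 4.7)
The plan is to prove, for every $\cL$-formula $\phi(x, \bar y)$, that $(\mu \otimes p_E)(\phi) = (p_E \otimes \mu)(\phi)$. Fix $M \prec \cU$ containing all parameters of $\phi$ and such that $\mu$ is $M$-invariant. My first step is to observe that, by quantifier elimination in $T_s$, the type $p_E$ is $\emptyset$-definable: define $d_\phi(\bar y)$ from $\phi(x, \bar y)$ by replacing every atomic subformula that contains the variable $x$ (necessarily of the form $x = t$ or $E(x, t)$ for $t$ a variable or a parameter) by $\bot$. Then for any $\bar b \in \cU^{\bar y}$ and any $a_* \models p_E|_{M\bar b}$, we have $a_* \ne c$ and $\neg E(a_*, c)$ for all $c \in M \cup \{b_1,\dots,b_n\}$, so each atom involving $x$ evaluates to false at $(a_*, \bar b)$; thus $\phi(a_*, \bar b) \leftrightarrow d_\phi(\bar b)$. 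This yields $(p_E \otimes \mu)(\phi) = \mu(d_\phi(\bar y))$.

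For the other side, since $p_E$ is a type, $(\mu \otimes p_E)(\phi) = \mu(\phi(a, \bar y))$ for any $a \models p_E|_M$, so it suffices to show $\mu(\phi(a, \bar y)) = \mu(d_\phi(\bar y))$. By QE, the atomic subformulas of $\phi(a, \bar y)$ that are not already shared with $d_\phi(\bar y)$ are of the form $a = y_i$, $E(a, y_i)$, $a = m$, or $E(a, m)$ for some $m \in M$. The latter two are constants (namely false, since $a \models p_E|_M$), in agreement with the substitution by $\bot$. It therefore remains to argue that the two $\cL_{Ma}$-formulas $a = y_i$ and $E(a, y_i)$ each define a $\mu$-null set in $\cU^{\bar y}$; once this is established, $\phi(a, \bar y)$ and $d_\phi(\bar y)$ will differ on a $\mu$-null set, so finite additivity gives $\mu(\phi(a, \bar y)) = \mu(d_\phi(\bar y))$.

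The nullity $\mu(E(y_i, a)) = 0$ is Lemma~\ref{lem:Enull} applied to the pushforward of $\mu$ to the $i$-th coordinate (or, equivalently, by repeating its Borel--Cantelli argument using Fact~\ref{fact:BCL} on the formula $\bigwedge_{j<s-1} E(y_i, a_j)$ for pairwise distinct $a_j \models p_E|_M$, which is inconsistent by $K_s$-freeness). The nullity $\mu(y_i = a) = 0$ is even easier: the formulas $y_i = a_j$ for pairwise distinct $a_j \models p_E|_M$ are mutually inconsistent and share the same $\mu$-measure by $M$-invariance of $\mu$, so that common measure must be $0$.

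The main, and rather modest, obstacle is bookkeeping under QE: one must be careful to enumerate every shape of atomic subformula containing $x$, including those pairing $x$ with parameters from $M$, and confirm that after substituting $x = a$ each such atom either becomes a (false) constant or defines a $\mu$-null set. The genuine content lies in Lemma~\ref{lem:Enull} (already available) and the parallel forking-style argument for equality, both of which only require invariance of $\mu$.
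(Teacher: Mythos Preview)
Your proposal is correct and takes essentially the same approach as the paper: both arguments use quantifier elimination, the $\emptyset$-definability of $p_E$, Lemma~\ref{lem:Enull} for $\mu(E(a,y_i))=0$, and $M$-invariance of $\mu$ for $\mu(y_i=a)=0$. The only difference is packaging: the paper reduces to conjunctions of literals and does a case split, whereas you define $d_\phi$ uniformly and argue that $\phi(a,\bar y)$ and $d_\phi(\bar y)$ differ on a $\mu$-null set, which handles all Boolean combinations at once.
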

\begin{proof}
Let $\nu_1=p_E\otimes\mu$ and $\nu_2=\mu\otimes p_E$. It suffices to show that $\nu_1$ and $\nu_2$ agree on formulas that are conjunctions of atomics and negated atomics. So fix such an $\cL_{\cU}$-formula $\phi(x,\ybar)$, where $|\ybar|=n$. Without loss of generality, $\phi(x,\ybar)$ is of the form 
\[
\bigwedge_{i=1}^n E^{\epsilon_i}(x,y_i)\wedge \bigwedge_{i=1}^n (x=_i y_i)\wedge \psi(x)\wedge\theta(\ybar)
\]
where $\epsilon_i\in\{0,1\}$, $E^1$ is $E$, $E^0$ is $\neg E$, $=_i$ is either $=$ or $\neq$, and $\psi(x)\wedge \theta(\ybar)$ is some $\cL_{\cU}$-formula. Fix $M\prec\cU$ such that $\phi(x,\ybar)$ is over $M$ and $\mu$ is invariant over $M$. 

Given $q\in S_{\ybar}(M)$, we have
\[
F^\phi_{p_E}(q)=\begin{cases}
0 & \text{if some $\epsilon_i=1$, some $=_i$ is $=$, $\theta(\ybar)\not\in q$, or $\psi(x)\not\in p$}\\
1 & \text{otherwise.}
\end{cases}
\]
Therefore
\[
\nu_1(\phi(x,y))=\begin{cases}
0 & \text{if some $\epsilon_i=1$, some $=_i$ is $=$, or $\psi(x)\not\in p$}\\
\mu(\theta(\ybar)) & \text{otherwise.}
\end{cases}
\]

On the other hand, we have $\nu_2(\phi(x,y))=\mu(\phi(a,\ybar))$, where $a\models p_E|_M$. If some $\epsilon_i=1$ then $\mu(\phi(a,\ybar))=0$ by Lemma \ref{lem:Enull}. If some $=_i$ is $=$, then $\mu(\phi(a,\ybar))=0$ since $a\not\in M$ and $\mu$ is $M$-invariant. If $\psi(x)\not\in p$, then clearly $\mu(\phi(a,\ybar))=0$. So we may assume all $\epsilon_i$ are $0$, all $=_i$ are $\neq$, and $\psi(x)\in p$. Since $\mu(\neg E(a,y_i))=\mu(a\neq y_i)=1$ for all $i$, we have $\mu(\phi(a,\ybar))=\mu(\theta(\ybar))$. So $\nu_1(\phi(x,\ybar))=\nu_2(\phi(x,\ybar))$. 
\end{proof}

In \cite[Theorem 5.10]{CoGa}, the first two authors made two more assertions about Keisler measures in $T$. First, it was claimed that a measure $\mu\in\kM_1(\cU)$ is \dfs\ if and only if it is \fam, and in this case $\mu$ is a convex combination of $p_E$ and a trivial measure. Second, it is claimed that every \fim\ measure in $\kM_1(\cU)$ is trivial.  As indicated by Remark \ref{rem:CGcor}, both statements relied on an erroneous remark concerning localization of measures, and the first statement is easily recovered using the corrected proof of Proposition \ref{prop:dfstriv1}. On the other hand, the second result is more complicated, and so we take the opportunity here to provide a correct proof.

\begin{theorem}\label{thm:Hfimtriv}
Any \fim\ measure in $\kM_1(\cU)$ is trivial.
\end{theorem}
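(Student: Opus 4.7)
The plan is to combine the (corrected) first part of \cite[Theorem 5.10]{CoGa}---that every \dfs\ measure in $\kM_1(\cU)$ is a convex combination of $p_E$ and a trivial measure---with a combinatorial argument exploiting the order property of $E$ and the weak law of large numbers. Since \fim\ implies \dfs, any \fim\ measure $\mu \in \kM_1(\cU)$ has the form $\mu = r p_E + (1-r)\nu$ for some $r \in [0,1]$ and trivial $\nu = \sum_{k} s_k \delta_{c_k}$; it suffices to show $r = 0$. So I assume $r > 0$ toward a contradiction, fix $M \prec \cU$ containing every $c_k$ and over which $\mu$ is \fim, and apply the definition of \fim\ to $\phi(x,y) := E(x,y)$ with $\epsilon := r/3$. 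This produces $\cL_M$-formulas $(\theta_n)_{n\geq 1}$ with $\mu^{(n)}(\theta_n) \to 1$ such that, for $n$ large, any $\abar \models \theta_n$ satisfies $\mu \approx^E_\epsilon \Av(\abar)$.

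The next step is to analyze $\mu^{(n)}|_M$. Unwinding $\mu^{(n+1)} = \mu_{x_{n+1}} \otimes \mu^{(n)}_{x_1 \dots x_n}$ with $\mu = r p_E + (1-r)\nu$ shows that, for any $I_0 \seq [n]$, the $\mu^{(n)}|_M$-mass of tuples whose coordinates realizing $p_E|_M$ are exactly those indexed by $I_0$ equals $r^{|I_0|}(1-r)^{n-|I_0|}$. Since this event is $M$-invariant, Fact \ref{fact:binomial} yields $\mu^{(n)}(\{\abar : |I(\abar)|/n \ge r/2\}) \to 1$, and intersecting with $\theta_n$ I can choose $\abar \models \theta_n$ with $|I|/n \ge r/2$. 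Crucially, each $a_i$ with $i \in I$ realizes $p_E$ over $M a_1 \dots a_{i-1}$, and $p_E$ contains $\neg E(x, z)$ for every parameter $z$; thus $\{a_i\}_{i \in I}$ is an independent set, and each of its elements is non-adjacent to $M$, hence to every $c_k$.

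Finally, I would produce $b \in \cU$ realizing the partial type $\{E(x, a_i) : i \in I\} \cup \{\neg E(x, c_k) : k\}$. This type is consistent with $T_s$: any clique containing $b$ is contained in $\{b\} \cup \{a_i\}_{i \in I}$, and since $\{a_i\}_{i \in I}$ is independent such a clique has size at most two, precluding a copy of $K_s$. Saturation of $\cU$ then delivers $b$. Now $\mu(E(x,b)) = r \cdot p_E(E(x,b)) + (1-r)\nu(E(x,b)) = 0$ (the first term vanishes since $p_E \models \neg E(x, b)$, the second since $\neg E(c_k, b)$ for every $k$), while $\Av(\abar)(E(x,b)) \ge |I|/n \ge r/2 > \epsilon$, contradicting $\mu \approx^E_\epsilon \Av(\abar)$. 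The main obstacle is the structural claim that the $p_E$-indexed coordinates of $\abar$ form an independent set non-adjacent to $M$; this requires carefully unwinding the inductive Morley product, after which the binomial tail bound and the genericity of $\cU$ finish things off.
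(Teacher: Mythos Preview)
Your approach is correct and in fact somewhat more direct than the paper's. Both arguments rest on the same two pillars: the expansion $\mu^{(n)}=\sum_{X\seq[n]}r^{|X|}(1-r)^{n-|X|}\mu_X$ (with $\mu_X$ the Morley product placing $p_E$ in the coordinates indexed by $X$ and $\nu$ elsewhere) and the binomial tail bound of Fact~\ref{fact:binomial}. The paper proceeds syntactically: it writes $\theta_n$ as a disjunction $\bigvee_t\psi_t$ of conjunctions of literals, defines a notion of ``$t$-good'' subset of $[n]$, and splits into cases according to whether some $\psi_t$ admits a $t$-good set of size $\geq\epsilon n$; the tail bound appears only in the negative case, to force $\mu^{(n)}(\theta_n)<1-\epsilon$. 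You skip the syntactic decomposition entirely and use the tail bound up front to locate a large $X$ with $\mu_X(\theta_n)>0$, which is cleaner.

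One step of your sketch needs sharpening. Knowing that $\mu^{(n)}\bigl([\theta_n]\cap\{|I(\abar)|\ge rn/2\}\bigr)>0$ lets you choose a type in this Borel set, but an \emph{arbitrary} such type need not satisfy ``$a_i\models p_E|_{Ma_{<i}}$ for $i\in I$'': two coordinates can each realize $p_E|_M$ while still being $E$-adjacent to one another, so the implication from $i\in I$ to membership in a Morley sequence fails as stated. The fix is exactly the ``careful unwinding'' you flag: since $\sum_{|X|<rn/2}r^{|X|}(1-r)^{n-|X|}\to 0$ while $\mu^{(n)}(\theta_n)\to 1$, there is some specific $X$ with $|X|\ge rn/2$ and $\mu_X(\theta_n)>0$; decomposing $\mu_X$ further as a countable convex combination of global types $\bigotimes_i(\text{$p_E$ or $\delta_{c_{k_i}}$})$ yields one such type containing $\theta_n$, and any $\abar$ realizing its restriction to $M$ genuinely has $a_i\models p_E|_{Ma_{<i}}$ for each $i\in X$. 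With $\abar$ chosen this way, your construction of $b$ and the final contradiction go through verbatim.
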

\begin{proof}
Suppose $\mu\in\kM_1(\cU)$ is \fim. By \cite[Theorem 5.10$(a)$]{CoGa}, we can write $\mu=rp_E+(1-r)\nu$ for some $r\in[0,1]$ and trivial $\nu\in\kM_1(\cU)$. Toward a contradiction, suppose $r>0$. 
Set $\epsilon=r/2$ and fix $n\geq \frac{r(1-r)}{\epsilon^2(1-\epsilon)}$. Since $\mu$ is \fim, there is a formula $\theta(x_1,\ldots,x_n)$ such that $\mu^{(n)}(\theta(\xbar))\geq 1-\epsilon$ and if $\abar\models\theta$ then $\mu\approx^E_\epsilon \Av(\abar)$. 

Fix $M\prec\cU$ such that $\theta(\xbar)$ is over $M$ and $\nu$ is a weighted sum of Dirac measures at points in $M$. By quantifier elimination, we may write
$$
\theta(\xbar)=\bigvee_{t=1}^k \psi_t(\xbar)
$$
where each $\psi_t(\xbar)$ is a consistent conjunction of atomic and negated atomics. Given $1\leq t\leq k$, call a set $X\seq[n]$ \emph{$t$-good} if $\psi_t(\xbar)$ does not prove a formula of the $x_i=b$ for some $i\in X$ and $b\in M$, or of the form $E(x_i,x_j)$ for some $i,j\in X$. 

Suppose first that, for some $1\leq t\leq k$, we have a $t$-good set $X\seq[n]$ of size at least $\epsilon n$. Then we can find a realization $\abar\models\theta(\xbar)$ such that $a_i\not\in M$ for all $i\in X$, and $\neg E(a_i,a_j)$ for all $i,j\in X$. We may then choose $b\in\cU$ such that $E(a_i,b)$ holds for all $i\in X$ and $\neg E(b,m)$ holds for all $m\in M$. Note that $\mu(E(x,b))=0$. On the other hand, $\Av(\abar)(E(x,b))\geq|X|/n\geq\epsilon$, which contradicts the choice of $\theta(\xbar)$. 

So now we can assume that for all $1\leq t\leq k$, any $t$-good set $X\seq[n]$ has size strictly less than $\epsilon n$. Given $X\seq [n]$, set 
\[
r_X=r^{|X|}(1-r)^{n-|X|}\makebox[.5in]{and} \mu_X=\bigotimes_{i=1}^{n} \left\{\begin{matrix}p_E, & i \in X \\ \nu, & i \notin X \end{matrix}\right\}.
\] 
By Fact \ref{fact:binomial} and choice of $n$, we have 
\begin{align*}
\mu^{(n)}(\theta(\xbar))=\sum_{X\seq[n]}r_X\mu_X(\theta(\xbar)) &\leq \sum_{X\not\in \cP_{r,\epsilon}(n)}r_X+\sum_{X\in\cP_{r,\epsilon}(n)}r_X\mu_X(\theta(\xbar))\\
 &<1-\epsilon+\sum_{X\in \cP_{r,\epsilon}(n)}r_X\mu_X(\theta(\xbar)).
\end{align*}
Now fix $X\in \cP_{r,\epsilon}(n)$. Then $|X|\geq rn/2=\epsilon n$, and so $X$ is not $t$-good for any $1\leq t\leq k$. Fix $1\leq t\leq k$. Then for any $t$, either $\psi_t(\xbar)$ contains a conjunct $x_i=b$ for some $i\in X$ and $b\in M$, or a conjunct $E(x_i,x_j)$ for some $i,j\in X$. In the first case we have $\mu_X(\psi_t(\xbar))\leq \mu_X(x_i=b)=p_E(x_i=b)=0$; and in the second case we have $\mu_X(\psi_t(\xbar))\leq\mu_X(E(x_i,x_j))=(p_E\otimes p_E)(E(x_i,x_j))=0$. Altogether, we have $\mu_X(\psi_t(\xbar))=0$ for all $1\leq t\leq k$, and so $\mu_X(\theta(\xbar))=0$. By the inqualities above, it follows that $\mu^n(\theta(\xbar))< 1-\epsilon$, which contradicts the choice of $\theta(\xbar)$. 
\end{proof}

\subsection{A new example of a \fam\ and non-\fim\ complete type}

In this section, we show that a certain reduct of the \dfs\ and non-\fam\ type built in Section \ref{sec:Ti} is \fam\ and non-\fim. First, we prove a  technical lemma regarding \fam\ types in the presence of quantifier elimination.

\begin{lemma} \label{lem:famQE}
Assume $T$ has quantifier elimination, and fix $p \in S_{x}(\mathcal{U})$. Suppose there exists a sequence of tuples $(\overline{c}_{n})_{n\in\omega}$ such that for every atomic formula $\theta(x,\overline{y})$ and every $\epsilon>0$ there exists $N(\epsilon,\theta)$ so that for all $n>N(\epsilon,\theta)$, $p \approx_{\epsilon}^{\theta}\Av(\overline{c}_n)$. Then $p$ is finitely approximated over any small model containing $(\overline{c}_n)_{n \in \omega}$.
\end{lemma}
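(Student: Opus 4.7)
My plan is to fix any small model $M\prec\cU$ containing $(\bar c_n)_{n\in\omega}$, and to show that for every $\cL$-formula $\phi(x,\bar y)$ and every $\epsilon>0$, some $\Av(\bar c_n)$ serves as a uniform $\epsilon$-approximation of $p$ on instances of $\phi$. Since each $\bar c_n$ lies in $M^{<\omega}$, this establishes \fam\ over $M$ directly.

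Using quantifier elimination, I would first replace $\phi(x,\bar y)$ by an equivalent Boolean combination $B(\theta_1(x,\bar y),\ldots,\theta_m(x,\bar y))$ of atomic formulas. Because $p$ is a complete type, for each $\bar b$ there is a unique $\sigma^*(\bar b)\in\{0,1\}^m$ satisfying $\theta_j^{\sigma^*(\bar b)(j)}(x,\bar b)\in p$ for every $j\in[m]$; equivalently, the atom $\chi_{\sigma^*(\bar b)}(x,\bar b):=\bigwedge_{j=1}^{m}\theta_j^{\sigma^*(\bar b)(j)}(x,\bar b)$ is the unique atom of the Boolean algebra generated by $\theta_1,\ldots,\theta_m$ lying in $p$, and $\phi(x,\bar b)\in p$ exactly when $\sigma^*(\bar b)$ lies in the subset $S\subseteq\{0,1\}^m$ determined by the truth table of $B$.

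The key leverage is that $p$ is two-valued on atomic formulas: applying the hypothesis to each $\theta_j$ with error $\epsilon/m$ says that uniformly in $\bar b$, the fraction of entries $c_i$ of $\bar c_n$ that \emph{disagree} with $p$ on $\theta_j(x,\bar b)$ is strictly less than $\epsilon/m$. Taking a union bound across $j\in[m]$, uniformly in $\bar b$, the fraction of entries $c_i$ failing to satisfy the unique atom $\chi_{\sigma^*(\bar b)}(x,\bar b)\in p$ is strictly less than $\epsilon$.

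So for $n$ chosen large enough that $p\approx_{\epsilon/m}^{\theta_j}\Av(\bar c_n)$ for every $j\in[m]$ (possible by the hypothesis and the finiteness of $m$, taking $N=\max_j N(\epsilon/m,\theta_j)$), at least a $(1-\epsilon)$ fraction of the entries of $\bar c_n$ satisfy $\chi_{\sigma^*(\bar b)}(x,\bar b)$ for every $\bar b$. Since $\chi_{\sigma^*(\bar b)}$ implies $\phi$ when $\sigma^*(\bar b)\in S$ and implies $\neg\phi$ otherwise, in both cases this forces $|p(\phi(x,\bar b))-\Av(\bar c_n)(\phi(x,\bar b))|<\epsilon$ uniformly in $\bar b$, completing the proof. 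There is no serious obstacle once the completeness of $p$ is exploited together with the union bound; the hypothesis' uniformity in $\bar b$ then automatically yields uniformity of the approximation of $\phi$.
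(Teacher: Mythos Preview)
Your proof is correct and uses essentially the same idea as the paper: exploit that $p$ is $\{0,1\}$-valued so that approximation on each atomic formula means a small fraction of entries disagree, then take a union bound. The only difference is organizational: the paper puts $\phi$ in DNF and argues in two stages (first conjunctions of literals, then disjunctions of those conjunctions, each with its own union bound), whereas you go straight to the unique Boolean atom $\chi_{\sigma^*(\bar b)}$ lying in $p$ and do a single union bound over the $m$ atomic formulas, which is slightly more direct and gives a marginally better dependence on $\epsilon$.
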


\begin{proof} We first note that for any formula $\psi(x,\overline{y})$ and any tuple $\overline{a}$ of points in $\mathcal{U}^{x}$, we have $p \approx_{\epsilon}^{\psi} \Av(\overline{a})$ if and only if $p \approx_{\epsilon}^{\neg \psi} \Av(\overline{a})$.  Let $\gamma(x, \overline{y}) = \bigwedge_{j \in J} \theta_{j}(x,\overline{y})$ where for each $j$, the formula $\theta_{j}(x,\overline{y})$ is either an atomic formula or the negation of an atomic formula. Fix $\epsilon > 0$. For each $\theta_{j}(x,\overline{y})$, choose $N_j = N(\epsilon/|J|,\theta_j)$ as in the statement of the lemma and fix $n> \max\{N_j: j \in J\}$. First, assume that $\gamma(x,\overline{b}) \in p$. For each $j \in J$, $\theta_j(x,\overline{b}) \in p$ and so

\begin{multline*}
 \Av(\overline{c}_{n})(\gamma(x,\overline{b}))=1 - \Av(\overline{c}_{n})\Big(\bigvee_{j\in J}\neg\theta_{j}(x,\overline{b})\Big)\\ \geq 1 - \sum_{j \in J} \Av(\overline{c}_n)(\neg\theta_{j}(x,\overline{b})) 
 \geq 1 - \sum_{j \in J} \frac{\epsilon}{|J|} = 1 - \epsilon.
\end{multline*} 
On the other hand, if $\neg \gamma(x,\overline{b}) \in p$, then there exists some fixed $j \in J$ such that $\theta_{j}(x,\overline{b}) \not \in p$. Since $p \approx_{\epsilon/|J|}^{\theta_j}\Av(\overline{c}_n)$, we have $ \Av(\overline{c}_{n})(\gamma(x,\overline{b})) \leq \Av(\overline{c}_n)(\theta_{j}(x,\overline{b})) \leq \epsilon$. 

Now assume that $\rho(x,\overline{y}) = \bigvee_{i \in I} \gamma_{i}(x,\overline{y})$ where each $\gamma_{i}(x,\overline{y})$ is as before (i.e. a conjunction of atomic and negated atomic formulas). By the previous paragraph, we can choose $m \in \omega$ so that for any $i \in I$, then $p\approx_{\epsilon/|I|}^{\gamma_i} \Av(\overline{c}_m) $. First, assume that $\rho(x,\overline{b}) \in p$. Then there exists some fixed $i \in I$ so that $\gamma_{i}(x,b) \in p$. So $\Av(\overline{c}_m)(\rho(x,\overline{b})) \geq \Av(\overline{c}_m)(\gamma_i(x,\overline{b})) \geq 1-\epsilon$. Finally, assume that $\neg \rho(x,\overline{b}) \in p$. So for each $i \in I$, we have that $\neg \gamma_i(x,\overline{b}) \in p$. Then, we have the following computation:

\begin{equation*} \Av(\overline{c}_m)(\rho(x,\overline{b})) \leq \sum_{i \in I} \Av(\overline{c}_m)(\gamma_i(x,\overline{b})) < |I|\Big(\frac{\epsilon}{|I|}\Big) = \epsilon. \qedhere
\end{equation*}
\end{proof}

Let $\Lc_{PQ}$ be the reduct of the language described in Section~\ref{sec:Ti} to just the $P$ and $Q$ sort. Let $q_{PQ}(y)$ be the reduct of the type from Corollary~\ref{cor:dfsnotfam} to the language $\Lc_{PQ}$. By Proposition~\ref{prop:PQomega}, the reduct $T_{PQ}\coloneqq \Ti| \Lc_{PQ}$ has quantifier elimination. From this it is not hard to show that $q_{PQ}(y)$ is axiomatized the formulas
\begin{enumerate}[\hspace{5pt}$\ast$]
\item $a \sqin y$ for every $a \in P(\Uc)$,
\item $y \neq \top$, and
\item $y \not \sim b$ for every $b \in Q(\Uc)$.
\end{enumerate}
By quantifier elimination for $T_{PQ}$, and essentially the same arguments as in Section \ref{sec:Ti}, $q_{PQ}$ determines a unique $\emptyset$-definable complete type, which is finitely satisfiable in any small model. However, we will now show that by dropping the measure sort, $q_{PQ}$ in fact becomes \fam, but is still not \fim.

\begin{proposition}
  $q_{PQ}$ is \fam\ and not \fim.
\end{proposition}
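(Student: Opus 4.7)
The plan is to treat the two claims independently, using quantifier elimination and the reduction of $\cL_Q$-terms modulo $q_{PQ}$ that is implicit in Lemma \ref{lem:0def}.

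For not-\fim, I would exhibit the order property for $\phi(y,x) = x \sqin y$, so $q_{PQ}$ is not generically stable (which, for types, is equivalent to \fim). Fix any Morley sequence $(a_i)_{i<\omega}$ of $q_{PQ}$ over a small $M \prec \cU$; the axiom $y \not\sim c$ of $q_{PQ}$ forces the $a_i$'s to lie in pairwise distinct $\sim$-classes of $Q(\cU)$, none of which meets $Q(M)$. The theory $T_{PQ}$ satisfies the genericity scheme that for any pairwise $\not\sim$-distinct $c_1, \ldots, c_k$ in $Q \setminus \{\bot, \top\}$ and any $T \seq [k]$, there is $b \in P$ with $b \sqin c_i \Leftrightarrow i \in T$. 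This holds in $\Mi$ because $S$ meets every cube in $\lb 0, 1\rp^\N$, and transfers to $\cU$ by elementarity. Applying this scheme together with compactness, for each $j < \omega$ there is $b_j \in P(\cU)$ with $b_j \sqin a_i \Leftrightarrow i \leq j$, which gives the order property.

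For \fam, I would invoke Lemma \ref{lem:famQE} and produce an approximating sequence from $Q(\Mi)$. Fix an injective enumeration $(n_j)_{j<\omega}$ of $\N$, and for each $n$ let $\bar{c}_n$ be the tuple of length $n^2$ indexed by $(j,l) \in [n]^2$ with entries $c_{j,l} = (n_j, X_l^{(n)})$, where $X_l^{(n)} = \lb 0, 1\rp \setminus \lb (l-1)/n, l/n\rp \in \cH_0$. The term-reduction step in the proof of Lemma \ref{lem:0def} relies only on the axiom $y \not\sim b$, so it adapts to $q_{PQ}$ and reduces every atomic formula modulo $q_{PQ}$ to a Boolean combination of instances of $x \sqin y$, $y = c$, $y \sim c$, and formulas not involving $y$.

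The estimates then run as follows. In $\Mi$, any $a \in S$ has $a(n_j) \in \lb (l-1)/n, l/n\rp$ for at most one $l \in [n]$, so $a \sqin c_{j,l}$ holds for at least $n-1$ values of $l$ per $j$; this gives $\Av(\bar{c}_n)(a \sqin y) \geq 1 - 1/n$, and elementarity lifts the bound to all $a \in P(\cU)$. Distinctness of the $n_j$'s yields $\Av(\bar{c}_n)(y \sim c) \leq 1/n$ and $\Av(\bar{c}_n)(y = c) \leq 1/n^2$ for every $c \in Q(\cU)$. For atomic $\theta$ with parameter tuple $\bar{b}$ of length $m$, the term-reduction can fail only at the (at most $mn$) pairs $(j,l)$ whose first coordinate matches one of the first coordinates appearing in $\bar{b}$, contributing at most $m/n$ to the error uniformly in $\bar{b}$. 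Together these bounds give $q_{PQ} \approx_\epsilon^\theta \Av(\bar{c}_n)$ for all large $n$, so Lemma \ref{lem:famQE} yields \fam. The main difficulty is the last balancing act: good $\sqin$-approximation pushes the $c_{j,l}$'s into a single $\sim$-class (to cover the unit interval $\lb 0, 1\rp$), while good $\sim$-approximation pushes them across many classes; the $[n] \times [n]$ product structure is precisely what accommodates both requirements.
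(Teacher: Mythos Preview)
Your proposal is correct and follows essentially the same approach as the paper. Your approximating tuples $c_{j,l}=(n_j,\lb 0,1\rp\setminus\lb(l-1)/n,l/n\rp)$ coincide with the paper's choice $d_{i,j}\cop=(i,\lb 0,1\rp\setminus\lb j/n,(j+1)/n\rp)$ up to the harmless reindexing by an injective $(n_j)$, and the estimates and appeal to Lemma~\ref{lem:famQE} match; for not-\fim\ you exhibit the order property along a Morley sequence while the paper exhibits the full independence property, but both arguments rest on the same observation that the $a_i$ lie in pairwise distinct $\sim$-classes, so arbitrary $\sqin$-patterns can be realized in $P(\cU)$.
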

\begin{proof}
We first show \fam.
  Fix $n<\omega$. For each $i,j < n$, we let $d_{i,j} = (i,\lb \frac{j}{n},\frac{j+1}{n})\rp\in Q(\Mi)$ and define the tuple $\cbar_n =(d_{i,j}\cop)_{i,j<n}$. 
  We show that $(\cbar_n)_{n<\omega}$ satisfies the conditions of
 Lemma \ref{lem:famQE} with respect to the type $q_{PQ}$. By quantifier elimination (Proposition~\ref{prop:PQomega}), we can then conclude that $q_{PQ}$ is \fam.

  First, note that for any $a \in P(\Uc)$, we clearly have $\Av(\bar{c}_{n})(a \sqin y) = \frac{n-1}{n}$. So the conditions of Lemma \ref{lem:famQE} are satisfied for the atomic formula $x\sqin y$. Now consider an atomic formula of the form $t(\xbar,y)\asymp s(\xbar,y)$ where $\asymp$ is either $=$ or $\sim$ and $t$ and $s$ are terms in the $Q$ sort. For any tuple, $\bbar$ of elements of $Q(\Uc)$ it is not too hard to see that if $d$ is an element of $Q(\Uc)\setminus\{\top,\bot\}$ that is not $\sim$-equivalent to any $b_i$, then  $t(\bbar,d)\asymp s(\bbar,d)$ holds if and only if $t(\bbar,y)\asymp s(\bbar,y) \in q_{PQ}(y)$. Note that for any tuple $\bbar$ of elements of $Q(\Uc)$, at most $|\xbar|=|\bbar|$ of the elements of $\cbar_n$ can be $\sim$-equivalent to some $b_i$. This implies that we always have $\Av(\cbar_n)(t(\bbar,y)\asymp s(\bbar,y)) \geq \frac{n - |\xbar|}{n}$ if $q_{PQ}(y)$ satisfies $t(\bbar,y)\asymp s(\bbar,y)$ and $\Av(\cbar_n)(t(\bbar,y)\asymp s(\bbar,y)) \leq \frac{|\xbar|}{n}$ if $q_{PQ}(y)$ does not satisfy $t(\bbar,y)\asymp s(\bbar,y)$.

Finally, we show $q_{PQ}$ is not \fim. Recall that for types, \fim\ is equivalent to generically stable (see \cite[Section 3]{CoGa}). Let $(b_i)_{i<\omega}$ be a Morley sequence in $q_{PQ}(y)$ over some set of parameters. Then $b_i\not\sim b_j$ for all $i\neq j$. It follows (using compactness) that for any $I \subseteq \omega$, there is an $a_I \in P(\Uc)$ such that for any $i<\omega$, $a_I \sqin b_i$ if and only if $i \in I$. Therefore $q_{PQ}(y)$ is not generically stable and so not \fim.
\end{proof}

\section{Concluding remarks}

A recurring theme in the previous work is that, outside of NIP theories, the study of Keisler measures is much more complicated and requires confrontation with a greater amount of pure measure theory. We have also seen that much of the aberrant behavior involving Morley products and Borel definable measures can be found in a very straightforward simple unstable theory, namely, the random ternary relation (see Proposition \ref{prop:bern} for another example of bad behavior in this theory). This suggests that a coherent study of Keisler measures in simple theories may need to focus on very different questions, as compared to NIP theories. On the other hand, since our counterexamples were all built using a generic \emph{ternary} relation, perhaps it is possible to recover some good behavior in the setting of $2$-dependent theories (see \cite{ChPaTa} for the definition of $k$-dependence). 

\begin{question}
Is the product of two Borel definable Keisler measures in a $2$-dependent theory again Borel definable? If so, does associativity hold for Borel definable measures in $2$-dependent theories?
\end{question}

Despite the bad news for Borel definability, our results on notions of `generic stability' for measures corroborate the philosophy of \cite{CoGa} that interesting results exist outside of NIP. For example, we have shown further evidence that \fim\ measures are a sufficiently well-behaved class in general theories. Moreover, results such as the weak law of large numbers continue to be effective tools for studying \fim\ measures outside of NIP. However, these developments are somewhat dampened by the fact that, while we have now found interesting and exotic \dfs\ and \fam\ measures in independent theories, there is a concerning dearth of examples of nontrivial \fim\ measures. As for \dfs\ and \fam, our work in Section \ref{sec:com} shows that some nice behavior can be recovered, and several interesting open questions remain. In particular, our results further highlight the power of Keisler's original result on the existence of smooth extensions in NIP theories, and we have demonstrated that this phenomenon remains powerful without a global NIP assumption. More specifically, we have shown that several results about measures in NIP theories generalize to measures in arbitrary theories, as along as one assumes that the measures in question admit extensions with various properties exhibited by smooth measures.

\begin{appendices}

\appendix

\renewcommand*{\thesection}{}
\renewcommand*{\thesubsection}{\Alph{section}.\arabic{subsection}}
\renewcommand{\thetheorem}{\Alph{section}.\arabic{theorem}}
\section{}

\subsection{Borel measures}\label{app:Borel}
Let $(X,\Sigma)$ be a \textbf{measure space}, i.e., $X$ is a set and $\Sigma$ is a $\sigma$-algebra of subsets of $X$. A function $f\colon X\to \lb 0,\infty\rp$ is \textbf{$\Sigma$-measurable} if $f\inv(U)\in\Sigma$ for all open $U\seq \lb 0,\infty\rp$. A $\Sigma$-measurable function on $X$ is \textbf{$\Sigma$-simple} if its image is finite. 

\begin{fact}\label{fact:Bapprox}
If $f\colon X\to \lb 0,\infty\rp$ is $\Sigma$-measurable then there is a sequence $(f_n)_{n=1}^\infty$ of $\Sigma$-simple functions converging pointwise to $f$. Moreover, $(f_n)_{n=1}^\infty$ converges uniformly to $f$ on any subset of $X$ for which $f$ is bounded.
\end{fact}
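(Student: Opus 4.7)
The plan is to use the standard dyadic truncation approximation from measure theory. For each integer $n \geq 1$, I would partition the codomain $\lb 0, n\rp$ into $n \cdot 2^n$ half-open intervals of length $2^{-n}$, namely $I_{n,k} = \lb k2^{-n}, (k+1)2^{-n}\rp$ for $0 \leq k < n2^n$, and define
\[
f_n(x) = \begin{cases} k2^{-n} & \text{if } f(x) \in I_{n,k}, \\ n & \text{if } f(x) \geq n. \end{cases}
\]
Each $f_n$ takes only finitely many values, and the preimage of each value is of the form $f^{-1}(I_{n,k})$ or $f^{-1}(\lb n, \infty\rp)$, which lies in $\Sigma$ by $\Sigma$-measurability of $f$ (using that $I_{n,k}$ and $\lb n, \infty\rp$ are Borel in $\lb 0,\infty\rp$). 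Hence each $f_n$ is $\Sigma$-simple.

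Next I would verify the convergence claims directly from the construction. For pointwise convergence, fix $x \in X$; since $f(x) < \infty$ (the codomain is $\lb 0,\infty\rp$), choose $N$ with $N > f(x)$. Then for all $n \geq N$ we have $f(x) < n$, so $f(x)$ lies in some interval $I_{n,k}$, and hence $0 \leq f(x) - f_n(x) < 2^{-n}$. Letting $n\to\infty$ yields $f_n(x) \to f(x)$. The same estimate gives the uniform statement: if $Y \seq X$ is such that $f$ is bounded on $Y$, say by $M$, then for all $n \geq \lceil M \rceil$ and all $y \in Y$, we have $f(y) < n$ and so $0 \leq f(y) - f_n(y) < 2^{-n}$, whence $\|f - f_n\|_{L^\infty(Y)} \leq 2^{-n} \to 0$.

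There is essentially no obstacle here — this is a textbook argument, and the only thing to be careful about is that the codomain $\lb 0,\infty\rp$ excludes $\infty$, which makes pointwise convergence trivially valid everywhere (no need to handle points where $f$ takes the value $+\infty$). The construction also makes $(f_n)$ monotone increasing, which is a useful bonus but not required by the statement.
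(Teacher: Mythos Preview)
Your argument is correct and is the standard textbook construction. The paper does not actually prove this Fact; it is stated without proof, and the Remark immediately following it offers a simpler explicit construction for the special case $f\colon X\to[0,1]$, namely $f_n=\sum_{i=0}^{n-1}\frac{i}{n}\boldsymbol{1}_{B_i}$ with $B_i=f^{-1}\bigl(\lp\tfrac{i}{n},\tfrac{i+1}{n}\rb\bigr)$, giving $\|f-f_n\|_\infty\leq \tfrac{1}{n}$. Compared to that remark, your dyadic truncation handles the general unbounded codomain $\lb 0,\infty\rp$ (which the remark does not address), yields monotone increasing approximants, and gives a sharper $2^{-n}$ rate on bounded sets; the paper's remark trades these for a slightly simpler formula adequate for the bounded setting used in the body of the paper.
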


\begin{remark}
If $f\colon X\to [0,1]$ is $\Sigma$-measurable, then in the previous fact one can take $f_n=\sum_{i=0}^{n-1}\frac{i}{n}\boldsymbol{1}_{B_i}$, where $B_i=f\inv(\lp\frac{i}{n},\frac{i+1}{n}\rb)$ for $0\leq i\leq n-1$. Indeed, for any $n\geq 1$, we have $\|f-f_n\|_\infty\leq\frac{1}{n}$. 
\end{remark}

Now assume $X$ is a compact Hausdorff space and $\Sigma$ is the $\sigma$-algebra of Borel subsets of $X$. Let $\mu$ be a \textbf{Borel measure} on $X$, i.e., a countably additive function $\mu\colon \Sigma\to [0,1]$ such that $\mu(\emptyset)=0$. We call $\mu$ a \textbf{Borel probability measure} if, moreover, $\mu(X)=1$. Also, $\mu$ is called \textbf{regular} if, for any Borel set $B\seq X$,
\[
\sup\{\mu(C):C\seq B,~C\text{ is closed}\}=\mu(B)=\inf\{\mu(U):B\seq U,~U\text{ is open}\}.
\]
Given a continuous surjective map $\rho\colon X\to Y$, with $Y$ compact Hausdorff,  and a regular Borel measure $\mu$ on $X$, the \textbf{pushforward of $\mu$ along $\rho$} is the Borel  measure $\nu$ on $Y$ defined by $\nu(B)=\mu(\rho\inv (B))$ for any Borel $B\seq Y$. 

\begin{fact}\label{fact:push}
Suppose $\rho\colon X\to Y$ is a continuous surjective function between compact Hausdorff spaces, and $\mu$ is a regular Borel probability measure on $X$. Then the pushforward of $\mu$ along $\rho$ is regular.
\end{fact}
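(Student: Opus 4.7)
The plan is to verify the two halves of regularity separately, using the complementation trick for a probability measure to reduce the problem to inner regularity, which is where the real content lies. First, one quickly checks that $\nu$ is indeed a Borel probability measure: since $\rho$ is continuous, $\rho^{-1}$ sends Borel sets to Borel sets and commutes with countable Boolean operations, so countable additivity and normalization are immediate from $\mu$.

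For inner regularity, fix a Borel set $B \subseteq Y$ and $\epsilon > 0$. The key observation is that, being a continuous map between compact Hausdorff spaces, $\rho$ is a closed map (the image of a closed subset of the compact space $X$ is compact, hence closed in the Hausdorff space $Y$). Since $\rho^{-1}(B)$ is Borel in $X$ and $\mu$ is regular, choose a closed $K \subseteq \rho^{-1}(B)$ with $\mu(K) > \mu(\rho^{-1}(B)) - \epsilon = \nu(B) - \epsilon$. Then $\rho(K)$ is closed in $Y$ and $\rho(K) \subseteq B$; moreover $K \subseteq \rho^{-1}(\rho(K))$, so $\nu(\rho(K)) = \mu(\rho^{-1}(\rho(K))) \geq \mu(K) > \nu(B) - \epsilon$. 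This gives inner regularity of $\nu$.

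For outer regularity, apply inner regularity to the Borel set $Y \setminus B$: given $\epsilon > 0$, find a closed $C \subseteq Y \setminus B$ with $\nu(C) > \nu(Y \setminus B) - \epsilon$. Then $U \coloneqq Y \setminus C$ is open, $B \subseteq U$, and because $\nu$ is a probability measure,
\[
\nu(U) = 1 - \nu(C) < 1 - \nu(Y \setminus B) + \epsilon = \nu(B) + \epsilon.
\]

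The main (and really only) obstacle is the inner regularity step, and even that is mild once one realizes that one should \emph{not} try to lift an open set from $X$ to $Y$ (there is no general way to do this), but instead push a closed subset of $\rho^{-1}(B)$ down via $\rho$ and exploit that $\rho$ is a closed map. Everything else is bookkeeping that uses only finiteness of $\nu$.
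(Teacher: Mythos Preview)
Your argument is correct. The paper states this result as a \emph{Fact} without supplying a proof, so there is no ``paper's own proof'' to compare against; your approach---push a closed approximant of $\rho^{-1}(B)$ forward through the closed map $\rho$ to get inner regularity, then use complementation and finiteness of $\nu$ for outer regularity---is the standard one and is fully rigorous as written.
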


Finally, we note some facts about totally disconnected compact Hausdorff spaces.

\begin{fact}\label{fact:BStone}
Suppose $X$ is a totally disconnected compact Hausdorff space, and let $\mu$ be a regular Borel probability measure on $X$.
\begin{enumerate}[$(a)$]
\item If $U\seq X$ is open, then $\mu(U)=\sup\{\mu(K):K\seq U,~\text{$K$ is clopen}\}$.
\item If $\nu$ is a regular Borel probability measure on $X$, and $\nu(K)=\mu(K)$ for all clopen $K\seq X$,  then $\mu=\nu$.
\end{enumerate}
\end{fact}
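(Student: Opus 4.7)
The plan is to prove both parts by leveraging the fact that in a totally disconnected compact Hausdorff space, the clopen sets form a basis for the topology, and then combining this with the regularity hypothesis.

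For part $(a)$, I would start by fixing an open $U \seq X$. The inequality $\sup\{\mu(K) : K \seq U,~K \text{ clopen}\} \leq \mu(U)$ is trivial from monotonicity. For the reverse inequality, regularity gives $\mu(U) = \sup\{\mu(C) : C \seq U,~C \text{ closed}\}$, so it suffices to show that for any closed $C \seq U$ there is a clopen $K$ with $C \seq K \seq U$. Since clopen sets form a basis, each $x \in C$ lies in some clopen $K_x \seq U$. Then $\{K_x : x \in C\}$ is an open cover of the closed (hence compact) set $C$, so finitely many $K_{x_1}, \ldots, K_{x_n}$ cover $C$. Their union $K$ is clopen, contained in $U$, and contains $C$, which gives the desired inequality.

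For part $(b)$, I would deduce it from $(a)$ together with regularity. By $(a)$ applied to both $\mu$ and $\nu$, and the fact that $\mu$ and $\nu$ agree on clopen sets, we get $\mu(U) = \nu(U)$ for every open $U \seq X$. Then for any Borel set $B$, by regularity of $\mu$ we have $\mu(B) = \inf\{\mu(U) : U \supseteq B,~U \text{ open}\} = \inf\{\nu(U) : U \supseteq B,~U \text{ open}\} \geq \nu(B)$, using monotonicity of $\nu$ on the last step. By symmetry (regularity of $\nu$) we obtain $\nu(B) \geq \mu(B)$, so $\mu(B) = \nu(B)$ for all Borel $B$.

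There is no serious obstacle here: the only point that needs care is the compactness argument in $(a)$ showing that a closed subset of an open set can be sandwiched by a clopen set. This is the standard reason Stone-type spaces are amenable to measure-theoretic reductions from Borel sets to clopen sets, and the rest follows mechanically from regularity.
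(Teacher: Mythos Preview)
Your proof is correct and takes essentially the same approach as the paper, which merely records that part~$(a)$ is ``straightforward'' and that part~$(b)$ ``follows from part~$(a)$ and regularity.'' Your write-up simply fills in the details the paper omits.
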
 
\begin{proof}
Part $(a)$ is straightforward. Part $(b)$ follows from part $(a)$ and regularity.
\end{proof}

\subsection{Measures on independent sets}\label{app:TR}
The primary goal of this section is to construct the measure defined in the proof of Lemma \ref{lem:LebTR}. We take a somewhat broader approach of independent interest.  Let $B$ be a  Boolean algebra, with join, meet,  complement, top element, bottom element, and induced partial order denoted $\sqcup$, $\sqcap$, $\cop$, $\top$, $\bot$, and $\sqseq$, respectively. Given $X\seq B$, let $X\cop =\{x\cop :x\in X\}$ and, assuming $X$ is finite, let $\bigsqcap X=\bigsqcap_{x\in X} x$.

\begin{definition}
A subset $F\seq B$ is \textbf{independent} if $\bigsqcap X\sqcap \bigsqcap Y\cop \neq \bot$ for any finite disjoint $X,Y\seq F$.
\end{definition}

The following lemma is certainly well-known, but we were unable to find  a suitable reference.

\begin{lemma}\label{lem:IE1}
Suppose $F\seq B$ is independent, and let $f\colon F\to [0,1]$ be a function. Then there is a finitely additive probability measure $\mu$ on $B$ such that, for any finite disjoint $X,Y\seq F$,
\[
\textstyle \mu\left(\bigsqcap X\sqcap \bigsqcap Y\cop\right)=\prod_{x\in X} f(x)\cdot\prod_{x\in Y} (1-f(x)).
\]
\end{lemma}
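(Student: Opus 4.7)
The plan is to build $\mu$ in two stages: first define it on the Boolean subalgebra $B_0 \seq B$ generated by $F$, and then invoke a standard extension theorem to push it out to all of $B$.

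For the first stage, I would exploit the independence hypothesis to show that $B_0$ is canonically the free Boolean algebra on $F$, i.e., isomorphic to the Boolean algebra of clopen subsets of $\{0,1\}^F$. Concretely, for each finite $F_0\seq F$ and each $\epsilon\colon F_0\to\{0,1\}$, form the minterm
\[
a_\epsilon \;=\; \bigsqcap_{\epsilon(x)=1} x \;\sqcap\; \bigsqcap_{\epsilon(x)=0} x\cop.
\]
These $2^{|F_0|}$ elements are pairwise disjoint with join $\top$, by distributivity. The crucial input from independence is that each $a_\epsilon$ is nonzero: if $a_\epsilon=\bot$ with $X=\epsilon\inv(1)$ and $Y=\epsilon\inv(0)$, then $\bigsqcap X \sqseq \bigsqcup Y$ with $X\cap Y=\emptyset$, contradicting the independence of $F$. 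So the $a_\epsilon$ are exactly the atoms of the subalgebra $B(F_0)$ generated by $F_0$, and $B(F_0)$ is isomorphic to $\cP(\{0,1\}^{F_0})$ via $b\mapsto\{\epsilon:a_\epsilon\sqseq b\}$.

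Given this, define a finitely additive probability measure $\mu_{F_0}$ on $B(F_0)$ by
\[
\mu_{F_0}(a_\epsilon) \;=\; \prod_{\epsilon(x)=1} f(x)\cdot\prod_{\epsilon(x)=0}(1-f(x)),
\]
and extend additively. The key consistency check is that if $F_0\seq F_1$ are both finite, then $\mu_{F_1}|_{B(F_0)} = \mu_{F_0}$; this is just marginalization, since each atom $a_\epsilon$ of $B(F_0)$ is the disjoint join of the atoms $a_{\epsilon'}$ of $B(F_1)$ for those $\epsilon'$ extending $\epsilon$, and the product probabilities on those $\epsilon'$ sum to the product probability on $\epsilon$. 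Consequently the $\mu_{F_0}$'s glue to a finitely additive probability measure $\mu_0$ on $B_0=\bigcup_{F_0}B(F_0)$, and by construction
\[
\textstyle\mu_0\bigl(\bigsqcap X\sqcap(\bigsqcup Y)\cop\bigr)=\prod_{x\in X}f(x)\cdot\prod_{x\in Y}(1-f(x))
\]
for any finite disjoint $X,Y\seq F$.

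For the second stage, I would invoke the standard extension theorem for finitely additive bounded charges on Boolean algebras (e.g., \cite[Theorem 3.2.10]{RaoRao}, sometimes attributed to Horn--Tarski), which says that any finitely additive probability measure on a subalgebra $B_0\seq B$ extends to a finitely additive probability measure on $B$. Applying this to $\mu_0$ yields the desired $\mu$. The main obstacle is really only notational: verifying that the $a_\epsilon$ really partition $\top$ in $B(F_0)$ and that the marginalization identity gives consistency across different $F_0$'s; both are routine once one unpacks the independence hypothesis, and the extension step is pure citation.
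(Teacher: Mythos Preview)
Your proposal is correct and takes a somewhat different route from the paper's proof, though both ultimately invoke a Horn--Tarski-type extension theorem. The paper does not first build a measure on the subalgebra $B_0$ generated by $F$; instead it defines $f^*$ directly on the set $S$ of all minterms $u_{X,Y}=\bigsqcap X\sqcap(\bigsqcup Y)\cop$, checks $f^*$ is well-defined (independence forces $u_{X,Y}=u_{X',Y'}\Rightarrow X=X',Y=Y'$), and then proves a combinatorial ``primality'' claim: if $a,b_1,\dots,b_n\in S$ with $a\sqseq b_1\sqcup\dots\sqcup b_n$, then $a\sqseq b_i$ for some $i$. From this it deduces the monotonicity hypothesis of \cite[Theorem~1.22]{HoTa} and extends $f^*$ to a measure on all of $B$ in one step. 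Your approach trades that combinatorial claim for the (perhaps more transparent) observation that independence makes $B_0$ the free Boolean algebra on $F$, so the product measure exists on $B_0$ by the usual marginalization argument, and then you only need the softer subalgebra-to-algebra extension theorem. Both are valid; yours is a bit more structural, the paper's a bit more hands-on.
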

\begin{proof}
We first observe that we can reduce to the case that $F$ is finite and generates $B$. Indeed, given a finite subset $E\seq F$, let $B_E$ be the sub-algebra generated by $E$. Suppose that for all finite $E\seq F$, we have a finitely additive probability measure $\mu_E$ on $B_E$ satisfying the desired conclusion for all finite disjoint $X,Y\seq E$. We can extend each $\mu_E$ arbitrarily to some finitely additive probability measure $\mu^*_E$ on $B$ (e.g., by \cite{LosMar}; see also \cite[Theorem 3.7]{StarBour}). Then $(\mu^*_E)_E$ is a net in the compact space of all finitely additive probability measures on $B$, and thus has a subnet converging to some measure $\mu$ with the desired properties.

So now assume $F$ is finite and generates $B$. Let $n=|F|$. Since $F$ is independent, $B$ has $2^n$ atoms, which are precisely the elements of the form $a_{X} \coloneqq 
\bigsqcap X\sqcap \bigsqcap (F\backslash X)\cop$ for $X\seq F$. A direct calculation show that the unique measure $\mu$ satisfying $\mu(a_{X}) = \prod_{x \in X} f(x) \cdot \prod_{x \in F \backslash X}(1 - f(x))$ has the desired properties.

Thus we can view $B$ as the event space of the experiment of flipping $n$ independent coins (identified with the elements of $F$). If we assign $x\in F$ the probability $f(x)$ of landing heads, then the resulting probability function is the desired finitely additive measure on $B$. 
\end{proof}

\begin{corollary}\label{cor:IE2}
Let $T$ be a complete theory with monster model $\cU$, and suppose $\cF\seq\Def_x(\cU)$ is independent. 
Then, for any $f\colon \cF\to [0,1]$ there is some $\mu\in\kM_x(\cU)$ such that, for any finite disjoint $\cX,\cY\seq\cF$, 
\[
\textstyle \mu\left(\bigcap_{A\in \cX}A\cap\bigcap_{B\in \cY}\neg B \right)=\prod_{A\in \cX} f(A)\cdot\prod_{B\in \cY} (1-f(B)).
\]
\end{corollary}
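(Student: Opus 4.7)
The plan is simply to invoke Lemma \ref{lem:IE1} with $B = \Def_x(\cU)$. In this Boolean algebra, the join, meet and complement correspond to disjunction, conjunction and negation of formulas modulo $T_\cU$, so the condition that $\cF \seq \Def_x(\cU)$ be independent in the sense of the lemma is literally the hypothesis on $\cF$ given here. Applying the lemma to the function $f\colon \cF \to [0,1]$ yields a finitely additive probability measure $\mu$ on $\Def_x(\cU)$ satisfying, for all finite disjoint $X,Y \seq \cF$,
\[
\textstyle \mu\!\left(\bigsqcap X \sqcap (\bigsqcup Y)\cop\right) = \prod_{A\in X} f(A)\cdot \prod_{B \in Y}(1-f(B)).
\]

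To translate back to the statement of the corollary, observe that a finitely additive probability measure on $\Def_x(\cU)$ is by definition an element of $\kM_x(\cU)$, and that
\[
\textstyle \bigsqcap X \sqcap (\bigsqcup Y)\cop = \bigcap_{A \in X} A \cap \bigcap_{B \in Y} \neg B
\]
in $\Def_x(\cU)$. Renaming $X,Y$ as $\cX,\cY$ yields exactly the required equality. There is no real obstacle: the content of the proof lies entirely in Lemma \ref{lem:IE1}, and this corollary is just the specialization of that lemma to the Boolean algebra of definable sets.
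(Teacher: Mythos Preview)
Your proposal is correct and follows exactly the paper's own proof, which simply states that the corollary follows directly from Lemma~\ref{lem:IE1}. You have merely spelled out the identification $B=\Def_x(\cU)$ and the translation of Boolean operations, which is the intended content.
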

\begin{proof}
This follows directly from Lemma \ref{lem:IE1}. 
\end{proof}

We now return to the theory $T_R$ of the random ternary relation $R$, defined in Section \ref{sec:TR}. Let $\cU\models T_R$ be a monster model. Let $\cF$ be the collection of (positive) instances of $R$ in one free variable. Then $\cF$ is independent by the extension axioms for $T_R$. So we can apply Corollary \ref{cor:IE2} with the constant $\frac{1}{2}$ function to obtain a measure $\lambda\in\kM_1(\cU)$ such that if $\theta_1(x),\ldots,\theta_n(x)$ are pairwise distinct (positive) instances of $R$ in one free variable, and $\psi_i(x)$ is either $\theta_i(x)$ or $\neg\theta_i(x)$, then  
\[
\lambda(\psi_1(x)\wedge\ldots\wedge\psi_n(x))=\frac{1}{2^n}.
\]
This finishes the construction of the measure defined in the proof of Lemma \ref{lem:LebTR}. 

We can use a similar construction to justify a claim made after Example \ref{ex:mu-meas}. Fix any countably infinite set $A\subset\cU$. Define $\nu\in\kM_1(\cU)$ in the same way as $\lambda$ above, except start by insisting that any  instance of $R$ involving only parameters from $A$ has $\nu$-measure $0$, and all other instances of $R$ have $\nu$-measure $\frac{1}{2}$. Now view $\nu$ as a measure in $\kM_y(\cU)$. Fix a Bernstein set $Z\seq S_{yz}(A)$, and define $p\in S_x(\cU)$ such that the positive instances of $R$ in $p$ are precisely those of the form $R(x,b,c)$, where $\tp(b,c/A)\in Z$. Note that $p$ and $\nu$ are $A$-invariant. 

\begin{proposition}\label{prop:bern}
The type $p$ is $\nu$-measurable over $A$, but not $\nu$-measurable over any proper extension $B\supset A$.
\end{proposition}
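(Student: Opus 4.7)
The plan is to handle the two directions separately, exploiting the fact that by construction $\nu$ assigns measure $0$ to every $R$-instance whose parameters all lie in $A$.

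For $\nu$-measurability over $A$, I would aim to show that $\nu|_A$ is a Dirac measure $\delta_{q_*}$ concentrated at a single type $q_* \in S_y(A)$; once this is established, every function on $S_y(A)$ is trivially $\nu|_A$-measurable, because the completion of $\delta_{q_*}$ makes every subset of $S_y(A)$ measurable. Let $q_*$ be the type asserting $y \neq a$ for every $a \in A$ together with $\neg \theta(y)$ for every positive atomic $\cL_A$-formula $\theta(y)$ involving $y$ (including degenerate instances such as $R(y,y,a)$). The extension axioms of $T_R$ make $q_*$ consistent and quantifier elimination makes it a complete type. To compute $\nu|_A(\{q_*\})$, I would use regularity together with a DNF reduction to show $\nu(\phi) = 1$ for every $\phi \in q_*$: each such $\phi$ is implied by a conjunction of the form $\bigwedge_i \neg \theta_i(y) \wedge \bigwedge_j (y \neq a_j)$, which by the product formula of Corollary~\ref{cor:IE2} (with $X = \emptyset$, $Y = \{\theta_i\}$, and $f$ vanishing on $A$-only instances) has $\nu$-measure $\prod_i(1-0) = 1$ on the $\neg\theta_i$ part, while strong continuity of $\nu$ (argued exactly as in the proof of Lemma~\ref{lem:LebTR}) gives $\nu(y = a_j) = 0$.

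For non-measurability over a proper extension $B \supset A$, I would pick $c \in B \setminus A$ and consider the $\cL_B$-formula $\phi(x,y) = R(x,y,c)$. By the definition of $p$, $F^\phi_{p,B}(\tp(b/B)) = 1$ iff $\tp(b,c/A) \in Z$, so $F^\phi_{p,B} = \boldsymbol{1}_{g^{-1}(Z)}$, where $g \colon S_y(B) \to S_{yz}(A)$ is the continuous map $\tp(b/B) \mapsto \tp(b,c/A)$. The image $Y' \coloneqq g(S_y(B))$ is the closed column $\{q \in S_{yz}(A) : q|_z = \tp(c/A)\}$, naturally homeomorphic to $S_y(Ac)$ via $\tp(b,c/A) \leftrightarrow \tp(b/Ac)$; because $Y'$ is closed in $S_{yz}(A)$, every uncountable closed subset of $Y'$ is also uncountable closed in $S_{yz}(A)$, so $Z \cap Y'$ inherits the Bernstein property in $Y'$. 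Under the homeomorphism $Y' \cong S_y(Ac)$, the pushforward of $\nu|_B$ along $g$ becomes $\nu|_{Ac}$, and I would check the singleton-null hypothesis of Lemma~\ref{lem:bern} using the independent family $\{R(y,c,a) : a \in A\}$: each of these formulas has a parameter $c \notin A$ and so has $\nu$-measure $\frac{1}{2}$, whence for any $q \in S_y(Ac)$ the conjunction of $q$'s answers to $R(y,c,a_1),\ldots,R(y,c,a_n)$ has $\nu|_{Ac}$-measure $2^{-n}$, forcing $\nu|_{Ac}(\{q\}) = 0$. Lemma~\ref{lem:bern} then yields that $g^{-1}(Z)$ is not $\nu|_B$-measurable.

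The main obstacle is in the first half: one must verify cleanly that every positive atomic $\cL_A$-formula in $y$—including degenerate $R$-instances where $y$ occupies more than one slot—belongs to the independent family $\cF$ underlying the construction of $\nu$ via Corollary~\ref{cor:IE2}, so that $\nu$ assigns it measure $0$. Once this is in hand, the identification $\nu|_A = \delta_{q_*}$ is immediate, and the second half is essentially the Bernstein-set argument of Corollary~\ref{cor:bern} transported along the map $g$.
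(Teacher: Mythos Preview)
Your proposal is correct and follows essentially the same route as the paper. For the first half, the paper likewise observes that $\nu|_A$ is the Dirac measure at the unique $R$-free non-realized type, making every function $\nu|_A$-measurable; for the second half, the paper also fixes $c\in B\setminus A$, looks at $dp(R(x,y,c))$, identifies the column in $S_{yz}(A)$ with $S_y(Ac)$, checks that $\nu|_{Ac}$ is strongly continuous (singleton-null) via the independent family of $R$-instances with parameter $c$, and applies Lemma~\ref{lem:bern}. The only cosmetic difference is that the paper factors your map $g$ as the restriction $\rho^y_{B,Ac}$ followed by the inclusion $S_y(Ac)\hookrightarrow S_{yz}(A)$, whereas you compose them; and the ``obstacle'' you flag about degenerate instances is a non-issue, since the independent family $\cF$ underlying Corollary~\ref{cor:IE2} already consists of \emph{all} positive $R$-instances in one free variable. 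One small caveat: your appeal to ``strong continuity of $\nu$ argued exactly as in Lemma~\ref{lem:LebTR}'' needs the parameters $a_i$ to be chosen outside $A$ (so that the relevant $R$-instances get $\nu$-measure $\tfrac12$ rather than $0$), but this is available since $\cU$ is large.
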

\begin{proof}
Note that $\nu|_A$ coincides with the unique type in $S_y(A)$ that contains the negation of any instance of $R$ involving $y$ and parameters from $A$. So $p$ is $\nu$-measurable over $A$ since any $f\colon S_y(A)\to [0,1]$ is $\nu|_A$-measurable. 

Now fix a proper extension $B\supset A$, and fix some $c\in B\backslash A$. Let $D=dp(R(x,y,c))\coloneqq \{q\in S_y(B):R(x,b,c)\in p\text{ for some }b\models q\}$. We claim that $D$ is not $\nu|_B$-measurable, and thus $p$ is not $\nu$-measurable over $B$. First, since $c\not\in A$, and $A$ is infinite, it follows that $\nu|_{Ac}$ is strongly continuous (as in the proof of Lemma \ref{lem:LebTR}). Let $\rho=\rho^y_{B,Ac}$ and let $f\colon S_y(Ac)\to S_{yz}(A)$ be the natural inclusion map. Then $\rho(D)=f\inv(Z)$, and $D=\rho\inv(\rho(D))$ (by $A$-invariance of $p$). So, by Lemma \ref{lem:bern}, it suffices to show that $f\inv(Z)$ is a Bernstein set in $S_y(Ac)$. To see this, note that  $X\coloneqq f(S_y(Ac))$  is a closed set in $S_{yz}(A)$, whence $Z \cap X$ is a Bernstein set in $X$, and so $f\inv(Z)$ is a Bernstein set in $S_y(Ac)$ as well.
\end{proof}

\subsection{Quantifier elimination for $\Ti$}\label{app:QE}

In this section, we prove that theory $\Ti$ defined in Section \ref{sec:Ti} has quantifier elimination (this was stated in Theorem \ref{thm:TiQE}).

Recall that $\cH$ is the Boolean algebra on $\lb 0,1\rp$ generated by sets of the form $\lb a,b\rp$ with $0\leq a < b \leq 1$. Note that every element of $\cH$ can be expressed as a (possibly empty) finite union of half-open intervals of this form, and so in particular $\cH$ contains no singletons and is an atomless Boolean algebra. Recall also that $\lambda$ denotes the Lebesgue measure on $\lb 0,1\rp$.

We start with the following easy fact.

\begin{fact}\label{fact:duh}
  For any $X \in \cH$, any finite $A \subset X$, and any real number $r \in (0,\lambda(X))$, there is $Y\in \cH$ such that $A \subset Y\subset X$ and $\lambda(Y)=r$.
\end{fact}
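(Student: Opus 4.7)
The plan is to reduce the problem to a cleaner sub-lemma about approximating arbitrary measures by prefixes, and then handle the finite set $A$ by a small perturbation.

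First I would establish the following sub-lemma: for any $W \in \cH$ and any $s \in [0, \lambda(W)]$, there exists $Z \in \cH$ with $Z \subseteq W$ and $\lambda(Z) = s$. To prove this, write $W$ as a disjoint union $\bigsqcup_{i=1}^{m} [c_i, d_i)$ with $c_1 < d_1 \leq c_2 < d_2 \leq \cdots \leq c_m < d_m$. For $t \in [0, \lambda(W)]$, define $Z_t$ to be the unique ``left prefix'' of $W$ of measure $t$, obtained by taking full intervals $[c_1,d_1), \ldots, [c_{k-1},d_{k-1})$ and then the partial interval $[c_k, c_k + u)$ where $k$ and $u \in [0, d_k - c_k)$ are determined by $\sum_{i < k}(d_i - c_i) + u = t$. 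Each $Z_t$ lies in $\cH$ and has Lebesgue measure exactly $t$.

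With this in hand, the main argument goes as follows. Given $X$, $A$, and $r$, pick for each $a \in A$ an index $i(a)$ such that $a \in [c_{i(a)}, d_{i(a)})$ in some fixed decomposition of $X$ into half-open intervals; then $a < d_{i(a)}$. Choose $\delta > 0$ small enough that (i) $\delta < d_{i(a)} - a$ for every $a \in A$, (ii) $\delta$ is smaller than the minimum gap between distinct points of $A$, and (iii) $|A|\cdot\delta < r$. Set $Y_0 = \bigsqcup_{a \in A}[a, a + \delta)$. Then $Y_0 \in \cH$, $A \subseteq Y_0 \subseteq X$, and $\lambda(Y_0) = |A|\delta < r$.

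Now let $W = X \setminus Y_0 \in \cH$, and let $s = r - \lambda(Y_0)$. Since $r < \lambda(X)$, we have $0 < s < \lambda(X) - \lambda(Y_0) = \lambda(W)$, so by the sub-lemma there is $Z \in \cH$ with $Z \subseteq W$ and $\lambda(Z) = s$. Setting $Y = Y_0 \sqcup Z$ gives $Y \in \cH$ with $A \subseteq Y \subseteq X$ and $\lambda(Y) = \lambda(Y_0) + s = r$, as required. The only mildly delicate step is item (i) in the choice of $\delta$, which uses that every point of $X$ lies in some half-open interval of the decomposition and is strictly less than its right endpoint; once this is noted, the construction is entirely elementary.
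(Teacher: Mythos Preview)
Your argument is correct. The paper does not actually supply a proof of this fact; it is stated as an ``easy fact'' and left to the reader. Your write-up fills in the details cleanly: the prefix construction for the sub-lemma is the natural choice, and the handling of $A$ via small half-open intervals $[a,a+\delta)$ is the expected elementary maneuver. The only thing worth double-checking in your write-up is the edge case $A=\emptyset$, where conditions (i)--(iii) are vacuous and the argument reduces directly to the sub-lemma applied to $W=X$; this works without modification.
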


\begin{lemma}\label{lem:fo-gen}
  For any finite set $B\subset Q(\Mi)$, the substructure of $Q(\Mi)$ generated by $B$ is exhausted by elements of the form $\bigsqcup C$ where $C$ is some set of elements of the form $\bigsqcap D$ for some $D \subseteq B\cup \{b\cop : b \in B\}$ (where $\sqcup \varnothing = \bot$ and $\sqcap \varnothing = \top$).

  In particular, for any set of variables $\bar{x}$ of sort $Q$, there is a fixed finite list of $\cL_Q$-terms $\{t_i(\bar{x})\}_{i<n}$ such that for any $\bar{b} \in Q(\Mi)$, $\{t_i(\bar{b})\}_{i<n}$ exhausts the $\cL_Q$-substructure of $Q(\Mi)$ generated by $B$. 
\end{lemma}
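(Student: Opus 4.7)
The plan is to first describe $\langle B\rangle$ explicitly by exploiting the fiber decomposition of $\cQ$, and then read off the DNF one fiber at a time. A purely syntactic induction on term complexity would not suffice, because $\cQ$ fails to be distributive across distinct fibers (for instance, with $a = (1,X)$, $b = (2,Y)$, $c = (1,Z)$ and $Z \not\subseteq X$, one has $(a \sqcup b) \sqcap c = c \ne (a \sqcap c) \sqcup (b \sqcap c)$); so the DNF has to be produced inside each fiber via the classical Boolean DNF, rather than by generic lattice manipulations.

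First, let $N = \{n \in \N : (n,Y) \in B \text{ for some } Y \in \cH_0\}$ and, for each $n \in N$, let $\cB_n \subseteq \cH$ be the finite Boolean subalgebra generated by $\{Y : (n,Y) \in B\}$. I claim that
\[
\langle B\rangle = \{\bot, \top\} \cup \bigcup_{n \in N} \{n\} \times (\cB_n \cap \cH_0).
\]
The right-hand side contains $B \cup \{\bot,\top\}$, and a direct check using the definitions of $\sqcap,\sqcup,\cop$ on $\cQ$ shows it is closed under all $\cL_Q$-operations: within a single fiber $\{n\} \times \cH_0$ the operations reduce to the Boolean operations on $\cB_n$ (with the values $\emptyset$ and $\lb 0,1\rp$ replaced by $\bot$ and $\top$), and across distinct fibers meets collapse to $\bot$ and joins to $\top$. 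For the converse inclusion, every element of $\cB_n$ is a Boolean combination of $\{Y : (n,Y) \in B\}$ in $\cH$, and such a combination lifts to the identical $\cL_Q$-combination in $\{n\} \times \cH_0 \cup \{\bot,\top\} \subseteq \langle B\rangle$.

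Second, I would obtain the DNF form. Take $C = \emptyset$ to realize $\bot$ and $C = \{\emptyset\}$ to realize $\top$ (using the conventions $\bigsqcup \emptyset = \bot$, $\bigsqcap \emptyset = \top$). For $c = (n,X)$ with $X \in \cB_n \cap \cH_0$, enumerate $\{Y_1,\ldots, Y_s\} = \{Y : (n,Y) \in B\}$ and apply classical DNF in the finite Boolean algebra $\cB_n$ to write $X = \bigcup_{\epsilon \in E_X}\bigcap_{i=1}^s Y_i^{\epsilon_i}$ for some $E_X \subseteq \{+,-\}^s$, where $Y_i^+ = Y_i$ and $Y_i^- = \lb 0,1\rp \setminus Y_i$. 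Lifting to $\cQ$ yields
\[
(n,X) = \bigsqcup_{\epsilon \in E_X}\bigsqcap_{i=1}^s (n,Y_i)^{\epsilon_i},
\]
and each conjunct $(n,Y_i)^{\epsilon_i}$ lies in $B \cup \{b\cop : b \in B\}$. (Atoms $\bigcap_i Y_i^{\epsilon_i}$ that are empty in $\cH$ correspond to conjunctions that collapse to $\bot$ in $\cQ$, which is harmless inside the outer join.)

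Third, for the uniform version, fix $\xbar = (x_1, \ldots, x_k)$ and consider the finite list of $\cL_Q$-terms
\[
\mathcal{T}_k = \Bigl\{\,\bigsqcup_{D \in \mathcal{E}}\bigsqcap_{z \in D} z \;:\; \mathcal{E} \subseteq \cP(\{x_1, x_1\cop, \ldots, x_k, x_k\cop\})\,\Bigr\}.
\]
For any tuple $\bbar = (b_1,\ldots,b_k)$ from $Q(\Mi)$, every element of $\langle\{b_1,\ldots,b_k\}\rangle$ is realized as $t(\bbar)$ for some $t \in \mathcal{T}_k$ by Step 2, and conversely every such $t(\bbar)$ lies in $\langle\{b_1,\ldots,b_k\}\rangle$. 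Since $\mathcal{T}_k$ is finite and does not depend on $\bbar$, this yields the uniform list required. The main technical obstacle is Step 1, where one has to verify that the explicit candidate for $\langle B\rangle$ is genuinely closed under the lattice operations; once this is in hand, the DNF extraction and the uniform finiteness of the term list are both immediate.
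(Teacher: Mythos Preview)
Your proposal is correct. The paper's own proof is essentially a single sentence (``The first statement is easy to check, so for the second statement we may take $\{t_i(\bar{x})\}_{i<n}$ to be an enumeration of all disjunctive normal form formal Boolean combinations of the variables $\bar{x}$''), so you have supplied the details the paper leaves out. Your route via the explicit fiber decomposition of $\langle B\rangle$ is the natural way to justify why DNF suffices here: since $\cQ$ is not distributive, one cannot simply push terms into DNF by lattice identities, but because each fiber $[b]^*_\sim$ is a genuine Boolean algebra and cross-fiber meets/joins collapse to $\bot$/$\top$, the classical Boolean DNF inside each fiber does the job. This is almost certainly what the authors had in mind, and your argument makes the verification explicit.
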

\begin{proof}
  The first statement is easy to check, so for the second statement we may take $\{t_i(\bar{x})\}_{i<n}$ to be an enumeration of all disjunctive normal form formal Boolean combinations of the variables $\bar{x}$.
\end{proof}

Let $\Lc_{PQ}$ be the sub-language of $\Lc$ obtained by removing the sort $R$ and all associated symbols, and let $T_{PQ}$ denote the reduct of $\Ti$ to $\Lc_{PQ}$.

\begin{corollary}
For any $M\models \Ti$,  $Q(M)$ is locally finite with respect to $\cL_{PQ}$ (i.e., every finite subset of $Q(M)$ generates a finite $\cL_{PQ}$-substructure).
\end{corollary}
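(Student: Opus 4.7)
The plan is to derive the corollary as an essentially immediate consequence of the ``in particular'' clause of the preceding lemma, with two small observations. First, I would note that although $\cL_{PQ}$ contains the binary relations $\sqin$ and $\sim$ in addition to the $\cL_Q$-symbols $\sqcap, \sqcup, \cop, \bot, \top$, relations do not contribute new elements to substructures. Hence for any $M \models \Ti$ and any finite $B \subset Q(M)$, the $\cL_{PQ}$-substructure of $Q(M)$ generated by $B$ coincides, on the $Q$-sort, with the $\cL_Q$-substructure generated by $B$ (and contains no $P$-sort elements, since $\cL_{PQ}$ has no function symbols from $Q$ to $P$).

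Next I would observe that the preceding lemma, although phrased for the standard model $\Mi$, holds with the same proof in any $M \models \Ti$: its content is the purely algebraic fact that every element of the Boolean-style substructure generated by $B$ admits a disjunctive normal form in $B \cup \{b\cop : b \in B\}$, which depends only on the $\cL_Q$-theory of the lattice-with-complements reduct and not on any specific properties of $\Mi$. Thus there is a fixed finite list of $\cL_Q$-terms $\{t_i(\bar{x})\}_{i<n}$ (where $n$ depends only on $|\bar{x}|$) such that, for any $M \models \Ti$ and any tuple $\bar{b}$ from $Q(M)$, the set $\{t_i(\bar{b})\}_{i<n}$ exhausts the $\cL_Q$-substructure generated by $\bar{b}$.

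Combining the two observations, for $B = \{b_1, \ldots, b_k\} \subset Q(M)$ enumerated as $\bar{b}$, the $\cL_{PQ}$-substructure generated by $B$ has at most $n$ elements and is therefore finite. I do not anticipate any obstacle: all the substance is in the disjunctive-normal-form argument of the preceding lemma, and the corollary amounts to observing that this argument transfers to arbitrary models and that adding relations to the language does not enlarge substructures.
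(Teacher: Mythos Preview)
Your proposal is correct and matches the paper's approach: the corollary is stated without proof in the paper, and your derivation from the preceding lemma---noting that the added symbols $\sqin,\sim$ are relations (so do not enlarge substructures) and that the disjunctive-normal-form statement transfers to arbitrary models---is exactly the intended justification.
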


Let $M$ be a model of $\Ti$. Then $\sqcap$ and $\sqcup$ are lattice operations on $Q(M)$. We will denote the induced partial order by $\sqsubseteq$. Furthermore, $\sim$ is an equivalence relation on $Q_0(M)\coloneqq Q(M)\backslash\{\bot,\top\}$. Given $b\in Q_0(M)$, we set $[b]_\sim=\{c\in Q_0(M):b\sim c\}$ and $[b]^*_\sim\coloneqq [b]_\sim\cup\{\bot,\top\}$. We refer to $[b]_\sim$ as the \emph{$\sim$-class} of $b$. Note that $([b]^*_\sim,\sqcap,\sqcup,\,\cop,\bot,\top)$ is an atomless Boolean algebra for any $b\in Q_0(M)$. We also emphasize that all of this notation depends implicitly on the ambient model $M$.

\begin{definition}\label{def:minimal}
Given a finite substructure $B\subset Q(M)$, an element $b \in B$ is \textbf{minimal} if it is not $\bot$ and is minimal with regards to the partial order $\sqsubseteq$. 
\end{definition}

\begin{lemma}\label{lem:min-in}
  For any $M \models \Ti$, any finite $\cL_Q$-substructure $B \subset Q(M)$, and any $a \in P(M)$, there is, for each $\sim$-class $C$ with representatives in $B$, a unique minimal $b \in B \cap C$ such that $a \sqin b$.
\end{lemma}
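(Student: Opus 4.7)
The plan is to reduce the lemma to the standard correspondence between ultrafilters on a finite Boolean algebra and its atoms, carried out inside the local Boolean algebra on a single $\sim$-class. So I would fix $M \models \Ti$, a finite $\cL_Q$-substructure $B \subset Q(M)$, an element $a \in P(M)$, and a $\sim$-class $C$ with $B \cap C \neq \varnothing$, and then pick any $b \in B \cap C$ in order to focus on $[b]^*_\sim$.

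The first step would set up the local Boolean algebra. The paper already records that $[b]^*_\sim$ is a Boolean algebra under $\sqcap, \sqcup, \cop$, with top and bottom $\top, \bot$. Since $B$ is closed under these operations, $b \in B$ already forces $\bot = b \sqcap b^\cop$ and $\top = b \sqcup b^\cop$ into $B$, and hence $B \cap [b]^*_\sim = (B \cap C) \cup \{\bot, \top\}$ is a finite, nontrivial Boolean subalgebra of $[b]^*_\sim$. Its atoms all lie in $C$ (the subalgebra being nontrivial) and coincide precisely with the minimal (with respect to $\sqsubseteq$) elements of $B \cap C$.

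The second and crucial step would be to show that $a$ induces an ultrafilter on $[b]^*_\sim$ via $\sqin$. One checks directly that on $\cQ$ the relation $\sqin$ satisfies $a \not\sqin \bot$, $a \sqin \top$, $a \sqin (c \sqcap c') \Leftrightarrow a \sqin c \wedge a \sqin c'$ (whenever $c, c'$ share a $\sim^*$-class), and $a \sqin c^\cop \Leftrightarrow a \not\sqin c$. These are first-order properties true in $\Mi$, so they transfer to $M$, and consequently $F_a := \{c \in [b]^*_\sim : a \sqin c\}$ is an ultrafilter on the Boolean algebra $[b]^*_\sim$.

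From here the conclusion falls out: $F_a \cap (B \cap [b]^*_\sim)$ is an ultrafilter on a finite Boolean algebra, hence principal on a unique atom $c_i$. This $c_i$ lies in $B \cap C$ with $a \sqin c_i$, and any other $b' \in B \cap C$ with $a \sqin b'$ decomposes as a join of the atoms $c_1, \dots, c_k$ of $B \cap [b]^*_\sim$; the ultrafilter property forces $c_i$ to appear in that join, so $c_i \sqsubseteq b'$. Thus $c_i$ is the unique minimum, and in particular the unique minimal element, of $\{b \in B \cap C : a \sqin b\}$. I do not anticipate a serious obstacle; the only genuinely model-theoretic point is the first-order transfer in the previous paragraph, which is routine.
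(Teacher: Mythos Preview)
Your argument is correct. The paper's own proof is the single sentence ``This is clearly a first-order property that holds in $\Mi$,'' relying implicitly on Lemma~\ref{lem:fo-gen} to say that for each fixed tuple $\bar y$ the claim is expressible by a sentence, and then checking it in the concrete model $\Mi$.

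Your route is more explicit: rather than transferring the full statement wholesale, you isolate the specific first-order facts about $\sqin$ needed to make $F_a=\{c\in[b]^*_\sim:a\sqin c\}$ an ultrafilter, transfer \emph{those}, and then finish with the atom/ultrafilter correspondence inside the finite Boolean subalgebra $B\cap[b]^*_\sim$. This buys you a proof that works uniformly in $M$ without ever revisiting $\Mi$ once the basic $\sqin$-axioms are established, and it makes transparent \emph{why} uniqueness holds (the atoms partition $\top$ and an ultrafilter selects exactly one). The paper's approach is shorter but leaves the verification in $\Mi$ to the reader; yours is a clean unpacking of that verification. Both ultimately rest on the same elementary transfer, so neither is more general, just differently packaged.
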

\begin{proof}
  This is clearly a first-order property that holds in $\Mi$. 
\end{proof}

We will need the following lemma and proposition for quantifier elimination of $\Ti$.

\begin{lemma}\label{lem:sep-points}
  Let $M$ be a model of $T_{PQ}$. Let $Y \subset P(M)$ be some finite set. For any $\sim$-class $C$ of $M$, and any sequence $c_0,\dots,c_{n-1} \in C\cup\{\top\}$ with $c_i \sqcap c_j = \bot$ for each $i<j<n$ and with $\bigsqcup_{i<n}c_i = \top$, there exists a family $\{d_a\}_{a \in Y}$ of elements of $C$ such that
  \begin{enumerate}[\hspace{5pt}$\ast$]
  \item for any distinct $a,a' \in Y$, $d_a\sqcap d_{a'}=\bot$,
  \item for each $a \in Y$, $d_a \sqsubset c_i$ for some $i<n$,
  \item for each $i<n$, $\bigsqcup \{d_a : d_a \sqsubseteq c_i\} \sqsubset c_i$, and
  \item for each $a \in Y$, $a \sqin d_a$.
  \end{enumerate}
\end{lemma}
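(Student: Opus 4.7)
The plan is to observe that the lemma is a first-order assertion in $\Lc_{PQ}$, so by elementarity it suffices to verify it in the standard model $\Mi$. For any fixed values of $n$ and $|Y|$ the statement encodes a single $\Lc_{PQ}$-sentence quantifying over tuples $(c_0,\ldots,c_{n-1})$ in $Q$ and $|Y|$-tuples of distinct elements of $P$; since $T_{PQ} = \Th(\Mi | \Lc_{PQ})$, checking this sentence in $\Mi$ proves it for every $M \models T_{PQ}$.

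First I would unpack the structure. In $\Mi$, a $\sim$-class $C$ has the form $\{m\}\times\cH_0$ for some $m\in\N$, and $C\cup\{\bot,\top\}$ is canonically isomorphic to the atomless Boolean algebra $\cH$, with $\top$ corresponding to $\lb 0,1\rp$, via $(m,X)\mapsto X$. The disjointness and join conditions on $c_0,\ldots,c_{n-1}$ translate to: the underlying subsets $Y_i \subseteq \lb 0,1\rp$ form a partition of $\lb 0,1\rp$ by sets in $\cH$, with each $Y_i\in\cH_0$ except in the degenerate case $n=1$, $c_0=\top$, where we set $Y_0 = \lb 0,1\rp$ (note that no other $c_i$ can equal $\top$, since then disjointness would force some other $c_j$ to be $\bot\notin C\cup\{\top\}$). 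By Lemma~\ref{lem:dense-seq}$(ii)$, the values $\{a(m) : a\in Y\}$ are pairwise distinct points of $\lb 0,1\rp$, and each lies in a unique $Y_{i(a)}$.

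The core step is then an elementary combinatorial construction in $\cH$. For each $a\in Y$, I would choose a half-open interval $I_a \subseteq Y_{i(a)}$ containing $a(m)$, using Fact~\ref{fact:duh} (or the direct observation that any nonempty element of $\cH$ is a finite disjoint union of half-open intervals, and so near any of its points we can find arbitrarily small half-open subintervals). The intervals $I_a$ can be chosen to be pairwise disjoint because the points $a(m)$ are finitely many and distinct; and within each class $i(a)=i$, we can further shrink them to ensure that $\bigcup\{I_a : i(a)=i\}$ is a proper subset of $Y_i$, which is possible because $Y_i$ has positive Lebesgue measure (or equals $\lb 0,1\rp$ in the degenerate case, where again shrinking is trivially available). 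Setting $d_a := (m,I_a) \in C$, the four conclusions translate directly: pairwise disjointness of the $I_a$ gives $d_a\sqcap d_{a'}=\bot$; $I_a\subsetneq Y_{i(a)}$ gives $d_a\sqsubset c_{i(a)}$; the slack condition gives $\bigsqcup\{d_a : d_a\sqsubseteq c_i\}\sqsubset c_i$ (covering both $c_i\in C$ and $c_i=\top$); and $a(m)\in I_a$ gives $a\sqin d_a$ by Definition~\ref{def:sqin}.

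The main obstacle, such as it is, is purely bookkeeping: keeping straight the dictionary between lattice operations in $C\cup\{\bot,\top\}$ and Boolean operations on subsets of $\lb 0,1\rp$ in $\cH$, and being careful about the degenerate case $c_0=\top$, where the strict-inclusion condition has to be verified by hand since there is no nontrivial ambient $Y_i$ to leave room in. Beyond that, there is no deep model-theoretic content once the reduction to $\Mi$ has been made.
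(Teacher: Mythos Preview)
Your proposal is correct and follows essentially the same approach as the paper: reduce to $\Mi$ by first-orderness, identify the $\sim$-class with $\{m\}\times\cH_0$, use Lemma~\ref{lem:dense-seq}$(ii)$ to get that the values $a(m)$ are distinct, and then pick small half-open intervals around them inside the appropriate piece of the partition. The only difference is cosmetic: the paper writes down an explicit formula $d_a=(k,\lb a(k),\tfrac{a(k)+r}{2}\rp)$ (with $r$ the next ``special point'') to guarantee disjointness and the strict-containment slack simultaneously, whereas you argue abstractly that such intervals exist and can be shrunk; both are fine, and your handling of the degenerate $c_0=\top$ case is if anything more explicit than the paper's.
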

\begin{proof}
  First note that for each fixed $n$, the stated property is equivalent to some sentence in $\Lc_{PQ}$, so it is sufficient to show that the property holds in $M$.

  So fix a finite set $Y \subset P(M)$, a $\sim$-class $C$ in $M$, and a finite sequence $c_0,\dots,c_{n-1} \in C$ with $c_i\sqcap c_j = \bot$ for each $i<j<n$ and with $\bigsqcup_{i<n}c_i = \top$. Let $k\in \N$ be such that $C$ is $\{k\}\times \cH_0$.

  For each $i<n$, let $Y_i=\{a\in Y:a \sqin c_i\}$. Fix some $i < n$. Call an element of $[0,1]$ a \emph{special point of $c_i$} if it either is $a(k)$ for some in  $a\in Y_i$ or is in one of the righthand endpoints of one of the constituent intervals in the subset of $\lb 0,1\rp$ determined by $c_i$ (where if $c_i=\top$ then this is $\lb 0,1\rp$). Note that by our choice of $P(M)$, we have $a(k) \neq a'(k)$ for any distinct $a,a' \in Y_i$. 
  So each element of $Y_i$ corresponds to a unique special point of $c_i$.

  For each $a \in Y_i$, let $d_a=(k,\lb a(k),\frac{a(k)+r}{2}\rp)$, where $r$ is the smallest special point of $c_i$ strictly greater than $a(k)$. Since no element of $Y_i$ yields a righthand endpoint interval in $c_i$, we clearly have that $d_a \sqsubset c_i$. We also clearly have that $a \sqin d_a$, that for any distinct $a,a' \in Y_i$, $d_a\sqcap d_{a'} = \bot$, and that $\bigsqcup \{d_a : d_a \sqsubseteq c_i\} = \bigsqcup_{a \in Y_i} d_a \sqsubset c_i$.

  Since $d_a \sqsubset c_i$ for each $a \sqin c_i$, we have that for any distinct $a,a' \in Y$, $d_a\sqcap d_{a'} = \bot$, and so the family $\{d_a\}_{a \in Y}$ fulfills the requirements of the lemma.
\end{proof}

\begin{proposition}\label{prop:PQomega}
  The theory $T_{PQ}$ is $\omega$-categorical and has quantifier elimination.
\end{proposition}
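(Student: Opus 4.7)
I plan to prove both $\omega$-categoricity and quantifier elimination for $T_{PQ}$ simultaneously via a back-and-forth argument between countable models. First I would write down a concrete axiomatization of $T_{PQ}$: that $Q$ is a lattice with complement, top $\top$, and bottom $\bot$; that $\sim$ is an equivalence relation on $Q_0 = Q \setminus \{\bot, \top\}$ with infinitely many classes, each of which together with $\{\bot, \top\}$ forms an atomless Boolean algebra, with distinct classes having trivial pairwise meets and joins; the compatibilities of $\sqin$ with the lattice structure; the separation statement of Lemma \ref{lem:sep-points} (for every $n$); and a genericity scheme for $P$ stating that for every finite $\cL_Q$-substructure $B \seq Q$ and every consistent assignment, one per $\sim$-class meeting $B$, of a minimal element of $B$ (in the sense of Lemma \ref{lem:min-in}) in that class, there exist infinitely many $a \in P$ realizing exactly this pattern. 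Each such scheme is first order, and each holds in $\Mi$: the separation scheme by Lemma \ref{lem:sep-points}, and the genericity scheme by the cube-meeting property of $P(\Mi)$ from Lemma \ref{lem:dense-seq}.

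By Lemma \ref{lem:fo-gen}, a finite substructure $A = A_P \cup A_Q$ of any model is determined up to isomorphism by the number of $\sim$-classes met by $A_Q$, the size of the finite Boolean subalgebra of each such class, and, for each $a \in A_P$, the unique minimal element of $A_Q$ in each represented class containing $a$ (which exists by Lemma \ref{lem:min-in}). There are thus countably many isomorphism types of finite substructures, and the class is uniformly locally finite. Given countable $M, N \models T_{PQ}$ and a finite partial isomorphism $f \colon A \to B$, I would extend one new element at a time.

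The forth step for a new $P$-element $a \in M \setminus A_P$ is immediate from the genericity axiom: transport the finite pattern of minimal atoms containing $a$ across $f$ and pick any $a' \in P(N)$ realizing it. The forth step for a new $Q$-element $b$ splits on whether its $\sim$-class matches one already represented in $A_Q$ or is fresh. In the fresh-class case, choose a new $\sim$-class in $N$ and use Lemma \ref{lem:sep-points} inside it to produce an element realizing the correct $\sqin$-pattern on the finite set $f(A_P)$. In the existing-class case, the atomic type of $b$ over $A$ is encoded by the value of $b \sqcap d$ relative to each atom $d$ of the finite Boolean subalgebra of $A_Q$ in that class, jointly with the $\sqin$-relations between $b$ and the finitely many $a \in A_P$ lying inside each atom; applying Lemma \ref{lem:sep-points} inside the image class and taking suitable Boolean combinations yields an element $b'$ matching the full pattern.

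The main obstacle will be this last existing-class subcase, since one must match the Boolean position of $b$ relative to a finite subalgebra and the $\sqin$-relation with a finite set of $P$-points simultaneously. Lemma \ref{lem:sep-points} is engineered exactly for this: within each cell of the finite subalgebra, it supplies elements of the ambient class that contain a prescribed finite set of $P$-points and sit strictly inside the cell, which lets one reproduce any Boolean value of $b \sqcap d$ consistent with the $P$-data. After the back-and-forth concludes, the two countable models are isomorphic, giving $\omega$-categoricity, and running the same argument over a finite parameter set in a sufficiently saturated model shows quantifier-free type determines type, hence quantifier elimination.
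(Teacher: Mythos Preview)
Your proposal is correct and follows essentially the same approach as the paper: a back-and-forth between countable models, handling the $P$-extension step via the genericity property for $P$-points (Lemma~\ref{lem:min-in} plus infinitely many realizations), and handling the $Q$-extension step by splitting on fresh versus existing $\sim$-class and invoking Lemma~\ref{lem:sep-points} in the latter case to match the Boolean position relative to the minimal elements together with the $\sqin$-data. The only cosmetic difference is that you frame things via an explicit axiomatization whereas the paper works directly with first-order properties verified in $\Mi$, and the paper observes QE follows because the back-and-forth only assumes $f$ is a substructure isomorphism rather than an elementary map.
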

\begin{proof}
  Let $M_0$ and $M_1$ be two countable models of $T_{PQ}$. Let $(Y_0,C_0)$ and $(Y_1,C_1)$ be isomorphic finite sub-structures of $M_0$ and $M_1$, respectively, and let $f\colon (Y_0,C_0)\to (Y_1,C_1)$ be a fixed isomorphism. All we need to show is that
  \begin{enumerate}[\hspace{5pt}$\ast$]
  \item for any $a_0 \in P(M_0)$, there exists an $a_1 \in P(M_1)$ such that $f$ extends to an isomorphism from $(Y_0a_0,C_0)$ to $(Y_1a_1,C_1)$ sending $a_0$ to $a_1$, and
  \item for any $b_0 \in Q(M_0)$, there exists a $b_1 \in Q(M_1)$ such that $f$ extends to an isomorphism from $(Y_0,\left<C_0b_0\right>)$ to $(Y_1,\left<C_1b_1\right>)$ sending $b_0$ to $b_1$.
  \end{enumerate}
 Note that the same follows with $0$ and $1$ switched by symmetry. Also note that since we are only assuming that $f$ is an isomorphism, rather than an elementary map, this is sufficient to show quantifier elimination. 

  Suppose we have $(Y_0,C_0)$, $(Y_1,C_1)$, $f$, and $a_0\in P(M_0)$. Assume that $a_0 \notin Y_0$. By Lemma~\ref{lem:min-in}, for each $\sim$-class $D$ in $C_0$, there is a unique minimal $b_{D} \in D \subset C_0$ such that $a_0 \sqin b_{D}$. The elements $f(b_D)$ (for each $\sim$-class $D$ that intersects $C_0$) are pairwise $\sim$-inequivalent. $\Ti$ says that for any finite set $X$ of pairwise $\sim$-inequivalent elements of $\cQ_0$, there are infinitely many elements of $P$ which are $\sqin$-in each element of $X$. Therefore we may find $a_1\in P(M_1)$ that is $\sqin$-in each $f(b_D)$ and that is not in $Y_1$, and we get that the obvious extension of $f$ from $(Y_0a_0,C_0)$ to $(Y_1a_1,C_1)$ is an isomorphism.

  Suppose we have $(Y_0,C_0)$, $(Y_1,C_1)$, $f$, and $b_0\in Q(M_0)$. Assume that $b_0 \notin C_0$. 
  There are two cases: Either $b_0$ is $\sim$-equivalent to some element of $C_0$ or it is not.   If $b_0$ is not $\sim$-equivalent to an element of $C_0$, then we may choose $b_1\in \cQ_0$ so that $b_1$ is not $\sim$-equivalent to any element of $C_1$ and, for $a\in Y_0$, we have $a\sqin b_0$ if and only if $f(a)\sqin b_1$. Then $f$ extends to an isomorphism from $(Y_0,\left< C_0b_0 \right>)$ to $(Y_1, \left< C_1b_1 \right>)$ sending $b_0$ to $b_1$. Now suppose $b_0$ is $\sim$-equivalent to some element of $C_0$. Let $E$ be the set of minimal elements of $[b_0]_\sim \cap C_0$. 
  For each $e \in E$ and $a \in Y_0$ with $a \sqin e$, one of the following cases holds:
    \begin{enumerate}[\hspace{5pt}$\ast$]
    \item $b_0 \sqcap e = \bot$ and $a \not\sqin b_0 \sqcap e$,
    \item $\bot \sqsubset b_0 \sqcap e \sqsubset e$ and $a \not\sqin b_0 \sqcap e$,
    \item $\bot \sqsubset b_0 \sqcap e \sqsubset e$ and $a \sqin b_0 \sqcap e$, or
    \item $b_0 \sqcap e = e$ and $a \sqin b_0 \sqcap e$.
    \end{enumerate}
   Let $E=\{c_0,\dots,c_{n-1}\}$. The list $\{f(c_0),\dots,f(c_{n-1})\}$ satisfies the requirements of Lemma~\ref{lem:sep-points}, so we can apply with $Y_1$ to get a family $\{d_x\}_{x \in Y_1}$ satisfying that
      \begin{enumerate}[\hspace{5pt}$\ast$]
      \item for any distinct $x,x' \in Y_1$, $d_x\sqcap d_{x'}=\bot$,
      \item for each $x \in Y_1$, $d_x \sqsubset f(c_i)$ for some $i<n$, 
      \item for each $i<n$, $\bigsqcup \{d_x : d_x \sqsubseteq f(c_i)\} \sqsubset f(c_i)$, and
      \item for each $x \in Y_1$, $x \sqin d_x$.
      \end{enumerate}
      Given the third condition, we can also clearly find, for each $i<n$, an element $h_i$ satisfying that $\bot \sqsubset h_i \sqsubset f(c_i)\sqcap \left( \bigsqcup \{d_x : d_x \sqsubseteq f(c_i)\} \right)\cop$. In particular, each $h_i$ satisfies $x \not \sqin h_i$ for all $x \in Y_1$.

      Now let
      \[
        b_1 = \bigsqcup_{i<n}\bigsqcup_{\begin{subarray}{1} a \in Y_0 \\ a\sqin c_i\end{subarray}}
        \begin{cases}
          \bot, & b_0 \sqcap c_i = \bot \\
          h_i, & \bot \sqsubset b_0 \sqcap c_i \sqsubset c_i\text{ and }a \not \sqin b_0 \sqcap c_i \\
          d_{f(a)}, & \bot \sqsubset b_0 \sqcap c_i \sqsubset c_i\text{ and }a \sqin b_0 \sqcap c_i \\
          f(c_i), & b_0\sqcap c_i = c_i 
        \end{cases}
      \]
      By construction, we now have that for any $e \in E$ and $a \in Y_0$ with $a \sqin e$,
    \begin{enumerate}[\hspace{5pt}$\ast$]
    \item $b_1 \sqcap f(e) = \bot$ and $f(a)\not\sqin b_1$ if and only if $b_0 \sqcap e = \bot$ and $a\not\sqin b_0$,
    \item $\bot \sqsubset b_1 \sqcap f(e) \sqsubset f(e)$ and $f(a) \not\sqin b_1$ if and only if $\bot \sqsubset b_0 \sqcap e \sqsubset e$ and $a \not\sqin b_0$,
    \item $\bot \sqsubset b_1 \sqcap f(e) \sqsubset f(e)$ and $f(a) \sqin b_1$ if and only if $\bot \sqsubset b_0 \sqcap e \sqsubset e$ and $a \sqin b_0$, and
    \item $b_1 \sqcap f(e) = f(e)$ and $f(a) \sqin b_1$ if and only if $b_0 \sqcap e = e$ and $a \sqin b_0$.
    \end{enumerate}
    These conditions are enough to imply that $f$ extends (uniquely) to an isomorphism from $(Y_0,\left< C_0b_0 \right>)$ to $(Y_1, \left< C_1b_1 \right>)$ sending $b_0$ to $b_1$.
  \end{proof}

  \begin{fact}\label{fact:doAb-QE}
    The reduct of $\Ti$ to the sort $R$ is the theory of ordered divisible Abelian groups with a nonzero constant. This theory has quantifier elimination.
  \end{fact}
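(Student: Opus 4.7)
The reduct of $\Mi$ to the sort $R$ is by construction the structure $(\R,+,<,0,1)$, which is an ordered divisible abelian group together with a distinguished nonzero element $1$. So $\Ti|_R$ extends the $\cL_R$-theory $T^*$ axiomatized by the axioms of ordered abelian groups, divisibility ($\forall x\,\exists y\,(\underbrace{y+\cdots+y}_{n})=x$ for each $n\geq 1$), density and unboundedness of the order, and $1\neq 0$. The plan is to establish quantifier elimination for $T^*$, and then deduce that $T^*$ is complete and hence equal to $\Ti|_R$. Completeness will follow from QE together with the observation that every model of $T^*$ contains the same $\cL_R$-substructure generated by $\emptyset$, namely the ordered subgroup $\Z\cdot 1$ (whose quantifier-free diagram is the same in every model, since it consists of the integer multiples of $1$ and their strict order).

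For QE itself, I would invoke the well-known theorem (due to Robinson) that the theory of ordered divisible abelian groups in the language $\{+,-,<,0\}$ admits quantifier elimination. Expanding by a constant symbol for any fixed element preserves QE: naming $1$ simply treats it as a parameter, and any atomic formula in $\cL_R$ mentioning $1$ is already quantifier-free. More concretely, any existential formula $\exists y\,\phi(y,\xbar,1)$ of $\cL_R$ can be viewed as an existential formula $\exists y\,\phi(y,\xbar,z)$ in the original unexpanded language with an additional free variable $z$, and then QE in the unexpanded theory produces a quantifier-free equivalent, into which $1$ is then substituted back.

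Alternatively, if one prefers a self-contained proof, the standard back-and-forth argument for ODAG works verbatim here. Given a partial $\cL_R$-isomorphism $f\colon A\to B$ between finitely generated substructures of two saturated models $M,N\models T^*$ and an element $c\in M$, one notes that $\tp(c/A)$ is determined by the cut of $c$ in the ordered divisible hull of $A$, and that any such cut realized in $M$ is realized in $N$ by density, divisibility, and saturation. The presence of $1$ in every substructure causes no new issue, since $1$ is by default included in $A$ and $B$ with $f(1)=1$.

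The argument contains no genuine obstacle; the content is just the standard QE for ODAG, and the only thing to verify is that expansion by the constant $1$ does not disrupt it, which is immediate. The main task is really the bookkeeping of checking that $T^*$, so axiomatized, is both a theory of the expected form and complete, which falls out of QE once established.
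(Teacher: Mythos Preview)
Your proposal is correct. The paper states this as a \textbf{Fact} without proof, treating quantifier elimination for ordered divisible abelian groups (with a named nonzero constant) as well-known; your sketch correctly supplies the standard details. One minor quibble: in the language $\{+,<,0,1\}$ used in the paper (no subtraction symbol), the substructure generated by $\emptyset$ is $\N\cdot 1$ rather than $\Z\cdot 1$, but this does not affect your completeness argument, since that structure is still uniquely determined up to isomorphism across all models.
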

  
  We can now prove Theorem \ref{thm:TiQE} ($\Ti$ has quantifier elimination).

\begin{proof}[\textnormal{\textbf{Proof of Theorem \ref{thm:TiQE}}}]
  For any finite tuples of variables $\bar{x}$ of sort $P$ and $\bar{y}$ of sort $Q$ and any type $p(\bar{x},\bar{y}) \in S_{\bar{x}\bar{y}}(T_{PQ})$, let $\tau_p(\bar{x},\bar{y})$ be some fixed quantifier-free formula isolating  $p$ (note that $\tau_p$ exists by Proposition \ref{prop:PQomega}). 
  
Quantifier elimination can be established by showing that for any tuples of variables $\bar{x}$ of sort $P$, $\bar{y}$ of sort $Q$, and $\bar{z}$ of sort $R$ and for any formula of the form
  \begin{equation*}
\psi(\bar{x},\bar{y},\bar{z}) \coloneqq  \tau_p(\bar{x},\bar{y})\wedge \varphi(\ell(t_0(\bar{y})),\ell(t_1(\bar{y})),\dots,\ell(t_{k-1}(\bar{y})),\bar{z}),
\end{equation*}
each of $\exists x_0 \psi$, $\exists y_0\psi$, and $\exists z_0\psi$ is equivalent to a quantifier free formula, where
\begin{enumerate}[\hspace{5pt}$\ast$]
\item  $p \in S_{\bar{x}\bar{y}}(T_{PQ})$  is some type (which, by Proposition \ref{prop:PQomega}, we may view as a complete quantifier-free type in $\cL_{PQ}$),
\item   $\varphi(\bar{w},\bar{z})$ is a quantifier-free atomic $\Lc_R$-formula, and
\item  $t_j(\bar{y})$ is an $\Lc_Q$-term for each $j < k$.
\end{enumerate}
(To see that this is sufficient, note that every quantifier-free formula is logically equivalent to a disjunction of formulas of the same form as $\psi(\bar{x},\bar{y},\bar{z})$.)

Let $\bar{\ell}$ be shorthand for the tuple $(\ell(t_0(\bar{y})),\ell(t_1(\bar{y})),\dots,\ell(t_{k-1}(\bar{y})))$. (So we will write $\varphi$ as $\varphi(\bar{\ell},\bar{z})$.) Let $\bar{x}_\ast$ be $\bar{x}$ without $x_0$, and let $\bar{y}_\ast$ and $\bar{z}_\ast$ be defined similarly. \medskip 

\noindent\textbf{Eliminating $P$ quantifiers:} Consider $\exists x_0 \psi(\bar{x},\bar{y},\bar{z})$. Since $\varphi$ does not actually contain $x_0$, $\exists x_0 \psi(\bar{x},\bar{y},\bar{z})$ is logically equivalent to $\varphi(\bar{\ell},\bar{z}) \wedge \exists x_0\tau_p(\bar{x},\bar{y})$. By quantifier elimination for $T_{PQ}$, this is equivalent to $\varphi(\bar{\ell},\bar{z})\wedge \tau_{p|{\bar{x}_*\bar{y}}}(\bar{x}_*,\bar{y})$, which is quantifier free.\medskip

\noindent\noindent\textbf{Reducing $Q$ quantifiers to $R$ quantifiers:} The type $p(\bar{x},\bar{y})$ fully determines the $\Lc_{PQ}$-isomorphism type of the substructure of $PQ$ generated by $\xbar\bar{y}$. Let $E$ be a set of terms $s(\bar{y}_\ast)$ corresponding to the minimal elements of the $\cL_Q$-substructure of $Q$ generated by $\bar{y}_\ast$. (These terms exist for any given type $p$ by Lemma \ref{lem:fo-gen}.) Without loss of generality, assume that $y_0$ is not $\top$ or $\bot$ (modulo $p$). Let $E_{\sim}$ be the set of terms in $E$ that are $\sim$-equivalent to $y_0$ (modulo $p$) and let $E_{\not\sim}$ be the set of those that are not $\sim$-equivalent to $y_0$ (modulo $p$). 
\medskip

\noindent\emph{Claim:} In $\psi$, we may assume that each $t_i(\bar{y})$ is either $s(\bar{y}_\ast)$ for some $s \in E_{\not\sim}$ or $s(\bar{y}_\ast)\sqcap y_0$ or $s(\bar{y}_\ast)\sqcap y_0\cop$ for some $s \in E_{\sim}$.

\noindent\emph{Proof:} For every term of the form $\ell(t_j(\bar{y}))$ either
\begin{enumerate}[\hspace{5pt}$\ast$]
\item there is some $E_0 \subseteq E_{\not\sim}$ such that $\ell(t_j(\bar{y})) = \sum_{s \in E_0}\ell(s(\bar{y}_\ast))$ or
\item there are some $E_1,E_2 \subseteq E_{\sim}$ such that 
\[
\ell(t_j(\bar{y})) = \sum_{s\in E_1}\ell(s(\bar{y}_\ast)\sqcap y_0) + \sum_{s\in E_2}\ell(s(\bar{y}_\ast)\sqcap y_0^c)
\]
\end{enumerate}
(modulo $p$). By substituting these expressions into $\varphi$, we get the claim. \claim
\medskip

\def\ellnotsim{\bar{\ell}^{\not\sim}}
\def\ellsimcap{\bar{\ell}^\sim_{\sqcap}}
\def\ellsimcapc{\bar{\ell}^{\sim}_{\textsf{c}}}
\def\ellsim{\bar{\ell}^{\sim}}

In light of the claim, we will split $\bar{\ell}$ into three subtuples $\ellnotsim$, $\ellsimcap$, and $\ellsimcapc$, where
\begin{enumerate}[\hspace{5pt}$\ast$]
\item $\ellnotsim$ is a list of all terms of the form $\ell(s(\bar{y}_\ast))$ for $s \in E_{\not\sim}$,
\item $\ellsimcap$ is a list of all terms of the form $\ell(s(\bar{y}_\ast)\sqcap y_0)$ for $s \in E_\sim$, and
\item $\ellsimcapc$ is a list of all terms of the form $\ell(s(\bar{y}_\ast)\sqcap y_0\cop)$ for $s \in E_\sim$ (in the same order).
\end{enumerate}
So now we will think of $\varphi$ as $\varphi(\ellnotsim,\ellsimcap,\ellsimcapc,\bar{z})$. It will also be useful to have the notation $\ellsim$ for a list of all terms of the form $\ell(s(\bar{y}_\ast))$ for $s \in E_\sim$ (also in the same order). Note that $\ellnotsim$ and $\ellsim$ do not contain the variable $y_0$. 

The core idea for reducing the quantifier $\exists y_0$ to some quantifiers in the $R$ sort is that once one fixes $\bar{a}$ in $P$ and $\bar{b}$ in $Q$ satisfying $p|_{\bar{x}\bar{y}_\ast}$ as well as some $\bar{c}$ in $R$, the existence of some $d$ such that $\psi(\bar{a},d\bar{b},\bar{c})$ holds depends \emph{only} on the existence of some values $\{m_s\}_{s \in E_{\sim}}$ for $\ell(s(\bar{b})\sqcap d)$ which are consistent with the existing measures of elements of $E$ as well as whatever requirements are imposed by the formula $\varphi$. In order to be consistent with the existing measures of elements of $E$ it is necessary and sufficient that for each $s \in E_{\sim}$
\begin{enumerate}[\hspace{5pt}$\ast$]
\item if $p$ requires that $s(\bar{y}_\ast)\sqcap y_0 = \bot$, then $m_s = 0$,
\item if $p$ requires that $\bot \sqsubset s(\bar{y}_\ast)\sqcap y_0\sqsubset s(\bar{y}_\ast)$, then $0 < m_s < \ell(s(\bar{y}_\ast))$, and
\item if $p$ requires that $s(\bar{y}_\ast)\sqcap y_0 = s(\bar{y}_\ast)$, then $m_s = \ell(s(\bar{y}_\ast))$.
\end{enumerate}
Let $\bar{m}$ and $\bar{u}$ be two new tuples of $R$-variables in the same order as $\ellsim$ (and so also in the same order as $\ellsimcap$ and $\ellsimcapc$). Rather than writing literal numerical indices for $\bar{m}$ and $\bar{u}$, we will write expressions such as $m_s$ to mean the variable in $\bar{m}$ in the same position as $\ell(s(\bar{y}_\ast))$ in $\ellsim$. We need a formula $\eta(\bar{m},\bar{u})$ expressing these compatibility requirements. So, to accomplish this, let
\[
\eta(\bar{m},\bar{u})\coloneqq \bigwedge_{s \in E_{\sim}} \begin{cases} m_s = 0, &   p(\bar{x},\bar{y}) \models s(\bar{y}_\ast) \sqcap y_0 = \bot  \\ 0 < m_s < u_s, & p(\bar{x},\bar{y}) \models \bot \sqsubset s(\bar{y}_\ast) \sqcap y_0 \sqsubset s(\bar{y}_\ast) \\ m_s = u_s, &  p(\bar{x},\bar{y}) \models s(\bar{y}_\ast) \sqcap y_0 = s(\bar{y}_\ast)\end{cases}.
\]
It's easy to see that the compatibility condition for $\bar{m}$ is equivalent to $\eta(\bar{m},\ellsim)$.

Now we can reduce the $\exists y_0$ quantifier to $\exists \bar{m}$. Consider the formula 
\[
\chi(\bar{x},\bar{y}_\ast,\bar{z}) \coloneqq \tau_{p|{\bar{x} \bar{y}_\ast}}(\bar{x},\bar{y}_\ast)\wedge \exists \bar{m}\left[ \eta(\bar{m},\ellsim) \wedge \varphi(\ellnotsim,\bar{m},\ellsim - \bar{m},\bar{z}) \right],
\]
where $\ellsim - \bar{m}$ is the tuple whose elements are  $\ell(s(\bar{y}_\ast)) - m_s$ for $s \in E_{\sim}$.
\medskip

\noindent\emph{Claim:} $\chi(\bar{x},\bar{y}_\ast,\bar{z})$ is logically equivalent to $\exists y_0 \psi(\bar{x},\bar{y},\bar{z})$.

\noindent\emph{Proof:} Fix $\bar{a}$ in $P$, $\bar{b}$ in $Q$, and $\bar{c}$ in $R$.

$\Leftarrow$: Suppose that there exists some $d$ in $Q$ such that $\psi(\bar{a},d\bar{b},\bar{c})$ holds. Clearly we have that $\bar{a}\bar{b}\models p|_{\bar{x}\bar{y}_\ast}$, so $\tau_{p|{\bar{x}\bar{y}_\ast}}(\bar{a},\bar{b})$ holds. By setting $m_s$ equal to $\ell(s(\bar{b})\sqcap d)$ for each $s \in E_{\sim}$, we get that the second part of $\chi(\bar{a},\bar{b},\bar{c})$ holds. (Noting that $\ell(s(\bar{b})\sqcap d\cop) = \ell(s(\bar{b}))-\ell(s(\bar{b})\sqcap d)$.) Therefore $\chi(\bar{a},\bar{b},\bar{c})$ holds.

$\Rightarrow$: Suppose that $\chi(\bar{a},\bar{b},\bar{c})$ holds. Let $\bar{e}$ be the tuple of elements of $R$ witnessing the $\exists \bar{m}$ quantifier. For each $s \in E_{\sim}$, choose $f_s$ such that 
\begin{enumerate}[\hspace{5pt}$\ast$]
\item if $p$ requires that $s(\bar{y}_\ast)\sqcap y_0 = \bot$, then $f_s = \bot$,
\item if $p$ requires that $\bot \sqsubset s(\bar{y}_\ast)\sqcap y_0\sqsubset s(\bar{y}_\ast)$, then $f_s$ is some element satisfying $\bot \sqsubset f_s \sqsubset s(\bar{b})$, $\ell(f_s) = e_s$, and $a_i \sqin f_s$ if and only if $p\models x_i \sqin y_0$ and
\item if $p$ requires that $s(\bar{y}_\ast)\sqcap y_0 = s(\bar{y}_\ast)$, then $f_s = s(\bar{b})$.
\end{enumerate}
This is alway possible for each $s \in E_{\sim}$ since $\eta(\bar{e},\ellsim(\bar{b}))$ holds and since the theory $\Ti$ entails that, for any $y\in Q$, any disjoint finite sets $P_{\sqin}$ and $P_{\not\sqin}$ of elements of $P$, and any $m \in (0,\ell(z))$, there exists a $z \in Q$ with $\bot \sqsubset z \sqsubset y$ and $\ell(z) = m$ such that $g \sqin z$ for every $g \in P_{\sqin}$ and $g \not \sqin z$ for every $g \in P_{\not\sqin}$. (Note that this is a family of first-order statements that hold in $\Mi$.)
Finally, let
\[
d = \bigsqcup_{s \in E_{\sim}} f_s.
\]
By quantifier elimination for $T_{PQ}$, we have that $\bar{a}d\bar{b}\models p$ (where $\bar{a}$ is assigned to $\bar{x}$ and $d\bar{b}$ is assigned to $\bar{y}$), so $\tau_p(\bar{a},d\bar{b})$ holds. Since $\varphi(\ellnotsim(\bar{b}),\bar{e},\ellsim(\bar{b}) - \bar{e},\bar{c})$ holds and since $\bar{e} = \ellsimcap(\bar{b},d)$, we have that $\psi(\bar{a},d\bar{b},\bar{c})$ holds, whence $\exists y_0 \psi(\bar{a},\bar{b},\bar{c})$ holds. \claim
\medskip

Therefore, once we can show that we can eliminate quantifiers of sort $R$, we will have shown that we can eliminate quantifiers of sort $Q$.\medskip

\noindent\textbf{Eliminating $R$ quantifiers:} The formula $\exists z_0 \psi(\bar{x},\bar{y},\bar{z})$ is logically equivalent to $\tau_p(\bar{x},\bar{y})\wedge \exists z_0 \varphi(\bar{\ell},\bar{z})$. By quantifier elimination for $T_R$, $\exists z_0 \varphi(\bar{v},\bar{z})$ (which is an $\Lc_R$-formula) is logically equivalent to some $\Lc_R$-formula $\theta(\bar{v},\bar{z}_\ast)$. Therefore we have that $\exists z_0 \psi(\bar{x},\bar{y},\bar{z})$ is logically equivalent to $\tau_p(\bar{x},\bar{y})\wedge \theta(\bar{\ell},\bar{z}_\ast)$, which is a quantifier-free formula.\medskip

Altogether, since we can reduce $Q$-quantifiers to $R$-quantifiers and eliminate $P$- and $R$-quantifiers, we can eliminate quantifiers in general, and $\Ti$ admits quantifier elimination.
\end{proof}

\begin{remark}
As an aside, quantifier elimination for $\Ti$ implies that the $R$ sort is stably embedded and that any types $p(x)$ in the $P$ sort and $r(y)$ in the $R$ sort are weakly orthogonal (i.e., $p(x)\cup r(y)$ axiomatizes a complete type).
\end{remark}

\subsection{Heirs of definable measures}\label{app:heir}

The purpose of this section is to  discuss definability for Keisler measures over small models, and  prove that such definable measures have definable global extensions.   This material is known in the folklore, especially from the perspective of continuous logic (see Remark \ref{rem:CLheir}). See also  \cite[Remark 2.7]{HPP} and \cite[Remark 3.20]{StarBour}. However, since a complete account does not appear in the literature, we take the opportunity in this appendix to provide complete definitions and some details of various proofs.

Let $T$ be a complete theory with monster model $\cU$. Throughout this section, we fix a model $M\preceq\cU$ and an arbitrary parameter set $A\seq M$.

\begin{definition}\label{def:babyfiber}
Given $\mu\in\kM_x(\cU)$ and an $\cL$-formula $\phi(x,y)$, define the map $f^\phi_\mu\colon M^y\to [0,1]$ such that $f^\phi_\mu(b)=\mu(\phi(x,b))$. 
\end{definition}

We view $M^x$ as a dense subset of $S_x(M)$ by identifying $a\in M^x$ with the isolated type $\tp(a/M)$. A  function $f\colon S_x(M)\to [0,1]$ is called  \emph{$A$-invariant} if $f(p)=f(q)$ whenever $p|_A=q|_A$.  

\begin{fact}\label{fact:defdefs2}
Given a measure $\mu\in\kM_x(M)$ and an $\cL$-formula $\phi(x,y)$, the following are equivalent.
\begin{enumerate}[$(i)$]
\item $f^\phi_\mu$ extends to an $A$-invariant continuous function from $S_y(M)$ to $[0,1]$.
\item For any $\epsilon>0$, there are $\cL_A$-formulas $\psi_1(y),\ldots,\psi_n(y)$ such that:
\begin{enumerate}[$\ast$]
\item $\{\psi_i(y):1\leq i\leq n\}$ is a partition of $M^y$, and
\item for any $1\leq i\leq n$, if $b_1,b_2\in \psi_i(M)$ then $|f^\phi_\mu(b_1)-f^\phi_\mu(b_2)|<\epsilon$.
\end{enumerate}
\item For any $\epsilon<\delta$ in $[0,1]$, there is an $\cL_A$-formula $\psi(y)$ such that 
\[
\{b\in M^y:f^\phi_\mu(b)\leq\epsilon\}\seq \psi(M)\seq\{b\in M^y:f^\phi_\mu(b)<\delta\}.
\]
\end{enumerate}
\end{fact}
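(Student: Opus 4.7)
The plan is to prove the cycle $(i) \Rightarrow (ii) \Rightarrow (iii) \Rightarrow (i)$, using throughout that $M^y$ is dense in $S_y(M)$: any consistent $\cL_M$-formula is realized in $M$ by elementarity. For $(i) \Rightarrow (ii)$, let $F\colon S_y(M) \to [0,1]$ be an $A$-invariant continuous extension of $f^\phi_\mu$. By $A$-invariance, $F$ factors as $\bar F \circ \rho^y_{M,A}$ for a continuous $\bar F\colon S_y(A) \to [0,1]$. Compactness of $S_y(A)$ together with the clopen basis yields, for each $\epsilon > 0$, a finite clopen partition of $S_y(A)$ on which $\bar F$ oscillates by less than $\epsilon$; pulling back along $\rho^y_{M,A}$ produces the required $\cL_A$-partition of $M^y$. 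For $(ii) \Rightarrow (iii)$, fix $\epsilon < \delta$, set $\eta = (\delta - \epsilon)/2$, and take a partition $\{\psi_i(y)\}_{i=1}^n$ provided by $(ii)$ at tolerance $\eta$. For each $i$ pick $b_i \in \psi_i(M)$ and let $\psi(y) = \bigvee\{\psi_i(y) : f^\phi_\mu(b_i) < \epsilon + \eta\}$; the triangle inequality inside each partition piece yields the sandwich.

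The heart of the argument is $(iii) \Rightarrow (i)$. For every pair of rationals $\epsilon < \delta$ in $[0,1]$ fix an $\cL_A$-formula $\psi_{\epsilon,\delta}(y)$ as in $(iii)$; taking complements in the sandwich yields the dual bound
\[
\{b \in M^y : f^\phi_\mu(b) \geq \delta\} \seq \neg\psi_{\epsilon,\delta}(M) \seq \{b \in M^y : f^\phi_\mu(b) > \epsilon\}.
\]
For $p \in S_y(M)$, set
\[
U(p) = \inf\{\delta : \psi_{\epsilon,\delta} \in p \text{ for some rational } \epsilon < \delta\}
\]
and
\[
L(p) = \sup\{\epsilon : \neg\psi_{\epsilon,\delta} \in p \text{ for some rational } \delta > \epsilon\}.
\]
I will show $L(p) = U(p)$ and define $F(p)$ to be this common value. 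For $L(p) \leq U(p)$, the key monotonicity observation is that whenever $\delta_2 \leq \epsilon_1$ we have $[\psi_{\epsilon_2,\delta_2}] \seq [\psi_{\epsilon_1,\delta_1}]$ in $S_y(M)$: the sandwich gives $\psi_{\epsilon_2,\delta_2}(M) \seq \psi_{\epsilon_1,\delta_1}(M)$, and since both clopens in $S_y(M)$ are determined by their intersections with the dense set $M^y$, the inclusion transfers. Hence $\neg\psi_{\epsilon_1,\delta_1}$ and $\psi_{\epsilon_2,\delta_2}$ cannot simultaneously lie in a single type when $\delta_2 \leq \epsilon_1$. For equality, if $L(p) < U(p)$ one chooses rationals $L(p) < \epsilon < \delta < U(p)$; whichever of $\psi_{\epsilon,\delta}, \neg\psi_{\epsilon,\delta}$ lies in $p$ contradicts the definition of $U(p)$ or $L(p)$ respectively.

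The resulting function $F\colon S_y(M) \to [0,1]$ extends $f^\phi_\mu$ (evaluate the sandwich at a realized type), is $A$-invariant because each $\psi_{\epsilon,\delta}$ is an $\cL_A$-formula, and is continuous because $\{p : F(p) < r\} = \bigcup_{\epsilon < \delta < r} [\psi_{\epsilon,\delta}]$ and $\{p : F(p) > r\} = \bigcup_{r < \epsilon < \delta} [\neg\psi_{\epsilon,\delta}]$ are open in $S_y(M)$ for every $r \in [0,1]$. The main obstacle is the bookkeeping in this last implication, particularly the clopen-monotonicity argument that forces $L(p) \leq U(p)$ so that inf and sup meet; everything else is formal.
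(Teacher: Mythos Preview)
Your proof is correct and fully self-contained. The paper, by contrast, does not give a detailed argument: it declares the result a standard exercise, singles out $(ii)\Rightarrow(i)$ as the hardest implication, and points to Ta\u{\i}manov's Theorem on extending continuous functions from dense subspaces as the tool for that step. Your route is different in two respects. First, you close the cycle via $(iii)\Rightarrow(i)$ rather than $(ii)\Rightarrow(i)$. Second, and more substantively, you build the continuous extension $F$ by hand from the sandwich formulas $\psi_{\epsilon,\delta}$, proving directly that the sup $L(p)$ and inf $U(p)$ coincide and that the resulting function is continuous and $A$-invariant; the paper instead outsources the existence of a continuous extension to a black-box citation. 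Your approach is more elementary and leaves no appeal to outside literature, at the cost of the bookkeeping you flag around $L(p)\leq U(p)$; the paper's approach is shorter to state but requires the reader to know (or look up) Ta\u{\i}manov's criterion. One very minor point: at the endpoints $f^\phi_\mu(b)\in\{0,1\}$ the defining sets for $U(p)$ or $L(p)$ may be empty, so you are implicitly using the conventions $\inf\varnothing=1$ and $\sup\varnothing=0$ in $[0,1]$; it would not hurt to say so.
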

\begin{proof}
This is a standard exercise in topology, which is similar to Fact \ref{fact:defdefs} and could be phrased entirely for functions on arbitrary Stone spaces. The direction requiring the most work is $(ii)\Rightarrow (i)$. So we note that this task can be simplified using \emph{Ta\u{\i}manov's Theorem}, which is a classical result that characterizes when a function on a dense subset of a space $X$ can be extended to a continuous function on $X$. See \cite{BlairECF} for details. 
\end{proof}

\begin{definition}\label{def:defsmall}
A measure $\mu\in\kM_x(M)$ is \textbf{$A$-definable} if, for any $\cL$-formula $\phi(x,y)$, the map $f^\phi_\mu$ satisfies the equivalent properties in Fact \ref{fact:defdefs2}. 
\end{definition}

The previous definition appears also in \cite[Definition 3.19]{StarBour} (using characterization $(ii)$ of Fact \ref{fact:defdefs2}).  
Note that this definition does not conflict with the formulation of definability for global measures. In particular, if  we take $M$ to be the monster $\cU$, then Fact \ref{fact:defdefs2} aligns with Fact \ref{fact:defdefs}. 

\begin{remark}
Let $f$ be a map from $M^x$ to a compact Hausdorff space $X$. Then $f$ is called \emph{$A$-definable} if for any closed $C\seq X$ and open $U\seq X$, with $C\seq U$, there is some $A$-definable set $D\seq M^x$ such that $f\inv(C)\seq D\seq f\inv(U)$. 
 In particular, condition $(ii)$ of Fact \ref{fact:defdefs2} is equivalent to $A$-definability of $f^{\phi}_\mu$.
\end{remark}

\begin{remark}\label{rem:heirQE}
Suppose $T$ has quantifier elimination in the language $\cL$. Then $\mu\in\kM_x(M)$ is $A$-definable if and only if the equivalent properties of Fact \ref{fact:defdefs2} hold for any $\cL$-formula $\phi(x,y)$, which is a conjunction of atomic and negated atomic $\cL$-formulas. Indeed, if every formula $\phi(x,y)$ of the described form satisfies condition $(i)$ of Fact \ref{fact:defdefs2}, then so does every quantifier-free $\cL$-formula by inclusion-exclusion. 
\end{remark}

The main result of this section says that definable global ``heirs" of definable measures exist and are unique. The uniqueness aspect is a consequence of the following observation, which also makes explicit the analogy to heirs of types.
 
 \begin{remark}\label{rem:heir}
Fix $\mu\in\kM_x(M)$, and suppose $\hat{\mu}\in\kM_x(\cU)$ is an $A$-definable extension of $\mu$. Then for any $\cL$-formula $\phi(x,y)$, $F^{\phi}_{\hat{\mu},M}$ is an $A$-invariant continuous extension of $f^\phi_\mu$.  Therefore $\mu$ is $A$-definable. Moreover, for any $\cL$-formula $\phi(x,y)$ and any open set $U\seq[0,1]$, if $b\in\cU^y$ and $\hat{\mu}(\phi(x,b))\in U$, then there is some $\psi(y)\in \tp(b/A)$ such that $\hat{\mu}(\phi(x,c))\in U$ for any $c\in\psi(\cU)$ (so, in particular, $\mu(\phi(x,c))\in U$ for some $c\in M^y$). It  follows that $\hat{\mu}$ is the unique $A$-definable global extension of $\mu$.
\end{remark}

\begin{theorem}\label{thm:heir}
Suppose $\mu\in\kM_x(M)$ is $A$-definable. Then $\mu$ has a unique $A$-definable  extension $\hat{\mu}\in \kM_x(\cU)$. 
\end{theorem}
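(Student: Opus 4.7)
The plan is to construct $\hat{\mu}$ directly by integrating the continuous $A$-invariant extensions of the fiber maps $f^\phi_\mu$ provided by $A$-definability, and then verify the axioms of a Keisler measure using a density argument on the Stone space $S_y(M)$.

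In more detail, for each $\cL$-formula $\phi(x,y)$, I would invoke condition $(i)$ of Fact \ref{fact:defdefs2} to obtain the unique $A$-invariant continuous function $g^\phi_\mu \colon S_y(M) \to [0,1]$ extending $f^\phi_\mu$ (uniqueness holds because $M^y$ is dense in $S_y(M)$). Then I would define $\hat{\mu}$ on $\cL_\cU$-formulas by the rule
\[
\hat{\mu}(\phi(x,b)) \;\coloneqq\; g^\phi_\mu\bigl(\tp(b/M)\bigr),
\]
for any $\cL$-formula $\phi(x,y)$ and any $b\in\cU^y$.

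The main obstacle is to show this is a well-defined, finitely additive probability measure on $\Def_x(\cU)$, since the expression above depends a priori on the presentation of a formula. The key lemma I would prove is: if $\phi_1(x,y_1)$ and $\phi_2(x,y_2)$ are $\cL$-formulas and $b_1\in\cU^{y_1}$, $b_2\in\cU^{y_2}$ satisfy $\cU \models \forall x(\phi_1(x,b_1)\leftrightarrow \phi_2(x,b_2))$, then $g^{\phi_1}_\mu(\tp(b_1/M)) = g^{\phi_2}_\mu(\tp(b_2/M))$. To prove this, let $\eta(y_1,y_2)$ denote the $\cL$-formula $\forall x(\phi_1(x,y_1)\leftrightarrow\phi_2(x,y_2))$, and consider the continuous function $\xi\colon S_{y_1y_2}(M)\to\R$ given by $\xi(q) = g^{\phi_1}_\mu(q|_{y_1}) - g^{\phi_2}_\mu(q|_{y_2})$. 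On any realized type $\tp(c_1,c_2/M)$ with $c_1,c_2\in M$ and $M\models\eta(c_1,c_2)$, we have $\phi_1(M^x,c_1) = \phi_2(M^x,c_2)$, so $\xi$ vanishes at such points. Since realized types are dense in $S_{y_1y_2}(M)$ and the clopen set $[\eta]$ is the closure of its realized points (using that $M\prec\cU$), continuity of $\xi$ forces $\xi\equiv 0$ on $[\eta]$. This gives well-definedness; finite additivity is obtained by the same density argument applied to the $\cL$-formula asserting disjointness and union of two instances; the probability condition holds since $g^{x=x}_\mu\equiv 1$; and extension of $\mu$ is immediate from $g^\phi_\mu(\tp(b/M)) = f^\phi_\mu(b) = \mu(\phi(x,b))$ for $b\in M^y$.

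It remains to check $A$-definability and uniqueness. By construction, $F^\phi_{\hat{\mu},M} = g^\phi_\mu$ is continuous for every $\cL$-formula $\phi(x,y)$, so $\hat{\mu}$ is definable over $M$. Moreover, $g^\phi_\mu$ is $A$-invariant, which directly gives that $\hat{\mu}$ is $A$-invariant, so Remark \ref{rem:BDdrop} (the analogue of Corollary \ref{cor:BDdrop} for definable measures) yields that $\hat{\mu}$ is definable over $A$. Uniqueness follows from Remark \ref{rem:heir}.
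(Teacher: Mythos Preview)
Your proposal is correct and follows essentially the same approach as the paper: both define $\hat{\mu}(\phi(x,b))$ via the unique continuous $A$-invariant extension of $f^\phi_\mu$ to $S_y(M)$, verify the Keisler measure axioms by a density argument (realized types are dense in the relevant clopen sets), and deduce uniqueness from Remark~\ref{rem:heir}. Your handling of well-definedness is slightly more explicit than the paper's (you directly compare two presentations via the clopen set $[\eta]$, whereas the paper only checks that inconsistent formulas get measure $0$ and then verifies additivity), and you spell out the appeal to Remark~\ref{rem:BDdrop} for $A$-definability where the paper leaves this implicit, but these are cosmetic differences rather than a genuinely different route.
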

\begin{proof}
By Remark \ref{rem:heir}, it suffices to just show that $\hat{\mu}$ exists. Given an $\cL$-formula $\phi(x,y)$ and $b\in M^y$, define 
\[
    \hat{\mu}(\varphi(x,b)) = \hat{f}_{\mu}^{\varphi}(\tp(b/M)), 
\]
where $\hat{f}_{\mu}^\phi$ is the continuous $A$-invariant extension of $f^\phi_\mu$ to $S_y(M)$.  Assuming $\hat{\mu}$ is a well-defined Keisler measure, it follows that $\hat{\mu}$ is an $A$-invariant global extension of $\mu$, and $F_{\hat{\mu},M}^{\phi}=\hat{f}_\mu^\phi$ for any $\cL$-formula $\phi(x,y)$. In particular, $\hat{\mu}$ is $A$-definable.

To show that $\hat{\mu}$ is well-defined, we need to verify that any inconsistent $\cL_{\cU}$-formula has measure $0$, and that finite additivity holds. So first fix an $\cL$-formula $\phi(x,y)$ and $b\in M^y$ such that $\phi(x,b)$ is inconsistent. Then $\tp(b/M)$ is in the clopen set $C\coloneqq [\forall x\neg\phi(x,y)]\seq S_y(M)$. Since $f^\phi_\mu$ is identically $0$ on $C\cap M^y$, which is dense in $C$, we have  $\hat{f}^\phi_\mu(\tp(b/M))=0$, as desired. Next, to verify finite additivity, fix $\cL$-formulas $\phi(x,y)$ and $\psi(x,z)$, and let $\theta(x;y,z)$ and $\chi(x;y,z)$ denote $\phi(x,y)\vee\psi(x,z)$ and $\phi(x,y)\wedge\psi(x,z)$, respectively. We need to show that if $bc\in \cU^{yz}$ then
\[
\hat{f}^{\theta}_\mu(\tp(bc/M))=\hat{f}^{\phi}_{\mu}(\tp(b/M))+\hat{f}^{\psi}_\mu(\tp(c/M))-\hat{f}^{\chi}_\mu(\tp(bc/M)).
\]
Note that the previous equation holds for any $bc\in M^{yz}$ since $\mu$ is a Keisler measure and the $\hat{f}_\mu$-maps extend the $f_\mu$-maps. So fix some $bc\in\cU^{yz}$, and let $(b_ic_i)_{i \in I}$ be a net of points in $M^{yz}$ such that $\lim_{i \in I} b_ic_i = \tp(bc/M)$ (recall that we identify points from $M$ with realized types over $M$). Then we have the following computation:
\begin{align*}
\hat{f}_{\mu}^{\theta}(\tp(bc/M)) &=  \lim_{i\in I}\hat{f}_{\mu}^{\theta}(b_{i}c_{i}) = \lim_{i\in I}\left[\hat{f}^{\phi}_{\mu}(b_i)+\hat{f}^{\psi}_\mu(c_i)-\hat{f}^{\chi}_\mu(b_ic_i)\right]\\
&= \lim_{i\in I}\hat{f}^{\phi}_{\mu}(b_i)+\lim_{i\in I}\hat{f}^{\psi}_\mu(c_i)-\lim_{i\in I}\hat{f}^{\chi}_\mu(b_ic_i)\\
&=\hat{f}^{\phi}_{\mu}(\tp(b/M))+\hat{f}^{\psi}_\mu(\tp(c/M))-\hat{f}^{\chi}_\mu(\tp(bc/M)).\qedhere
\end{align*} 
\end{proof}

\begin{remark}\label{rem:CLheir}
  The proof of Theorem \ref{thm:heir} can be understood in an abstract way with continuous logic. For the case of types, one can show that a definable type $p$ over a model $M$ has a canonical definable global extension by arguing that the theory of $M$ `knows' that the defining schema of $p$ gives a complete type. This is essentially the same as the argument we have presented here. A continuous real valued function on a type space is the same thing as a definable predicate in the sense of continuous logic (or a formula if one has a broad enough notion of formula). Given a definable measure $\mu$ over a model $M$, the theory of $M$ `knows' that the defining schema of $\mu$ gives a measure, and so it follows that the same schema gives a global measure.
\end{remark}

Finally, we take a brief moment to note the existence of `coheirs' for measures over small models. This is not used in any part of the paper, but it is thematically relevant to the aims of this part of the appendix.

\begin{proposition} 
For any $\mu \in \mathfrak{M}_{x}(M)$ there is some $\hat{\mu} \in \mathfrak{M}_{x}(\mathcal{U})$ such that $\hat{\mu}|_{M} = \mu$ and $\hat{\mu}$ is finitely satisfiable in $M$. 
\end{proposition}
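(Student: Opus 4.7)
The plan is to realize $\hat{\mu}$ as an element of a compact convex set via a density-plus-compactness argument that mirrors how one builds coheirs of types from ultrafilters on $M^x$. Let
\[
K = \{\nu \in \mathfrak{M}_x(\cU) : \nu \text{ is finitely satisfiable in } M\}.
\]
First I would verify that $K$ is closed (hence compact) and convex in $\mathfrak{M}_x(\cU)$. Closedness is immediate: $\nu \in K$ iff $\nu(\phi(x)) = 0$ for every $\cL_{\cU}$-formula $\phi(x)$ not realized in $M$, and each such condition cuts out a closed set in the product topology on $[0,1]^{\Def_x(\cU)}$. Convexity is obvious from linearity. Let $\rho \colon K \to \mathfrak{M}_x(M)$ be the restriction map $\nu \mapsto \nu|_M$. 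Then $\rho$ is continuous, so $\rho(K)$ is compact (hence closed) and convex in $\mathfrak{M}_x(M)$.

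Next, for each $a \in M^x$, the global Dirac measure $\delta_a$ lies in $K$ and satisfies $\delta_a|_M = \delta_a \in \mathfrak{M}_x(M)$. By convexity, $\rho(K)$ contains the convex hull $C$ of $\{\delta_a : a \in M^x\}$ inside $\mathfrak{M}_x(M)$, and since $\rho(K)$ is closed, it contains the closure $\overline{C}$. So everything reduces to the density claim $\overline{C} = \mathfrak{M}_x(M)$, which is the main (and only substantive) step of the proof. Once we have this, $\mu \in \rho(K)$ and any $\hat{\mu} \in \rho^{-1}(\mu)$ is a witness.

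For the density, I would invoke the identification (discussed in Section~\ref{app:Borel} and after Fact~\ref{fact:basicBorel}) of $\mathfrak{M}_x(M)$ with the space of regular Borel probability measures on $S_x(M)$ equipped with the weak$^*$-topology. The match of topologies comes from the fact that, since $S_x(M)$ is totally disconnected, every $f \in C(S_x(M))$ is a uniform limit of $\R$-linear combinations of indicator functions of clopen sets (use Fact~\ref{fact:Bapprox} on $C(S_x(M))$ viewed as the real-valued $\sigma$-algebra-measurable functions that happen to be continuous), so convergence tested against clopen indicators is the same as convergence tested against continuous functions. If $C$ were not dense in $\mathfrak{M}_x(M)$, a Hahn--Banach separation would produce $f \in C(S_x(M))$ and $r \in \R$ with $\int f \, d\nu \leq r$ for all $\nu \in C$ and $\int f\, d\mu > r$. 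Specialising to $\nu = \delta_a$ gives $f(\tp(a/M)) \leq r$ for every $a \in M^x$; since $\{\tp(a/M) : a \in M^x\}$ is dense in $S_x(M)$ and $f$ is continuous, $f \leq r$ pointwise, so $\int f\, d\mu \leq r$, a contradiction.

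The main obstacle is this density step, but it is really just a standard separation argument once the right topological identification is in place; the rest is bookkeeping about closed convex sets and continuous surjections. Note also that this approach does not produce a canonical $\hat{\mu}$: any point in the (typically large) fibre $\rho^{-1}(\mu) \subseteq K$ works, exactly parallel to how a type can have many coheirs corresponding to different ultrafilter extensions.
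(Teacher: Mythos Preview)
Your proposal is correct and follows essentially the same approach as the paper: show that the set $K$ of finitely satisfiable global measures is closed, that the restriction map to $\mathfrak{M}_x(M)$ therefore has closed image, and that this image contains a dense set of (convex combinations of) Dirac measures at points of $M^x$. The paper leaves the density of averages as a standard fact (citing \cite[Proposition 2.11]{ChGan}), whereas you spell it out via Hahn--Banach separation; otherwise the arguments are the same.
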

\begin{proof} 
Define $\kM_x(\cU,M)\coloneqq \{\mu\in\kM_x(\cU):\text{$\mu$ is finitely satisfiable in $M$}\}$. Let $\rho$ be the restriction of $\rho^x_M\colon\kM_x(\cU)\to \kM_x(M)$ to $\kM_x(\cU,M)$. We want to show that $\rho$ is surjective. Note that $\kM_x(\cU,M)$ is closed in $\kM_x(\cU)$, and so $\rho$ is a continuous map between compact Hausdorff spaces. Thus $\im(\rho)$ is closed in $\kM_x(M)$. Moreover, $\im(\rho)$ is a dense subset of $\kM_x(M)$ (consider the image of $\{\Av(\overline{a}): \overline{a} \in (M^{x})^{< \omega}\}$). Therefore $\rho$ is surjective. 
\end{proof}

\end{appendices}

\end{document}